\pdfoutput=1
\documentclass[12pt, oneside, notitlepage]{amsart}
\usepackage[margin=3cm]{geometry}
\usepackage{amsmath,amssymb,amsthm,graphicx,mathrsfs,bbm,url}
\usepackage[mathcal]{euscript}
\usepackage{amsthm}
\usepackage{wrapfig}
\usepackage{enumitem}
\usepackage{mathtools}
\usepackage{comment}
\usepackage{stmaryrd}
\usepackage[utf8]{inputenc}
 \usepackage[T1]{fontenc}
\usepackage[usenames,dvipsnames]{color}
\usepackage[colorlinks=true,linkcolor=Red,citecolor=Green]{hyperref}
\usepackage{amsmath, amsthm} 
\usepackage{tikz}
\usetikzlibrary{arrows.meta, patterns}
\usepackage[noabbrev]{cleveref}
\usepackage[super]{nth}
\usepackage[open, openlevel=2, depth=3, atend]{bookmark}
\hypersetup{pdfstartview=XYZ}
\usepackage[font=footnotesize]{caption}

\usepackage{epstopdf}
 
\usepackage{hyperref}

\theoremstyle{plain}
\newtheorem{theorem}{Theorem}[section]
\newtheorem*{theorem*}{Theorem}

\newtheorem{lemma}[theorem]{Lemma}

\newtheorem{proposition}[theorem]{Proposition}
\newtheorem{corollary}[theorem]{Corollary}

\theoremstyle{definition}
\newtheorem{definition}[theorem]{Definition}

\theoremstyle{remark}
\newtheorem{remark}[theorem]{Remark}

\numberwithin{equation}{section}

\newcommand{\C}{\mathbb{C}}
\newcommand{\R}{\mathbb{R}}

\newcommand{\Z}{\mathbb{Z}}

\newcommand{\N}{\mathbb{N}}
\newcommand{\K}{\mathbf{k}}
\newcommand{\X}{\mathbf{X}}
\newcommand{\el}{\mathbf{l}}
\newcommand{\El}{\mathbf{L}}

\DeclareMathOperator{\Ell}{ell}

\DeclareMathOperator{\Op}{Op}
\DeclareMathOperator{\WF}{WF}

\DeclareMathOperator{\supp}{supp}

\newcommand{\be}{\begin{equation}}
\newcommand{\ee}{\end{equation}}

\newcommand{\diag}{\operatorname{diag}}

\title
[Spectral theory for frame flows on hyperbolic manifolds]
{Spectral theory of frame flows on closed hyperbolic manifolds}

\author{Julien Namazi}

\address{Université de Paris-Est Créteil , LAMA, 94010, Créteil, France.}

\email{julien.namazi@cnrs.fr}

\begin{document}

\begin{abstract}
We prove a resolvent estimate for the generator of the frame flow on hyperbolic manifolds away from vertical lines of resonances. A byproduct of the proof is an optimal essential spectral gap property for the generator, hence giving another proof of exponential mixing of frame flows with respect to the volume measure of the frame bundle. This extends the result of \cite{DGH2021} in dimension $3$ to any dimension. We make an extensive use of the Borel-Weil calculus developed in \cite{CL24} to overcome difficulties of this higher-dimensional case. 
\end{abstract}

\maketitle
\setcounter{tocdepth}{1}
\tableofcontents

\section{Introduction and main result}\label{section:introandmain}
Let $M$ be a closed manifold. A flow $(\varphi_t)$ generated by a vector field $X$ is said to be \textit{Anosov} if there exists a $(d\varphi_t)$ invariant splitting 
\begin{align}
    &TM=E^u_{SM}\oplus E^s_{SM}\oplus \mathbb{R}X,
\end{align}
such that there exists $C>0$ and $\nu>0$ satisfying (with respect to any metric on $M$) 
\begin{align}\label{eq:contractionintro}
    &\forall x\in M,\forall v\in E^s_{SM}, \|d_x\varphi_t(v)\|_{\varphi_t(x)}\leq Ce^{-\nu t}\|v\|_x, \,\text{ if $t>0$},\\
    &\forall x\in M,\forall v\in E^u_{SM}, \|d_x\varphi_{t}(v)\|_{\varphi_t(x)}\leq Ce^{\nu t}\|v\|_x, \,\,\,\,\text{ if $t<0$}.\notag
\end{align}
Of particular importance is the case of the geodesic flow on the unit tangent bundle of a manifold with a negative sectional curvature everywhere \cite{Anosov1967}. The Anosov property is of central importance in the realm of dynamical systems: the central direction $\mathbb{R}X$, not present for Anosov diffeomorphisms, makes the study of their \textit{speed of mixing} very challenging. When such flows exhibit the \textit{contact property}, much more can be said.
\begin{definition}\label{def:contact}
    An Anosov flow generated by a vector field $X$ is said to be contact if there exists a smooth $1$-form $\alpha$, defined by $\alpha(X)=1$ and $\ker(\alpha)=E^u\oplus E^s$, satisfying that $d\alpha_{|E^u\oplus E^s}$ is non-degenerate.
\end{definition}
A celebrated result is the exponential mixing of contact Anosov flows with respect to the Liouville form $\alpha\wedge (d\alpha)^{n-1}$ (if $\dim(M)=2n-1$) \cite{Dol98,Liv,NZ14,FT}.
In particular, the geodesic flow is exponentially mixing on negatively curved manifolds with respect to the Liouville measure on their unit tangent bundles. The more recent approach in \cite{NZ14,FT} uses tools from microlocal and semiclassical analysis, whereas the one in \cite{Dol98,Liv} is more dynamically oriented and use of Markov partitions \cite{Dol98} or Lasota-Yorke inequalities \cite{Liv}. In this paper, we will adopt the more recent point of view involving microlocal analysis.

The key ingredient to obtain the exponential mixing of $(\varphi_t)$ is to obtain \textit{resolvent estimates} for the unbounded operator $X$ on the \textit{anisotropic Sobolev space} $\mathcal{H}^s(SM)$, a functional space adapted to the hyperbolic behaviour of the flow. Together with the spectral gap property, one obtains the exponential decay of correlations. In the context of smooth r-normally hyperbolic trapping, the work \cite{Dy16} shows spectral gap and resolvent estimate properties for operators whose symbol has a Hamiltonian flow presenting a structure similar to the Anosov case. In fact, the ideas in \cite{Dy16} apply to treat exponential mixing for contact Anosov flows such that their stable and unstable bundles are smooth. However the smoothness assumption is known to be very rigid, and only occur for Anosov flows built from the geodesic flow on hyperbolic manifold \cite{HK90,BFL90}. 

In the recent paper \cite{CG21}, the authors were able to prove exponential mixing for contact Anosov flow in dimension $3$. In this case $E^{u,s}$ are known to be $C^{1,\alpha}$ regular, which allows after some work to use the semiclassical proof of \cite{Dy16}. The present work will assume smoothness of $E^{u,s}$, but the flows under consideration will present additional(s) neutral directions: this is the setting of \textit{partially hyperbolic dynamics}.  
\begin{definition}\label{def:partiallyhypcompact}
    Let $G$ be a compact Lie group and $\pi:P\to M$ a $G$-principal bundle whose right-multiplication by $g\in G$ is written $R_g$. Let $(\varphi_t)$ be an Anosov flow on $M$. A flow $(\Phi_t)$ is said to be an isometric extension of $(\varphi_t)$ if the following conditions are fulfilled
    \begin{align}\label{eq:partiallyhypcompact}
        &\pi\circ \varphi_t=\Phi_t\circ \pi,\\
        & R_g\Phi_t(\cdot)=\Phi_t(R_g\cdot),\,\,\forall g\in G.\notag
    \end{align}
\end{definition}
Under suitable assumptions on $P$, $(\Phi_t)$ is known to be rapid mixing - the decay of correlation is faster than power any of $t$ - for measures which are locally the product of the Haar measure and an equilibrium state for the base flow by \cite{PZ25}. The work \cite{CL24} proved this with microlocal methods for volume preserving flow under more explicit geometric assumptions on $P$ (non-torsion if $\dim(G)=1$, semisimple otherwise), the base flow $(\varphi_t)$ (non-joint integrability type condition) and $(\Phi_t)$ (if $\dim(G)\geq 2$ ergodicity is required).

In this paper, we will consider a specific case of isometric extensions: the \textit{frame flow over $(M^n,g)$}. Write $SM:=\{(x,v)\in TM\,|\, g_x(v,v)=1\}$ and $(\varphi_t)$ the geodesic flow on $SM$. On the orthonormal frame bundle $FM\to SM$, we may define the \textit{frame flow} $(\Phi_t)$. It corresponds to the parallel transport of a frame $(x,v,\mathbf{e}_1,\dotsc,\mathbf{e}_{n-1})$ along the geodesic starting at $(x,v)$, see Section \S\ref{subsec:frameflow} for precise definitions. This flow is an $SO(n-1)$ extension of the geodesic flow on $SM$. We write $X_{FM}$ the generator of this flow. Now assume that $(M,g)$ is closed and has everywhere negative sectional curvature. Thanks to the fact that it satisfies Definition \ref{def:partiallyhypcompact} with $G=SO(n-1)$, the frame flow is partially hyperbolic. We have the splitting 
\begin{align}\label{eq:introframeflowsplitting}
    T(FM)=E^u_{FM}\oplus E^s_{FM}\oplus \mathbb{R}X_{FM}\oplus \mathbb{V}_{FM},
\end{align}
where the \textit{vertical bundle} $\mathbb{V}_{FM}$ is tangent to the fiber direction of $FM\to SM$ and is invariant by $d\Phi_t$. The bundles $E^{u,s}_{FM}$ are also invariant subbundles additionally satisfying the usual contraction and expansion properties 
\begin{align}\label{eq:contractionintroFM}
    &\forall p\in FM,\forall v\in E^s_{FM}, \|d_x\Phi_t(v)\|_{\varphi_t(p)}\leq Ce^{-\nu t}\|v\|_x, \,\text{ if $t>0$},\\
    &\forall p\in FM,\forall v\in E^u_{FM}, \|d_x\Phi_{t}(v)\|_{\varphi_t(x)}\leq Ce^{\nu t}\|v\|_x, \,\,\,\,\text{ if $t<0$}.\notag
\end{align}
Since the work \cite{BG80}, much effort has been made to understand the dynamical property of this flow under the negative curvature assumption on $M$, see \cite{CLMS24a,CLMS24b}. For instance, it is known since \cite{Moore1987} that it is exponentially mixing for the natural volume measure if $M$ is an hyperbolic manifold, however the proof relies heavily on representation theory.

In \cite{DGH2021}, Guillarmou and Delarue were able to incorporate the microlocal toolbox from \cite{FS11} in the setting of the frame flow on hyperbolic 3-manifolds. They were able to prove exponential mixing for the normalized volume using those techniques. Note that $FM\to SM$ is a circle bundle if $n=3$, this allows the use of a fiberwise Fourier decomposition of elements in $\mathcal{D}'(FM)$, namely distributions on $FM$. The spectral gap property for Anosov flows then becomes a gap which is uniform in the Fourier parameter. Higher dimensions seemed more difficult to tackle due to the fact that $FM\to SM$ is in general a $SO(n-1)$-principal bundle. Thanks to the recent important work \cite{CL24}, Cekić and Lefeuvre were able to construct the so-called \textit{Borel-Weil calculus}. This is a (semiclassical) pseudodifferential calculus that is adapted to $G$-equivariant pseudodifferential operators on $FM$, $X_{FM}$ fits in this class. To such operator we may associate a family of operators acting on specific line bundles. We will heavily use the Borel-Weil calculus of \cite{CL24} to tackle the case of higher-dimensional hyperbolic manifold. Different approaches using Dolgopyat's inequality led to the proof of exponential mixing of the frame flow outside of the compact case on hyperbolic manifolds \cite{SW21,LP23,LiPanSarkar2026}.

\textbf{Statement of the main result.} Let  $M:=\Gamma\backslash \mathbb{H}^n$ be a closed hyperbolic $n$-manifold, where $\Gamma$ is a discrete, torsion-free, co-compact subgroup of orientation preserving isometries of $\mathbb{H}^n$.  

We write $\mathbb{N}=\mathbb{Z}_{\geq0}$.
Let $\varepsilon\in(0,1)$ and define 
\begin{align*}
    \mathbb{C}_{\varepsilon}:=\C\setminus\bigl\{\Re(z)\in (\tfrac{1-n}{2}-k-\varepsilon,\tfrac{1-n}{2}-k+\varepsilon), k\in\mathbb{N}\bigr\}.
\end{align*}
In addition, we write $\mathbb{C}_0:=\C\setminus \{\Re(z)\neq \frac{1-n}{2}-k, k\in\mathbb{N}\}$. We will prove resolvent estimates in the connected component of $\mathbb{C}_{\varepsilon}$ inside $\{\Re(z)\leq 0\}$, namely
\begin{align}\label{eq:band_def}
    &\mathcal{B}_0(\varepsilon):=\biggl\{\Re(z)\in \biggl(\frac{1-n}{2}+\varepsilon,0\bigg]\biggr\}\\
    &\mathcal{B}_k(\varepsilon):=\biggl\{\Re(z)\in \biggl(\frac{1-n}{2}-k-1+\varepsilon,\frac{1-n}{2}-k-\varepsilon\biggr)\biggr\}
\end{align}
The resolvent estimates will be performed on the functional space $\mathcal{H}^{m,s}\subset \mathcal{D'(FM)}$. Those spaces have been introduced in \cite{DGH2021,CL24}, they generalize the anisotropic spaces $\mathcal{H}^s$ of \cite{FS11} tailored to Anosov flow in the case of principal bundle extensions: the additional $m\in \R$ parameter measure Sobolev regularity in the direction of $\mathbb{V}_{FM}$. 
\begin{theorem}[Meromorphic continuation \& Boundedness estimates]\label{thm:mainthm} (i) The resolvent associated to the frame flow, defined as a holomorphic family of operators $z\mapsto(-X_{{FM}}-z)^{-1}\in \mathcal{L}(L^2({FM}))$ on $\{\Re(z)>0\}$, extends as a meromorphic family of operators $z\mapsto (-X_{{FM}}-z)^{-1}\in \mathcal{L}(C^{\infty}(FM),\mathcal{D}'({FM}))$ on $\mathbb{C}_0$. Moreover, there only exists a finite number of pole in each band $\mathcal{B}_k(\varepsilon)$
    
(ii) For any $m\in \mathbb{R}$ and $s\in \mathbb{Z}_{>0}$ there exists Hilbert spaces $\mathcal{H}^{m,s}({FM})\subset \mathcal{D}'(FM)$ containing densely $C^{\infty}({FM})$ such that for $z\in \mathbb{C}_0$ not a pole with $\Re(z)>-s$
    \begin{align}\label{eq:mainthmboundedness}
        (-X_{{FM}}-z)^{-1}\in \mathcal{L}(\mathcal{H}^{m-\lfloor \Re(z) \rfloor,s},\mathcal{H}^{m,s}).
    \end{align}
    The boundedness property is refined by the following estimate on each $\mathcal{B}_k(\varepsilon)$ with $k\in \mathbb{N}$. For every $\varepsilon\in(0,1)$, $s\in \mathbb{Z}_{>0}$ and $z\in \mathbb{C}_{\varepsilon}$ with $\Re(z)>-s$, there exists $C_{\varepsilon,k}>0$ such that 
    \begin{align}\label{eq:mainthmestimate}
        \|(-X_{FM}-z)^{-1}\|_{\mathcal{L}(\mathcal{H}^{m-\lfloor \Re(z) \rfloor,s},\mathcal{H}^{m,s})}\leq C_{\varepsilon,k}\langle \Im(z) \rangle^{-\lfloor \Re(z) \rfloor+s} ,\,\,z\in \mathcal{B}_k(\varepsilon),|\Im(z)|\gg 1
    \end{align}
    
\end{theorem}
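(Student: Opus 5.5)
Via the Borel--Weil calculus of \cite{CL24}, the $G$-equivariant operator $X_{FM}$ decomposes as a family of flows acting on sections of line bundles over $SM$, or equivalently over the flag bundle $F:=FM/T$, with $T\subset SO(n-1)$ a maximal torus. The plan is to prove the uniform spectral statements for this family and reassemble them.

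First, decompose $L^2(FM)=\bigoplus_{\lambda}H_\lambda$ along the dominant weights $\lambda$ of $SO(n-1)$ (Peter--Weyl). Each isotypic component is identified with sections of the holomorphic line bundle $L^{\otimes k}$ over the flag bundle $F\to SM$, and on it $X_{FM}$ acts as the lifted flow $X_{k}$. The semiclassical parameter is $h=\langle\lambda\rangle^{-1}$, and the space $\mathcal{H}^{m,s}$ is built from an escape function $\langle\lambda\rangle^{m}$ times an anisotropic weight of order $s$, as in \cite{CL24}.

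Hyperbolicity of $M$ gives that $E^u_{SM},E^s_{SM}$ are smooth and that the frame flow on $\Gamma\backslash SO(n,1)$ is right multiplication by $\exp(tA)$. In horocyclic coordinates, $X$ acts on $E^u$-derivatives by multiplication by $e^{t}$. Following \cite{Dy16,DGH2021}, let $U_-$ be the unstable horocyclic derivatives, which satisfy the commutation relation $[X,U_-]=-U_-$. Define the first band as the $X$-resonant states annihilated by $U_-$, and the $k$-th band by applying $U_-^{k}$ with the symmetric-tensor algebra of $E^{u*}$. The key computation is that on $\ker U_-$ the operator $X$ has a transfer structure whose resonances are confined to $\Re z=\frac{1-n}{2}$ modulo $\pm\varepsilon$. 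This uses the fact that the unstable Jacobian on $E^u$ is $e^{(n-1)t}$ and that the vertical action is unitary. The $k$-th band then sits at $\frac{1-n}{2}-k$.

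To obtain meromorphic continuation with finitely many poles per band, I would apply radial estimates at $E^{u*},E^{s*}$ together with propagation, uniformly in $h$. Here the Borel--Weil calculus is essential: it makes the vertical symbol $\langle\lambda\rangle$ a genuine semiclassical variable, so the Fredholm argument of \cite{FS11} works for $\Re z>-s$. This gives part (i) and \eqref{eq:mainthmboundedness}. The loss $\lfloor\Re z\rfloor$ in the vertical index comes from each $U_-$ commutation costing one order in the vertical regularity.

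The main obstacle is the uniform-in-$\lambda$ and large-$|\Im z|$ estimate \eqref{eq:mainthmestimate} between bands. Between bands the trapped set is $r$-normally hyperbolic for each $X_k$, so the plan is to run Dyatlov's argument \cite{Dy16} on the family of line-bundle flows. The difficulty is that the trapped set in $T^*F$ depends on the weight, through the curvature of $L^{k}$, and one needs the Borel--Weil symbol class to see that the normal hyperbolicity constants are uniform in $k$. I would first try to conjugate by $U_-$-ladder operators to reduce each band $\mathcal{B}_k(\varepsilon)$ to $\mathcal{B}_0(\varepsilon)$ at the cost of $\langle\Im z\rangle^{k}$ (giving the exponent $-\lfloor\Re z\rfloor$). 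On $\mathcal{B}_0(\varepsilon)$ I would prove the estimate by a positive commutator argument near the trapped set using $\Re z>\frac{1-n}{2}+\varepsilon$, which loses $\langle\Im z\rangle^{s}$ from the anisotropic weights.
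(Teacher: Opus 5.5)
Your reduction to the family $\mathbf{X}_{\mathbf{k}}$ on fiberwise holomorphic sections over $F$, and the final summation over $\mathbf{k}$, match the paper. There is, however, a genuine gap: everything hard is deferred to ``run Dyatlov's argument'' and ``a positive commutator argument near the trapped set'', and the obstacle you name is not the real one. The trapped set and its hyperbolicity do not depend on the weight. Since $\iota_{X_F}\mathbf{F}_{\overline{\nabla}}=0$, the Hamiltonian flow of $p=\xi(X_F)$ is the same for $\omega_{\mathbf{l}}=\omega_0+i\mathbf{l}\cdot\pi^*\mathbf{F}_{\overline{\nabla}}$ as for $\omega_0$, so there is nothing to make uniform in $\mathbf{k}$.

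The real difficulty is the regime $\langle\mathbf{k}\rangle\to\infty$ with $\Im z/\langle\mathbf{k}\rangle\to 0$. There the semiclassical measure of the quasimodes sits over the zero section $K_0$. At the zero section $\{\xi(U^+),\xi(U^-)\}_{\omega_0}=\xi([U^+,U^-])$ vanishes, so the contact form gives no transversality of the unstable Hamiltonian fields to $\Gamma_-$, and Dyatlov's Lipschitz-bound argument has nothing to run on. The missing idea is to take the horocyclic Hamiltonians with respect to the twisted form $\omega_{\mathbf{l}}$. One then checks, for suitable local basic frames, that the line-bundle curvature pairs $E^u_F$ with $E^s_F$ non-degenerately: $i^{-1}\mathbf{F}_{\overline{\nabla}_1}(\mathbf{U}^+_1,\mathbf{U}^-_j)\approx-2$. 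This comes from $[U^+_i,U^-_j]=2R_{i+1,j+1}$ having a nonzero $\mathfrak{t}$-component, e.g. $R_{2,3}$. The contact form $d\alpha$ is only needed when $\mathbf{l}=0$, and then necessarily $\eta\neq0$.

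Your ``key computation'' is also false as stated: the constant function lies in $\ker\mathcal{U}^-$ and is resonant at $z=0$. What is true, and has to be proved by exactly this argument, is that the union over all $\mathbf{k}$ of the resonances of $\mathbf{X}_{\mathbf{k}}$ is finite in each band. That finiteness is what part (i) needs. Per-mode Fredholm theory plus uniform radial estimates does not give it, because these resonances do accumulate on the excluded lines.

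The deeper bands have a second gap. Since $[\nabla_X,\mathcal{U}^-]=-\mathcal{U}^-$, applying $h\mathcal{U}^-$ turns a quasimode at $z$ into one at $z+1$. An estimate in $\mathcal{B}_0(\varepsilon)$ therefore only tells you that a suitable power $(h\mathcal{U}^-)^ku$ is small, not that $u$ is. To come back down you must rule out approximately horocyclically invariant quasimodes with $\Re z<\frac{1-n}{2}-\varepsilon$. The paper does this with the lower Lipschitz bound $\psi_k^*\mu(B^k_\delta)\gtrsim\delta^{n-1}$ combined with $(\Phi^{\omega_0,p}_t)_*\mu=e^{2\nu t}\mu$, which forces $e^{2\nu t}\gtrsim e^{-(n-1)t}$. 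A positive commutator estimate valid only for $\Re z>\frac{1-n}{2}+\varepsilon$ cannot supply this.

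Horocyclic invariance is not free either, even inside $\mathcal{B}_0(\varepsilon)$ once $n\geq4$. It follows directly from the $e^{-t}$ decay of the propagator on $E_s^*$ only when $\Re z>-1$. Beyond that, the paper derives it by inverting the shifted operator on $H^0(F,\mathbf{L}^{\mathbf{k}})\otimes E_s^*$. This bundle is first split, via the tensor product with the vector representation of $SO(n-1)$, into finitely many line-bundle modes $H^0(F,\mathbf{L}^{\mathbf{k}'})$. On each mode one uses the resolvent bound from the strip one unit to the right, on anisotropic spaces with shifted order function. So the estimates must be bootstrapped strip by strip from right to left. This bootstrap is also the true source of the loss $\langle\mathbf{k}\rangle^{-\lfloor\Re z\rfloor}$: each step requires quasimodes one power of $h$ sharper.
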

An illustration of this Theorem is given in Figure \ref{figure:figure_1}. We note that our theorem does not give the exact position of the blue poles: \textit{a priori}, we only know that there is finitely many resonances in each connected region in $\mathbb{C}_{\varepsilon}$.
\begin{figure}[htbp]
    \centering
    \begin{tikzpicture}[>=Stealth, xscale=2.5, yscale=1.2]

        \tikzset{
            hatch zone/.style={pattern=north east lines, pattern color=gray!60, draw=none}
        }
        \fill[hatch zone] (-2.1, -2.2) rectangle (-1.9, 2.2);
        \fill[hatch zone] (-3.1, -2.2) rectangle (-2.9, 2.2);
        \fill[hatch zone] (-4.1, -2.2) rectangle (-3.9, 2.2);

        \draw[->, thick] (-4.3, 0) -- (0.8, 0) node[above right] {$\text{Re}(z)$}; 
        \draw[->, thick] (0, -2.2) -- (0, 2.2) node[above left] {$\text{Im}(z)$};
        \node[below left, inner sep=4pt] at (0,0) {$0$}; 

        \draw[thick, red] (-2, -2.2) -- (-2, 2.2);
        \node[below left, text=black, inner sep=2pt] at (-2, 0) {$-2$};
        
        \draw[thick, red] (-3, -2.2) -- (-3, 2.2);
        \node[below left, text=black, inner sep=2pt] at (-3, 0) {$-3$};
        
        \draw[thick, red] (-4, -2.2) -- (-4, 2.2);
        \node[below left, text=black, inner sep=2pt] at (-4, 0) {$-4$};

        \tikzset{
            blue cross/.style={color=blue, thick, inner sep=0pt, minimum size=4pt}
        }
        
        \node[blue cross] at (0,0) {$\times$}; 
        \node[blue cross] at (-0.5, 0) {$\times$};
        \node[blue cross] at (-1.0, 0) {$\times$};
        \node[blue cross] at (-1.5, 0) {$\times$};
        \node[blue cross] at (-2.5, 0) {$\times$};
        \node[blue cross] at (-3.5, 0) {$\times$};

        \node[align=center, font=\footnotesize] at (-1.0, 1.4) {$\mathcal{O}(|\text{Im}(z)|^4)$};
        \node[align=center, font=\footnotesize] at (-2.5, 1.4) {$\mathcal{O}(|\text{Im}(z)|^6)$};
        \node[align=center, font=\footnotesize] at (-3.5, 1.4) {$\mathcal{O}(|\text{Im}(z)|^8)$};

        \node[align=center, font=\footnotesize] at (-1.0, -1.4) {$\mathcal{B}_0$};
        
        \node[align=center, font=\footnotesize] at (-2.5, -1.4) {$\mathcal{B}_1$};
        
        \node[align=center, font=\footnotesize] at (-3.5, -1.4) {$\mathcal{B}_2$};

    \end{tikzpicture}
    \caption{Distribution of the poles of $z\mapsto(-X_{FM}-z)^{-1}$ on $\mathbb{C}_{\varepsilon}$ (corresponding to the non-dashed region). Here $n=5$ so $\tfrac{n-1}{2}=2$. The resolvent estimate in $\mathcal{B}_k$ is with respect to the operator norm $\mathcal{L}(\mathcal{H}^{m-\lfloor \Re(z)\rfloor ,2+k},\mathcal{H}^{m,2+k})$.}
    \label{figure:figure_1}
\end{figure}

Theorem \ref{thm:mainthm} will be a consequence of the key technical estimate in Theorem \ref{thm:mainthmfiberwise}. We remark that the estimate (\ref{eq:mainthmestimate}) may be far from optimal, we did not try to sharpen it further. For instance, a consequence of the work \cite[Theorem 1.7]{FT} is that the generator of a contact Anosov flow has uniformly bounded resolvent away from vertical lines of resonances. We believe that the bound (\ref{eq:mainthmestimate}) could be upgraded to a uniform bound using the techniques in \cite{FT}. We further think that the resolvent $(-X_{FM}-z)^{-1}$ could even be in $\mathcal{L}(\mathcal{H}^{m,s})$: the $m$ parameter represents a Fourier parameter in the direction of the fiber of $FM\to SM$, it could be that the tools in \cite{FT} applies here to show that the resolvent does not downgrade the regularity in this direction.

A consequence of the resolvent estimate joint with the spectral gap property in Theorem \ref{thm:mainthm} is the exponential mixing of the frame flow for the natural normalized volume on $FM$. 
\begin{corollary}
\label{thm:expmixing}
    Let $\mu_{FM}$ be the natural smooth volume on $FM$ which writes locally as a product of the Haar measure on $SO(n-1)$ and of the Liouville measure on $SM$. Then the frame flow $(\Phi_t)$ is exponentially mixing, there exists $\gamma>0$ such that for all smooth $f,g$ there exists $C_{f,g}>0$ verifying 
    \begin{align}\label{eq:expmixing}
        \biggl|\int (f\circ \Phi_{-t})g\,\,d\mu_{FM}-\int f d\mu_{FM}\int g d\mu_{FM}\biggr|\leq C_{f,g}e^{-\gamma t}
    \end{align}
\end{corollary}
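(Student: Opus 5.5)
The plan is to derive the corollary from Theorem~\ref{thm:mainthm} through the Laplace transform, using a contour deformation. Reduce first to mean-zero $f$: replacing $f$ by $f-\int f\,d\mu_{FM}$ leaves the left side of (\ref{eq:expmixing}) unchanged, because $\Phi_t$ preserves $\mu_{FM}$. For $\Re(z)>0$ and smooth $f,g$,
\begin{align*}
\int_0^\infty e^{-zt}\langle f\circ\Phi_{-t},g\rangle\,dt=\langle (-X_{FM}-z)^{-1}f,g\rangle ,
\end{align*}
with the sign convention $e^{-tX_{FM}}f=f\circ\Phi_{-t}$ that goes with $-X_{FM}$. The correlation function $\rho(t):=\langle f\circ\Phi_{-t},g\rangle$ is recovered by inverse Laplace transform along $\Re(z)=c>0$. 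To get convergence I first regularise. Take $\tilde f:=(-X_{FM}-z_0)^{-N}f$ with a fixed $z_0$, $\Re z_0>0$. Alternatively, integrate by parts in $t$: since $\partial_t\rho(t)=-\langle (X_{FM}f)\circ\Phi_{-t},g\rangle$, each integration gives an extra factor $z^{-1}$ at the cost of applying $X_{FM}$ to $f$, which is harmless because $f$ is smooth. After $N$ such steps the integrand is $O(\langle\Im z\rangle^{-N})\cdot\|(-X_{FM}-z)^{-1}X^N_{FM}f\|$.

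Next I shift the contour from $\Re(z)=c$ to $\Re(z)=-\gamma$, with $0<\gamma<\tfrac{n-1}{2}-\varepsilon$, so that the line lies in $\mathcal{B}_0(\varepsilon)$. Take $s=1$ and $m=0$. Since $\lfloor\Re z\rfloor=-1$, (\ref{eq:mainthmestimate}) gives
\begin{align*}
|\langle (-X_{FM}-z)^{-1}X^N_{FM}f,g\rangle|\le C\|X^N_{FM}f\|_{\mathcal{H}^{1,1}}\|g\|_{(\mathcal{H}^{0,1})'}\langle\Im z\rangle^{2}.
\end{align*}
Here $C^\infty(FM)$ embeds continuously into $\mathcal{H}^{m,s}$, and smooth $g$ pairs continuously with it. Choosing $N\ge 4$ makes the integrand integrable on vertical lines. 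On the horizontal segments $|\Im z|=R$ it decays as $R\to\infty$, uniformly for $\Re z\in[-\gamma,c]$. On the part with $\Re z>0$ I use the same bound, or the trivial $L^2$ bound $\|(-X-z)^{-1}\|\le 1/\Re z$ combined with the $\mathcal{H}$ estimates near $\Re z=0$.

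The main obstacle is to show that the only pole in $\{-\gamma\le\Re z\le 0\}$ is at $z=0$, and that this pole is simple with residue $f\mapsto\int f\,d\mu_{FM}$. Then mean-zero $f$ contributes no residue, and the shifted integral is $O(e^{-\gamma t})$ with $C_{f,g}$ depending on finitely many derivatives. Theorem~\ref{thm:mainthm}(i) supplies only finitely many poles in $\mathcal{B}_0(\varepsilon)$, so I can shrink $\gamma$ to exclude every pole with $\Re z<0$. It remains to handle the imaginary axis. The resolvent is bounded on $L^2$ for $\Re z>0$ with norm $\le 1/\Re z$, which forces poles on $i\R$ to be simple. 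Any resonant state $u\in\mathcal{H}^{m,s}$ with $X_{FM}u=-i\lambda u$ must then be shown to be in $L^2$, in fact smooth. Here I use the radial estimates underlying the construction of $\mathcal{H}^{m,s}$ together with the standard pairing argument of \cite{FS11,DGH2021}: $\Re\langle X u,u\rangle=0$ forces $u$ to be smooth. The result is an $L^2$ eigenfunction of the measure-preserving flow. Ergodicity, in fact mixing, of the frame flow on hyperbolic manifolds \cite{BG80,Moore1987} excludes eigenfunctions with $\lambda\neq0$ and shows that $\ker X_{FM}$ consists of the constants. With this, after reducing $\gamma$ once more to avoid the finitely many remaining poles, the contour argument yields (\ref{eq:expmixing}).
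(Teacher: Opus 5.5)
Your proposal is correct and follows the route the paper takes by deferring to \cite[\S 4.5]{DGH2021}. That route is: Laplace transform, gaining powers of $z^{-1}$ from the smoothness of $f$, shifting the contour to $\Re z=-\gamma$ using (\ref{eq:mainthmestimate}) and the finiteness of poles in $\mathcal{B}_0(\varepsilon)$, and using the $1/\Re z$ bound together with weak mixing to show that $0$ is the only pole on $i\mathbb{R}$, simple, with residue the projection onto constants. Three minor points: take qualitative weak mixing from Howe--Moore (or a Hopf-type argument) rather than \cite{Moore1987}, which already contains the exponential rate; the $1/\Re z$ bound alone puts the range of the residue in $L^2$, so your pairing/smoothness argument is unnecessary; and the $z^{-j}$ boundary terms from integrating by parts integrate to zero on $\Re z=-\gamma$ for $t>0$, so dropping them is harmless.
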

For a proof that Corollary \ref{thm:expmixing} implies Theorem \ref{thm:mainthm}, the interested reader may refer to \cite[\S 4.5]{DGH2021} where the this is done in the three-dimensional case: having at hand the estimate (\ref{eq:mainthmestimate}) and the spectral gap property, their proof applies in our setting.

We now make an important remark.
\begin{remark}
    The rate of decay $\gamma$ is unknown here. From the quantum-classical correspondence in \cite{DyatlovFaureGuillarmou2015}, and following the arguments in \cite{DGH2021} in dimension 3, $\gamma$ should be explicitly given in term of the first non-trivial eigenvalue of the positive scalar Laplacian $\Delta$ on $M$. Although interesting, we chose not to prove this. Outside of the locally symmetric case this correspondence is unknown, and we want this paper to be as self-contained as possible.  
\end{remark}

\textbf{The strategy.} In order to prove Theorem \ref{thm:mainthm}, we will perform a Fourier decomposition of the generator $X_{FM}$. In the three-dimensional case \cite{DGH2021}, this gives a family of first order differential operators $X_k$ on line bundles $L^{\otimes k}$ over $SM$ with $k\in \Z$. In higher dimensions, we use the Borel-Weil calculus of \cite{CL24} to obtain a family of connections $\X_\K$ over line bundles $\El^\K$ over the \textit{flag bundle} $F\to SM$. We wish to estimate the norm of $(-X_{FM}-z)^{-1}$ on the spaces $\mathcal{H}^{m,s}(FM)$, see Definition \ref{def:anisotorpicnormFM}. It thus boils down to an estimate on the norm of each component $(-\X_\K-z)^{-1}$, well-defined by the calculus of Cekić-Lefeuvre \cite{CL24} and \cite{FS11}. The technical estimate that is needed is in Theorem \ref{thm:mainthm}. To do so, the reasoning goes as follows. 
\begin{itemize}
    \item  One first proves the estimate on $\{\Re(z)>-1\}$. Assuming the estimate (\ref{eq:fiberwiseestimates}) does not hold, we obtain a sequence of quasimodes $(u_h)\in \mathcal{H}^N_h(F,\El^{\K(h)})$ and of complex numbers $(z_h)$ satisfying 
    \begin{align}
        (-h\X_{\K(h)}-z_h)u_h=o(h^{2})
    \end{align}
    where $h$ is either $1/\langle\Im(z)\rangle$ or $\langle \K \rangle^{-1}$: this is the \textit{high-frequency regime} or \textit{high tensor power regime} dichotomy.
    \item As $h\to 0$, we then study the microlocal properties of $(u_h)$. As done in \cite{Dy16}, \cite{CG21} or \cite{DGH2021}, we study semiclassical measures $\mu$ associated to $(u_h)$. We introduce them in the setting of the Borel-Weil calculus in Section \ref{section:BWcalculus}. Thanks to the fact that we assumed $\Re(z_h)/h>-1$, we are able to prove a \textit{horocyclic invariance} of $u_h$, see Proposition \ref{prop:horo_invariance_quasimode_re_larger_minus1}. We then deduce from there a \textit{Lipschitz-bounds} on $\mu$, see Lemma \ref{lemma:lipschitz_bound}. 
    \item In the course of the proof, we introduce particular unstable and stable vector fields whose corresponding Hamiltonian vector fields are \textit{transversal to the trapped set} region where $u_h$ microlocalizes. Due to the presence of the line bundles $\El^\K$, the underlying \textit{classical dynamics} in the Borel-Weil calculus may not agree with the one originating from the standard symplectic form. We thus have to be more careful than in the pure contact flow case.
    \item Combining the previous result gives a contradiction concerning the existence of the quasimodes $u_h$; $\mu$ has to simultaneously satisfy contradictory estimates, namely the Lipschitz bound and the propagation property with respect to the Hamiltonian flow of $\X_\K$.
    \item Using the \textit{horocyclic operators} introduced in (\ref{eq:horocyclic_op}), we are able to deduce resolvent estimates everywhere away from $\varepsilon$-neighborhoods of vertical band $\{\Re(z)=\frac{1-n}{2}-k\}$ through an induction procedure on $k$.
    \end{itemize}
\textbf{Perspectives.} It is not clear for the moment if we should expect that $z\mapsto (-X_{FM}-z)^{-1}$ is meromorphic on the vertical lines $\{\Re(z)=\frac{1-n}{2}-k, k\in \mathbb{N}\}$. This issue appears in \cite{DGH2021} and has been reformulated in terms of the spectral theory of the Bochner Laplacian $\Delta_N$ on the space of traceless symmetric tensors $\bigotimes^N_{S,0}T^*M$, using the quantum-classical correspondence. One has to be able to have a precise asymptotics for $\lambda_1(\Delta_N)$. However in general there is no reason to expect meromorphic continuation. Simply consider the trivial extension of an Anosov flow, $0$ will be a pole of infinite order. The ergodic properties of the frame flow may be a key property allowing to have meromorphic continuation.

Another perspective is to find a suitable way to study the spectrum of $-X_{FM}+V$ where $V\in C^{\alpha}(FM)$. If $V$ is constant on each fibers of $FM$ then the Fourier decomposition reasoning may work, up to incorporate the approach of \cite{GuillarmouDePoyferre2022} to tackle the Hölder regularity. This may be interesting in order to understand the speed of mixing for equilibrium states which are locally product of the Haar measure and an equilibrium state of the geodesic flow. But if $V$ has non-trivial Fourier components, defining resonances does not seem trivial and it seems to be an interesting problem to address. It could potentially lead to the construction of other interesting equilibrium states for the frame flow by following the ideas of \cite{Humbert2025} in the case of transitive Anosov flows.\\
Finally, the exponential mixing of frame flows on manifolds with strictly $\frac{1}{4}$-pinched negative curvature still seems out of reach. In this work, the smoothness of $E^{u,s}$ was crucial. Although the approach of \cite{CG21} could possibly tackle the case where $E^{u,s}_{FM}$ are only $C^{1+\varepsilon}$, the works \cite{BCG95} and \cite{B25} proved that this implies smoothness of the latter bundles. The fact that the curvature of the dynamical connection $d\Theta_{dyn}$ on $FM\to SM$ is non-degenerate on $E^u_{FM}\oplus E^s_{FM}$, together with the contact property of the base flow $(\varphi_t)$, seems however to be a key mechanism for exponential mixing (see Section \ref{section:framebundleandhyp} for the notations).\\

\textbf{Acknowledgments.} The author is grateful to his PhD advisor, Mihajlo Cekić, for his guidance during this project and his many suggestions to improve the presentation. He also thanks Thibault Lefeuvre for interesting discussions concerning the Borel-Weil calculus and horocyclic invariance of quasimodes.

Finally, the author thanks Louis-Brahim Beaufort and Tristan Humbert for several interesting discussions related to the content of the paper.

\section{Analytic preliminaries: the Borel-Weil calculus}\label{section:BWcalculus}
 We provide a necessary overview of the construction in \cite{CL24} of the Borel-Weil calculus. Then, we define semiclassical measures in this setting. 
\subsection{Representations of $SO(n)$ and the Borel-Weil theorem }\label{subsection:BWtheorem}

Let $G$ be a compact connected \textit{semisimple} Lie group of dimension $m$. Choose $T$ a maximal torus of $G$ ($\mathfrak{t}$ the corresponding Lie subalgebra of $\mathfrak{g}$), that is a maximal compact connected Abelian subgroup. The dimension of $T$ as a submanifold of $G$ is called the rank and is written $d$.

Equivalence classes of irreducible unitary representations of $G$, whose set is written $\widehat{G}$, can be labeled by a subset of $(i\mathfrak{t})^*$ (linear forms $\mathfrak{t}\to i\R$). Elements of this set are the so-called \textit{highest weight} of $G$. Precisely, from a choice of simple roots one may form the so-called positive Weyl chamber $\mathcal{W}_+\subset (i\mathfrak{t})^*$. Choose a family of analytically integrals weights (\textit{i.e.} elements of $(i\mathfrak{t})^*$ that descend to a character $T\to \C$ through the exponential map), denoted $(\lambda_i)_{1\leq i \leq d}\in (i\mathfrak{t}^*)^d$, that generates analytically integral weights in $\mathcal{W}_+$. Then we can construct the following labeling
\begin{align}\label{eq:labelling_irreps}
    \phi(\mathbf{k}):=\sum_{i=1}^dk_i\lambda_i,\,\mathbf{k}\in \mathbb{Z}_+^d.
\end{align}
Define an equivalence relation on $\mathbb{Z}_+^d$ by declaring that $\mathbf{k}\sim\mathbf{k'}$ if $\phi(\mathbf{k})=\phi(\mathbf{k}')$. The set of corresponding equivalence classes is written $\Lambda$ and can be seen to index complex irreducible representations of $G$.
As this will be useful in Lemma \ref{lemma:construction_particular_vf}, let us specify this in the case of $SO(n)$ for $n\geq 3$. Its rank is $\lfloor \frac{n}{2}\rfloor$ and the standard Cartan subalgebra of $SO(n)$ is given by $\mathfrak{t}=\mathrm{span}(\{E_{2j-1, 2j} - E_{2j, 2j-1}\}_{1\leq i \leq d})$, with $(E_{i,j})$ the usual elementary matrices. Write $(e_j)_{1\leq j \leq d}$ the corresponding dual basis of $\mathfrak{t}^*$. Let us introduce, for the $n=2d+1$ case
\begin{align}\label{eq:fundament_weight_odd}
    \omega_j=i(e_1+\cdots+e_j),\, 1\leq j\leq d
\end{align}
and for $n=2d$
\begin{align}\label{eq:fundament_weight_even}
    \omega_j&=i(e_1+\cdots+e_j),\, 1\leq j\leq d-2\notag\\
    \omega_{d-1}&=i(e_1+\cdots+ e_{d-1}-e_d), \\
    \omega_{d}&=i(e_1+\cdots+ e_{d-1}+e_d). \notag
\end{align}
The followig result is standard, and shows that $\Lambda=\mathbb{Z}_+^d$ for the case $G=SO(n)$.
\begin{proposition}\label{prop:labelling_highest_weights}

The finite-dimensional irreducible complex representations of $SO(n)$ have highest weight
$\lambda=\sum_{i=1}^d m_i\omega_i$ $m_i\in\mathbb Z_{\ge0}$ with $m_i\in \mathbb{Z}_+$.
\end{proposition}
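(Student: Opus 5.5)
The plan is to derive the statement from the Cartan--Weyl theorem of the highest weight. The irreducible complex representations of a compact connected Lie group are in bijection with the dominant analytically integral weights. So the work is to identify, for $G=SO(n)$ with the Cartan subalgebra $\mathfrak{t}$ fixed above, the lattice of analytically integral weights and the positive Weyl chamber $\mathcal{W}_+$. We then check that the $\omega_j$ of (\ref{eq:fundament_weight_odd})--(\ref{eq:fundament_weight_even}) freely generate the monoid of dominant integral weights.

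\textbf{Step 1 (integrality).} The torus $T=\exp\mathfrak{t}$ consists of block-diagonal rotations $R(\theta_1)\oplus\cdots\oplus R(\theta_d)$, with an extra $1$ when $n=2d+1$. A weight $\lambda=i\sum a_je_j$ exponentiates to a character of $T$ exactly when $\theta\mapsto e^{i\sum a_j\theta_j}$ is $2\pi$-periodic in each $\theta_j$. This holds if and only if $a_j\in\mathbb{Z}$. The integral lattice is therefore $i\mathbb{Z}^d$, in both parities.

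\textbf{Step 2 (roots and dominance).} The roots are $\pm i(e_j\pm e_k)$ for $j<k$, together with $\pm ie_j$ when $n$ is odd. We choose the simple roots as follows:
\begin{itemize}
\item for type $B_d$: $i(e_j-e_{j+1})$ and $ie_d$;
\item for type $D_d$: $i(e_j-e_{j+1})$ and $i(e_{d-1}+e_d)$.
\end{itemize}
The dominance condition on $\lambda=i\sum a_je_j$ is then:
\begin{itemize}
\item $a_1\geq\cdots\geq a_d\geq 0$ in the odd case;
\item $a_1\geq\cdots\geq a_{d-1}\geq|a_d|$ in the even case.
\end{itemize}

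\textbf{Step 3 (generation).} For $n$ odd, an integral dominant $\lambda$ with coefficients $a_j$ can be written as $\sum m_j\omega_j$ with $m_j=a_j-a_{j+1}$ (setting $a_{d+1}=0$). These $m_j$ are nonnegative integers, and the decomposition is unique because the $\omega_j$ are linearly independent.

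For $n$ even, we solve
\[
a_{d-1}=\cdots=\text{(partial sums)},\qquad m_{d-1}=a_{d-1}-a_d,\qquad m_d=a_{d-1}+a_d,
\]
and $m_j=a_j-a_{j+1}$ for $j\leq d-2$. Two cases arise:
\begin{itemize}
\item If every $\lambda=\sum m_j\omega_j$ is integral, then both $m_{d-1}$ and $m_d$ are integers and $\geq0$ by dominance.
\item As written, $\omega_{d-1}$ and $\omega_d$ have integer coefficients, so every nonnegative integer combination is integral and dominant. Conversely, an integral dominant $\lambda$ has $m_{d-1}+m_d=2a_{d-1}$ even. Hence not every pair $(m_{d-1},m_d)$ arises, and the map $\mathbf{k}\mapsto\phi(\mathbf{k})$ is injective but not onto $\mathbb{Z}_+^d$.
\end{itemize}

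\textbf{Main obstacle.} The delicate point is exactly this even-dimensional bookkeeping, and the same issue appears for $\omega_d=ie_1+\cdots$ in type $B$. The true fundamental weights of $\mathfrak{so}(n)$ involve half-integers: $\tfrac{i}{2}(e_1+\cdots\pm e_d)$ in type $D$, and $\tfrac{i}{2}(e_1+\cdots+e_d)$ in type $B$. These are spin weights and are not integral for $SO(n)$.

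With the paper's normalization (no $\tfrac12$), the $\omega_j$ are integral for $SO(n)$, but they generate only a sub-monoid of the dominant integral weights. In type $B$, for example, $ie_1$ appears as $\omega_1$, yet $i(e_1+\cdots+e_d)=\omega_d$ while $2\omega_d$ is not needed. Actually every $(a_j)$ with $a_1\ge\cdots\ge a_d\ge0$ is reached there, so type $B$ is fine. In type $D$, the weight $i(e_1+\cdots+e_{d-1})$ equals $\tfrac12(\omega_{d-1}+\omega_d)$ and is missed.

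I would therefore handle the even case by noting this parity constraint. Either I would restrict to the labelling $\Lambda$ modulo the equivalence relation, or I would replace $\omega_{d-1},\omega_d$ by $\omega_{d-1}'=i(e_1+\cdots+e_{d-1})$ and $\omega_d$ (still integral) to obtain a free generating set. The statement is then exactly Cartan--Weyl applied to these generators.
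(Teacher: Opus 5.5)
Your odd-$n$ argument is correct, and it is what the paper's one-line citation of Fulton--Harris amounts to. For $SO(2d+1)$ the dominant integral weights are $i\sum a_je_j$ with $a_j\in\mathbb{Z}$ and $a_1\ge\cdots\ge a_d\ge 0$, and $m_j=a_j-a_{j+1}$ gives a bijection with $\mathbb{Z}_+^d$; the paper's $\omega_d$ is twice the spin weight, which is exactly what integrality for $SO(2d+1)$ forces. Your diagnosis of the even case is also correct, and it means the statement cannot be proved as printed for $n=2d$ (this is the case $G=SO(n-1)$ whenever $n$ is odd). Indeed, any $\sum m_j\omega_j$ has $a_{d-1}+a_d=2m_d$ even. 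For instance, the weights $\pm ie_1,\pm ie_2$ of the vector representation of $SO(4)$ do not even lie in the lattice spanned by $\omega_1,\omega_2$. More generally, $i(e_1+\cdots+e_{d-1})$, the highest weight of $\Lambda^{d-1}\mathbb{C}^{2d}$, equals $\tfrac12(\omega_{d-1}+\omega_d)$. The cited \S19 of Fulton--Harris uses the half-integral fundamental weights with a parity condition, not the paper's claim. One small slip: with the paper's normalization, $m_{d-1}=\tfrac12(a_{d-1}-a_d)$ and $m_d=\tfrac12(a_{d-1}+a_d)$. Your formulas without the $\tfrac12$ are coordinates with respect to the genuine fundamental weights $\varpi_{d-1},\varpi_d=\tfrac{i}{2}(e_1+\cdots+e_{d-1}\mp e_d)$. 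The parity conclusion is unaffected.

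The step that fails is your proposed repair. With $\omega_1,\dots,\omega_{d-2}$, $\omega'_{d-1}=i(e_1+\cdots+e_{d-1})$ and $\omega_d$, every nonnegative integer combination has $a_d=m_d\ge 0$. So you now miss every dominant weight with $a_d<0$, in particular $\omega_{d-1}=i(e_1+\cdots+e_{d-1}-e_d)$ itself, the highest weight of one half of $\Lambda^d\mathbb{C}^{2d}$. In fact no choice of $d$ weights can work. The dominant integral weights of $SO(2d)$ form the monoid $\{\sum_j m_j\varpi_j : m_j\in\mathbb{Z}_{\ge 0},\ m_{d-1}+m_d\ \text{even}\}$, with $\varpi_j=\omega_j$ for $j\le d-2$. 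Its indecomposable elements are $\omega_1,\dots,\omega_{d-2},\omega_{d-1},\omega_d$ and $i(e_1+\cdots+e_{d-1})$, subject to $2\,i(e_1+\cdots+e_{d-1})=\omega_{d-1}+\omega_d$. Having $d+1$ atoms, this monoid is not free, so it is not isomorphic to $\mathbb{Z}_+^d$. Your other option, passing to $\Lambda$ modulo $\sim$, does not help with the paper's $d$ weights either. They are linearly independent, so $\phi$ is already injective, and $\sim$ only absorbs non-injectivity, not missing weights. The correct even-$n$ statement uses the $d+1$ generators above in the framework of \S\ref{subsection:BWtheorem}. Then $\phi:\mathbb{Z}_+^{d+1}\to\mathcal{W}_+$ hits every dominant integral weight, and $\Lambda=\mathbb{Z}_+^{d+1}/\!\sim$ rather than $\mathbb{Z}_+^d$. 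For odd $n$ your argument stands as is.
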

\begin{proof}
    This is a standard result, see for instance \cite[\S 19.2, \S 19.4]{FultonHarris1991}.
\end{proof}
\subsubsection{The Borel-Weil correspondence.} This theorem establishes an isomorphism between every complex irreducible representations of $G$, indexed by $\mathbf{k}$, and the corresponding complex canonical representation of $G$ (called the Borel-Weil representation). The latter can be described as follow. To $\K\in \Lambda$ can be associated an analytically integral weight $\alpha_{\K}\in (i\mathfrak{t})^*$. It has the property of descending through the exponential map to a character $\gamma_{\K}:T\to \mathbb{S}^1$. Thus, one can form the associated line bundle 
\begin{align}\label{eq:associated_line_bw}
\mathbf{J}^{\K}:=G\times_{\gamma_{\K}}\mathbb{C},
\end{align}
by the quotient of $G\times\mathbb{C}$ by $[g,z]\sim [g\cdot t,\gamma_{\K}(t)^{-1}z]$ for $t\in T$. It can be shown that $G/T$ has a holomorphic structure with respect to which $\mathbf{J}^{\K}$ is a holomorphic line bundle, see \cite{Sepanski2007}. It can be endowed with the canonical $G$-action given by left-translation $g\cdot[hT,z]=[ghT,z]$. The Borel-Weil representation is formed by the data of the finite-dimensional vector space of holomorphic sections of $\mathbf{J}^\K$ written $H^0(G/T,\mathbf{J}^{\gamma_{\K}})$ and of the mapping 
\begin{align}\label{equiv}
    \Phi(g)s \,(hT):=g\cdot s(g^{-1}hT).
\end{align}

\begin{remark}\label{remark:BW}
    A section $s\in H^0(G/T,\mathbf{J}^{\gamma_\K})$ equivariantly lifts to a function $\overline{s}\in C^{\infty}(G,\mathbb{C})$ verifying $\overline{s}(g\cdot t)=\gamma_{\K}(t)^{-1}\overline{s}(g)$. Indeed, just define $\overline{s}$ by $s(hT)=[h,\overline{s}(h)]$. At the level of lifted sections, the previous representation reads $\overline{\Phi}(g)\overline{s}\,(g_0):=\overline{s}(g^{-1}g_0)$.
\end{remark}
For a representation  $V$, write $V^*$ the corresponding dual representation.
\begin{theorem}
    Let $\lambda(\gamma)$ be an analytically integral weight whose corresponding character is $\gamma$ and define $\mathbf{J}^{-\lambda(\gamma)}$ the corresponding associated bundle (\ref{eq:associated_line_bw}) using $\gamma^{-1}$. If $-\lambda(\gamma)\notin \mathcal{W}^+$, then  $ H^0(G/T,\mathbf{J}^{-\lambda(\gamma)})=\{0\}$. If not, let $V^{-\lambda(\gamma)}$ be the unique complex irreducible representation of $G$ with highest weight $-\lambda(\gamma)\in \mathcal{W}^+$. There is the following isomorphism of representations 
    \begin{align}
        H^0(G/T,\mathbf{J}^{-\lambda(\gamma)})\cong (V^{-\lambda(\gamma)})^*,
    \end{align} 
\end{theorem}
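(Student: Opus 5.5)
Via Remark \ref{remark:BW}, I identify holomorphic sections of $\mathbf{J}^{-\lambda(\gamma)}$ with smooth functions $\overline{s}\in C^\infty(G,\C)$ satisfying $\overline{s}(gt)=\gamma(t)\overline{s}(g)$. The proof will then use the Peter--Weyl theorem together with the characterization of holomorphicity as annihilation by a nilpotent subalgebra.

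\textbf{Step 1: holomorphicity.} The complex structure on $G/T\cong G_\C/B$ is chosen so that the $(0,1)$-tangent directions at $eT$ correspond to the root spaces $\mathfrak{g}_\alpha$ for $\alpha$ in a fixed set of roots (positive or negative, according to the paper's convention). For the equivariant lift, the condition $\bar\partial s=0$ is then equivalent to
\[
dR(Y)\,\overline{s}=0 \quad\text{for all } Y\in\mathfrak{n},
\]
where $dR$ is the right-regular derivative extended complex-linearly and $\mathfrak{n}=\bigoplus\mathfrak{g}_\alpha$ is the relevant nilpotent subalgebra. Equivariance under $T$ gives $dR(H)\overline{s}=\lambda'(H)\overline{s}$ for $H\in\mathfrak t$, where $\lambda'$ is $\pm\lambda(\gamma)$ depending on conventions. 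Combining these two conditions, $\overline{s}$ is a lowest-weight-type vector (of weight $\lambda'$) for the right action, annihilated by $\mathfrak n$.

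\textbf{Step 2: Peter--Weyl.} I decompose
\[
L^2(G)\cong\widehat{\bigoplus_{\pi\in\widehat G}}\,V_\pi\otimes V_\pi^*,
\]
where the left action of $G$ acts on the $V_\pi$ factor and the right action on the $V_\pi^*$ factor. Since the conditions in Step 1 involve only the right action, and the left $G$-action preserves them, the solution space equals $\bigoplus_\pi V_\pi\otimes W_\pi$, where $W_\pi\subset V_\pi^*$ is the space of vectors of weight $\lambda'$ killed by $\mathfrak n$. In an irreducible representation, the vectors killed by $\mathfrak n$ form exactly the extremal (highest or lowest) weight line. Hence $W_\pi$ is nonzero only for the unique $\pi$ whose extremal weight matches. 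Translating back to $V_\pi$, this means $\pi=V^{-\lambda(\gamma)}$, and in that case $\dim W_\pi=1$.

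\textbf{Step 3: conclusion.} If $-\lambda(\gamma)\notin\mathcal W^+$, no irreducible representation has that highest weight, so $H^0=\{0\}$. Otherwise $H^0\cong V_\pi\otimes W_\pi$ is a single irreducible summand, and matching the definition \eqref{equiv} with the left action identifies it with $(V^{-\lambda(\gamma)})^*$.

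\textbf{Main obstacle.} The delicate part is sign and duality bookkeeping. One must fix which root spaces define the complex structure, decide whether the character $\gamma$ or $\gamma^{-1}$ is used, and determine whether matrix coefficients produce $V$ or $V^*$ in the left factor. These choices determine whether the answer is $V^{-\lambda}$ or its dual. I would settle them by checking the case $G=SU(2)$, $G/T=\mathbb{CP}^1$, where $H^0(\mathcal O(k))$ consists of degree-$k$ polynomials.

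A further point is that Step 1 needs smoothness and finiteness to justify applying the Peter--Weyl decomposition termwise. Smoothness follows from elliptic regularity of $\bar\partial$. Finiteness then follows because each isotypic component is finite-dimensional and only one component survives.
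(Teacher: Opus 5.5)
Your route (equivariant lifts, holomorphicity as annihilation by a nilpotent subalgebra under right translation, Peter--Weyl, and the theorem of the highest weight) is the standard proof behind the reference the paper cites. The paper itself gives no argument beyond \cite[Lemma 7.5]{Sepanski2007}. Steps 1--2 are sound, including the termwise projection onto isotypic components and the vanishing part.

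\textbf{The gap is Step 3.} This is exactly the part of the statement that separates $V^{-\lambda(\gamma)}$ from its dual. Your ``matching with the left action'' is asserted, not computed, and it contradicts your own Step 2.
\begin{itemize}
\item The action $\overline{\Phi}(g)\overline{s}(g_0)=\overline{s}(g^{-1}g_0)$ is the left regular representation. It acts on the $V_\pi$ factor by $\pi$ itself, so $H^0\cong V_\pi$.
\item With the lift you read off, $\overline{s}(gt)=\gamma(t)\overline{s}(g)$, the $V_\pi^*$-component $\xi$ satisfies $\pi^*(t)\xi=\gamma(t)\xi$. So $\xi$ has weight $\lambda(\gamma)$.
\item If $\mathfrak{n}$ is the negative root space (which is what makes the vanishing criterion read $-\lambda(\gamma)\in\mathcal{W}^+$), then $\xi$ is a lowest weight vector. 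Hence $V_\pi$ has highest weight $-\lambda(\gamma)$, and you get $H^0\cong V^{-\lambda(\gamma)}$. That is exactly your Step 2, and it is not the dual.
\item If $\mathfrak{n}$ is the positive root space, you get instead $H^0\neq 0$ iff $\lambda(\gamma)\in\mathcal{W}^+$, with $H^0\cong (V^{\lambda(\gamma)})^*$.
\end{itemize}
The statement as written comes out only from the other pairing. You need lifts of right weight $-\lambda(\gamma)$, i.e.\ $\overline{s}(gt)=\gamma(t)^{-1}\overline{s}(g)$, which is what \eqref{eq:associated_line_bw} gives with $\gamma$ rather than $\gamma^{-1}$. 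Combine this with $\mathfrak{n}$ the positive root spaces. Then $\xi$ is a highest weight vector of $V_\pi^*$ of weight $-\lambda(\gamma)$, so $V_\pi^*\cong V^{-\lambda(\gamma)}$ and $H^0\cong V_\pi\cong (V^{-\lambda(\gamma)})^*$. You must commit to conventions and carry out this computation. Otherwise you have proved the result only up to replacing $V^{-\lambda(\gamma)}$ by its dual.

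\textbf{Your proposed sanity check cannot settle this.} Every irreducible representation of $SU(2)$ is self-dual, since the longest Weyl group element acts by $-1$. So $\mathbb{CP}^1$ detects the sign in the vanishing criterion but is blind to $V$ versus $V^*$. You need either a group such as $SU(3)$ or the explicit bookkeeping above. For the paper's $G=SO(n-1)$, all irreducible representations are self-dual unless $n-1\equiv 2\pmod 4$, so the distinction only shows up in those dimensions (e.g.\ for $SO(6)$).
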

\begin{proof}
    For more information and the proof of the theorem, we refer to \cite[Lemma 7.5]{Sepanski2007}. .
\end{proof}
As an example, the reader may bear in mind the $SO(3)$ case. Real irreducible representations are indexed by odd integers $2l+1$ ($l \geq 0$) and correspond to the usual representation on spherical harmonics $\mathbf{H}_{2l+1}:=\{P\in \mathbb{R}_l[x_1,x_2,x_3],\, P(\lambda x)=\lambda^lP(x) \,\,,  \Delta P=0\}$. The corresponding complexified representation is still irreducible. Doing this, $\mathbf{H}_{2l+1}\otimes \mathbb{C}$ actually identifies with $H^0(\mathbb{CP}^1,\mathcal{O}(2l))$ and thus provides a concrete manifestation of the previous theorem. Here $\mathcal{O}(2l)$ is $\mathcal{O}(1)^{\otimes {2l}}$ where $\mathcal{O}(1)$ is the dual to the tautological bundle on $\mathbb{CP}^1$.

The chosen system of roots spanning irreducible representation $(\lambda_i)_{1\leq i \leq d}\in (i\mathfrak{t}^*)^d$ allows one to construct individual line bundles $(J_1,\dotsc,J_d)$. We will write $\mathbf{J}^{\K}=\bigotimes _{1\leq i \leq d}J^{\otimes k_i}_i$ with $\K:=(k_i)_{1\leq i \leq d}$. The metric  $g^{\K}$ will stand for the induced metric on $\mathbf{J}^{{\K}}$ by the associated bundle construction. The bi-invariant metric on $G$ induces a metric on $G/T$, hence a Riemannian volume written $d\mu_T$. This allows us to define a scalar product on $L^2(G/T,\mathbf{J}^{\gamma_{\K}})$.

Finally, the data of the metric $g^{\K}$ on $\mathbf{J}^{\K}$ uniquely defines the associated Chern connection $\nabla_{\mathbf{J}^{\K}}$, whose $(0,1)$-part will be written $\overline{\partial}_{\mathbf{J}^{\K}}$. We thus have by definition  
\begin{align}\label{align:hol}
    H^0(G/T,\mathbf{J}^{\K}):=\ker\bigl(\,\overline{\partial}_{\mathbf{J}^{\K}}\bigr).
\end{align}
\subsubsection{Fourier decomposition on principal bundles :}\label{section:fiberwise} We now fix $P\to M$ to be a $G$-principal bundle with $G$ as above. Standard fiberwise Fourier decomposition gives the isomorphism 
\begin{align}\label{Fourier}
    \mathcal{F}\, : \,C^{\infty}(P)\to\bigoplus_{\lambda\in \hat{G}}C^{\infty}(M,\text{Hom}(V^{\lambda},E^{\lambda})).
\end{align}
where $V^{\lambda}\in \hat{G}$ is the unique (up to isomorphism) irreducible representation with highest weight $\lambda$ and $E^{\lambda}:= P\times_{\rho_{\lambda}} V^{\lambda}$ is the corresponding associated vector bundle. Using Theorem \ref{remark:BW} we can retrieve from a smooth function $f\in C^{\infty}(P)$ a family of sections $(f_{\K})$ of particular line bundles. The latter family is easier to manipulate than a family of sections of vector bundles whose rank may even go to infinity.
\begin{definition}\label{def:fiberwise}
    Let $T$ be a maximal torus of $G$. We can form the fiber bundles $F:=P/T\to M$ whose typical fiber is the flag manifold $G/T$. Each fiber possesses a holomorphic structure. 
    Now let $\mathbf{k}\in \Lambda$ and $\mathbf{L}:=(L_1,\cdots,L_d)$ where $L_i:=P\times_{\gamma(\lambda_i)}\mathbb{C}$, we can thus form $\mathbf{L}^{\otimes \mathbf{k}} := \bigotimes_{i=1}^{d}L_i^{\otimes k_i}$ which are \textit{fiberwise holomorphic line bundles over $F$}.
    
    Also define 
    \begin{align*}
        H^0(F,\mathbf{L}^{\mathbf{k}}):= \coprod_{x\in M} H^0(F_x,\mathbf{L}^{\mathbf{k}}_{|F_x}),
    \end{align*}
    which is a smooth vector bundle over M.
\end{definition}
From time to time, we will freely use the identification
\begin{align}\label{eq:identification}
    C^{\infty}(M,H^{0}(F,\mathbf{L}^{\mathbf{k}}))\cong C^{\infty}_{\mathrm{hol}}(F,\mathbf{L}^{\mathbf{k}})
\end{align}
where the subscript $\mathrm{hol}$ stands for fiberwise holomorphic sections of $\El^{\K}\to F$ (we can easily construct a fiberwise version of $\overline{\partial}_{J^\mathbf{k}}$, written $\overline{\partial_{\mathbf{k}}}$, thanks to \cite[\S 2.2.3]{CL24}). More generally, for a functional space $\mathcal{F}$ of sections of $\El^\K\to F$ (such as $\mathcal{D}'(F,\El^\K)$ the set of distributional sections), we will write $\mathcal{F}_{\mathrm{hol}}:=\mathcal{F}\cap \ker(\overline{\partial})$.

For notational simplicity, $\El^\K$ should be understood as $\mathbf{L}^{\otimes \K}$.
As in \cite[\S 2.2.3]{CL24}, we write $\Pi_{\mathbf{k}}:L^2(F,\El^{\K})\to L^2_{\mathrm{hol}}(F,\El^{\K})$ the orthogonal projection on fiberwise holomorphic sections. 

From $f\in C^{\infty}(P)$ one obtains the family of sections $(\mathcal{F}f(\cdot,\lambda))_{\lambda\in \hat{G}}$. Each of the previous sections has a value in $E^{\lambda}=P\times_{\lambda} V^{\lambda}$. We would like to get further a family $(f_\mathbf{k})_{\mathbf{k}\in \Lambda}$ of sections in $C^{\infty}_{\mathrm{hol}}(F,\mathbf{L}^{\mathbf{k}})$. For that, it suffices to prove the following useful lemma
\begin{proposition}\label{prop: fiber_id}
    Recall that $\Phi$ is the previously-defined canonical representation of $G$ in $H^0(G/T,J^{\mathbf{k}})$. We find that $H^0(F,\mathbf{L}^{\mathbf{k}})$ is an associated vector bundle with respect to $\Phi$, that is 
    \begin{align}\label{eq: fiber_id}
        H^0(F,\mathbf{L}^{\mathbf{k}})\cong P\times_{\Phi} H^0(G/T,J^{\mathbf{k}})
    \end{align}
\end{proposition}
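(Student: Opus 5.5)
We will write down an explicit fiberwise isomorphism and check it is well defined, smooth, and bijective. The starting point is the identification $F=P/T\cong P\times_G (G/T)$, where $G$ acts on $G/T$ by left translation. Under the map $[p,hT]\mapsto pT$ each fiber $F_x$ becomes a copy of $G/T$ once a point $p\in P_x$ is chosen.

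\textbf{The line bundle on a fiber.} Similarly, $\El^\K=P\times_{\gamma_\K}\mathbb{C}$ over $P/T$ identifies with $P\times_G \mathbf{J}^\K$ via
\begin{align*}
[p,[h,z]]\mapsto [ph,z].
\end{align*}
This is well defined because $[p g, [g^{-1}h,z]]\mapsto[ph,z]$, and $[p,[ht,\gamma_\K(t)^{-1}z]]\mapsto [pht,\gamma_\K(t)^{-1}z]=[ph,z]$. Hence, for each $p\in P_x$, the map
\begin{align*}
\iota_p:G/T\to F_x,\qquad hT\mapsto phT,
\end{align*}
is a diffeomorphism covered by a line bundle isomorphism $\tilde\iota_p:\mathbf{J}^\K\to \El^\K_{|F_x}$, namely $[h,z]\mapsto[ph,z]$. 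These maps satisfy $\iota_{pg}=\iota_p\circ L_g$ and $\tilde\iota_{pg}=\tilde\iota_p\circ g\cdot$.

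\textbf{The candidate isomorphism.} Define
\begin{align*}
\Psi:P\times H^0(G/T,\mathbf{J}^\K)\to H^0(F,\El^\K),\qquad \Psi(p,s):=\tilde\iota_p\circ s\circ \iota_p^{-1}.
\end{align*}
Three things need checking.
\begin{enumerate}[label=(\roman*)]
\item \emph{Holomorphy.} The image is fiberwise holomorphic, because the fiberwise holomorphic structure on $F_x$ and on $\El^\K_{|F_x}$ is by construction (cf.\ \cite[\S 2.2.3]{CL24}) the one transported by $\iota_p$. The transported structure is independent of $p$, since left translations $L_g$ act holomorphically on $G/T$ and on $\mathbf{J}^\K$.
\item \emph{Descent to the associated bundle.} We need $\Psi(pg,\Phi(g)^{-1}s)=\Psi(p,s)$. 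Using \eqref{equiv}, $\Phi(g)^{-1}s=g^{-1}\cdot s\circ L_g$. Then
\begin{align*}
\Psi(pg,\Phi(g)^{-1}s)=\tilde\iota_p\circ g\cdot g^{-1}\cdot s\circ L_g\circ L_g^{-1}\circ\iota_p^{-1}=\Psi(p,s).
\end{align*}
So $\Psi$ descends to $P\times_\Phi H^0(G/T,\mathbf{J}^\K)$.
\item \emph{Fiberwise bijectivity.} On each fiber $\Psi$ is linear and bijective, with inverse $\sigma\mapsto \tilde\iota_p^{-1}\circ\sigma\circ\iota_p$.
\end{enumerate}

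\textbf{Smoothness, the main point.} The main point is that $\Psi$ is a smooth vector bundle isomorphism. It is here that one must make precise the smooth structure of $H^0(F,\El^\K)$ from Definition~\ref{def:fiberwise}, which a priori is only a disjoint union. My plan is to define that smooth structure through local trivializations of $P$. Over a trivializing open set $U$, with a section $\sigma:U\to P$, we have $F_{|U}\cong U\times G/T$, and $H^0(F,\El^\K)_{|U}\cong U\times H^0(G/T,\mathbf{J}^\K)$ via $\Psi(\sigma(x),\cdot)$. The transition maps are $\Phi(g_{UV}(x))$, which are smooth because $\Phi$ is a finite-dimensional smooth representation. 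Thus $\Psi$ is smooth by construction.

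Equivalently, one can check that a smooth section of $P\times_\Phi H^0$ corresponds to a smooth fiberwise holomorphic section of $\El^\K\to F$, since $(x,hT)\mapsto \tilde\iota_{\sigma(x)}s(x)(hT)$ is jointly smooth. This gives the identification \eqref{eq:identification} as a byproduct.
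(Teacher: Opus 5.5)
Your proof is correct and is essentially the paper's argument: the paper sets $\Psi(p,s)=\Psi_T^{-1}\bigl((\psi_p)_*\overline{s}\bigr)$ with $\psi_p(g)=p\cdot g$ and checks $\Psi(p\cdot g,\Phi(g^{-1})s)=\Psi(p,s)$, which is exactly your $\tilde\iota_p\circ s\circ\iota_p^{-1}$ written on $T$-equivariant lifts. You are more explicit than the paper about bijectivity and smoothness, which the paper simply asserts; to fully obtain \eqref{eq:identification} you should also state the converse direction, namely that a smooth fiberwise holomorphic section $S$ gives a smooth map $x\mapsto\tilde\iota_{\sigma(x)}^{-1}\circ S_{|F_x}\circ\iota_{\sigma(x)}$, which follows by pairing with a fixed basis of the finite-dimensional space $H^0(G/T,\mathbf{J}^\K)$.
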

\begin{proof}
  This seems to be a standard result, but since we did not manage to find a reference, we quickly prove the statement for completeness. Recall that $\pi:P\to M$ is the bundle projection. To each $p\in P$ we can associate $\psi_p:G\to P_x$ the canonical isomorphism $g\to p\cdot g$. For every $x\in M$, we further write $\Psi_T$ the (invertible) map assigning to an element $s\in (H^0(F,\El^\K))_x$ its $T$-equivariant lift to $P_x$.
  
  We can form the following map 
  \[
\begin{aligned}
 P\times H^0(G/T,\mathbf{J}^\K) &\longrightarrow H^0(F,\El^\K) \\
\Psi \colon \quad\quad\quad (p,s)\quad\quad &\longmapsto \Psi_T^{-1}\bigl((\psi_p)_*\overline{s}\bigr)
\end{aligned}
\]
where $\overline{s}\in C^{\infty}(G,\mathbb{C})$ is the $T$-equivariant lift of $s$, which descends to an holomorphic section of $\mathbf{J}^{\K}\to G/T$. In order for $\Psi$ to make sense, let us check that for any $(p,s)\in P\times H^0(G/T,\mathbf{J}^\K)$, $(\psi_p)_*\overline{s}$ is $T$-equivariant. For any $p':=p\cdot g'\in P_x$ 
\begin{align*}
    \overline{s}(\psi_p^{-1}(p'\cdot t))&=\overline{s}(g'\cdot t)\\
    &=\gamma_\K(t^{-1})\overline{s}(\psi_p^{-1}(p')),
\end{align*}
thus descending to a section $\mathbf{L}^\K\to F_{\pi(p)}$ thanks to $\Psi_T^{-1}$.
$\Psi$ descend to the associated bundle; for any $g,g'\in G$ ($p':=p\cdot g'$)
\begin{align*}
    \Psi(p\cdot g, \Phi(g^{-1})s)&=\Psi_T^{-1}\bigl(\overline{\Phi(g^{-1})s}(\psi_{p\cdot g}^{-1}(p\cdot g'))\bigr)\\
    &=\Psi_T^{-1}\bigl(\overline{s}(g\,\psi_{p\cdot g}((p\cdot g)\cdot g^{-1}g'))\bigr)\\
    &=\Psi_T^{-1}\bigl(\overline{s}(\psi_p^{-1}(p'))\bigr)\\
    &=\Psi(p,s),
\end{align*}
where we used Remark \ref{remark:BW} in the second equality and the relation $\psi_p^{-1}(p')=g'$. We thus get the desired smooth isomorphism (holomorphicity is easily checked).

\end{proof}
This isomorphism induces an isomorphism of vector bundles $E^{\lambda}\cong H^0(F,\mathbf{L}^{\mathbf{k}})$ where $\mathbf{k}=(k_i)_{1\leq i \leq d}$ is such that $\lambda =\sum k_i \lambda_i$. To account for multiplicity of an irreducible representation in the Fourier decoposition of $f\in C^{\infty}(P)$, we write $d_\K:=\dim(H^0(G/T,\mathbf{J}^{\K}))$.

Due to the fiberwise Fourier transform and Borel-Weil theory, we were able to pass data from a smooth function $f\in C^{\infty}(P)$ to a family $(f_{\mathbf{k},i})_{\mathbf{k}\in \Lambda,1\leq i\leq d_\K}$ of fiberwise holomorphic sections of the line bundle $\mathbf{L}^{\mathbf{k}}$. For simplicity the multiplicity index in the Fourier decomposition will be omitted most of the time since it will not impact the arguments of the next sections.

To finish, suppose that we equip $P\to M$ with a connection $\mathbb{H}_P$. This connection induces a canonical connection $\mathbb{H}_F$ on the homogeneous $G$-bundle $F\to M$. The complexified Lie algebra $\mathfrak{g}_\C$ of $G$ splits according to 
\begin{align}\label{eq: splitting}
    \mathfrak{g}_\C=\mathfrak{t}_\C\oplus \mathfrak{n}^+\oplus \mathfrak{n}^-, 
\end{align}
with $\mathfrak{n}^\pm$ being the positive and negative root spaces for the adjoint representation of $\mathfrak{g}$. Thus, the $T$-principal bundle $P\to F$ can be equipped with the connection $\mathbb{H}_{P\to F}:=\mathbb{H}_P\oplus \Re(\mathfrak{n}^+\oplus \mathfrak{n}^-)$. This gives us the possibility of defining a \textit{horizontal connection} and a full vector connection on $\El^{\K}$ as explained in \cite[\S 2.2.2]{CL24}. For that purpose we will write for a vector field $X$ on $M$ (resp. $F$) its horizontal lift $X^{\mathbb{H}_F}$ to $F$ and $X^{\mathbb{H}_P}$ to $P\to M$ (resp. $X^{\mathbb{H}_{P\to F}}$ its lift to $P\to F$).

For the horizontal connection $\nabla^{\K}: C^{\infty}(F,\El^{\K})\to C^{\infty}(F,\El^{\K}\otimes \mathbb{H}_F^*)$, we define from $s\in C^{\infty}_{\mathrm{hol}}(F,\El^{\K})$ and $X\in C^{\infty}(M,TM)$ 
\begin{align}\label{eq:partialconnection}
    \overline{\nabla^\K_{X^{\mathbb{H}_F}}s}:=X^{\mathbb{H}_P}\overline{s},
\end{align}
which by construction descends to a section written $\nabla^{\K}_{X^{\mathbb{H}_F}}s$.

For the full connection $\overline{\nabla}^{\mathbf{k}}:C^{\infty}(F,\El^{\K})\to C^{\infty}(F,\El^{\K}\otimes T^*F)$, we define from $s\in C^{\infty}_{\mathrm{hol}}(F,\El^{\K})$ and $X\in \Gamma(TF)$ 
\begin{align}\label{eq:fullconnection}
    \overline{\overline{\nabla}_{X}^\K s}:=X^{\mathbb{H}_{P\to F}}\overline{s},
\end{align}
the induced section over $F$ will be written $\overline{\nabla}^{\K}_Xs$.

We record the following crucial property, which will be used extensively in the remaining parts. We refer to \cite[Proposition 2.2.8 \& Lemma 2.2.9]{CL24} for the proof.
\begin{proposition}\label{prop:holpreserved}
    The complexified horizontal space $\mathbb{H}_F\otimes \mathbb{C}:=\mathbb{H}_F(\C)$ and its dual can be equipped with a canonical fiberwise complex structure. We have 
    \begin{align*}
        \nabla^{\K}:C^{\infty}_{\mathrm{hol}}(F,\mathbf{L}^{\K})\to C^{\infty}_{\mathrm{hol}}(F,\mathbf{L}^{\K}\otimes \mathbb{H}_F^*)
    \end{align*}
    Moreover, $[\nabla^{\K},\Pi_{{\K}}]:=\Pi_\K^{\mathbb{H}_F(\mathbb{C})^*}\nabla_\K-\nabla_\K\Pi_\K=0$ where $\Pi^{\mathbb{H}_F(\C)^*}_{\K}:L^2(F,\El^\K\otimes\mathbb{H}_F(\C)^*)\to L^2_{\mathrm{hol}}(F,\El^\K\otimes\mathbb{H}_F(\C)^*)$ is the orthogonal projection on fiberwise holomorphic sections.
\end{proposition}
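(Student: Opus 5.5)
The plan has three steps: reduce to the $T$-equivariant lift on $P$, check that $\nabla^\K$ sends fiberwise holomorphic sections to fiberwise holomorphic ones, and then deduce the commutation with the projections.

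\textbf{Lifting to $P$.} By Remark \ref{remark:BW}, sections of $\El^\K\to F$ correspond to $T$-equivariant functions $\overline{s}$ on $P$, with $\overline{s}(p\cdot t)=\gamma_\K(t)^{-1}\overline{s}(p)$. Under this correspondence, fiberwise holomorphicity of $s$ means that $\overline{s}$ is annihilated by the fundamental vector fields generated by one nilpotent subalgebra in (\ref{eq: splitting}); with the conventions of the Borel-Weil theorem we take this to be $\mathfrak{n}^+$, and write $Y^\sharp$ for the fundamental vector field of $Y\in\mathfrak{n}^{+}$. The complex structure on $\mathbb{H}_F(\C)$ is transported from the fiber: at $p\in P$ one identifies $(\mathbb{H}_F)_{[p]}$ with $(\mathbb{H}_P)_p$. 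Moving $p$ to $p\cdot g$ along the fiber changes this identification by $R_g$, which makes $\mathbb{H}_F(\C)$ isomorphic to the associated bundle $P\times_{\mathrm{Ad}}$ built from the horizontal lifts. A fiberwise holomorphic section of $\El^\K\otimes\mathbb{H}_F(\C)^*$ then lifts to an equivariant $\mathbb{H}_P^*$-valued function whose components in a horizontal frame are again annihilated by $\mathfrak{n}^+$. I take this to be exactly the canonical fiberwise complex structure in the statement, and I would check this carefully against \cite[\S 2.2]{CL24}.

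\textbf{Holomorphicity is preserved.} Take $s$ fiberwise holomorphic and $X$ a vector field on $M$. By (\ref{eq:partialconnection}), the lift of $\nabla^\K_{X^{\mathbb{H}_F}}s$ is $X^{\mathbb{H}_P}\overline{s}$. Since $\mathbb{H}_P$ is a principal connection, horizontal lifts are $G$-invariant, so $[X^{\mathbb{H}_P},Y^\sharp]=0$ for every $Y\in\mathfrak{g}_\C$. This gives $Y^\sharp(X^{\mathbb{H}_P}\overline{s})=X^{\mathbb{H}_P}(Y^\sharp\overline{s})=0$ for $Y\in\mathfrak{n}^+$, and the same commutation gives $T$-equivariance of $X^{\mathbb{H}_P}\overline{s}$. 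Ranging over a local frame of $TM$ shows that $\nabla^\K s$ is fiberwise holomorphic as an $\mathbb{H}_F^*$-valued section, which is the first claim.

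\textbf{Commutation with the projections.} Here I would argue by orthogonality. Write $\nabla^\K=\nabla^\K\Pi_\K+\nabla^\K(1-\Pi_\K)$. The first term already lies in the holomorphic subspace by the previous step, so it suffices to prove $\Pi^{\mathbb{H}_F(\C)^*}_\K\nabla^\K(1-\Pi_\K)=0$. By duality this is the statement that $(\nabla^\K)^*$ maps holomorphic sections of $\El^\K\otimes\mathbb{H}_F(\C)^*$ to holomorphic sections. The adjoint of $X^{\mathbb{H}_P}$ on $P$ is $-X^{\mathbb{H}_P}-\mathrm{div}(X^{\mathbb{H}_P})$, and the divergence is constant along fibers because the Haar measure is invariant and the horizontal lift is $G$-equivariant. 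Hence $(\nabla^\K)^*$ also commutes with all $Y^\sharp$, and the previous step applies to it.

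An equivalent route uses the Peter--Weyl decomposition of $L^2$ of each fiber: $\nabla^\K$ commutes with the left $G$-action, so it preserves isotypic components, while the Borel-Weil theorem identifies the holomorphic part as a single isotypic piece.

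\textbf{Main obstacle.} I expect the difficulty to lie in the bookkeeping of the first step rather than in any analysis: making precise that the complex structure on $\mathbb{H}_F(\C)$ makes holomorphic $\mathbb{H}_F^*$-valued sections correspond to $\mathfrak{n}^+$-annihilated equivariant functions, and handling the $\mathrm{Ad}$-twist. The volume and divergence computation for the adjoint also needs care, but only at the level of routine checks.
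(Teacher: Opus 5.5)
Your argument is correct. The paper gives no proof of its own here and only cites \cite[Proposition 2.2.8 \& Lemma 2.2.9]{CL24}, so there is nothing in the text to compare against step by step; your route is the natural one. It uses three facts, all of which hold. First, $R_g$-invariance of $X^{\mathbb{H}_P}$ gives $[X^{\mathbb{H}_P},Y^\sharp]=0$ for all $Y\in\mathfrak{g}_\C$, so both $T$-equivariance and $\mathfrak{n}^+$-annihilation are preserved. Second, $\Pi^{\mathbb{H}_F(\C)^*}_\K\nabla^\K(1-\Pi_\K)=0$ reduces to the statement that $(\nabla^\K)^*$ preserves fiberwise holomorphy. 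Third, $\mathrm{div}(X^{\mathbb{H}_P})$ is fibre-constant, because $X^{\mathbb{H}_P}$ and the volume on $P$ are both right-$G$-invariant and $G$ acts transitively on fibres.

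One correction to your first step: there is no $\mathrm{Ad}$-twist on the horizontal side.
\begin{itemize}
\item Since $\pi\circ R_g=\pi$ and $\mathbb{H}_P$ is $R_g$-invariant, $d\pi_{F\to M}$ identifies $\mathbb{H}_F$ with $\pi_{F\to M}^*TM$. This bundle is canonically trivial along each fibre $F_x$.
\item The canonical fiberwise holomorphic structure on $\mathbb{H}_F(\C)$ and its dual is this trivial one. Over $F_x$, the fiberwise holomorphic sections of $\El^\K\otimes\mathbb{H}_F(\C)^*$ are therefore $H^0(F_x,\El^\K)\otimes(T^*_xM\otimes\C)$.
\item The $\mathrm{Ad}$-twisted bundle is the vertical one, $\mathbb{V}_F\cong P\times_{\mathrm{Ad}(T)}(\mathfrak{g}/\mathfrak{t})$.
\end{itemize}
This triviality is exactly what justifies testing holomorphy of $\nabla^\K s$ on its components $X_i^{\mathbb{H}_P}\overline{s}$, for a frame $(X_i)$ lifted from $TM$. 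That is what your second step actually does, so the argument stands once the first step is phrased this way. For the same reason, in the adjoint computation the metric on $\mathbb{H}_F^*$ and any partition of unity are pulled back from $M$, so all coefficients are fibre-constant.

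For the Peter--Weyl variant, it is cleanest to use the right $G$-action on $P$. The isotypic projectors are averages of $R_g^*$ against characters, so they commute with $X^{\mathbb{H}_P}$. You also need that the holomorphic subspace is a whole isotypic component of $L^2(F_x,\El^\K)$. This does not follow from the Borel--Weil theorem alone; it comes from Frobenius reciprocity together with the fact that extremal weights have multiplicity one.
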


\subsection{Uniform calculus on $\El^{\K}\to F$ }\label{subsection:uniformcalculus} For an in-depth treatment of the following concepts, we again refer to \cite[\S 3]{CL24}. We wish to provide the reader with the tools that we will use in order to prove our main result. Only in this sub-section will $F$ stand for any closed manifold: we don't need the fiberwise holomorphic structure to define the uniform calculus on $\El^\K\to F$. We fix $L_1,..,L_d$ to be complex Hermitian line bundles on $F$ and $\nabla=(\nabla_1,\dotsc,\nabla_d)$ a family of unitary connections on $\mathbf{L}:=(L_1,..,{L}_d)$. In this section only, $\Lambda$ is any subset of $\Z^d$.

Let $h\in (0,1]$ and introduce $\Omega(h):=\bigl\{\mathbf{k}\in \Lambda, \,h\langle \mathbf{k}\rangle\leq 1\bigr\}$. Notice here that we can construct sequences of elements in $\Omega(h)$ such that $(\mathbf{k}(h))_h$ has a norm that goes to infinity as $h$ goes to $0$.  We wish to construct semiclassical pseudodifferential operators acting on sections of $\mathbf{L}^{\K}\to F$, and we additionally want to take into account the parameter $\mathbf{k}$. 

First, to take into account the parameter $\mathbf{k}$ we define the \textit{uniform class of semiclassical pseudodifferential operators on $F$}, $\Psi^{\bullet}_{h,\mathbf{k}}(F)$.

\begin{definition}\label{symbol}
    Given a covering $(U_i,\varphi_i)$ of $F$ by charts, we say that a smooth function $a_{h,\K}$ on $T^*M$ is in the uniform class $S^m_{h,\mathbf{k}}(T^*F)$ if, for any $i$, $h\in (0,1]$ and $\mathbf{k}\in \Omega(h)$ 
    \begin{align*}
        \bigl|\partial_{\xi}^{\beta}\partial_x^{\alpha}(a_{h,\mathbf{k}}\circ d\varphi_i^\top\,)(x,\xi)\bigr|\leq C_{\alpha,\beta,i} \langle \xi \rangle ^{m-|\beta|}, \, (x,\xi)\in T^*U_i,
    \end{align*}
    Notice that the bound is uniform both in $h$ and $\K$. The corresponding class of (semiclassical) pseudodifferential operators is
    \begin{align*}
    \Psi^m_{h,\K}(F):=\bigl\{\Op_h(a)+R,\, R\in h^{\infty}\Psi_{h,\K}^{-\infty}(F)\bigr\},
\end{align*}
where $\Op_h$ is a quantization procedure (see \cite[Appendix E]{DZ19}), elements in $\Psi_{h,\K}^{-\infty}(F)$ are operators with smooth Schwartz kernel having each of its $C^{\infty}$ seminorms uniformly bounded in $h$ and $\mathbf{k}\in \Omega(h)$.
\end{definition}
The reader should bear in mind the following important remark. It shows the important difference with the usual semiclassical calculus.
\begin{remark}\label{parameterdependant}
    The connection $h{\nabla}^{\K}$ takes the form $ihd+ih\K\cdot \beta$ in local coordinates with respect to a local unit section $s^{k_1}\otimes\cdots\otimes s^{k_d}$ ($\beta:=(\nabla_i s_i/s_i)_{1\leq i \leq d}$). If $\mathbf{k}\in \Omega(h)$ is fixed, then the principal symbol would be $i\xi\in C^{\infty}(T^*F,\mathbb{C}\to T^*F)$. But choosing for instance $\K(h)$ so that $\lim_{h\to 0}h\K(h)\to \mathbf{l}\in (i\mathfrak{t})^*$ gives the principal symbol $i\xi +i\mathbf{l}\cdot\beta$.
    To take all of this into account, we see that the principal should only be defined \textit{with respect to a mapping $h\mapsto \mathbf{k}(h)$}.
\end{remark}
This motivates the following definition 
\begin{definition}\label{def:shiftsymbol}
    The principal symbol $\sigma_{h,\mathbf{k}(h)}(A)$ of $A\in \Psi_{h,\K}(F)$ with respect to the function $h\mapsto \K(h)$ is defined as the principal symbol of the operator $A_{h,\K(h)}\in\Psi_h(F)$.
    In turn, $\sigma_A$ can be seen as a function taking values in functions $h\mapsto \mathbf{k}(h)$ and returning $\sigma_{A_{h,k(h)}}$.
\end{definition}
In semiclassical analysis it is usual to work with the \textit{fiber-radially compactified cotangent bundle} $\overline{T^*F}$, see \cite[\S E.1.3]{DZ19} for a precise definition. Roughly speaking, its interior is diffeomorphic to $T^*F$ while its boundary is diffeomorphic to the cosphere bundle $S^*F$. We can define for $A\in \Psi_{h,\K}(F)$ its elliptic set $\Ell(A)$ and its wavefront set $\text{WF}(A)$ as 
\begin{align}
    \Ell_h(A) &:= \bigcap_{h\mapsto \K(h)} \Ell(A_{h,\K(h)}) \subseteq \overline{T^*F} \label{eq:elliptic_set}, \\
    \text{WF}_h(A) &:= \bigcup_{h\mapsto \K(h)} \text{WF}(A_{h,\K(h)}) \subseteq \overline{T^*F}.\label{eq:wavefront_set}
\end{align}
The usual results on the inversion of microelliptic operators carry on in this calculus.\\
Building on this calculus, we now define the class $\Psi^{\bullet}_h(F,\El)$ of \cite[\S 3.2]{CL24}. 
\begin{definition}\label{def:unifclass}
    An element $\mathbf{A}\in \Psi_{h}(F,\mathbf{L})$ is the data of a family of operators $A_{h,\mathbf{k}}\in \Psi_{h,\mathbf{k}}(F, \mathbf{L}^\mathbf{k})$. In turn, we say that $A_{h,\mathbf{k}}\in \Psi_{h,\mathbf{k}}(F, \mathbf{L}^\mathbf{k})$ if, for every trivializing $U\subset F$, for all $s_i$ unit sections of $(L_i)_{\big| U}$, $\chi,\psi\in C^{\infty}_c(U)$, there exists $A_{h,\mathbf{k}}\in \Psi_{h,\mathbf{k}}(F)$ such that for all $f\in C^{\infty}(F)$ 
    \begin{align}\label{eq:localunifclass}
        \psi \mathbf{A}_{h,\mathbf{k}}(\chi f\mathbf{s}^{\mathbf{k}})=A_{h,\mathbf{k}}(f)\otimes \mathbf{s}^{\mathbf{k}}.
    \end{align}
\end{definition}
The principal symbol of such operators is defined with respect to a sequence $h\mapsto\mathbf{k}(h)$ (see Remark \ref{parameterdependant}) and the family of connections $({\nabla}_1,\dotsc,{\nabla}_d):={\nabla}$. The latter being motivated by the fact that it is natural to expect $\nabla_X^{\mathbf{k}(h)}$ to have $(x,\xi)\mapsto i\xi(X(x))$ as its principal symbol: the connection $1$-forms from Remark (\ref{parameterdependant}) should not appear. 
\begin{definition}\label{def:symbunifclass}
    In the local setting of the above definition, define the principal symbol of $\mathbf{A}\in \Psi_{h,\mathbf{k}}(F,\mathbf{L})$ with respect to $h\mapsto \K(h)$ and the family of connections $\tilde{\nabla}$ to be 
    \begin{align}\label{eq:symbunifclass}
        \sigma^{(\tilde{\nabla})}_{\mathbf{k}(h)}(\mathbf{A})(x,\xi):=\sigma_{h,\K(h)}(A)(x,\xi-h\mathbf{k}(h)\cdot \tilde{\beta}).
    \end{align}
    where $\tilde{\beta}_i:=\tilde{\nabla}_is_i/s_i$. That the definition (\ref{eq:symbunifclass}) is independent of the chosen charts or local unitary sections is proved in \cite[\S 3.2.3]{CL24}.
\end{definition}
Even if the theory works for any unitary connection on $\mathbf{L}$, we will work in a convenient setting for us.

\textbf{Convention.} For $F$ and $\El^\K$ as in Definition \ref{def:fiberwise}, the principal symbols (and other connection dependent quantities) will always be computed respect to the vector of connections $\overline{\nabla}^{\K}$ defined in \ref{eq:fullconnection}. For instance, the principal symbol $h\overline{\nabla}^{\K}_X$ is $i\xi(X)$, and is, by construction, independent of a choice $h\mapsto \mathbf{k}(h)$.\\

We will write $T_{\beta,\K}$ for the phase space translation $(x,\xi)\mapsto (x,\xi-h\mathbf{k}(h)\cdot \beta)$ on $F$. Notice that (for $d=1$ to simplify) if we replace a unit trivializing section $s$ by $e^{i\theta(\cdot)}s$, another trivializing section, then with respect to the latter $A_{h,\mathbf{k}}$ becomes $A'_{h,\mathbf{k}}(\bullet)=e^{-i\theta} A_{h,\mathbf{k}}(e^{i\theta}\bullet)$. The new connection form is $\beta'=\beta+d\theta$. Thanks to the stationary phase formula it can be checked that  $T_{\beta}^{-1}\WF_{\mathbf{k}(h)}(A_{h,\mathbf{k}(h)})=T_{\beta'}^{-1}\WF_{\mathbf{k}(h)}(A'_{h,\mathbf{k}(h)})$ (see \cite[Lemma 3.1.9]{CL24} for a detailed proof). Thus, the following definition of the wavefront set with respect to a family of connection $\nabla$ is independant of any choice of trivializing sections over an open set $U$
\begin{align}\label{eq:wfconnection}
    \text{WF}^{{\nabla}}(\mathbf{A}):=\bigcup_{h\mapsto \mathbf{k}(h)}\biggl(\bigcup_{U,\chi,\psi} T_{\beta}^{-1}\text{WF}(A_{h,\mathbf{k}(h)})\biggr).
\end{align}
where we recall that $A_{h,\K}$ is defined in \ref{eq:localunifclass}. The elliptic set $\Ell^{{\nabla}}(\mathbf{A})$ (being defined from the principal symbol, the elliptic set depends on the chosen connection) is defined in a similar fashion as in the uniform calculus, see \cite[Definition 3.2.5]{CL24}.
\begin{remark}\label{remark:quantif}
    Adapting \cite[Appendix E]{DZ19}, it is clear that we may construct a quantization $\Op_{h,\K}:S^{\bullet}(T^*F)\to \Psi^{\bullet}(F,\mathbf{L}^\K)$ by fixing a covering of cutoff charts $\{(U_i,\varphi_i,\chi_i)\}$ and local trivializations $\mathbf{s}_j$ over $U_j$. However in view of the principal symbol property, the quantization formula of $a_{h,\mathbf{k}}$ would need to use the quantization of $\tilde{\varphi_i}^*(a_{h,\mathbf{k}}\circ T_{\beta,\K})$ in $\varphi_i(U_i)$ ($\tilde{\varphi_i}$ being the symplectic lift of the cart map $\varphi_i$). This is in order to recover the principal part of $a_{h,\K}$ as a principal symbol of $\Op_{h_,\K}(a_{h,\K})$.
\end{remark}
Finally, we may define semiclassical Sobolev spaces for distributional sections of $\El^\K$ in the usual fashion.
\begin{definition}\label{def:sobolev_full}
    Define ${\Delta}_{\K}:=({\nabla}^\K)^*{\nabla}^\K$ and set 
    \begin{align}
        H^s_h(F,\mathbf{L}^\K):=(\mathbf{1}+h^2\overline{\Delta}_\K)^{-s/2}L^2(F,\mathbf{L}^\K),
    \end{align}
    where $(\mathbf{1}+h^2\overline{\Delta}_\K)^{-s/2}$ is defined using the spectral theorem. The latter is a Hilbert space when equipped with its natural scalar product.
\end{definition}
\subsubsection{Propagation properties in the uniform calculus }\label{section:propagation} An important consequence of the definition of the principal symbol in the semiclassical uniform calculus is the fact that the subsequent "classical dynamics" of an operator $\Op_{h,\mathbf{k}}(a_{h,\K})$ is not simply the Hamiltonian flow of $a$ with respect to the Liouville $2$-form $\omega_0$ on $T^*F$.

Write $\mathbf{F}_{\overline{\nabla}}:=(F_{\overline{\nabla}_1},\dotsc,F_{\overline{\nabla}_d})\in C^{\infty}(F,\Lambda^2T^*F)^d$ the vector of curvature forms of each $L_i$ (each one of them is purely imaginary). We define the following $2$-form on $T^*F$ 
\begin{align}\label{eq:twisted_symplectic}
    \omega_{h,\mathbf{k}}:=\omega_0+ ih\mathbf{k}\cdot \pi_{T^*F}^*\mathbf{F}_{\overline{\nabla}}.
\end{align}
Thanks to the Bianchi identity, this form is closed. A computation in local coordinates gives non-degeneracy of $\omega_{h,\K}$ \cite[Lemma 3.2.10]{CL24}, making $(T^*F,\omega_{h,\mathbf{k}})$ a symplectic manifold. For a smooth function $f$ on $T^*F$, write $H_f^{\omega_{h,\mathbf{k}}}$ the corresponding Hamiltonian vector field and $(\Phi^{\omega_{h,\K},f}_t)$ its Hamiltonian flow.  Write $p_X,p_Y$ the usual Hamiltonian functions of $X,Y\in \Gamma(TF)$ : $p_X(x,\xi)=\xi(X(x))$. We have the following useful identity for $\omega_{h,\K}$ (we refer the reader to \cite[3.2.18]{CL24}) 
\begin{align}\label{eq:vfield_symplectic}
    \{p_X,p_Y\}^{\omega_{h,\mathbf{k}}}=p_{[X,Y]}+ih\K\cdot \mathbf{F}_{\overline{\nabla}}(X,Y),
\end{align}
where $\{\cdot,\cdot \}^{\omega_{h,\mathbf{k}}}$ is the Poisson bracket associated to $\omega_{h,\K}$.
Also, the following property will be important for us. The generator of the frame flow descended to $F$, $X_F$, satisfies $i_{X_F}\mathbf{F}_{\overline{\nabla}}=0$. As justified by the following proposition (see \cite[Lemma 3.2]{DGH2021} for instance), it implies that $H_{p_{X_F}}^{\omega_{h,\mathbf{k}}}=H_{p_{X_F}}^{\omega_0}$ ; one will recover the usual sink and source type structure of the Hamiltonian flow of $X_F$.
\begin{lemma}\label{label:hamiltonianflow}
    If $Y\in \Gamma(TF)$ is such that $i_Y\mathbf{F}_{\overline{\nabla}}=0$, then  $H_{p_{Y}}^{\omega_{h,\mathbf{k}}}=H_{p_{Y}}^{\omega_0}$.
\end{lemma}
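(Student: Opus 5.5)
The statement is a pointwise identity of vector fields on $T^*F$, so I will compare the defining equations of the two Hamiltonian vector fields directly. By definition $H^{\omega}_{f}$ is characterized by $\iota_{H^{\omega}_f}\omega=-df$ (with whichever sign convention is fixed for $\omega_0$; the argument is insensitive to it). Put $V:=H^{\omega_0}_{p_Y}$, so $\iota_V\omega_0=-dp_Y$. Since $\omega_{h,\K}$ is non-degenerate, $H^{\omega_{h,\K}}_{p_Y}$ is the \emph{unique} vector field satisfying $\iota_{W}\omega_{h,\K}=-dp_Y$. It therefore suffices to show
\begin{align*}
\iota_V\omega_{h,\K}=\iota_V\omega_0+ih\K\cdot\iota_V\pi_{T^*F}^*\mathbf{F}_{\overline{\nabla}}=-dp_Y,
\end{align*}
that is, $\iota_V\pi^*_{T^*F}\mathbf{F}_{\overline{\nabla}}=0$ componentwise.

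The key step is to identify the projection of $V$ to the base. I will use the standard fact, checked in local canonical coordinates $(x,\xi)$ where $p_Y=\sum_j\xi_jY^j(x)$, that $H^{\omega_0}_{p_Y}=\sum_j Y^j\partial_{x_j}-\sum_{j,l}\xi_j\partial_{x_l}Y^j\,\partial_{\xi_l}$, i.e. $d\pi_{T^*F}(V)=Y\circ\pi_{T^*F}$; this is simply the statement that $H_{p_Y}$ is the symplectic lift of $Y$. Then for any $\zeta\in T_{(x,\xi)}(T^*F)$,
\begin{align*}
(\iota_V\pi_{T^*F}^*F_{\overline{\nabla}_i})(\zeta)=F_{\overline{\nabla}_i}\bigl(d\pi_{T^*F}V,d\pi_{T^*F}\zeta\bigr)=(\iota_Y F_{\overline{\nabla}_i})_x(d\pi_{T^*F}\zeta)=0
\end{align*}
by the hypothesis $i_Y\mathbf{F}_{\overline{\nabla}}=0$. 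Hence $V$ satisfies the defining equation for $H^{\omega_{h,\K}}_{p_Y}$, and uniqueness (from the non-degeneracy recorded in \cite[Lemma 3.2.10]{CL24}) gives equality.

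There is no real obstacle; the only point needing care is the coordinate identification $d\pi_{T^*F}H^{\omega_0}_{p_Y}=Y$, which holds independently of sign conventions, and the non-degeneracy of $\omega_{h,\K}$, which is assumed from earlier. Alternatively, one could cross-check with (\ref{eq:vfield_symplectic}): the Poisson brackets of $p_Y$ with all $p_X$ agree for the two forms, though that only determines the $\xi$-linear part, so the direct argument above is preferable.
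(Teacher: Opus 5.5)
Your proof is correct and follows the same route as the paper: contract $\omega_{h,\K}$ with $H^{\omega_0}_{p_Y}$, note that the curvature term reduces to $ih\K\cdot\pi_{T^*F}^*(i_Y\mathbf{F}_{\overline{\nabla}})=0$, and conclude by the non-degeneracy of $\omega_{h,\K}$. You also make explicit the step $d\pi_{T^*F}H^{\omega_0}_{p_Y}=Y$, which the paper uses without comment. Strictly, this identity holds only up to a convention-dependent sign, i.e.\ it is $\pm Y$, and the argument goes through either way.
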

\begin{proof}
    By the definition of $\omega_{h,\K}$, $\iota_{H^{\omega_0}_{p_Y}}\omega_{h,\mathbf{k}}= \iota_{H^{\omega_0}_{p_Y}}\omega_0+ ih\mathbf{k}\cdot(i_Y\mathbf{F}_{\overline{\nabla}})$. The right-hand side is equal to $dp_{Y}$ by the assumption on $Y$. The non-degeneracy of $\omega_{h,\K}$ concludes.
\end{proof}
To finish, we will use extensively the following theorem, proved in \cite[\S 3.2.4]{CL24}. 
\begin{proposition}\label{prop:unifpropagation}
    Let $Y$ be a vector field on $F$ and set $\mathbf{Y}=i_Y\overline{\nabla}^{\mathbf{k}}$ the induced family of operators in $\Psi_{h}(M,\mathbf{L})$. As we saw above, $\mathbf{Y}(:=(\mathbf{Y}_\mathbf{k})_{\mathbf{k}})\in \Psi_h(M,\mathbf{L})$. Then \\
    \begin{enumerate}
        \item For all $\mathbf{A}\in \Psi_{h,\mathbf{k}}$, we have that $e^{t\mathbf{Y}}\mathbf{A}e^{-t\mathbf{Y}}\in \Psi_{h}(M,\mathbf{L})$.
        \item The principal symbol of the previous operator with respect to a sequence $h\mapsto \mathbf{k}(h)$ is $\sigma_{h,k(h)}(\mathbf{A})\circ \Phi_t^{\omega_{h,\mathbf{k}(h)},p_Y}$. Moreover,  $\sigma_{h,\mathbf{k}(h)}(h^{-1}[\mathbf{Y},\mathbf{A}])=i\,H^{\omega_{h,\mathbf{k}(h)}}_{p_Y}\sigma_{h,\K(h)}(\mathbf{A})$.
    \end{enumerate}
\end{proposition}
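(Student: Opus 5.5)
The plan is to reduce both claims to the classical Egorov theorem for scalar semiclassical operators, working in local unitary trivializations $\mathbf{s}^{\K}$ as in Definition \ref{def:unifclass}. The key observation is that $e^{t\mathbf{Y}}$ is not a pseudodifferential operator but a geometric bundle map. Since $\mathbf{Y}_\K=\overline{\nabla}^\K_Y$ is a connection derivative, $e^{t\mathbf{Y}_\K}f$ is $f\circ\varphi_t$ (with $\varphi_t$ the flow of $Y$) composed with parallel transport in $\El^\K$ along the flow lines. In a trivialization over $U$, with $\overline{\nabla}_i s_i = \beta_i\otimes s_i$, this parallel transport is multiplication by the phase
\begin{align*}
e^{\K\cdot \Theta_t(x)},\qquad \Theta_t(x):=\int_0^t \beta(Y)(\varphi_\tau(x))\,d\tau
\end{align*}
(up to sign and normalization conventions). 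Hence, locally,
\begin{align*}
e^{t\mathbf{Y}}\mathbf{A}e^{-t\mathbf{Y}} = e^{\K\cdot\Theta_t}\,\varphi_t^*\,A_{h,\K}\,(\varphi_t^{-1})^*\,e^{-\K\cdot\Theta_t},
\end{align*}
modulo cutoffs and patching between overlapping trivializations.

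\textbf{Step 1 (membership, part (1)).} For $t$ in a compact interval I would show that the conjugated operator lies in $\Psi_{h}(M,\mathbf{L})$. Conjugation by the pullback $\varphi_t^*$ preserves $\Psi_{h,\K}(F)$, with symbol estimates uniform in $\K$, by ordinary diffeomorphism invariance of the calculus. Conjugation by $e^{\K\cdot\Theta_t}=e^{\frac{i}{h}(h\K)\cdot(-i\Theta_t)}$ is conjugation by a semiclassical oscillating factor whose phase $h\K\cdot\Theta_t$ has all $C^\infty$ seminorms bounded uniformly for $\K\in\Omega(h)$, since $h\langle\K\rangle\le 1$. Stationary phase then shows that this is again in $\Psi_{h,\K}(F)$, with symbol $a(x,\xi+ d(h\K\cdot\Theta_t)/i)+\mathcal{O}(h)$. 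This is exactly the computation underlying the independence of trivializations in Definition \ref{def:symbunifclass}, and the $h^\infty$ remainders stay uniform in $\K$.

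\textbf{Step 2 (commutator formula).} I would next prove the infinitesimal statement
\begin{align*}
\sigma(h^{-1}[\mathbf{Y},\mathbf{A}])=iH^{\omega_{h,\K}}_{p_Y}\sigma(\mathbf{A}).
\end{align*}
Locally $h\mathbf{Y}_\K$ has full scalar symbol $i\big(p_Y + h\K\cdot(-i\beta)(Y)\big)$. By the scalar calculus, the symbol of $h^{-1}[\mathbf{Y},\mathbf{A}]$ is the $\omega_0$-Poisson bracket of this symbol with $a$, evaluated at the unshifted point. Transporting through $T_{\beta,\K}$, which recenters the symbol with respect to $\overline{\nabla}$ as in Definition \ref{def:symbunifclass}, turns the extra term $h\K\cdot\beta(Y)$ into a magnetic term. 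Since $d\beta=\mathbf{F}_{\overline{\nabla}}$, the pullback of $\omega_0$ under the translation is $\omega_0+ih\K\cdot\pi^*\mathbf{F}_{\overline{\nabla}}=\omega_{h,\K}$, which is \eqref{eq:twisted_symplectic}. The vector field produced is therefore $H^{\omega_{h,\K}}_{p_Y}$. As a consistency check, this agrees with \eqref{eq:vfield_symplectic} when $a=p_X$.

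\textbf{Step 3 (integrating to the flow).} Set $b_t:=\sigma(e^{t\mathbf{Y}}\mathbf{A}e^{-t\mathbf{Y}})$. Differentiating in $t$ and applying Step 2 to the conjugated operator, which is legitimate by Step 1, gives the transport equation $\partial_t b_t = H^{\omega_{h,\K}}_{p_Y}b_t$, with the factor $i$ absorbed by the convention $\sigma(h\mathbf{Y})=ip_Y$. Hence $b_t=\sigma(\mathbf{A})\circ\Phi_t^{\omega_{h,\K},p_Y}$. To make this rigorous I would run the usual Duhamel argument: build $B_t=\Op_{h,\K}(b_t)$ as in Remark \ref{remark:quantif}, show that $\partial_t(e^{-t\mathbf{Y}}B_te^{t\mathbf{Y}})$ is $\mathcal{O}(h)$ in $\Psi_{h,\K}$, and iterate to get all symbol orders.

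\textbf{Main obstacle.} The step I expect to be hardest is uniformity in $\K$. The Hamiltonian flows $\Phi_t^{\omega_{h,\K}}$ depend on $h\K$, but only through the bounded parameter $h\K\in\{|\cdot|\le1\}$. All $C^\infty$ bounds on the flow and its derivatives on compact time intervals are therefore uniform by smooth dependence on this parameter. The remaining work is to check that each scalar Egorov remainder estimate depends only on finitely many seminorms of the symbols and phases involved, all of which are uniformly controlled on $\Omega(h)$.
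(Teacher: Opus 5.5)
Your argument is correct, and it follows the standard route for this result. The paper does not prove it itself but imports it from \cite[\S 3.2.4]{CL24}. The route is:
\begin{itemize}
\item work in local unitary trivializations;
\item use that the fiber translation by $h\K\cdot\beta$ pulls $\omega_0$ back to $\omega_{h,\K}$, because $d\beta=\mathbf{F}_{\overline{\nabla}}$;
\item run the usual Egorov transport argument, made uniform by $h\langle\K\rangle\le 1$.
\end{itemize}
Your Step 1 in fact already yields the symbol. The identity $\varphi_t^*\beta-\beta=d\Theta_t+\int_0^t\varphi_s^*\iota_Y\mathbf{F}_{\overline{\nabla}}\,ds$ shows that the gauge-dependent phases combine into exactly the magnetic shift of $\Phi_t^{\omega_{h,\K},p_Y}$, so Step 3 serves only as a cross-check.

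The one thing to tidy is the normalization. With $\mathbf{Y}=\overline{\nabla}^{\K}_Y$, the scalar rule $\sigma([A,B])=\frac{h}{i}\{a,b\}$ gives $\sigma([\mathbf{Y},\mathbf{A}])=H^{\omega_{h,\K}}_{p_Y}\sigma(\mathbf{A})$, with no $h^{-1}$ and no $i$ (up to the sign convention for $\{\cdot,\cdot\}$, and as written in Proposition \ref{prop:bwproperties2}). This is exactly the transport equation your Step 3 uses. The extra $h^{-1}$ and $i$ in the statement come from a normalization slip, which your Step 2 reproduces by dropping the $\frac{1}{i}$. They should be corrected rather than "absorbed by convention".
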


\subsection{The Borel-Weil calculus.}\label{subsection:BW} $F$ and $\El^\K$ are now as in Definition \ref{def:fiberwise}. We were able to pass from the data of a smooth function on $P$ to the data of a family of Fourier modes, namely sections in $C^{\infty}_{\mathrm{hol}}(F,\mathbf{L}^{\mathbf{k}})$. More precisely, we associated to the data of $f\in C^{\infty}(P)$ the family of sections $\mathcal{F}f:=(f_{\K,i})_{\K\in \Lambda, i\leq d_{\K}}$, $d_{\K}$ being the dimension of the irreducible representation corresponding to the label $\K\in \Lambda$. From this, the work done by Cekić-Lefeuvre allows to build a calculus such that the class of $G$-equivariant pseudodifferential operators on $P$ would fit in \cite[\S 3.3]{CL24}. Such operators $A$ satisfy by definition 
\begin{align*}
        A(R_g^{*}f)=R_g^{*}A(f), \text{ $R_g$ right-multiplication by $g$}.
\end{align*}
Those operators induce, by the usual Fourier decomposition and the Borel-Weil theorem, a family of operators $\mathbf{A}_{\mathbf{k}}:C^{\infty}_{\mathrm{hol}}(F,\mathbf{L}^{\mathbf{k}})\to C^{\infty}_{\mathrm{hol}}(F,\mathbf{L}^{\mathbf{k}})$. For instance, a vector field $X\in \Gamma(\mathbb{H}_P)$ satisfying $(R_g)_*X=X$ enjoys such a property (\textit{i.e.} horizontal lifts of vector fields on $M$). It induces a family of operators $(\X_\K)$, which is explicitly seen to be $\nabla^\K_{X^{\mathbb{H}_F}}=\overline{\nabla}^\K_{X^{\mathbb{H}_F}}$ (notice the absence of horizontal line over the connection symbol), this will be our main object of interest in the following sections.
\begin{definition}\label{def:bwclass}
    For $m\in \mathbb{R}$, define 
    \begin{align*}
        \Psi_{h,\mathrm{BW}}^m(P):=\bigl\{\Pi_{\mathbf{k}} \mathbf{A}\Pi_\mathbf{k} \text { s.t. } \mathbf{A}\in\Psi_{h}(F,\mathbf{L}) \text{ admissible}\bigr\},
    \end{align*}
    where \textit{admissibility} (following terminology of \cite[\S 3.3]{CL24}) of $\mathbf{A}\in \Psi(F,\El)$ means \begin{align}\label{eq:hol_preserved}
        [\mathbf{A}_{\bullet},\Pi_{\bullet}]\in h^{\infty}\Psi^{-\infty}_h(F,\mathbf{L}^{\bullet}).
    \end{align}
    In particular, such operators must preserve holomorphicity of sections up to negligible remainder.
    
\end{definition}
There are two important subbundles of $T^*F$. The annihilator of $\mathbb{V}_{F}$ (resp. $\mathbb{H}_{F}$) is written $\mathbb{H}_{F}^*$ (resp. $\mathbb{V}_F^*$) and is called the co-horizontal (resp. co-vertical) space. The main technical feature of working with fiberwise holomorphic section is that only the co-vertical direction $\overline{\mathbb{H}_F^*}\subset \overline{T^*F}$ will be under consideration. Indeed, for $\mathbf{A}:=\Pi_{\bullet}\mathbf{A}_{\bullet}\Pi_{\bullet}\in \Psi_{h,\mathrm{BW}}(P)$, we have by admissibility
\begin{align*}
    h\overline{\partial}_{\bullet}\mathbf{A}_{\bullet}\in\mathcal{O}(h^\infty)\Psi^{-\infty}
\end{align*}
but $h\overline{\partial}_{\mathbf{k}}\in \Psi^1_{h,\mathbf{k}}(F,\mathbf{L}^\mathbf{k})$ verifies that $\Ell^{\overline{\nabla}}(h\overline{\partial}_{\mathbf{k}})=\overline{T^*F}\setminus \overline{\mathbb{H}_F^*}$, thus $\text{WF}^{\overline{\nabla}}(\mathbf{A}_\K)\subseteq \overline{\mathbb{H}^*_F}$ for all $\K$ by elliptic inversion in the uniform calculus.
\begin{remark}\label{remark:ellipticitybw}
    Following the construction of anisotropic spaces for Anosov flows in \cite[Chapter 9]{CL24}, it is tempting to try to carry this construction using the generator of the frame flow $X_{FM}$ (we recall that the latter is not Anosov because of the additional neutral directions in (\ref{eq:introframeflowsplitting})). The symplectic lift of the flow of $X_{FM}$ has the same dynamical properties as in \cite[\S 9.1.2]{CL24} for the Anosov case, replacing $E_0^*$ by $E_0^*\oplus \mathbb{V}_{FM}^*$. The construction of the escape function $G_m$ carries on as in \cite[Lemma 9.1.9]{CL24}. However the proof of the meromorphic continuation property of the resolvent of an Anosov vector field uses crucially the ellipticity of the $\xi(X)$ in the complementary directions of $E_u^*\oplus E_0^*$, a property which is not available for $X_{FM}$: $\xi(X_{FM}(x))=0$ for all $(x,\xi)\in \mathbb{V}_{FM}^*$. The main benefit of working with fiberwise holomorphic sections is to forget the co-vertical direction $\mathbb{V}_{FM}^*$ and only consider $\mathbb{H}_F^*$.
\end{remark}
Before moving on to the next section, we briefly overview some important definition and properties of the calculus in the class $\Psi_{h,\mathrm{BW}}$, we again refer to \cite[\S 3.3]{CL24} for details and proofs. 

First, in the spirit of the exponential map quantization, one can quantize symbols in $S^m_{h,\K}(\mathbb{H}_F^*)$ in the following way, see for more details \cite[\S 3.3]{CL24}. For two points $x_1,x_2\in M$ close enough, write $\gamma$ the unique geodesic joining them and $\tau^F_{\gamma}$ the corresponding parallel transport $F_{x_1}\to F_{x_2}$ with respect to $\mathbb{H}_F$. We may now define $\tau_{\gamma}^\K$ to be the parallel transport map on $\El^\K_{|F_{x_1}}$ with respect to $\overline{\nabla}^\K$ along $\tau_{\gamma}^F$.
\begin{definition}\label{def:bwquantiz}
    Let $f\in C^{\infty}(F,\El^\K)$ and $a\in S^m_{h,\K}(\mathbb{H}_F^*)$. Let $\Pi_{\K}(\bullet)$ be the fiberwise holomorphic projection restricted to $F_\bullet$.  Define for any $hT\in F$
    \begin{equation}
\begin{aligned}\label{eq:bwquant}
\Op_{h,\mathrm{BW}}(a)f(hT)
:= (2\pi h)^{-n}\Pi_\K(\pi(hT))
\int_{M\times T^*_{\pi(hT)}M}\!\!
& \chi(y,\pi(hT))
e^{\frac{i}{h}\xi(\exp_{\pi(hT)}^{-1}(y))} \\
& \cdot a(\bullet,(D_{hT}\pi)^T(\xi)) \\
& \cdot \tau_{y\to \pi(hT)}
\Pi_\K(y)f_{|F_y}
\, d\xi\, dy .
\end{aligned}
\end{equation}
where $\chi$ is a cutoff on $M\times M$ equal to $1$ in a neighborhood of the diagonal. \\

\end{definition}
\begin{remark}\label{remark:bwquant}
    In \cite{DGH2021} the case where $G=\mathbb{S}^1$ was treated (in dimension $3$, $FM\to SM$ is a $\mathbb{S}^1$-principal bundle). Thus $F:=FM/T=SM$, this is the case were the Borel-Weil calculus simply corresponds to the uniform calculus: the line bundles $\El^\K$ are over $M$. In this case the fiberwise holomorphic projections do not appear since each fiber of $F$ consists in a single element. Moreover $d=1$ in this case. For such a case, \cite{DGH2021} introduces the following quantization (placing ourselves in local coordinates and with respect to a trivializing section $s^n$)
    \begin{align*}
        {s}^{-n(h)}(\Op_{h,n(h)}(a_{h,n(h)})fs^{n(h)}):=(2\pi h)^{-5}\int e^{\frac{i}{h}\langle\xi,x-y\rangle}a(x,\xi-hn(h)\beta)\chi(x)f(y)dyd\xi,
    \end{align*}
    where $\beta:=\nabla s/s$. For $n=3$, the projections maps $\Pi_\K$ are unnecessary and it is reasonable to wonder the link with quantization (\ref{eq:bwquant}). We refer the interested reader to \cite[Lemma 3.3.20]{CL24} for a proof that both quantization coincides up to a $\mathcal{O}(h)$ remainder (in their proof, the variable $z\in G/T$ will not appear and the claim is a mere application of the Kuranishi trick).
\end{remark}
\begin{definition}\label{def:bwsymbol}
    Let $m\in \mathbb{R}$ and $\mathbf{A}:=(\Pi_\K\mathbf{A'}\Pi_\K)_\K\in \Psi^m_{h,\mathrm{BW}}$. The principal symbol of $\mathbf{A}$ with respect to a sequence $\K(h)$ and the connection $\overline{\nabla}$, written $\sigma^{\mathrm{BW},\overline{\nabla}}_{h,\K(h)}(\mathbf{A})\in S^m(\mathbb{H}_F^*)$, is defined to be the restriction of $\sigma^{\overline{\nabla}}_{h,\K(h)}(\mathbf{A}')\in S^m(T^*F)$ to $\mathbb{H}_F^*$. Similarly, $\WF^{\mathrm{BW},\overline{\nabla}}(\mathbf{A})$ (resp. $\Ell^{\mathrm{BW},\overline{\nabla}}(\mathbf{A})$) is defined by restriction of $\WF^{\overline{\nabla}}(\mathbf{A'})$ (resp. $\Ell^{\overline{\nabla}}(\mathbf{A}')$) to $\mathbb{H}_F^*$.
    The class $\Psi^{\mathrm{comp}}_{\mathrm{BW},h}$ of compactly microsupported operators will stand for elements of $\Psi_{\mathrm{BW},h}$ with compact wavefront sets ($\WF^{\mathrm{BW}}(\mathbf{A})\cap \partial{\overline{T^*F}}=\emptyset$).
\end{definition}
If $\mathbf{A}=\Pi_\K\mathbf{A'}\Pi_\K=\Pi_\K\tilde{\mathbf{A}}\Pi_\K$ we have that $\sigma^{\overline{\nabla}}_{h,\K(h)}(\mathbf{A})_{|\mathbb{H}_F^*}-\sigma^{\overline{\nabla}}_{h,\K(h)}(\tilde{\mathbf{A}})_{|\mathbb{H}_F^*}=\mathcal{O}_{S^{m-1}_h}(h)$ by \cite[Lemma 3.3.14]{CL24}, giving sense to the previous definition.

From now on, if clear from the context, we will omit to mention the BW superscript and that the previous objects are computed with respect to $\overline{\nabla}$.
Let us now gather the main properties of the calculus in $\Psi_{h,\mathrm{BW}}(P)$ thanks to \cite[\S 3.3]{CL24}.
\begin{proposition}\label{prop:bwproperties1}
    \begin{itemize}
        \item $\Psi^\bullet_{h,\mathrm{BW}}$ is an algebra ; For $\mathbf{A}\in \Psi^{m}_{h,\text{BW}}(P)$ and $\mathbf{B}\in \Psi^{m'}_{h,\text{BW}}(P)$, $\mathbf{A}\mathbf{B}\in \Psi^{m+m'}_{h,\text{BW}}(P)$.
        \item Elliptic inversion : Let $\mathbf{A}\in \Psi^{m}_{h,\text{BW}}(P)$ and $\mathbf{B}\in \Psi^{m'}_{h,\text{BW}}(P)$ be such that $\WF(\mathbf{A})\subset \Ell(\mathbf{B})$. Then there exists $\mathbf{Q},\mathbf{Q}'\in \Psi^{m-m'}_{h,\mathrm{BW}}$ such that 
        \begin{align}\label{eq:bwproperties1_1}
            \mathbf{A}=\mathbf{B}\mathbf{Q}+\mathcal{O}_{\Psi^{-\infty}_{h,\mathrm{BW}}}(h^\infty)=\mathbf{Q'}\mathbf{B}+\mathcal{O}_{\Psi^{-\infty}_{h,\mathrm{BW}}}(h^\infty).
        \end{align}
        \item Gårding's inequality : Let $\mathbf{A}\in \Psi^{m}_{h,\text{BW}}(P)$ and suppose that for any $h\mapsto \K(h)\in \Omega(h)$, $\Re(\sigma^{\mathrm{BW}}_{h,\K(h)}(\mathbf{A}))\geq 0$. Then there exists $C>0$ such that for all $u\in C^{\infty}(F,\El^\K)$, we have 
        \begin{align}\label{eq:bwproperties1_2}
            \Re(\mathbf{A}u,u)_{L^2(F,\El^\K)}\geq -Ch\|u\|_{H^{(m-1)/2}(F,\El^\K)}^2
        \end{align}
        \end{itemize}
\end{proposition}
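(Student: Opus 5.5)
The three items will be reduced to the corresponding statements in the uniform calculus $\Psi_{h}(F,\El)$ of \S\ref{subsection:uniformcalculus}. The tool is admissibility (\ref{eq:hol_preserved}), which lets us commute the projections $\Pi_\K$ past the underlying operators at the cost of $\mathcal{O}(h^\infty)\Psi^{-\infty}$ errors. Throughout, write $\mathbf{A}=\Pi_\K\mathbf{A}'\Pi_\K$ and $\mathbf{B}=\Pi_\K\mathbf{B}'\Pi_\K$ with $\mathbf{A}',\mathbf{B}'$ admissible. We also use that $\WF^{\overline{\nabla}}(\mathbf{A}'\Pi_\K)\subseteq\overline{\mathbb{H}_F^*}$, which holds by elliptic inversion of $h\overline{\partial}_\K$ off $\overline{\mathbb{H}_F^*}$, as noted before Remark \ref{remark:ellipticitybw}.

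\emph{Algebra property.} Using $\Pi_\K^2=\Pi_\K$ and admissibility, we have $\mathbf{A}\mathbf{B}=\Pi_\K\mathbf{A}'\Pi_\K\mathbf{B}'\Pi_\K=\Pi_\K\mathbf{A}'\mathbf{B}'\Pi_\K+\mathcal{O}(h^\infty)$. The composition $\mathbf{A}'\mathbf{B}'$ lies in $\Psi^{m+m'}_h(F,\El)$ by the uniform calculus, and it is admissible because $[\mathbf{A}'\mathbf{B}',\Pi]=\mathbf{A}'[\mathbf{B}',\Pi]+[\mathbf{A}',\Pi]\mathbf{B}'$. The $\mathcal{O}(h^\infty)$ remainder is itself of the form $\Pi_\K R\Pi_\K$ with $R$ smoothing, so it can be absorbed into the class.

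\emph{Elliptic inversion.} The hypothesis $\WF^{\mathrm{BW}}(\mathbf{A})\subset\Ell^{\mathrm{BW}}(\mathbf{B})$ only concerns $\overline{\mathbb{H}_F^*}$. To use it in the full calculus, we replace $\mathbf{B}'$ by $\mathbf{B}''=\mathbf{B}'+\mathbf{C}$, where $\mathbf{C}=(h\overline{\partial}_\K)^*h\overline{\partial}_\K$ (suitably order-adjusted). This $\mathbf{C}$ is elliptic off $\overline{\mathbb{H}_F^*}$ and annihilates holomorphic sections, so $\Pi_\K\mathbf{B}''\Pi_\K=\mathbf{B}$. For a neighborhood of $\WF^{\overline{\nabla}}(\mathbf{A}')$ we then get, after cutting $\mathbf{A}'$ microlocally near $\mathbb{H}_F^*$, that $\WF(\mathbf{A}')\subset\Ell(\mathbf{B}'')$. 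The uniform-calculus parametrix gives $\mathbf{A}'=\mathbf{B}''\mathbf{Q}_0+\mathcal{O}(h^\infty)$. Setting $\mathbf{Q}=\Pi_\K\mathbf{Q}_0\Pi_\K$ and sandwiching by $\Pi_\K$ yields (\ref{eq:bwproperties1_1}), provided $\mathbf{Q}_0$ is admissible. This is the main obstacle, and I would handle it iteratively. Since $[\mathbf{B}'',\Pi]$ and $[\mathbf{A}',\Pi]$ are negligible, the identity $\mathbf{B}''[\mathbf{Q}_0,\Pi]=[\mathbf{A}',\Pi]-[\mathbf{B}'',\Pi]\mathbf{Q}_0+\mathcal{O}(h^\infty)$ shows that $\mathbf{B}''[\mathbf{Q}_0,\Pi]$ is negligible microlocally near $\WF(\mathbf{A}')$. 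Applying the left parametrix of $\mathbf{B}''$ there then gives $[\mathbf{Q}_0,\Pi]=\mathcal{O}(h^\infty)$ after microlocal cutoff, which is precisely admissibility. If this is delicate, the alternative is to invoke \cite[\S 3.3]{CL24} directly. The left inverse $\mathbf{Q}'$ is obtained symmetrically.

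\emph{G\aa rding inequality.} The symbol $\sigma^{\overline{\nabla}}(\mathbf{A}')$ has nonnegative real part on $\mathbb{H}_F^*$. Adding $M\,\mathbf{C}$ with $M$ large makes the real part nonnegative on all of $T^*F$, modulo $\mathcal{O}(h)$ symbols, in a conic neighborhood of $\mathbb{H}_F^*$; away from that neighborhood $\mathbf{A}'\Pi_\K$ is already microlocally negligible. We then apply the sharp G\aa rding inequality of the uniform calculus, whose proof via positive quantization carries over with uniformity in $\K\in\Omega(h)$. For holomorphic $u$ we have $\Pi_\K u=u$ and $\mathbf{C}u=0$, and (\ref{eq:bwproperties1_2}) follows. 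For general $u$ we apply this to $\Pi_\K u$, which suffices since $\mathbf{A}u=\mathbf{A}\Pi_\K u$ and $\Pi_\K$ is bounded on $H^{(m-1)/2}$ uniformly in $\K$, by Proposition \ref{prop:holpreserved}.
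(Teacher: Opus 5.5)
The paper does not prove this proposition; it quotes it from \cite[\S 3.3]{CL24}. Your algebra step is fine. The self-contained arguments you give for the other two items each break at an identifiable point.

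\emph{Elliptic inversion.} The weak link is admissibility of $\mathbf{Q}_0$, as you suspected.

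From $\mathbf{B}''[\mathbf{Q}_0,\Pi_\K]=\mathcal{O}(h^\infty)$ and a left parametrix $\mathbf{G}'\mathbf{B}''=\mathbf{1}-\mathbf{R}'$, with $\mathbf{R}'$ microsupported away from $\WF(\mathbf{Q}_0)$, you get $[\mathbf{Q}_0,\Pi_\K]=\mathbf{R}'\mathbf{Q}_0\Pi_\K-\mathbf{R}'\Pi_\K\mathbf{Q}_0+\mathcal{O}(h^\infty)$. The first term is negligible. The second cannot be handled by disjointness of wavefront sets, because $\Pi_\K$ is not an operator of the uniform calculus and it moves microsupport along the fibres of $F\to SM$. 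For $\K=0$ it is fibrewise averaging. For slowly growing $|\K|$ its kernel is not $\mathcal{O}(h^\infty)$ off the fibre diagonal; at best it decays like $e^{-c|\K|d(z,w)^2}$.

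Admissibility is a global condition on $[\mathbf{Q}_0,\Pi_\K]$, so a statement ``after microlocal cutoff'' does not give it. The modification by $\mathbf{C}$ is unnecessary in any case: a microlocal cutoff $\chi$ near $\overline{\mathbb{H}_F^*}$ is automatically admissible, since $h\overline{\partial}_\K$ is elliptic on $\WF(\mathbf{1}-\chi)$ and $h\overline{\partial}_\K\Pi_\K=0$.

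What is actually missing is a supply of admissible operators with prescribed symbol on $\mathbb{H}_F^*$, such as $\Op_{h,\mathrm{BW}}$ from Definition \ref{def:bwquant}. With these, the parametrix can be built by symbolic iteration inside $\Psi_{h,\mathrm{BW}}$. That is exactly the input taken from \cite{CL24}.

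\emph{G\aa rding.} Adding $M\mathbf{C}$ does not make the real part nonnegative modulo $\mathcal{O}(h)$.

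Knowing $\Re a\ge 0$ on $\mathbb{H}_F^*$, with $a=\sigma(\mathbf{A}')$, only gives $\Re a\ge -C|\xi_{\mathbb{V}}|\langle\xi\rangle^{m-1}$ nearby. A quadratic penalty $M|\xi_{\mathbb{V}}|^2\langle\xi\rangle^{m-2}$ cannot absorb this linear deficit. It leaves $\min_t(Mt^2-Ct)\langle\xi\rangle^m=-\tfrac{C^2}{4M}\langle\xi\rangle^m$.

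Such linear terms do occur in admissible operators. Take $-ih$ times the infinitesimal action of a section of $FM\times_{\mathrm{Ad}}\mathfrak{g}$: it acts on the fibres $G/T$ by holomorphic isometries, commutes exactly with $\Pi_\K$, and has a real symbol affine in $\xi_{\mathbb{V}}$ with non-trivial linear part. So your argument only yields $\Re(\mathbf{A}u,u)\ge-\epsilon\|u\|^2_{H^{m/2}}-C_\epsilon h\|u\|^2_{H^{(m-1)/2}}$.

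The fix is to change representative rather than add a penalty.
Let $\tilde a$ be the $\xi_{\mathbb{V}}$-independent extension of $a|_{\mathbb{H}_F^*}$, cut off where $|\xi_{\mathbb{V}}|\gtrsim\langle\xi_{\mathbb{H}_F^*}\rangle$.
Since $a-\tilde a$ vanishes on $\mathbb{H}_F^*$, it factors through $\sigma(h\overline{\partial}_\K)$ and its complex conjugate.
Hence, modulo terms microsupported where $h\overline{\partial}_\K$ is elliptic, $\mathbf{A}'-\Op_{h,\K}(\tilde a)=\mathbf{S}\,h\overline{\partial}_\K+(h\overline{\partial}_\K)^*\mathbf{R}+h\mathbf{E}$ with $\mathbf{E}\in\Psi^{m-1}_h$.
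It follows that $\Pi_\K(\mathbf{A}'-\Op_{h,\K}(\tilde a))\Pi_\K=h\,\Pi_\K\mathbf{E}\Pi_\K+\mathcal{O}(h^\infty)$.
Sharp G\aa rding for $\Op_{h,\K}(\tilde a)$, applied to $\Pi_\K u$, then gives the $\mathcal{O}(h)$ bound.
This change of representative is essentially \cite[Lemma 3.3.14]{CL24}, which the paper invokes after Definition \ref{def:bwsymbol}.
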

This proposition gathered the basic properties of the calculus in $\Psi_{h,\text{BW}}$. In Section \ref{section:resolventestimatesfirstband}, we will extensively use additional properties of the calculus concerning propagation of singularities. This is the context of the following more advanced proposition. For the definitions of \textit{radial sink and sources}, we refer the reader to \cite[Appendix E]{DZ19} for the definitions or to \S\ref{subsection:anisofourierwise} for the specification to our setting. The Hamiltonian flow of an Anosov vector field presents a sink-source structure, see Section \ref{section:anisotropicspace}. We define $\Im(\mathbf{P}):=\frac{1}{2i}(\mathbf{P}-\mathbf{P}^*)$ the imaginary part of $\mathbf{P}\in \Psi_{h,\mathrm{BW}}$. For simplicity Proposition \ref{prop:bwproperties2} is specified to the case where $Y:=X^{\mathbb{H}_F}$ for $X$ an Anosov vector field on $M$ generating a volume-preserving flow and $i_Y\mathbf{F}_{\overline{\nabla}}=0$. We further assume for simplicity that the contraction and expansion rate of the flow of $X$ are equal to $1$. This is for instance achieved when $X$ generates the geodesic flow on a closed hyperbolic manifold (this will be exactly the setting we will consider in the following sections). $Y$ is called a basic vector field on $F$. The propagation estimates should still hold for general $\mathbf{P}\in \Psi_{h,\mathrm{BW}}$ whose principal symbol $\mathbf{p}$ has a Hamiltonian flow $(\Phi_t^{\omega_{h,\K},\mathbf{p}})$ preserving $\mathbb{H}_F^*$, see Proposition \ref{prop:horizontal_preserved} for sufficient conditions.

The Hamiltonian flow of $\langle \xi_{\mathbb{H}_F^*}\rangle^{-1}\mathbf{p}_Y$ on $\mathbb{H}_F^*$ extends to a flow on the compactified co-horizontal space $\overline{\mathbb{H}_F^*}$, this abstraction is convenient in order to deal with sink and sources on the co-horizontal space.

\begin{proposition}\label{prop:bwproperties2}
    Let $Y$ be a basic vector field on $F$ such that $i_Y\mathbf{F}_{\overline{\nabla}}=0$ and whose flow preserves the natural volume on $F\to M$. 
    \begin{itemize}
    \item Egorov's Theorem. Similarly to Proposition \ref{prop:unifpropagation}, for all $\mathbf{A}\in \Psi^m_{h,\mathrm{BW}}$ one has  $e^{t\mathbf{Y}}\mathbf{A}e^{-t\mathbf{Y}}\in \Psi_{h,\mathrm{BW}}$.
        The principal symbol of the previous operator with respect to a sequence $h\mapsto \mathbf{k}(h)$ is $\sigma_{h,\K(h)}(\mathbf{A})\circ \Phi^{\omega_{h,\mathbf{k}(h)},p_Y}\in S^m(\mathbb{H}_F^*)$. Moreover,  $\sigma_{\K(h)}^{\mathrm{BW}}([\mathbf{Y},\mathbf{A}])=H^{\omega_{h,\mathbf{k}(h)}}_{p_Y}\sigma_{\K(h)}^{\mathrm{BW}}(\mathbf{A})$.
        \item Propagation of singularities. 
Let $\mathbf{A}, \mathbf{B} \in \Psi^0_{h,\mathrm{BW}}(P)$ satisfy the following condition. For all $(x,\xi)\in
\WF_h^{\mathrm{BW}}(\mathbf{A})$ there exists $t\in \mathbb{R}$ such that
$
\Phi_{t}^{\omega_0,\langle \xi_{\mathbb{H}_F^*}\rangle ^{-1}p_Y}(x,\xi) \in \Ell^{\mathrm{BW}}(\mathbf{B}).
$
Then there exists $\mathbf{B} \in \Psi^{0}_{h,\mathrm{BW}}$ such that for all $N\in\mathbb{R}$, $k>0$ large enough and $f_{h,\K}\in H^N_h(F,\El^\K)$, there
exists $C>0$ satisfying for all $h>0$ and $\K\in \Omega(h)$
\begin{equation}\label{eq:propag_standard}
\|\mathbf{A} f_{h,k}\|_{H_h^N}
\leq C\Big(
\|\mathbf{B} \mathbf{Y} f_{h,\K}\|_{H_h^N}
+ \|\mathbf{A}' f_{h,\K}\|_{H_h^N}
+ h^N \|f_{h,\K}\|_{H_h^{-k}}
\Big).
\end{equation}
\item Radial sink estimate. We consider the operator $\mathbf{P}=-ih\mathbf{Y}+\eta+ih\nu$ where $\Lambda,\nu\in \mathbb{R}^+$, its principal symbol is $\mathbf{p}_Y=\xi(Y)+\eta$. We assume that $\bigl(\Phi_t^{\omega_0,\langle\xi_{\mathbb{H}_F^*}\rangle^{-1}\mathbf{p}_Y}\bigr)$ has a sink structure $L\subset \{\langle \xi_{\mathbb{H}_F^*} \rangle p_Y=0\}\cap \partial \overline{\mathbb{H}_F^*}$ in $\overline{\mathbb{H}_F^*}$, see \cite[E.51]{DZ19}. Take $N>0$ and $s\in \mathbb{R}^+$ large enough.
Then for all $\mathbf{A}\in \Psi^0_{h,\mathrm{BW}}$ whose wavefront set is near $L$, there exists $\mathbf{B}\in \Psi^0_{h,\mathrm{BW}}$ with wavefront and elliptic set in a slightly larger neighborhood than $\mathrm{WF}^{\mathrm{BW}}(\mathbf{A})$, $\mathbf{A}'\in \Psi^0_{h,\mathrm{BW}}$ with wavefront in $\mathrm{WF}^{\mathrm{BW}}(\mathbf{B})$ disjoint from $L$, and constant $C>0$ such that for all $f_{h,\K}\in H^N_h(F,\El^{\K(h)})$, $h>0$, $\K\in \Omega(h)$ and $k$ large enough 
\begin{align}\label{eq:propag_sink}
\|\mathbf{A}f_h\|_{H^N_h}\leq C\bigl(\|\mathbf{B}\mathbf{P}f_h\|_{H^N_h}+\|\mathbf{A}'f_h\|_{H^N_h}+h^s\|f_h\|_{H^{-s}_h}\bigr)
\end{align}
\item Radial source estimate. Let $\mathbf{P}$ be as in the previous point. Assume that $(\Phi_t^{\omega_{0},\langle\xi_{\mathbb{H}_F^*}\rangle^{-1}\mathbf{p}_Y})$ has on $\overline{\mathbb{H}_F^*}$ a source structure $L'\subset \{\langle \xi \rangle\mathbf{p}_1=0\}\cap \partial \overline{\mathbb{H}_F^*}$. Take $N\in \mathbb{R}^-$ to satisfy the threshold condition $N<\nu$ and $s\in \R^+$ large enough.
Then for all $\mathbf{A}\in \Psi^0_{h,\mathrm{BW}}$ whose wavefront set is near $L'$, there exists $\mathbf{B}\in \Psi^0_{h,\mathrm{BW}}$ with wavefront and elliptic set in a slightly larger neighborhood than $\mathrm{WF}^{\mathrm{BW}}(\mathbf{A})$, and constant $C>0$ such that for all $f_{h,\K}\in H^N_h(F,\El^{\K(h)})$, $h>0$, $\K\in \Omega(h)$ and $k$ large enough 
\begin{align}\label{eq:propag_source}
\|\mathbf{A}f_h\|_{H^N_h}\leq C\bigl(\|\mathbf{B}\mathbf{P}f_h\|_{H^N_h}+h^k\|f_h\|_{H^{-k}_h}\bigr)
\end{align}
\end{itemize}
\end{proposition}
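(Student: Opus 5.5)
The statement to justify is Proposition \ref{prop:bwproperties2}: Egorov's theorem, propagation of singularities, and the radial sink and source estimates in the Borel-Weil class. The plan is to reduce each item to its counterpart in the uniform calculus of \S\ref{subsection:uniformcalculus}. There Proposition \ref{prop:unifpropagation} and the standard semiclassical arguments of \cite[Appendix E]{DZ19} are available, and the holomorphic projections $\Pi_\K$ are then carried through using admissibility (\ref{eq:hol_preserved}).

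\textbf{Step 1: commutation with the projections.} Write $\mathbf{A}=\Pi_\K\mathbf{A}'\Pi_\K$ with $\mathbf{A}'$ admissible. By Proposition \ref{prop:holpreserved}, $\mathbf{Y}=\nabla^\K_{Y}$ commutes with $\Pi_\K$, since $Y$ is basic (a horizontal lift). Hence $e^{t\mathbf{Y}}$ commutes with $\Pi_\K$ as well, and
\[
e^{t\mathbf{Y}}\mathbf{A}e^{-t\mathbf{Y}}=\Pi_\K\bigl(e^{t\mathbf{Y}}\mathbf{A}'e^{-t\mathbf{Y}}\bigr)\Pi_\K .
\]
By Proposition \ref{prop:unifpropagation}, the conjugated operator $e^{t\mathbf{Y}}\mathbf{A}'e^{-t\mathbf{Y}}$ lies in $\Psi_h(F,\El)$, with principal symbol $\sigma(\mathbf{A}')\circ\Phi_t^{\omega_{h,\K},p_Y}$. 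It remains admissible, because $[e^{t\mathbf{Y}}\mathbf{A}'e^{-t\mathbf{Y}},\Pi_\K]=e^{t\mathbf{Y}}[\mathbf{A}',\Pi_\K]e^{-t\mathbf{Y}}$, and conjugation by $e^{\pm t\mathbf{Y}}$ preserves $h^\infty\Psi^{-\infty}$.

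\textbf{Step 2: Egorov's theorem and the commutator formula.} Restricting the symbol to $\mathbb{H}_F^*$, as in Definition \ref{def:bwsymbol}, gives Egorov's theorem provided $\Phi_t^{\omega_{h,\K},p_Y}$ preserves $\mathbb{H}_F^*$. By Lemma \ref{label:hamiltonianflow} and $i_Y\mathbf{F}_{\overline{\nabla}}=0$, this flow equals $\Phi_t^{\omega_0,p_Y}$, the symplectic lift of the flow of $Y$. That lift preserves $\mathbb{H}_F^*$ because the flow of $Y$ preserves $\mathbb{V}_F$, which holds since $Y$ is basic and the connection is invariant. The commutator formula follows by differentiating at $t=0$.

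\textbf{Step 3: propagation, sink and source estimates.} I would rerun the positive commutator proofs of \cite[Theorems E.47, E.52, E.54]{DZ19}, using only the ingredients from Proposition \ref{prop:bwproperties1}: the algebra property, elliptic inversion, and G\r{a}rding's inequality (\ref{eq:bwproperties1_2}) for the symbols of $\Im$ of the commutators. The escape functions and commutant symbols are built on $\overline{\mathbb{H}_F^*}$ from the flow $\Phi_t^{\omega_0,\langle\xi_{\mathbb{H}_F^*}\rangle^{-1}p_Y}$, then quantized with $\Op_{h,\mathrm{BW}}$ from Definition \ref{def:bwquant}. Everything is applied to holomorphic $f_{h,\K}$, so that $\Pi_\K f=f$.

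Two further points enter:
\begin{itemize}
\item Because $Y$ preserves the volume and $\nabla^\K$ is unitary, $\mathbf{Y}^*=-\mathbf{Y}$. Therefore $\Im\mathbf{P}=h\nu$ exactly, and the threshold condition reduces to comparing $N$ with $\nu$ relative to the unit expansion rate.
\item All constants must be uniform in $\K\in\Omega(h)$. This is built into the uniform symbol classes.
\end{itemize}

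\textbf{Main obstacle.} The delicate step is the threshold computation at the radial sets on $\partial\overline{\mathbb{H}_F^*}$. One must check that the subprincipal contribution of $\mathbf{Y}$, and the weight $\langle\xi_{\mathbb{H}_F^*}\rangle^{N}$ built from the horizontal Laplacian, produce the correct sign without a $\K$-dependent error. I would first compute $\sigma(h^{-1}\Im[\mathbf{Y},\Op_{h,\mathrm{BW}}(\langle\xi\rangle^{2N})])$ and show that it equals $N$ times the radial expansion rate plus $\mathcal{O}(h)$ uniformly in $\K$. For this I would use the identity (\ref{eq:vfield_symplectic}) together with $i_Y\mathbf{F}_{\overline{\nabla}}=0$.
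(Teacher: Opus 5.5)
Your proposal is correct and follows the paper's route. The paper gives no self-contained argument: it defers to \cite[\S 3.3]{CL24} and \cite[Appendix E]{DZ19}, uses Proposition~\ref{prop:holpreserved} to place $\mathbf{Y}=\nabla^{\K}_{Y}$ in the Borel--Weil calculus, and attributes the simple thresholds to volume preservation and unit expansion rates, which is the reduction you spell out in more detail. One slip in your threshold check, minor but literal: $\mathbf{Y}$ carries no factor $h$, and $[\mathbf{Y},G]$ is self-adjoint whenever $G$ is (since $\mathbf{Y}^*=-\mathbf{Y}$), so taking $h^{-1}\Im$ of it is wrong as written and gives zero for a self-adjoint weight such as $(1+h^2\Delta_{\K})^{N}$. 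What you want is the real symbol of $[\mathbf{Y},\Op_{h,\mathrm{BW}}(\langle\xi_{\mathbb{H}_F^*}\rangle^{2N})]$, namely $H^{\omega_0}_{p_Y}\langle\xi_{\mathbb{H}_F^*}\rangle^{2N}$; near the radial sets this is $\pm 2N\langle\xi_{\mathbb{H}_F^*}\rangle^{2N}$ to leading order and has no $\K$-dependence.
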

The threshold conditions are particularly simple in this case thanks to the volume preserving assumptions on $X$ and the fact that the contraction and expansion rate of the flow are equal to $1$. More general statement exists but they will not be used in this work. 

In the version of Egorov theorem in the Borel-Weil calculus, we used that the horizontal lift $Y^{\mathbb{H}_F}$ on $F$ of any vector field $Y$ on $M$ induces an operator $\mathbf{Y}_{\K}:=\nabla^{\K}_{Y}$ which is in the calculus. This is a consequence of Proposition \ref{prop:holpreserved}. In the rest of the paper, this is precisely the kind of operator that will be under consideration. Finally let us stress the following.
    \begin{remark}\label{rk:bwchoiceconnection}
        In the uniform calculus, we are free to choose a family of unitary connections to define the quantization. However, in the Borel-Weil calculus, the choice is made at the level of the principal bundle connection $\mathbb{H}$ on $P\to M$. The connections $\overline{\nabla}$ under consideration are then naturally associated to this choice, see (\ref{eq:fullconnection}). 
    \end{remark}
Lastly, we need the following functional spaces.
\begin{definition}\label{def:sobolBW}
    We have the following scale of Sobolev spaces 
    \begin{align*}
        H^s_{h,\mathrm{hol}}(F,\mathbf{L}^\K):=(\mathbf{1}+h^2\Delta_\K)^{-s/2}L^2_{\mathrm{hol}}(F,\El^\K),
    \end{align*}
    where $\Delta_{\K}:=\nabla_\K^*\nabla_\K$, see (\ref{eq:partialconnection}).
     
\end{definition}
The latter space can alternatively be seen to be equal to $\Op_{h,\mathrm{BW}}(\langle \xi_{\mathbb{H}_F^*}\rangle^{-s/2})L^2_{\mathrm{hol}}(F,\El^\K)$, the principal symbol of the horizontal Laplacian $\Delta_\K\in \Psi^2_{h,\mathrm{BW}}$ being $|\xi_{\mathbb{H}_F^*}|^2_{g_F}$ \cite[\S3.3.2]{CL24}.
\subsubsection{Semiclassical measures in the Borel-Weil calculus.} In this paragraph, we fix a sequence $h\mapsto \K(h)$ with $\K(h)\in \Omega(h)$. Each computation (such as principal symbols) will be done with respect to this sequence, see Definition \ref{def:symbunifclass}.\label{subsubsection:measure} We define semiclassical measures associated to $h$-dependent sequences $(v_h)$ in $\mathcal{D}'_{\mathrm{hol}}(F,\El^{\K(h)})$. 
\begin{definition}\label{def:semimeasure}
    Let $(v_h)\in \mathcal{D}_{\mathrm{hol}}'(F,\El^{\K})$ be a sequence of fiberwise holomorphic distribution on $F$. We say that $v_h\rightharpoonup \mu$ with $\mu$ a real and positive Radon measure on $\mathbb{H}_F^*$ if, for all $a\in C^{\infty}_c(\mathbb{H}_F^*)$, we have 
    \begin{align*}
        \langle \Op_{h,\mathrm{BW}}(a)v_h,v_h\rangle_{L^2(F,\El^\K)} \to \int_{\mathbb{H}_F^*} a\,\, d\mu.
    \end{align*}

\end{definition}
We have the following important proposition :
\begin{proposition}\label{prop:propagation_measure}
    Let $(v_h)\in \mathcal{D}_{hol}'(F,\El^{\K(h)})$ be such that there exists $s>0$ and $C>0$ (independent of $h$) verifying $||v_h||_{H^{-s}_{h,\mathrm{hol}}(F,\El^{\K(h)})}\leq C$. Then, after an eventual extraction, $v_h\rightharpoonup \mu$ in the sense of Definition \ref{def:semimeasure}. 
    
    After extraction, assume that $h\K(h)\to \mathbf{l}$. Let $\mathbf{P}\in \Psi^{m}_{h,\mathrm{BW}}(P)$ and write $\Im(\mathbf{P})$ for its imaginary part, as in Proposition \ref{prop:bwproperties2}. We assume for simplicity that both $\mathbf{P}$ and $\Im(\mathbf{P})$ have $h$-independent principal symbols, and that $\mathbf{p}:=\sigma_{h,\K(h),\mathrm{BW}}(\mathbf{P})$ has a Hamiltonian flow preserving the co-horizontal space $\mathbb{H}_F^*$ (see Remark \ref{rk:choicemeasure}). Then, if $(v_h)$ satisfies the following for some $N>0$  
    \begin{align*}
        \|\mathbf{P} v_h\|_{H^{-N}_{h,\mathrm{hol}}}=o(h^{\beta}),
    \end{align*}
    one obtain the following support and propagation properties for $\mu$ 
    \begin{itemize}
        \item $\supp(\mu)\subseteq \bigl\{\mathbf{p}=0\bigr\}$ \text{if $\beta>0$} (ellipticity),\\
        \item $\displaystyle \int_{\mathbb{H}_F^*} \bigl(H_{\mathbf{p}}^{\omega_{\mathbf{l}}}+2\sigma_{h,\mathrm{BW}}(h^{-1}\Im(\mathbf{P}))\,\bigr)a \,d\mu =0$ \text{if $\beta\geq 1$} (Hamiltonian propagation).
    \end{itemize}
    where the principal symbol is computed with respect to the sequence $h\mapsto \mathbf{k}(h)$.
\end{proposition}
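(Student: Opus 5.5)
The proof will follow the standard semiclassical defect measure argument, transplanted to the Borel--Weil calculus through Definition~\ref{def:semimeasure} and Proposition~\ref{prop:bwproperties1}.

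\textbf{Existence of $\mu$.}
Replacing $v_h$ by $\Op_{h,\mathrm{BW}}(\langle\xi_{\mathbb{H}_F^*}\rangle^{-s})v_h$ changes nothing for compactly supported test symbols, up to a harmless elliptic factor. So we may assume $\|v_h\|_{L^2}\le C$.

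For $a\in C^\infty_c(\mathbb{H}_F^*)$, the $L^2$-boundedness of $\Op_{h,\mathrm{BW}}(a)$ gives
\[
|\langle \Op_{h,\mathrm{BW}}(a)v_h,v_h\rangle|\le C\sup|a|+\mathcal{O}(h).
\]
This bound is uniform in $\K\in\Omega(h)$, since all seminorms in Definition~\ref{symbol} are uniform in $\K$. By the sharp Gårding inequality \eqref{eq:bwproperties1_2}, $a\ge 0$ implies $\liminf\langle \Op_{h,\mathrm{BW}}(a)v_h,v_h\rangle\ge 0$.

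Take a countable dense family in $C^0_c(\mathbb{H}_F^*)$ and extract diagonally. This produces a positive linear functional on $C_c(\mathbb{H}_F^*)$, hence, by Riesz, a positive Radon measure $\mu$. It is independent of the lower-order terms of the quantization, because two quantizations differ by $\mathcal{O}(h)$ by \cite[Lemma 3.3.14]{CL24}. The further extraction $h\K(h)\to\mathbf{l}$ uses only that $h\K(h)$ is bounded, since $\K(h)\in\Omega(h)$.

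\textbf{Ellipticity.}
Let $a\in C_c^\infty$ be supported in $\{\mathbf{p}\neq0\}$. By elliptic inversion \eqref{eq:bwproperties1_1}, there is $\mathbf{Q}$ with
\[
\Op_{h,\mathrm{BW}}(a)=\mathbf{Q}\mathbf{P}+\mathcal{O}(h^\infty)
\]
microlocally; the $\mathcal{O}(h)$ defect between $\sigma(\mathbf{P})$ and $\mathbf{p}$ is absorbed into $\mathbf{Q}$. Since $a$ has compact support, $\mathbf{Q}$ is smoothing, so
\[
|\langle \Op(a)v_h,v_h\rangle|\le \|\mathbf{Q}\mathbf{P}v_h\|_{H^{N}}\|v_h\|_{H^{-N}}=o(h^\beta)\to0.
\]
Hence $\supp\mu\subseteq\{\mathbf{p}=0\}$.

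\textbf{Propagation.}
Let $\mathbf{A}=\Op_{h,\mathrm{BW}}(a)$ with $a$ real and compactly supported, and write $\mathbf{P}=\Re\mathbf{P}+i\Im\mathbf{P}$. Then
\[
\langle [\mathbf{A},\mathbf{P}]v_h,v_h\rangle
=\langle \mathbf{A}\mathbf{P}v_h,v_h\rangle-\langle \mathbf{A}v_h,\mathbf{P}^*v_h\rangle
=\langle \mathbf{A}\mathbf{P}v_h,v_h\rangle-\langle \mathbf{A}v_h,\mathbf{P}v_h\rangle-2i\langle \mathbf{A}v_h,\Im(\mathbf{P})v_h\rangle .
\]
The first two terms are $o(h)$ when $\beta\ge1$, since $\mathbf{A}$ is compactly microsupported. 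Dividing by $h$ and using that $h^{-1}\Im\mathbf{P}$ has an $h$-independent principal symbol, the last term tends to $\int 2\,\sigma(h^{-1}\Im\mathbf{P})\,a\,d\mu$, after composing symbols to leading order.

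For the commutator term we need
\[
\sigma^{\mathrm{BW}}_{\K(h)}\bigl(h^{-1}[\mathbf{P},\mathbf{A}]\bigr)=-i\,H_{\mathbf{p}}^{\omega_{h,\K(h)}}a\big|_{\mathbb{H}_F^*}.
\]
This is the uniform-calculus statement of Proposition~\ref{prop:unifpropagation}, applied to the admissible representatives $\mathbf{P}',\mathbf{A}'$ and restricted to $\mathbb{H}_F^*$. The restriction is legitimate precisely because the Hamiltonian flow of $\mathbf{p}$ preserves $\mathbb{H}_F^*$, so $H_{\mathbf{p}}a$ on $\mathbb{H}_F^*$ depends only on $a|_{\mathbb{H}_F^*}$. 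Finally,
\[
\omega_{h,\K(h)}=\omega_0+ih\K(h)\cdot\pi^*\mathbf{F}\to\omega_{\mathbf{l}}
\]
in $C^\infty$, so $H^{\omega_{h,\K(h)}}_{\mathbf{p}}a\to H^{\omega_{\mathbf{l}}}_{\mathbf{p}}a$ uniformly on $\supp a$. Taking limits yields the stated identity.

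\textbf{Main obstacle.}
The delicate step is justifying this commutator symbol in the Borel--Weil class. One must check that $\Pi_\K$ can be moved past $\mathbf{P}'$ and $\mathbf{A}'$ modulo $h^\infty$ using admissibility \eqref{eq:hol_preserved}. One must also check that the symbol of $[\mathbf{P}',\mathbf{A}']$ restricted to $\mathbb{H}_F^*$ involves only $a|_{\mathbb{H}_F^*}$. The second point is where the invariance of $\mathbb{H}_F^*$ under the flow, the hypothesis in Remark~\ref{rk:choicemeasure}, is essential.
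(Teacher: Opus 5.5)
Your proposal is correct and follows essentially the same route as the paper. For existence and positivity, both use the same reduction by $\Op_{h,\mathrm{BW}}(\langle\xi_{\mathbb{H}_F^*}\rangle^{-s})$, followed by Calder\'on--Vaillancourt, diagonal extraction, Riesz and G\aa rding. Both use elliptic inversion $\mathbf{A}=\mathbf{Q}\mathbf{P}+\mathcal{O}(h^\infty)$ for the support property. For propagation, your expansion via $\mathbf{P}^*=\mathbf{P}-2i\Im\mathbf{P}$ is the same computation as the paper's $h^{-1}\Im\langle\mathbf{P}v_h,\mathbf{A}v_h\rangle$, combined with the uniform-calculus symbol of $h^{-1}[\mathbf{A},\mathbf{P}]$.

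You are slightly more explicit than the paper about the limit $H^{\omega_{h,\K(h)}}_{\mathbf{p}}a\to H^{\omega_{\mathbf{l}}}_{\mathbf{p}}a$ and about using admissibility to move $\Pi_\K$ past the representatives. One minor slip: in the ellipticity step you should pair in $H^{s}_h\times H^{-s}_h$, since $v_h$ is only controlled in $H^{-s}_h$. This works because $\mathbf{Q}$ is compactly microsupported, so $\|\mathbf{Q}\mathbf{P}v_h\|_{H^s_h}\lesssim\|\mathbf{P}v_h\|_{H^{-N}_h}$.
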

By definition, $\mathbf{p}$ is an element of $S^m(\mathbb{H}_F^*)$ and is the restriction to the (co)horizontal space of the symbol $\sigma_{h,\K(h)}(\tilde{\mathbf{P}})$ where $\mathbf{P}:=\Pi_{\K}\tilde{\mathbf{P}}\Pi_{\K}+\mathbf{R}$, $\mathbf{R}\in h^\infty\Psi^{-\infty}$ (which is well-defined by \cite[Lemma 3.3.14]{CL24}). In general $H_{\sigma_{h,\K(h)}(\mathbf{\tilde{P}})}^{\omega_{\mathfrak{l}}}$ has no reason to be tangent $\mathbb{H}_F^*$, this is precisely the hypothesis we make in Proposition \ref{prop:propagation_measure}. After the proof, we provide a useful procedure for generating Hamiltonian functions verifying this.
\begin{proof}
    The proof is almost identical to that of \cite[E.44]{DZ19} in the scalar case. We quickly summarize the proof to emphasize the well-behaved properties of the Borel-Weil calculus. Consider $(h_k)$ such that $h_k\to 0$ as $k\to +\infty$ and write $\Lambda_{h,N}:=\Op_{h,\mathrm{BW}}(\langle \xi_{\mathbb{H}_F^*}\rangle^{-N})$.
    
    Consider the following functional 
    \begin{align*}
        I_{h}(a):=\langle \Op_{h,\mathrm{BW}}(a)v_h,v_h\rangle_{L^2_{\mathrm{hol}}(F,\El^{\K(h)})}, \,a\in C^{\infty}_c(\mathbb{H}_F^*).
    \end{align*}
    On the other hand, one notices that by composition 
    \begin{align*}
        \Op_{h,\mathrm{BW}}(a)=\Lambda_{h,N}^*\Op_{h,\mathrm{BW}}(a\langle \xi_{\mathbb{H}_F^*}\rangle^{2N})\Lambda_{h,N}\,+\,\mathcal{O}_{H_h^{-N}\to H_h^N}(h).
    \end{align*}
    Using this relation, the assumption on $(v_h)$ and $L^2$-boundedness in the uniform calculus gives the following bound
    \begin{align*}
        |I_h(a)|&\leq \|\Op_{h,\mathrm{BW}}(a\langle \xi_{\mathbb{H}_F^*}\rangle^{2N})\Lambda_{h,N}\,v_h\|_{L^2}\|v_h\|_{H^{-N}_{h,\mathrm{hol}}}+\mathcal{O}(h)\\
        &\leq C^2\sup_{(x,\xi)} |\langle \xi_{\mathbb{H}_F^*}\rangle^{-N}a(x,\xi)|+\mathcal{O}(h)\\
        &\lesssim \sup_{(x,\xi)} |a(x,\xi)|,
    \end{align*}
    with an implicit constant depending on $C$ and on the support of $a$. In the second inequality, we use the Calderon-Vaillancourt inequality \cite[Proposition 3.2.3]{CL24}. Taking $\{a_l\}$ a dense family of test functions in $C_c(\mathbb{H}_F^*)$, we can use the previous uniform bound for each $a_l$ to diagonally extract as in \cite[E.44]{DZ19} a subsequence $(I_{h_j}(a))$ which converges for every smooth $a$. We define $I:C^{\infty}_c(\mathbb{H}_F^*)\to \mathbb{R}$ by the previous limits. The uniform bound allows to extend $I$ to a continuous functional on $C^0_c(\mathbb{H}_F^*)$. The Riesz theorem gives the desired measure. That the latter is positive follows from the Gårding inequality in the BW calculus, which itself follows from the usual Gårding inequality, see \cite[Lemma 3.3.16]{CL24}.
   Since $I$ is real valued for real valued $a$, we conclude. For simplicity, we do not continue to label the extracted sequence that allowed to obtain $I_{h_j}(a)\to\mu(a)$.

    For the support property, notice that for all $\mathbf{A}\in \Psi^{\mathrm{comp}}_{h,\mathrm{BW}}$ chosen to be microsupported on $\{\mathbf{p}\neq 0\}$ there exists $\mathbf{Q}\in\Psi^{\mathrm{comp}}_{h,\mathrm{BW}}$ such that $\mathbf{A}=\mathbf{Q}\mathbf{P}+\mathbf{R}$ with $\mathbf{R}\in h^{\infty}\Psi^{-\infty}_{h,\mathrm{BW}}$. Using the estimate on $v_h$
    \begin{align*}
        \langle \mathbf{A}v_h,v_h\rangle &=\langle \mathbf{Q}(\mathbf{P}v_h),v_h\rangle +\mathcal{O}(h^{\infty})\\
        &\to 0,
    \end{align*}
    by definition of $\mu$ we obtain $\int \sigma_{h,\mathrm{BW}}(\mathbf{A})d\mu=0$. As a consequence $\supp(\mu)\subseteq \bigl\{\mathbf{p}=0\bigr\}$.

   As for the propagation property of $\mu$,
    we may construct $\mathbf{A}\in\Psi^{\mathrm{comp}}_{h,\mathrm{BW}}$ self-adjoint with $\sigma_{h,\K(h),\mathrm{BW}}(\mathbf{A})=:a\in C^{\infty}_c(T^*F)$. Then from $\mathbf{P}v_h=o_{H^{-N}_h}(h)$
    \begin{align*}    \dfrac{\Im(\langle\mathbf{P}v_h,\mathbf{A}v_h\rangle)}{h}=\dfrac{\langle [\mathbf{A},\mathbf{P}]v_h,v_h\rangle}{2ih}+\dfrac{\langle \Im(\mathbf{P})\mathbf{A}v_h,v_h\rangle}{ih}\to 0,
    \end{align*}
    By Proposition \ref{prop:unifpropagation} we ontain the desired conclusion.

\end{proof}
The assumption that the Hamiltonian flow of $\mathbf{P}$ preserves the co-horizontal space on $F$ seems important. Otherwise, since our test functions $a$ are defined on $\mathbb{H}_F^*$, $H^{\omega_{h,\mathbf{k}(h)}}_{\mathbf{p}}a$ does not make sense. However, for us this will always be verified for $\mathbf{P}$ since it will always be of the form $\nabla^{\K}_{X^{\mathbb{H}_F}}$, $X\in C^{\infty}(SM,T(SM))$ (and the Hamiltonian flow of basic vector fields preserves $\mathbb{H}_F^*$, see \cite[Proposition 3.2.15]{CL24} or the proof below).
    More generally, an easy way to generate such $\mathbf{p}$ is to pullback a smooth function $f$ on $T^*M$ to $T^*F$ by 
    \begin{align*}
        \pi^*f\,(z,D_{hT}\pi^{\top}(\xi)+\eta):=f(\pi(z),\xi),\,\,z\in F,\xi\in T^*_{\pi(z)}M,\eta\in \mathbb{V}_F^*(z).
    \end{align*}
    \begin{proposition}\label{prop:horizontal_preserved}
        $\pi^*f\in C^{\infty}(T^*F)$ has a Hamiltonian flow $(\Phi_t^{\omega_\mathbf{l},\pi^*f})$ that preserves $\mathbb{H}_F^*$ for any $\mathbf{l}\in (i\mathfrak{t})^*$.
    \end{proposition}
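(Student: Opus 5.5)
We prove that $H^{\omega_{\mathbf{l}}}_{\pi^*f}$ is tangent to $\mathbb{H}_F^*$. Since $\mathbb{H}_F^*$ is a closed submanifold of $T^*F$ (a subbundle), tangency of the Hamiltonian vector field gives invariance of $\mathbb{H}_F^*$ under the flow $(\Phi_t^{\omega_\mathbf{l},\pi^*f})$.

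\textbf{Local coordinates.} Work locally over a trivializing chart $U\subset M$ of $P$, so that $F_{|U}\cong U\times G/T$. Pick coordinates $x$ on $U$ and $z$ on $G/T$, with dual momenta $\xi$ and $\zeta$. The connection $\mathbb{H}_F$ is spanned by vector fields
\[
\partial_{x_j}+\sum_\alpha A_j^\alpha(x,z)\partial_{z_\alpha},
\]
where the coefficients $A_j^\alpha$ come from the horizontal lift of $\mathbb{H}_P$ (infinitesimal left action of $\mathfrak{g}$ on $G/T$). Its annihilator is
\[
\mathbb{H}_F^* = \{\xi_j + \textstyle\sum_\alpha A^\alpha_j\zeta_\alpha = 0\}.
\]
Equivalently, $\mathbb{H}_F^*$ is the conormal-type space $\{\zeta = 0\}$ in adapted coordinates $(x,z,\tilde\xi,\zeta)$, where $\tilde\xi$ pairs with horizontal lifts and $\zeta$ with vertical vectors.

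In these adapted coordinates one has $\pi^*f(x,z,\tilde\xi,\zeta)=f(x,\tilde\xi)$, independent of $z$ and $\zeta$. More intrinsically, $\pi^*f=f\circ\Theta$, where $\Theta:T^*F\to T^*M$ is the map sending $\xi$ to its restriction to $\mathbb{H}_F$, pushed down by $d\pi$.

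\textbf{Step 1: the untwisted case $\omega_0$.} Write $H_{\pi^*f}^{\omega_0}$ in terms of the lifts of the coordinate vector fields. Since $\pi^*f$ depends on the fibre variable only through $\tilde\xi$, the vertical (in $T^*F\to F$) component of the Hamiltonian field is $-d_{(x,z)}\pi^*f$, and its horizontal component is the horizontal lift of $\partial_\xi f$. Tangency to $\{\zeta=0\}$ amounts to the vanishing of the $\zeta$-component of the Hamiltonian field, i.e. of $\{\pi^*f,\zeta_\alpha\}$, evaluated on $\zeta=0$.

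Using $\{p_X,p_Y\}=p_{[X,Y]}$, write $\pi^*f$ as a function of the momenta $p_{\tilde e_j}$ of horizontal lifts $\tilde e_j$ of a local frame $e_j$ of $TM$, and take $\zeta_\alpha=p_{V_\alpha}$ for a vertical frame $V_\alpha$. The bracket $\{\pi^*f,\zeta_\alpha\}$ then expands into terms $\partial f\cdot p_{[\tilde e_j,V_\alpha]}$ plus the term $V_\alpha(\pi^* f)$. The latter vanishes, since $\pi^*f$ does not depend on $z$ through the base coefficients. For the former, choose the $V_\alpha$ to be the fundamental vector fields of the left $G$-action restricted to a fibre. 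Horizontal lifts are $G$-invariant, so $[\tilde e_j,V_\alpha]$ is vertical. Hence $p_{[\tilde e_j,V_\alpha]}$ vanishes on $\mathbb{H}_F^*$.

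\textbf{Step 2: adding the twist.} By \eqref{eq:vfield_symplectic}, the twisted bracket picks up the extra term $ih\mathbf{l}\cdot\mathbf{F}_{\overline{\nabla}}(\tilde e_j,V_\alpha)$. I must show this vanishes, i.e. that the curvature of $\overline{\nabla}$ has no mixed horizontal–vertical component. This uses the construction \eqref{eq:fullconnection}: the connection $\mathbb{H}_{P\to F}=\mathbb{H}_P\oplus\Re(\mathfrak{n}^+\oplus\mathfrak{n}^-)$ has curvature computed from the $\mathfrak{t}$-component of brackets of lifts. The bracket of an $\mathbb{H}_P$-lift (which is $G$-invariant) with a fundamental field generated by $\mathfrak{n}^\pm$ is zero. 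So the mixed component of the curvature vanishes, and the twisted Poisson bracket reduces to the untwisted one on these pairs. The remaining horizontal–horizontal curvature terms only affect the horizontal momenta, not the $\zeta$-components.

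\textbf{Main obstacle.} The hardest step is this curvature computation: verifying that $\mathbf{F}_{\overline{\nabla}}(\mathbb{H}_F,\mathbb{V}_F)=0$, with the correct identification of vertical vector fields on $G/T$ as projections of right-invariant fields. I would first do it on $P$ using the connection form $\theta$ on $P\to F$ and its structure equation $d\theta+\tfrac12[\theta,\theta]$, projected onto $\mathfrak{t}$. Alternatively, I would cite \cite[Proposition 3.2.15]{CL24} for the case of linear-in-$\xi$ symbols and extend it to general $f$ by the chain rule.
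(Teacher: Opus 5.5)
Your argument is correct in substance and takes a different route from the paper. The paper works in local product coordinates $(x,y)$ on $F$ with the adapted coframe $dx_i$, $\tilde y_\alpha=dy_\alpha-\sum_i A_i^\alpha dx_i$. It writes $\omega_0$ out explicitly, adds the magnetic term using $\mathbf{F}_{\overline{\nabla}}(\mathbb{H}_F,\mathbb{V}_F)=0$ (cited from \cite[Lemma 2.2.11]{CL24}), and solves $\iota_X\omega_{\mathbf{l}}=D(\pi^*f)$ component by component. The $d\tilde p$-components force the projection of the Hamiltonian field to $F$ to be horizontal, and the $\tilde y$-components then give $\tilde P^\alpha=0$ on $\{\tilde p=0\}$.

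You instead reduce tangency to the vanishing of $\{\pi^*f,p_{V_\alpha}\}^{\omega_{\mathbf{l}}}$ on $\{p_{V_\alpha}=0\}$, and expand it with the derivation property and (\ref{eq:vfield_symplectic}). This isolates the two geometric inputs: $[\tilde e_j,V_\alpha]$ is vertical, and the curvature has no mixed horizontal--vertical component. In the paper's computation these show up as the $\tilde p\,\partial_yA$ terms and the absence of $\gamma^{i\alpha}$ terms. Your version is shorter and frame-independent. Your sketch of the mixed-curvature vanishing (for $Z\in\mathfrak{t}^\perp$, $[X^{\mathbb{H}_P},\sigma(Z)]=0$ by right-invariance, so $d(\Pi_{\mathfrak t}\theta)(X^{\mathbb{H}_P},\sigma(Z))=0$) makes the proof self-contained where the paper cites \cite{CL24}. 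The paper's computation, in exchange, exhibits every component of $H^{\omega_{\mathbf{l}}}_{\pi^*f}$. Your fallback, tangency for linear symbols from \cite[Proposition 3.2.15]{CL24} plus the chain rule $H_{f(g_1,\dots,g_k)}=\sum_i\partial_if\,H_{g_i}$, also works.

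Three things to clean up.

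(a) The set $\{\xi_j+\sum_\alpha A^\alpha_j\zeta_\alpha=0\}=\{\tilde\xi=0\}$ is the annihilator of $\mathbb{H}_F$, which the paper calls $\mathbb{V}_F^*$. The co-horizontal space $\mathbb{H}_F^*$ is the annihilator of $\mathbb{V}_F$, i.e. $\{\zeta=0\}$. So your two ``equivalent'' descriptions are in fact complementary subbundles. Only $\{\zeta=0\}$ is used afterwards, so just drop the first.

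(b) Your reason for $[\tilde e_j,V_\alpha]$ being vertical is not right. The right $G$-action on $P$ does not descend to $F=P/T$, and in a local trivialization the horizontal lifts are not invariant under the left $G$-action on $G/T$; their brackets with its fundamental fields are nonzero vertical fields in general. The correct reason is simpler and holds for any vertical frame: $\tilde e_j$ is $\pi$-related to $e_j$ and $V_\alpha$ to $0$, so $[\tilde e_j,V_\alpha]$ is $\pi$-related to $0$. Right-invariance is the right tool on $P$, which is where you use it in Step~2.

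(c) In Step~2 you should also say why the base-coordinate term is unaffected by the twist. Since $H^{\omega_0}_{x_i\circ\pi}$ is tangent to the fibres of $T^*F\to F$, we have $\iota_{H^{\omega_0}_{x_i\circ\pi}}\pi_{T^*F}^*\mathbf{F}_{\overline{\nabla}}=0$. Hence $H^{\omega_{\mathbf{l}}}_{x_i\circ\pi}=H^{\omega_0}_{x_i\circ\pi}$ and $\{x_i\circ\pi,p_{V_\alpha}\}^{\omega_{\mathbf{l}}}=\pm V_\alpha(x_i\circ\pi)=0$.
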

   \begin{proof}
        
    We may prove this statement using local coordinates. Since $F\to M$ is a fiber bundle (whose rank is $m=\frac{n(n-1)}{2}-d$), choose local product coordinates $(x_1,\cdots,x_n,y_1,\cdots,y_m)$ on $U\subset F$ where the $y$ parameter stand for the fiber coordinates. We pick a local basis for the horizontal space $\mathbb{H}_F$ and the vertical space $\mathbb{V}_F$
    \begin{align*}
    H^i &= \partial_{x_i} + \sum_{j=1}^m A_i^j \partial_{y_j}, \quad 1 \leq i \leq n, \\
    V^j &= \partial_{y_j}, \quad 1 \leq j \leq m
\end{align*}

    The adapted coframe is spanned by $(dx_i)_{1\leq i \leq n}$ (which is a local basis of $\mathbb{H}_F^*$) and
    \begin{align*}
        \tilde{y}_{j}=dy_j-\sum_{i=1}^n A_i^jdx_i,
    \end{align*}
    it is clear that $\tilde{y}_j(V_i)=\delta_{ij}$ and $\tilde{y}_j(H_i)=0$, meaning that $(\tilde{y}_j)_{1\leq j \leq m}$ is a local basis of $\mathbb{V}_F^*$ on $U$. We dropped the exterior derivative notation for $\tilde{y}_j$ as we want to stress that this form is not closed: curvature terms of the connection $\mathbb{H}_F$ appears when applying the exterior derivative. A $1$-form on $T^*U$ decomposes as $\sum_ip^idx_i+\sum_\alpha\tilde{p}^\alpha \tilde{y}_{\alpha}$. With respect to the coordinate chart $(x,y,p,\tilde{p})$ of $T^*U$, the Liouville $1$-form reads as $\alpha=\sum_ip^idx_i+\sum_\alpha\tilde{p}^\alpha d\tilde{y}_{\alpha} $. By differentiating $\alpha$, the canonical symplectic form $\omega_0$ on $T^*U$ is
    \begin{align*}
\omega_0
&= \sum_i dp^i \wedge dx_i
 + \sum_{\alpha} d\tilde{p}^\alpha \wedge \tilde{y}_{\alpha}
 + \sum_{\alpha}\tilde{p}_{\alpha}
   \sum_{i,j}\partial_{x_j}A^\alpha_i\,dx_j\wedge dx_i \\
&\quad
 + \sum_{\alpha}\tilde{p}_\alpha
   \sum_{i,\beta}\partial_{y_{\beta}}A_i^\alpha\,
   \tilde{y}^\beta \wedge dx_i .
\end{align*}
    Let $X$ be a vector field on $T^*U$, it decomposes in coordinates as $\sum_i X^iH^i+P^i\partial_{p_i}+\sum_{\alpha}Y^{\alpha}V^\alpha+\tilde{P}^{\alpha}\partial_{\tilde{p}_{\alpha}}$. Contracting $\omega_0$ by $X$, we obtain 
    \begin{align*}
        \iota_X\omega_0=&\sum_i P^idx_i-X^idp_i\,+\,\sum_{\alpha}\tilde{P}^\alpha\tilde{y}_{\alpha}-Y^{\alpha}d\tilde{p}_{\alpha}\\
        &+\sum_\alpha \tilde{p}_\alpha\sum_{i,j}X^j\partial_{x_j}A^\alpha_i\,dx_i-X^i\partial_{x_j}A^\alpha_i\,dx_j\\
        &+\sum \tilde{p}_{\alpha}\sum_{i,\beta} Y^\beta \partial_{y_\beta}A^\alpha_i dx_i-X^i\partial_{y_\beta}A^\alpha_i \tilde{y}^\beta.
    \end{align*}
    We need to take into account the magnetic part of $\omega_\mathbf{l}$, since $\mathbf{F}_{\overline{\nabla}}(\mathbb{H}_F,\mathbb{V}_F)=0$ by \cite[Lemma 2.2.11]{CL24}, we write locally
    \begin{align*}
        \mathbf{F}_{\overline{\nabla}}=\gamma^{ij}dx_i\wedge dx_j+\gamma^{\alpha\beta}\tilde{y}^\alpha\wedge \tilde{y}^\beta.
    \end{align*}
The differential of the Hamiltonian function $\pi^*f$ writes in local coordinates 
\begin{align*}
    D(\pi^*f)=\sum_i\partial_{x_i}fdx_i+\partial_{p_i}fdp_i.
\end{align*}
By identification of the $d\tilde{p}^\alpha$ components in $i_X\omega_\mathbf{l}=D(\pi^*f)$ we obtain $Y^\alpha=0$: the Hamiltonian flow of $\pi^*f$ does not leave the fibers of $U\to M$. The co-horizontal space $\mathbb{H}_F^*\cap T^*U$ is defined by the equation $\tilde{p}_{\alpha}=0$, $1\leq \alpha\leq m$. By identification of the $\tilde{y}^\alpha$ components and using that $Y^\alpha=0$ together with $\tilde{p}_{\alpha}=0$
\begin{align*}
    \tilde{P}^\alpha=0,
\end{align*}
which in turn implies that $X=H^{\omega_\mathbf{l}}_{\pi^*f}$ integrate locally to a flow living in $\mathbb{H}_F^*$.

   \end{proof}
   Notice that the assumption that $\mathbf{F}_{\overline{\nabla}}$ has no mixed horizontal - vertical component was key in the previous proof. Otherwise, the identification of the $\tilde{y}^{\alpha}$ component would contain non-vanishing terms of the form $X^i\gamma^{i\alpha}$, preventing us from concluding $\tilde{P}^{\alpha}=0$.
   
   Example of such $\pi^*f$ are $\xi(X^{\mathbb{H}_F})$ for $X\in C^{\infty}(M,TM)$ (so $f=\xi(X)$ on $T^*M$) or $|\xi_{\mathbb{H}_F^*}|^2$, the principal symbol of the horizontal Laplacian on $F$.
\begin{remark}\label{rk:choicemeasure}  We have chosen to define semiclassical measures in the setting of the Borel-Weil calculus, thus the connection $\overline{\nabla}$ is canonical once a choice of connection $\mathbb{H}$ on $P$ is done. Another choice of horizontal space $\mathbb{H}'$ on $P\to M$ would give another semiclassical measure. Let us write $\overline{\nabla}'$ the corresponding family of unitary connections on $\mathbf{L}\to F$. The difference $\overline{\nabla}'-\overline{\nabla}$ is a $1$-form on $F$. Consider a sequence $(v_h)$ in $\mathcal{D}'(F,\El^{\K(h)})$ which converges in the sense of semiclassical measure to $\mu^{\overline{\nabla}}$ (meaning the quantization used is with respect to $\overline{\nabla}$, see Definition \ref{def:bwquantiz}). Then one can notice that $(v_h)$ also converges in the semiclassical sense, with respect to the quantization involving $\overline{\nabla}'$, to the Radon measure $\mu^{\overline{\nabla}'}$. Moreover $\mu^{\overline{\nabla}'}:=(T_{\overline{\nabla}'-\overline{\nabla}})_*\mu^{\overline{\nabla}}$, see (\ref{eq:wfconnection}) for the definition of the translation $T_{\bullet}$ on $T^*F$.
\end{remark}
\section{Frame bundle of hyperbolic manifolds}\label{section:framebundleandhyp} 
\subsection{Frame bundle and frame flow}\label{subsec:frameflow} Let $(M^n,g)$ be an oriented Riemannian manifold and write $SM$ its unit tangent bundle and $\nabla^{LC}$ the Levi-Civita connection on $TM$. We denote by $FM\to M$ the so-called \textit{frame bundle} of $M$. Thanks to the metric $g$, it is a $SO(n)$-principal bundle whose fiber over $x\in M$, $(FM)_x$, consists of the set of oriented orthonormal basis of $T_xM$. Define $\mathcal{N}\to SM$ the \textit{normal bundle} of $M$, the vector bundle consisting over each point $(x,v)\in SM$ of the vector space $v^{\perp}$ (with respect to $g$). $\mathcal{N}$ is a sub-bundle of $\pi_{SM\to M}^*(TM)$ which can be equipped with the metric $g$. More precisely, we can restrict the pulled-back metric $\pi_{SM\to M}^*g$ on $\pi_{SM\to M}^*(TM)$ to $\mathcal{N}\hookrightarrow \pi_{SM\to M}^*(TM)$. The \textit{unitary frame bundle} $\pi:FM\to SM$ is an $SO(n-1)$-principal bundle over $SM$ and is the bundle of oriented and orthonormal frames of $\mathcal{N}$. We will write $\mathbb{V}_{FM}$ the vertical space of this principal bundle (\textit{i.e} $\ker(D\pi)$), and $g_{FM}$ its canonical metric which locally writes as the product of the Sasaki metric on $SM$ and of the usual bi-invariant metric on $SO(n-1)$. 

By construction, the normal bundle $\mathcal{N}$ is a vector bundle associated to the standard vector representation of $SO(n-1)$ on $\mathbb{R}^{n-1}$ 
\begin{align}\label{eq:normalbundle}
    \mathcal{N}\cong FM\times_{\rho_{\mathrm{vect}}}\mathbb{R}^{n-1}
\end{align}
The vector bundle $\pi_{SM\to M}^*(TM)\to SM$ is canonically endowed with the pullback connection $\pi_{SM\to M}^*\nabla^{LC}$, we therefore equip $\mathcal{N}$ with the connection $p_{\mathcal{N}}^\perp(\pi_{SM\to M}^*\nabla^{LC})$ where $p_{\mathcal{N}}^\perp:\pi_{SM\to M}^*TM\to \mathcal{N}$ is the orthogonal projection with respect to $g$.

Write $(\varphi_t)$ the geodesic flow on $SM$ and for $(x,v)\in SM$, denote $\mathcal{P}_t:\mathcal{N}_{(x,v)}\to \mathcal{N}_{\varphi_t(x,v)}$ the parallel transport map along $(\varphi_t)$ induced by the previous connection on $\mathcal{N}$.
The frame flow $(\Phi_t)$ on $FM$ is then defined as the parallel transport of the frames of $\mathcal{N}$ along the orbits of the geodesic flow. Precisely, it is defined by
\begin{align}\label{eq:frameflow}
    \Phi_t(x,v,e_1,\dotsc,e_{n-1})=(\varphi_t(x,v),\mathcal{P}_t(e_1),\dotsc,\mathcal{P}_t(e_{n-1}))\in (FM)_{\varphi_t(x,v)}
\end{align}
where $(e_1,\dotsc,e_{n-1})\in (FM)_{(x,v)}$. We will write $X_{FM}$ the section of $T(FM)$ that generates $(\Phi_t)$. This flow is an \textit{isometric extension of the geodesic flow} in the sense of Definition \ref{def:partiallyhypcompact}, with $P=FM$.

The main consequence of the isometric extension property of Anosov flow is its \textit{partially hyperbolicity}.
\begin{proposition}\label{prop:framepartialhyp}
    There exists two sub-bundles $E^s_{FM},E^u_{FM}$ of $T(FM)$, both of dimension $n-1$, such that 
    \begin{itemize}
        \item $E^s_{FM}$ and $E^u_{FM}$ are $d\Phi_t$ invariant for all $t\in \mathbb{R}$.
        \item $T(FM)=\mathbb{R}X_{FM}\oplus E^u_{FM}\oplus E^s_{FM}\oplus \mathbb{V}_{FM}$.
        \item There exists $C>0$ and $\nu>0$ such that $\|d\varphi_t(v)\|_{\varphi_t(p)}\leq Ce^{-\nu t}\|v\|_p$ for $v\in (E^s_{FM})_p$ and $t>0$, $\|d\varphi_{-t}(v)\|_{\varphi_{-t}(p)}\leq Ce^{-\nu t}\|v\|_p$ for $v\in (E^u_{FM})_p$ and $t>0$.
    \end{itemize}
\end{proposition}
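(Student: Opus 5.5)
The plan is to build $E^s_{FM}$ and $E^u_{FM}$ as graphs over the base bundles $E^s_{SM}$ and $E^u_{SM}$. Each graph is a horizontal lift corrected by a vertical term, obtained by integrating the curvature of the natural connection along the flow. The only inputs are the Anosov property of $(\varphi_t)$, which holds in negative curvature, and the two relations (\ref{eq:partiallyhypcompact}). Throughout, $\pi:FM\to SM$ is the bundle projection.

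\textbf{Step 1: choice of connection and the two basic relations.} I take the principal connection $\mathbb{H}_P$ on $FM\to SM$ induced by the connection $p_{\mathcal{N}}^\perp(\pi_{SM\to M}^*\nabla^{LC})$ on $\mathcal{N}$, using (\ref{eq:normalbundle}). Let $\Omega$ be its curvature, an $\mathfrak{so}(n-1)$-valued $2$-form. By the definition (\ref{eq:frameflow}) of the frame flow as parallel transport, $X_{FM}=X^{\mathbb{H}_P}$ is the horizontal lift of the geodesic vector field $X$.

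The first relation concerns vertical vectors. For $\xi\in\mathfrak{so}(n-1)$, let $\xi^\#$ be the fundamental vertical field. Since $\Phi_t\circ R_g=R_g\circ\Phi_t$, we get $d\Phi_t(\xi^\#_p)=\xi^\#_{\Phi_t p}$. Hence $\mathbb{V}_{FM}$ is $d\Phi_t$-invariant. Moreover $d\Phi_t$ is an isometry on it for the metric coming from the bi-invariant metric on $SO(n-1)$.

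The second relation concerns horizontal lifts. For $Y$ a vector field on $SM$, the standard identity is
\begin{align*}
[X^{\mathbb{H}_P},Y^{\mathbb{H}_P}]=[X,Y]^{\mathbb{H}_P}-\bigl(\Omega(X,Y)\bigr)^\#,
\end{align*}
with $\Omega$ read through its equivariant lift to $FM$. Integrating it along orbits should give, for $w\in T_{\pi(p)}SM$,
\begin{align*}
d\Phi_t(w^{\mathbb{H}_P}_p)=\bigl(d\varphi_t w\bigr)^{\mathbb{H}_P}_{\Phi_tp}+\Bigl(\int_0^t \Omega_{\Phi_s p}\bigl(X,d\varphi_s w\bigr)\,ds\Bigr)^\#_{\Phi_tp},
\end{align*}
up to a sign convention. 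The verification is an ODE: differentiate $t\mapsto d\Phi_{-t}\bigl((d\varphi_tw)^{\mathbb{H}_P}\bigr)$ and use the bracket identity together with the invariance of $\xi^\#$.

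\textbf{Step 2: definition of $E^s_{FM}$ and $E^u_{FM}$.} For $w\in E^s_{SM}(\pi(p))$ I set
\begin{align*}
\widetilde{w}_p:=w^{\mathbb{H}_P}_p+\Bigl(\int_0^{\infty}\Omega_{\Phi_s p}\bigl(X,d\varphi_s w\bigr)\,ds\Bigr)^\#_p ,
\end{align*}
and define $E^s_{FM}(p):=\{\widetilde{w}_p : w\in E^s_{SM}(\pi(p))\}$. The integral converges absolutely and uniformly in $p$, because $\Omega$ is bounded on the compact manifold $FM$ and $|d\varphi_s w|\le Ce^{-\nu s}|w|$ by (\ref{eq:contractionintro}). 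So $E^s_{FM}$ is a continuous subbundle, of dimension $\dim E^s_{SM}=n-1$.

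Step 1 then gives
\begin{align*}
d\Phi_t\widetilde{w}_p=(d\varphi_tw)^{\mathbb{H}_P}+\Bigl(\int_t^\infty\Omega(X,d\varphi_sw)\,ds\Bigr)^\#=\widetilde{(d\varphi_t w)}_{\Phi_tp},
\end{align*}
where the second equality is a change of variables $s\mapsto s+t$ combined with the cocycle property of $d\varphi$. This proves invariance. For contraction, both the horizontal part and the tail integral are bounded by $C'e^{-\nu t}|w|$, and $|w|\lesssim|\widetilde{w}_p|$ uniformly in $p$, which gives the stated exponential bound.

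$E^u_{FM}$ is defined the same way with $w\in E^u_{SM}$ and $-\int_{-\infty}^0$. The same argument applies for $t<0$.

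\textbf{Step 3: the splitting.} The differential $d\pi$ maps $\mathbb{R}X_{FM}\oplus E^u_{FM}\oplus E^s_{FM}$ isomorphically onto $\mathbb{R}X\oplus E^u_{SM}\oplus E^s_{SM}=T(SM)$, and $\mathbb{V}_{FM}=\ker d\pi$. The dimensions add up to $\dim FM=(2n-1)+\dim SO(n-1)$, so the sum is direct and equals $T(FM)$. Continuity of the bundles follows from the uniform convergence in Step 2. Their smoothness is not claimed in the statement, although in the hyperbolic case it follows because $E^{u,s}_{SM}$ are smooth.

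\textbf{Expected obstacle.} I expect the main difficulty to be the variation formula in Step 1. One has to fix the sign and Ad-equivariance conventions for $\Omega$ so that the vertical correction is genuinely a single vector $\xi^\#$ that is transported trivially. This relies on the commutation $\Phi_t\circ R_g=R_g\circ\Phi_t$. Once that formula is established, everything else is an exponential-tail estimate.
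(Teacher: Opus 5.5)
Your argument is correct, and it takes a genuinely different route from the paper.

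\textbf{The paper's route.} The paper does not write out a general proof. It presents the proposition as a consequence of the isometric-extension structure. In the hyperbolic setting it actually uses, it reads the bundles off the Lie algebra. The left-invariant fields $U_i^\pm$ descend to $FM$, and the relation $[\mathfrak{X},U_i^\pm]=\pm U_i^\pm$ gives $d\Phi_t U_i^\pm=e^{\mp t}U_i^\pm$. The splitting then comes from $\mathfrak{so}(1,n)=\mathfrak{g}\oplus\mathfrak{a}\oplus\mathfrak{n}^+\oplus\mathfrak{n}^-$.

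\textbf{Your route.} You treat an arbitrary closed negatively curved $M$. You build $E^{s,u}_{FM}$ as graphs over $E^{s,u}_{SM}$, correcting the Levi-Civita horizontal lifts by the integrated curvature cocycle. This proves the proposition in the generality in which it is stated, with the base rate $\nu$. In general, however, it only produces continuous (H\"older) bundles.

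\textbf{What each buys.} The algebraic route is limited to constant curvature, but it gives smooth, explicitly known bundles with rate exactly $1$. It also shows that $\mathbb{R}X_{FM}\oplus E^u_{FM}\oplus E^s_{FM}$ is the horizontal space of a smooth connection, which is what the Borel--Weil calculus later needs.

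\textbf{Agreement in the hyperbolic case.} There $\iota_{X_{FM}}\Omega=0$: combine the proof of Lemma \ref{lemma:curvature} with Remark \ref{rk:LCvsDyn}. So your vertical correction vanishes. Your $E^u_{FM}$ and $E^s_{FM}$ are then just the horizontal lifts of $E^u_{SM}$ and $E^s_{SM}$, namely the spans of the $U^-_i$ and the $U^+_i$ respectively.

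\textbf{A sign to fix.} With the bracket identity as you wrote it, the ODE argument gives the Step 1 variation formula with a plus sign. The Step 2 correction must therefore be $-\int_0^\infty$ for $E^s_{FM}$ and $+\int_{-\infty}^0$ for $E^u_{FM}$. With your signs, $d\Phi_t\widetilde{w}_p$ would pick up $\int_0^t+\int_0^\infty$ instead of $\int_t^\infty$, and invariance would fail as written.
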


    The constant $\nu$ appearing in the definition may be chosen to be equal to the one appearing in the definition of the Anosov flow $(\varphi_t)$.

Importantly, the sub-bundle $\mathbb{R}X_{FM}\oplus E^u_F\oplus E^s_F$ being complementary to $\mathbb{V}$ in $T(FM)$, we can define the \textit{dynamical connection on $FM$} 
\begin{align}\label{eq:horizontal}
    \mathbb{H}^{\mathrm{dyn}}_{FM\to SM}:=\mathbb{R}X_{FM}\oplus E^u_{FM}\oplus E^s_{FM}.
\end{align}
Let us recall that in full generality the bundles $E_{FM}^{u,s}$ are merely Hölder regular, see \cite{B25} for an extensive treatment of related questions. Thus the horizontal distribution is $C^{\alpha}$ for $\alpha>0$. This connection can equivalently be seen as a $\mathfrak{g}$-valued $1$-form $\Theta\in C^{\alpha}(T^*(FM)\otimes \mathfrak{g})$. However in the setting of hyperbolic manifolds, $\alpha=\infty$.
\subsubsection{Frame bundle  and frame flow on hyperbolic manifolds}\label{subsub:framebundlehyp} The following algebraic description of hyperbolic manifolds and their frame bundle follows \cite[\S3.1]{DyatlovFaureGuillarmou2015}. The universal model used here for hyperbolic manifolds is 
\begin{align*}
    \mathbb{H}^{n} := \bigl\{ x \in \mathbb{R}^{n+1} \,|\, \langle x, x \rangle_{\mathbb{M}} := x_0^2 - \sum_{j=1}^{n} x_j^2 = 1, x_0>0 \bigr\}.
\end{align*}
We will assume from now on that $n\geq 3$. The group of orientation preserving isometries of $\mathbb{H}^n$ is $PSO(1,n)=SO(1,n)/\{\pm Id\}$. Its Lie algebra can be described as follows 
\begin{align}\label{eq:splittinghypframe}
    \mathfrak{so}(1,n)&=\Bigl\{
\begin{pmatrix} 0_{1\times 1} & 0 \\ 0 & \mathbf{k} \end{pmatrix} : \mathbf{k} \in \mathfrak{so}(n)
\Bigr\}
\oplus
\Bigl\{
\begin{pmatrix} 0 & \mathbf{p}^{\top} \\ \mathbf{p} & 0 \end{pmatrix} : \mathbf{p} \in \mathbb{R}^{n}
\Bigr\}\\
&:=\mathfrak{g}'\oplus \mathfrak{p}\notag,
\end{align}
where $\mathfrak{so}(n)$ is the set of antisymmetric matrices of $\mathbb{R}^{n+1}$. Note that $\mathfrak{g}'$ integrate through the exponential map of $PSO(1,n)$ to the maximal compact subgroup $G'\cong SO(n)$. This subgroup is the isotropy group of the element $e_0=(1,0,\dotsc,0)\in \mathbb{H}^n$. We thus have the identification $PSO(1,n)/G'\cong \mathbb{H}^n$, and seeing $SO(n-1)$ as a subgroup $G$ of $G'$  gives that $PSO(1,n)/G\cong S\mathbb{H}^n$. Geometrically, $G$ is the isotropy subgroup of $(e_0,e_1)\in T_{e_0}\mathbb{H}^n$. The frame bundle of the universal cover, $F\mathbb{H}^n$, identifies with $PSO(1,n)$.

The following subalgebra of $\mathfrak{so}(1,n)$ is of importance 
\begin{align*}
    \mathfrak{a}:=\Bigl\{
\begin{pmatrix} 0 & \mathbf{t}^\top \\ \mathbf{t} & 0_{n,n} \end{pmatrix}, \mathbf{t}=(t,\dotsc,0,0)^{\top}, t\in \R
\Bigr\}.
\end{align*}
Let us write $\mathfrak{g}$ the Lie algebra of $G$. In $\mathfrak{so}(1,n)$, there exists two further canonical Lie subalgebras 
\begin{align}\label{eq:rootspacehyp}
    \mathfrak{n}^{\pm}:=\Biggl\{
\begin{pmatrix} 0 & 0 & -\mathbf{v}^\top \\ 0 & 0  & \mp\mathbf{v}^\top \\ - \mathbf{v} & \pm\mathbf{v} & 0_{n-1,n-1} \end{pmatrix}, \mathbf{v}\in\mathbb{R}^{n-1}
\Biggr\},
\end{align}
this clearly gives the following decomposition 
\begin{align}\label{eq:splittinghypcartan}
    \mathfrak{so}(1,n)=\mathfrak{g}\oplus \mathfrak{a}\oplus \mathfrak{n}^+\oplus \mathfrak{n}^-.
\end{align}
Those algebras give an algebraic description of the stable and unstable manifolds of the frame flow (and of the geodesic flow) on hyperbolic manifolds. Introduce the following elements of $\mathfrak{so}(1,n)$ 
\begin{align}\label{eq:commutationrelatalgebra}
    \mathfrak{X}:= E_{0,1}+E_{1,0}, \, R_{i,j}:= E_{i,j}-E_{j,i}, \, U_i^{\pm}:=-(E_{0,i+1}+E_{i+1,0})\mp R_{1,i+1}\, ,
\end{align}
for $1\leq i <j \leq n$ and $E_{i,j}$ the usual basis elements for size $n+1$ matrices. We readily notice that $\mathfrak{g}$ is spanned by $(R_{i,j})_{2\leq i,j\leq n}$ and $\mathfrak{n}^{\pm}$ by $(U_i^{\pm})_{1\leq i \leq n-1}$.

The commutation relations of those elements will appear to be of importance in the following sections 
\begin{align}\label{eq:commutation_algebraic}
&[\mathfrak{X}, U_i^\pm] = \pm U_i^\pm, \quad
[U_i^\pm, U_j^\pm] = 0, \quad
[U_i^+, U_i^-] = 2\mathfrak{X}, \quad 
[U_i^\pm, U_j^\mp] = 2 R_{i+1,j+1},  \\
&\,\,\,\,\,\,\,\,\,\,\,\,\,\,\,\,[R_{i+1,j+1}, \mathfrak{X}] = 0, \quad
[R_{i+1,j+1}, U_k^\pm] = \delta_{jk} U_i^\pm - \delta_{ik} U_j^\pm \notag,
\end{align}
with $1\leq i,j,k \leq n-1$ and $i\neq j$. From now on, we will identify elements in the Lie algebra with their corresponding left-invariant vector field on the corresponding Lie group.

Let us fix $\Gamma$ a torsion-free co-compact lattice of $PSO(1,n)$. The quotient space $\Gamma\setminus PSO(1,n)/G':=M$ is an hyperbolic manifold. $\mathfrak{X}$ induces a left-invariant vector field $X_{F\mathbb{H}^n}$, the generator of the frame flow on the universal cover. Quotienting by $\Gamma$ gives $X_{FM}$, the generator of the frame flow on $FM$. It also descends to a vector field $X$ on $SM:=\Gamma \setminus PSO(1,n)/G$, generating the geodesic flow on $SM$. 
The relations (\ref{eq:commutation_algebraic}) show that the frame flow $(\Phi_t)$ on $M$ is partially hyperbolic, with the following stable and unstable bundles (each of the left-invariant vector fields on $\mathrm{PSO}(1,n)$ descend to $FM$) 
\begin{align}\label{eq:stableunstablehyp}
    E^u_{FM}=\mathrm{span}(U_1^-,...,U_{n-1}^-),\\
    E^s_{FM}=\mathrm{span}(U_1^+,...,U_{n-1}^+)\notag,
\end{align}
where we used that the left-invariant vector fields $(U_i^{\pm})$ descend on $FM$ through the covering map $F\mathbb{H}^n\to \Gamma \setminus F\mathbb{H}^n=FM$ (but notice that they do not descend to $SM$ since $n\geq 3$ by (\ref{eq:commutation_algebraic})).
The vertical space $\mathbb{V}_{FM}$ is generated by $(R_{i+1,j+1})_{1\leq i<j\leq n-1}$.

    The geodesic flow $(\varphi_t)$ on $SM$ is Anosov. The unstable and stable bundles are given by $E^{u/s}_{SM}(\Gamma g G):=D_g\pi(E^{u/s}(g))$. It will be convinient to see that they can alternatively be seen as associated vector bundles. Indeed, we can consider the adjoint representation of $G$ on $\mathfrak{n}^\pm$, $(\mathrm{Ad}(G),\mathfrak{n}^{\pm})$, which is well-defined by (\ref{eq:commutationrelatalgebra}). Then, it can easily be seen that 
    \begin{align}\label{eq:unstableassociated}
        E^u_{SM}=FM\times _{\mathrm{Ad}(G)}\mathfrak{n}^-,\\
        E^s_{SM}=FM\times _{\mathrm{Ad}(G)}\mathfrak{n}^+.\notag
    \end{align}

We also define the subbundles $E_s^*, E_u^*$ and $E_0^*$ of $T^*(SM)$ 
 \begin{align}\label{dualbundles}
     E_s^*(E^s\oplus E^0)=0\notag,\\
     E_u^*(E^u\oplus E^0)=0,\\
     E_0^*(E^u\oplus E^s)=0\notag.
 \end{align}

 \subsubsection{The setting for the Borel-Weil calculus }\label{subsub:settingBWhyp} As explained in Section \ref{section:BWcalculus}, the Borel-Weil calculus of \cite{CL24} is conditionned by a choice of a horizontal distribution on the principal bundle $P\to M$. In our case, the connection we consider for the $SO(n-1)$ principal bundle $FM\to SM$ is the dynamical connection
    $\mathbb{H}^{\mathrm{dyn}}_{FM\to SM}=\mathbb{R}X_{FM}\oplus E^u_{FM}\oplus E^s_{FM}$, defined in (\ref{eq:horizontal}). Since this is the only principal connection we will consider in this note, we will omit most of the time the 'dyn' superscript. We write $\Theta_{\mathrm{dyn}}$ the corresponding connection 1-form on $FM$, with values in $\mathfrak{so}(n-1)$.
    
    Let us stress that there exists \textit{a priori} another canonical distinct choice of horizontal distribution on $FM\to SM$, this choice will have a geometric nature whereas the dynamical connection is dynamical. Indeed, the Levi-Civita connection on $TM\to M$ induces a vector connection on the normal bundle $\mathcal{N}\to SM$ as explained in the beginning of Section \ref{subsec:frameflow}. We can parallel transport element of $FM$ (a frame of $\mathcal{N}$) along any geodesic path on $SM$ for the Sasaki metric. We write $\tau^{LC}_{\gamma}$ such a parallel transport map. Then for $p\in FM$, set 
\begin{align*}\
        \mathbb{H}_{FM\to SM}^{LC}(p):=\biggl\{{\partial_t}_{|t=0}\tau^{LC}_{\gamma}(p),\gamma:(-\varepsilon,\varepsilon)\to M\text{ geodesic s.t }  (\gamma(0),\dot{\gamma}(0))=(x,v) \biggr\}, 
    \end{align*}
 with $\pi_{FM\to SM}(p)=(x,v)$. Thus the Levi-Civita connection on $\mathcal{N}$ induces a $SO(n-1)$ principal bundle connection on $FM\to SM$. 

This connection \textit{a priori} does not coincide with the dynamical connection on $FM$ (\ref{eq:horizontal}). Indeed, the Levi-Civita connection is always smooth whereas the dynamical connection (on $FM$) is merely continuous. But the dynamical connection is smooth here, see (\ref{subsub:framebundlehyp}) for the algebraic description of $E^{u,s}_{FM}$. There is even more: a concrete computation allows to see that those two connections coincide. We will thus simply write $\mathbb{H}_{FM\to SM}$ the principal bundle connection under consideration.  The following remark justifies the previous affirmation.
\begin{remark}\label{rk:LCvsDyn}
    It will suffice to consider only $\mathbb{H}^n$ (resp. $S\mathbb{H}^n$) at $eG'$ (resp. $eG$). The identity element $e\in PSO(1,n)$ is identified with an oriented orthonormal frame at the point $eG$. We will give a description of the horizontal space for the Levi-Civita connection at $e$, which will be a subset of $\mathfrak{g}$: the description of this space at other point will simply be given by the left-translation of the space at $e$.
    The Levi-Civita connection on $T\mathbb{H}^n$ induces a connection $\mathbb{H}^{LC}_{FM\to M}$ (here $FM$ stands for the full oriented orthonormal frame bundle $FM\to M$ whose structure group is $SO(n)$). Indeed, at $e$
    \begin{align}\label{eq:horizontal_LC_M}
        \mathbb{H}_{FM\to M}^{\mathrm{LC}}(e):=\biggl\{{\partial_t}_{|t=0}\tau^{LC}_{\gamma}(e),\gamma:(-\varepsilon,\varepsilon)\to \mathbb{H}^n\text{ geodesic s.t\,} \gamma(0)=eG' \biggr\},
    \end{align}
    Algebraically, this space is equal to $\mathfrak{p}$, defined in (\ref{eq:splittinghypframe}). Every geodesic starting at $eG'$ is explicitly given by $\pi_{FM\to M}(e\,\mathrm{exp}(tY))$ where $Y\in \mathfrak{p}$. The parallel transport of the identity frame $e$ along the geodesic path $\pi_{FM\to M}(\mathrm{exp}(tY))$ is simply $e\,\mathrm{exp}(tY)$. Thus (\ref{eq:horizontal_LC_M}) implies that $\mathbb{H}_{FM\to M}^{LC}(e)=\mathfrak{p}$.
    
    We may now write the identity frame as $e=(eG,\mathbf{u})=:(eG',e_1,\mathbf{u})$ where we recall that $\mathbf{u}$ is such that $(e_1,\mathbf{u})$ is an oriented orthonormal frame. The horizontal space associated to the Levi-Civita connection on $\mathcal{N}\to S\mathbb{H}^n$ at $eG$ is 
    \begin{align*}
        \mathbb{H}_{FM\to SM}^{\mathrm{LC}}(eG):=\mathbb{H}^{\mathrm{LC}}_{FM\to M}(eG')\oplus \biggl\{{\partial_t}_{|t=0}\tau^{S_{eG'}\mathbb{H}^n}_{\gamma}(\mathbf{u}),\gamma:(-\varepsilon,\varepsilon)\to S_{eG'}\mathbb{H}^n\text{ geodesic s.t\,} \gamma(0)=eG'\biggr\},
    \end{align*}
The sphere $S_{eG'}\mathbb{H}^n$ is given the usual round-metric. The parallel transport of the frame $\mathbf{u}$ is given algebraically by $\mathrm{exp}(tZ)$ where $Z\in \mathfrak{g}$ belongs to the matrix space
\begin{align*}
    \mathfrak{q}:=\Biggl\{
\begin{pmatrix} 0 & 0 & 0 \\ 0 & 0  & \mathbf{v}^\top \\ 0 & -\mathbf{v} & 0_{n-1} \end{pmatrix}, \mathbf{v}\in\mathbb{R}^{n-1}
\Biggr\},
\end{align*}
which is a consequence of the fact that the action of elements in $SO(n-1)\subset PSO(1,n)$ fixes $eG$. Finally

\begin{align}\label{eq:horizontal_LC_algebraic}
        \mathbb{H}_{FM\to SM}^{\mathrm{LC}}(e)&=\mathfrak{p}\oplus\mathfrak{q}\\
        &=\mathfrak{a}\oplus \mathfrak{n}^+\oplus\mathfrak{n}^-.\notag
    \end{align}
    The dynamical connection is induced by $\mathfrak{n}^-\oplus \mathfrak{n}^+\oplus \mathfrak{a}$ and this gives the claimed correspondence.
\end{remark}
In order to use the Borel-Weil calculus, we have to choose a fixed maximal torus $T$ of $G$. Its corresponding Lie algebra is chosen to be 
\begin{align}\label{eq:chosentorus}
\mathfrak{t}:=\mathrm{span}\bigl((R_{j,j+1})_{2 \leq j\leq n-1 \text{ even}}\bigr).
\end{align}
We write $d:=\lfloor \frac{n-1}{2}\rfloor$ to be the rank of $SO(n-1)$, which is the dimension of $\mathfrak{t}$. We are now in the setting of Section \S\ref{def:fiberwise} with $P=FM$ and $G=SO(n-1)$. We may form the flag bundle $F:=FM/T\to SM$ and the line bundles $\mathbf{L}:=(L_1,\dotsc,L_{\lfloor \frac{n-1}{2}\rfloor})$ on $F$, see Definition \ref{def:fiberwise}. Following Section \ref{section:fiberwise}, those line bundles are equipped with the horizontal connections $\nabla=(\nabla_1,\dotsc,\nabla_d)$ (resp. full connection $\overline{\nabla}:=(\overline{\nabla}_1,\dotsc,\overline{\nabla}_d)$) associated to the connection $\mathbb{H}_{FM\to SM}$ (resp. $\mathbb{H}_{FM\to F}:=\mathbb{H}_{FM\to SM}\oplus \Re(\mathfrak{n}_{\mathfrak{so}(n-1)}^+\oplus \mathfrak{n}_{\mathfrak{so}(n-1)}^-)$). We insist that $\mathfrak{n}_{\mathfrak{so}(n-1)}^{\pm}$ are the Lie subalgebras corresponding to positive and negative roots with respect to $\mathfrak{t}$, see (\ref{eq: splitting}), and that they have nothing to do with $\mathfrak{n}^{\pm}$.  The corresponding curvature is, as before, written $\mathbf{F}_{\overline{\nabla}}$).
When clear from the context, we will write $X$ in place of $X_F$ for the generator of the frame flow induced on the flag bundle $F$.

Finally, $X_F$ generates a partially hyperbolic flow in the sense of Proposition \ref{prop:framepartialhyp} ($F$ in place of $FM$). We write $E^u_F,E^s_F,E^0_F$ and $\mathbb{V}_{F}$ the bundles from the partially hyperbolic splitting; $E_{u,F}^*,E_{s,F}^*,E_{0,F}^*$ $\mathbb{V}_{F}^*$ are the corresponding dual bundles. We thus see that $\mathbb{H}_{F}=E^u_F\oplus E^s_F \oplus E^0_F$. 

The following property, also appearing in \cite[Lemma 4.7]{DGH2021} and \cite[Lemma 4.2.2]{CL24}, will play an important in view of Lemma \ref{label:hamiltonianflow}.
\begin{lemma}\label{lemma:curvature}
    $i_X \mathbf{F}_{\overline{\nabla}^\K}=0$.
\end{lemma}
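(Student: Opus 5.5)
The plan is to compute the curvature at the level of the principal bundle $FM\to F$ (a $T$-principal bundle) and then push it to the line bundles. The connection $\overline{\nabla}^{\K}$ on $\El^{\K}=FM\times_{\gamma_\K}\C$ is induced by the $T$-connection $\mathbb{H}_{FM\to F}=\mathbb{H}_{FM\to SM}\oplus\Re(\mathfrak{n}^+_{\mathfrak{so}(n-1)}\oplus\mathfrak{n}^-_{\mathfrak{so}(n-1)})$. Its connection form $\Theta_T$ takes values in $\mathfrak{t}$, and its curvature $\Omega_T=d\Theta_T+\tfrac12[\Theta_T,\Theta_T]=d\Theta_T$ (since $\mathfrak{t}$ is abelian) is basic. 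Then $\mathbf{F}_{\overline{\nabla}^{\K}}$ is the descent of $d\gamma_\K(\Omega_T)$, i.e. $\K\cdot\mathbf{F}_{\overline{\nabla}}$ with $\mathbf{F}_{\overline\nabla}$ the descended $(\lambda_i\circ\Omega_T)_i$. Hence it suffices to show $\iota_{X_{FM}}\Omega_T=0$ on $FM$, since $X_{FM}$ is a lift of $X_F$.

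To get this, use the left-invariant frame $\{\mathfrak{X},U_i^\pm,R_{a,b}\}$ on $FM$, all of whose elements descend from $PSO(1,n)$. Here $\Theta_T$ is the constant-coefficient 1-form: the projection onto $\mathfrak{t}$ along $\mathfrak{a}\oplus\mathfrak{n}^+\oplus\mathfrak{n}^-\oplus(\text{root spaces of }\mathfrak{so}(n-1))$. The Maurer–Cartan formula for left-invariant $Y,Z$ gives
\[
d\Theta_T(Y,Z)=Y\Theta_T(Z)-Z\Theta_T(Y)-\Theta_T([Y,Z])=-\Theta_T([Y,Z]).
\]
Taking $Y=\mathfrak{X}$ and $Z$ ranging over the frame, the relations (\ref{eq:commutation_algebraic}) give $[\mathfrak{X},U_i^\pm]=\pm U_i^\pm$, which has no $\mathfrak{t}$-component; $[\mathfrak{X},R_{a,b}]=0$; and trivially $[\mathfrak{X},\mathfrak{X}]=0$. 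Therefore $\Theta_T([\mathfrak{X},Z])=0$ for all $Z$, so $\iota_{\mathfrak{X}}\Omega_T=0$. Since $X_{FM}$ is the vector field induced by $\mathfrak{X}$, this gives the claim after descending to $F$ and pairing with $\K$.

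The main obstacle is the identification step: checking that the connection form of $\mathbb{H}_{FM\to F}$ really is the constant projection onto $\mathfrak{t}$ in the left-invariant frame, so that no non-constant coefficient terms appear in $d\Theta_T$. This needs two facts. First, Remark \ref{rk:LCvsDyn} identifies $\mathbb{H}_{FM\to SM}$ with the left-invariant distribution $\mathfrak{a}\oplus\mathfrak{n}^+\oplus\mathfrak{n}^-$. Second, the $\Re(\mathfrak{n}^\pm_{\mathfrak{so}(n-1)})$ part is spanned by the left-invariant vertical fields $R_{a,b}$ not in $\mathfrak{t}$. Both hold because the direct sum is $\mathrm{Ad}(T)$-invariant and built from left-invariant fields. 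A secondary point to check is that the normalization of $\overline\nabla^\K$ in (\ref{eq:fullconnection}) matches $d\gamma_\K\circ\Theta_T$. Any discrepancy there is only a constant factor, which does not affect the vanishing.
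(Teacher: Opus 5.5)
Your proposal is correct and is essentially the paper's argument: both reduce to the identity $d\Theta(\mathfrak{X},Z)=-\Theta([\mathfrak{X},Z])$ for left-invariant fields and conclude from the relations $[\mathfrak{X},U_i^\pm]=\pm U_i^\pm$. The only difference is that you work directly with the $\mathfrak{t}$-valued connection form of $FM\to F$, rather than with the relation (\ref{eq:curvaturelinelift}) for $\Pi_{\mathfrak{t}}\,d\Theta_{\mathrm{dyn}}$ on horizontal vectors; this has the minor benefit of also covering the vertical directions of $F\to SM$ via $[\mathfrak{X},R_{a,b}]=0$, where the paper instead relies implicitly on $\mathbf{F}_{\overline{\nabla}}(\mathbb{H}_F,\mathbb{V}_F)=0$ from \cite[Lemma 2.2.11]{CL24}.
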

\begin{proof}
    The curvature of the line bundles $\mathbf{L}^{\mathbf{k}}$ and the one of the principal bundle (from which we construct the associated line bundles) are related by the following relation. For any $Y,Z\in \mathbb{H}_F(gT)$
    \begin{align}\label{eq:curvaturelinelift}
        \mathbf{F}_{\overline{\nabla}^\K}(X,Y)=\alpha_{\mathbf{k}}\bigl(\,\Pi_{\mathfrak{t}}(d\Theta_{\mathrm{dyn}}(X^{\mathbb{H}_{\text{FM}\to F}},Y^{\mathbb{H}_{\text{FM}\to F}}))\,\bigr)
    \end{align}
    where $\alpha_{\K}$ is the highest weight associated to $\K\in\Lambda$ and $\Pi_{\mathfrak{t}}$ is the orthogonal projection from $\mathfrak{g}$ onto the Lie algebra of the choosen maximal torus $T$. For a proof, see \cite[Lemma 2.2.12]{CL24}.
    
    Contracting by $X^{\mathbb{H}_{\text{FM}}}=X_{FM}$ the curvature of $FM$ gives, for any $Z\in \mathbb{H}_{\text{FM}}$ 
    \begin{align}\label{contraction_by_X_curvature}
        i_X d\Theta_{\mathrm{dyn}}(Z)&=-\Theta_{\mathrm{dyn}}([X_{FM},Z])\\
        &=0,\notag
    \end{align}
    by the commutation relations (\ref{eq:commutation_algebraic}).
\end{proof}
The so-called \textit{horocyclic operators} will play an important role when iterating resolvent estimates for large $\Re(z)$. Let $(\rho,V)$ be a representation of $G$ on a real vector space $V$. Consider $\mathcal{V}_{\rho}\to SM$ the corresponding associated vector bundle, equipped with the canonical connection $\nabla^{\mathcal{V}_{\rho}}$ (constructed through the principal bundle connection $\mathbb{H}_{FM\to SM}$), we can form 
\begin{align}\label{eq:horocyclic_op}
 C^{\infty}(SM,\mathcal{V}_{\rho}) &\to C^{\infty}(SM,\mathcal{V}_{\rho}\otimes E_s^*)\notag \\
\mathcal{U}^- : \,\,\,\,\,\,\,\,\,\,\,\,\,\,\,\,\,\,\,\,\,\,\,\,\,\,s &\mapsto \text{pr}_{E_s^*}\circ\nabla^{\mathcal{V}_{\rho}}s,
\end{align}

where $\text{pr}_{E_s^*}$ stands for the orthogonal projection on the (co)stable bundle; notice that using our conventions, $E_s^*$ identifies with the dual bundle of $E_u$. Those operators have also been used in \cite[Section \S4]{DyatlovFaureGuillarmou2015} or \cite[Lemma 4.2]{KusterWeich2021} to prove the Quantum-Classical correspondence in the locally symmetric setting. The unstable horocyclic operator thus derives section of a vector bundle only along unstable directions, and it extends naturally to distributional sections of $\mathcal{V}_{\rho}$.
Their main feature on hyperbolic manifolds is the following commutation property, also appearing in \cite[Lemma 4.2]{KusterWeich2021}. For a section $t$ of an associated vector bundle $E\to SM$, we recall that $\overline{t}$ stands for the equivariantly lifted section on $FM\to SM$.
\begin{lemma}\label{lemma:commutationhorocyclic}
    Equip the vector bundle $\mathcal{V}_{\rho}\otimes E_s^*\to SM$ with its natural tensor product connection $\nabla^{\mathcal{V}\otimes E_s^*\to SM}$. We have the following commutation relation 
    \begin{align*}\label{eq:horocyclic_op_commutation}
        [\nabla^{\mathcal{V}_{\rho}}_X,\mathcal{U}^-]&:= \nabla^{\mathcal{V}_{\rho}\otimes E_s^*}_X\mathcal{U}^--\mathcal{U}^-\nabla_X^{\mathcal{V}_{\rho}}\\
        &=-\mathcal{U}^-.
    \end{align*}
\end{lemma}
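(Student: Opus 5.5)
We will work entirely on the frame bundle by lifting sections equivariantly, and reduce the identity to the Lie algebra relation $[\mathfrak{X},U_i^-]=-U_i^-$ from (\ref{eq:commutation_algebraic}). Let $s\in C^\infty(SM,\mathcal{V}_\rho)$ with equivariant lift $\overline{s}\in C^\infty(FM,V)$, so that $\overline{s}(p\cdot g)=\rho(g)^{-1}\overline{s}(p)$. By construction of the connection on an associated bundle from the principal connection $\mathbb{H}_{FM\to SM}$, the lift of $\nabla^{\mathcal{V}_\rho}_Y s$ is $Y^{\mathbb{H}}\overline{s}$ for the horizontal lift $Y^{\mathbb{H}}$. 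Since $X^{\mathbb{H}}=X_{FM}=\mathfrak{X}$ and the horizontal space is $\mathfrak{a}\oplus\mathfrak{n}^+\oplus\mathfrak{n}^-$ (Remark \ref{rk:LCvsDyn}), the horizontal lifts of vectors in $E^u_{SM}$ are exactly the left-invariant fields spanned by $U_1^-,\dots,U_{n-1}^-$.

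\textbf{Step 1 (lift of $\mathcal{U}^-$).} Using (\ref{eq:unstableassociated}), $E_s^*\cong (E^u)^*= FM\times_{\mathrm{Ad}(G)}(\mathfrak{n}^-)^*$. The lift of $\mathcal{U}^-s$ is therefore the $V\otimes(\mathfrak{n}^-)^*$-valued function $\overline{\mathcal{U}^-s}=\sum_{i}U_i^-\overline{s}\otimes (U_i^-)^*$, where $(U_i^-)^*$ is the dual basis. We check that this is $G$-equivariant. This follows from $(R_g)_*U_i^-=\mathrm{Ad}(g^{-1})U_i^-$ together with $\mathrm{Ad}(G)$ preserving $\mathfrak{n}^-$, which is the relation $[R,U^\pm]\subset\mathfrak{n}^\pm$.

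\textbf{Step 2 (tensor connection along $X$).} The tensor product connection on $\mathcal{V}_\rho\otimes E_s^*$ is again the associated connection for the representation $\rho\otimes\mathrm{Ad}^*$. Its covariant derivative along $X$ therefore lifts to $\mathfrak{X}$ acting componentwise on the coefficient functions in the fixed basis $U_i^-\otimes(U_i^-)^*$: the frame $(U_i^-)^*$ is constant in the trivialization given by the lift. We then compute
\begin{align*}
\mathfrak{X}\bigl(U_i^-\overline{s}\bigr)=U_i^-\mathfrak{X}\overline{s}+[\mathfrak{X},U_i^-]\overline{s}=U_i^-\mathfrak{X}\overline{s}-U_i^-\overline{s}.
\end{align*}
Summing over $i$ gives $\overline{\nabla_X(\mathcal{U}^-s)}-\overline{\mathcal{U}^-\nabla_X s}=-\overline{\mathcal{U}^-s}$. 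Descending to $SM$ yields the lemma for smooth sections, and it extends to distributions by density and continuity.

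\textbf{Main obstacle.} The main obstacle is Step 2: justifying that the connection on $E_s^*$ induced by the principal connection corresponds, on lifts, to differentiation of coefficients against the constant frame $(U_i^-)^*$. In particular, we must check that this connection does not coincide with the "dynamical" derivative $\mathcal{L}_X$ on $E_s^*$, whose commutator would vanish and not produce $-1$. We expect the distinction to be exactly this: the associated connection differentiates lifted coefficients with $\mathfrak{X}$, and then the commutator $[\mathfrak{X},U_i^-]$ appears. We would verify this identification carefully from $\Theta_{\mathrm{dyn}}(\mathfrak{X})=0$ and the definition of the associated connection, using that $\overline{\nabla_Y t}=Y^{\mathbb{H}}\overline{t}$ for any associated bundle.
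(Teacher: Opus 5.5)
Your proposal is correct and is essentially the paper's proof: the paper also lifts to $FM$, writes $\overline{\mathcal{U}^-s}=\sum_{i}U_i^-\overline{s}\otimes (U_i^-)^*$ with each $(U_i^-)^*$ treated as a constant $(\mathfrak{n}^-)^*$-valued function, and concludes from $[\mathfrak{X},U_i^-]=-U_i^-$. The constancy of the $(U_i^-)^*$ is exactly the step you flag as the main obstacle; it holds because the associated connection lifts to horizontal differentiation and $X^{\mathbb{H}}=\mathfrak{X}$, and your remark that the $\mathcal{L}_X$-twisted derivative would instead give a vanishing commutator is also correct.
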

\begin{proof}
    $\mathcal{V}_{\rho}\otimes E_s^*$ is an associated vector bundle for the tensor product of the representation $(\rho,V)$ and the dual of the $\mathrm{Ad}(G)$ representation on $\mathfrak{n}^-$. 
    Thus for any $s\in C^{\infty}(SM,\mathcal{V}_{\rho})$ 
    \begin{equation}\label{eq:horocyclic_lifted}
        \overline{\mathcal{U}^-s}=\sum_{i=1}^{n-1}U_i^-\overline{s}\otimes (U^-_i)^* \text{ on } FM,
    \end{equation}
    where each $(U_i^-)^*$ is the dual of the left-invariant vector field $U_i^-$. Each $(U_i^-)^*$ can be seen as a constant function in $C^{\infty}(FM,(\mathfrak{n}^-)^*)$. We thus see that 
    \begin{align}\label{eq:hroocyclic}
        &\overline{\mathcal{U}^-\nabla^{\mathcal{V}_{\rho}}_Xs}=\sum_{i=1}^{n-1}U_i^-(X\,\overline{s})\otimes (U^-_i)^*,\\
        &\overline{\nabla^{\mathcal{V}_{\rho}\otimes E_s^*}_X\mathcal{U}^-s}=\sum_{i=1}^{n-1}X(U_i^-\,\overline{s})\otimes (U^-_i)^*,
    \end{align}
    and the commutation relations $[X,U_i^-]=-U_i^-$ gives the desired relation.
    
    The result follows from the commutation relation $[\mathfrak{X},U_i^-]=-U_i^-$.
\end{proof}
\textbf{Some important notation.} We recall that $(\Phi_t)$ will stand for the frame flow on $FM$ and $(\varphi_t)$ for the geodesic flow on $SM$. The vector field $X_{FM}$ induces a vector field on the flag bundle $F$ that we write $X_F$. For a smooth function $a\in C^{\infty}(T^*F)$, we write $(\Phi^{\omega_{\mathbf{l}},a}_t)$ its Hamiltonian flow with respect to $\omega_0+i\mathbf{l}\cdot \mathbf{F}_{\overline{\nabla}}$ and $H^{\omega_{\mathfrak{l}}}_a$ the corresponding vector field. The flow of $X_F$, written $(\Phi_t^F)$, is partially hyperbolic. The splitting is written
\begin{align}\label{eq:splittingflag}
    TF:=E^u_F\oplus E^s_F\oplus \mathbb{R}X_F\oplus \mathbb{V}_F
\end{align}
To prevent eventual confusions, Hamiltonian flows will always have a superscript indicating the symplectic form in consideration: this will be crucial at some point, and this prevents the confusions with the partially hyperbolic flows $(\Phi_t^F)$ and $(\Phi_t)$.

\section{Anisotropic spaces for $\X_\K$ and $X_{FM}$}\label{section:anisotropicspace}

\subsection{Anisotropic spaces for each Fourier mode }\label{subsection:anisofourierwise}By the $G$-equivariance property of $X_{FM}$, the Borel-Weil calculus allows us to decompose this vector field as a family $(\X_\K):=(\nabla_{X_F}^{\K})_{\K\in \hat{G}}$ of differential operators, each acting on $\mathcal{D}'_{\mathrm{hol}}(F,\El^\K)$. Following the foundational work \cite{FS11}, the authors in \cite[\S4.2]{CL24} have defined Hilbert spaces $\mathcal{H}^N_{h,\mathrm{hol}}$ ($N\in \mathbb{R}^+$) on which $z\mapsto (-\X_\K-z)^{-1}$ is a meromorphic family of operator on $\{\Re(z)>-cN\}$, $c$ being a constant depending on the expansion rate of the flow of $(\phi_t)$. Here, $c=1$ because the expansion rate of the geodesic flow is equal to $1$ on hyperbolic manifolds.

For any real number $N>0$ we may define the Hilbert spaces $\mathcal{H}^N_{{h,\mathrm{hol}}}$ in the following way. The Hamiltonian flow of $X_F$ with respect to $\omega_{h,\mathbf{k}}$ is $(\Phi_t^{\omega_{h,\K},\xi(X_F)})$. Lemma \ref{label:hamiltonianflow} gives that $\Phi_t^{\omega_{h,\K},\xi(X_F)}= \Phi_t^{\omega_{0},\xi(X_F)}=(\Phi_t^F,(d\Phi_{-t}^F)^\top)$, and Lemma \ref{prop:horizontal_preserved} justify that such a flow preserves the co-horizontal space of $F$, namely 
\begin{align}\label{eq:dualsplittingflag}
    \mathbb{H}_F^*:=E_{s,F}^*\oplus E_{u,F}^*\oplus E_{0,F}^*.
\end{align}
This setting is convenient because $(\Phi_t^{\omega_{0},\xi(X_F)})_{|\mathbb{H}_F^*}$ presents a \textit{sink and source structure}. More precisely in the setting of hyperbolic manifolds, there exists $C>0$ such that 
    \begin{enumerate}
        \item If $\xi\in E_{s,F}^*$, $|(d_x\Phi^F_t)^{-\top}(\xi)|_{\varphi_t(x)}\leq C e^{-t} |\xi|_x$ for $t\geq 0$.
        \item If $\xi\in E_{u,F}^*$, $|(d_x\Phi^F_t)^{-\top}(\xi)|_{\varphi_t(x)}\leq C e^{t} |\xi|_x$ for $t\leq 0$.
    \end{enumerate}
This allows us to define the corresponding {order function} $\mathbf{m}$ and the associated \textit{escape function} $\mathbf{G}_\mathbf{m}$.
\begin{proposition}\label{prop:escapefunctionflag}
    For $-u,s>0$, there exists $\mathbf{m}_{u,s}\in C^{\infty}(\mathbb{H}_F^*,[u,s])$ homogeneous of degree $0$ for $|\xi_{\mathbb{H}_F^*}|$ large enough such that: $\mathbf{m}_{u,s}\equiv s$ (resp. $\mathbf{m}_{u,s}\equiv u$) in a (co)horizontal conical neighborhood of $E_{s,F}^*$ (resp. $E_{u,F}^*$), $\mathbf{m}_{u,s}\equiv 0$ near $E_{0,F}^*\cap\mathbb{H}_F^*$ and $H_{\xi(X_F)}^{\omega_0}\mathbf{m}_{u,s}\leq 0$.\\
    Moreover, the escape function $\mathbf{G}_{\mathbf{m}_{u,s}}:=\mathbf{m}_{u,s}\log(\langle \xi_{\mathbb{H}_F}^*\rangle)\in C^{\infty}(\mathbb{H}_F^*)$ satisfies for $\xi$ with large enough norm: $H_{\xi(X_F)}^{\omega_0}\mathbf{G}_{\mathbf{m}_{u,s}}\leq 0$ and $H_{\xi(X_F)}^{\omega_0}\mathbf{G}_{\mathbf{m}_{u,s}}\leq \min(-u,s)$ near $(E_{s,F}^*\cap E_{u,F}^*)\cap \mathbb{H}_F^*$
\end{proposition}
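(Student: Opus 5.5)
The plan is to reduce to the standard construction of escape functions for Anosov flows (\cite[Lemma 1.2]{FS11}, \cite[Lemma 9.1.9]{CL24}), carried out on the co-horizontal bundle $\mathbb{H}_F^*$ instead of the full cotangent bundle. The key structural input is already available. By Lemma \ref{lemma:curvature} and Lemma \ref{label:hamiltonianflow}, the Hamiltonian flow of $p:=\xi(X_F)$ for $\omega_0$ (equivalently for $\omega_{h,\K}$) is the symplectic lift $(\Phi^F_t,(d\Phi^F_{-t})^\top)$. By Proposition \ref{prop:horizontal_preserved} it preserves $\mathbb{H}_F^*=E_{s,F}^*\oplus E_{u,F}^*\oplus E_{0,F}^*$. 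Moreover, in the hyperbolic setting the bundles $E^*_{s,F},E^*_{u,F},E^*_{0,F}$ are smooth: they are spanned by the duals of the left-invariant fields $U_i^\pm,\mathfrak{X}$ restricted to $\mathbb{H}_F$.

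\textbf{Step 1: the order function $\mathbf{m}_{u,s}$.} First I pass to the radial compactification $\partial\overline{\mathbb{H}_F^*}$, which is the sphere bundle of $\mathbb{H}_F^*$. On it the rescaled flow $\widetilde\Phi_t$ of $\langle\xi_{\mathbb{H}_F^*}\rangle^{-1}p$ has three invariant sets: $\partial E_{s,F}^*$ is an attractor (radial sink) for $t\to+\infty$, $\partial E_{u,F}^*$ is a repeller (source), and $\partial E_{0,F}^*$ is neutral. This follows from the two contraction estimates stated just before the proposition, together with the fact that $\xi(X_F)$ is conserved. Concretely, writing $\xi=\xi_0+\xi_s+\xi_u$, along the flow $|\xi_u|$ grows like $e^t$ and $|\xi_s|$ decays like $e^{-t}$, while $\xi_0$ is fixed. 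Hence any direction not in $E_{s,F}^*\oplus E_{0,F}^*$ converges to $\partial E_{u,F}^*$ as $t\to+\infty$, and symmetrically for negative time.

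I then choose small conic neighbourhoods $\mathcal{U}_s$ of $E^*_{s,F}$, $\mathcal{U}_u$ of $E^*_{u,F}$ and $\mathcal{U}_0$ of $E^*_{0,F}$, together with smooth homogeneous cutoffs $m_s,m_u,m_0\in[0,1]$. These are the Faure–Sjöstrand functions built by averaging along the flow: $m_u$ is defined as $\frac1{T}\int_0^T\chi_u\circ\widetilde\Phi_{t}\,dt$ with $\chi_u$ a cutoff near $\partial E^*_{u,F}$, and similarly for the others. Each is monotone along $\widetilde\Phi_t$: $m_s$ is nondecreasing in backward time and $m_u$ is nondecreasing in forward time. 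I set $\mathbf{m}_{u,s}:=s\,m_s+u\,m_u$, extended homogeneously of degree $0$ for large $|\xi|$, and arranged so that $m_s+m_u\le1$ and $\mathbf{m}_{u,s}\equiv0$ near $E^*_{0,F}$. Since $s>0$ and $u<0$, the monotonicities give $H^{\omega_0}_{p}\mathbf{m}_{u,s}\le0$.

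\textbf{Step 2: the escape function $\mathbf{G}_{\mathbf{m}_{u,s}}$.} Now set $\mathbf{G}:=\mathbf{m}_{u,s}\log\langle\xi\rangle$, so that
\[
H_p\mathbf{G}=(H_p\mathbf{m}_{u,s})\log\langle\xi\rangle+\mathbf{m}_{u,s}\,\frac{H_p\langle\xi\rangle}{\langle\xi\rangle}.
\]
The first term is $\le0$. For the second term, near $E^*_{s,F}$ we have $\mathbf{m}=s>0$ and $H_p\log\langle\xi\rangle\approx-1$, which is negative; near $E^*_{u,F}$ we have $\mathbf{m}=u<0$ and $H_p\log\langle\xi\rangle\approx+1$, again negative. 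This yields the bound $\le-\min(-u,s)$ up to the chosen constants and a slight shrinking of the neighbourhoods. The norm used should be adapted so that $|H_p\log\langle\xi\rangle|$ is close to $1$ on these cones; in the hyperbolic case this holds exactly for the left-invariant metric.

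In the transition region, away from all three cones, $\log\langle\xi\rangle\to\infty$ while the second term stays bounded. So I need $H_p\mathbf{m}_{u,s}\le-c<0$ there, which is a strict decrease of $\mathbf{m}$ along the flow outside the cones. Where the cones meet $\mathcal{U}_0$, $\mathbf{m}$ vanishes, so the second term is zero.

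\textbf{Main obstacle.} The main difficulty is this strict decrease of $\mathbf{m}$ in the transition region: it needs a uniform flight time out of each cone region towards the next. This uses the sink/source dynamics on $\partial\overline{\mathbb{H}_F^*}$ from Step 1 together with compactness of $F$. I would follow \cite[Lemma 9.1.9]{CL24} essentially verbatim, checking only that the vertical covariables never enter the argument, which holds because the flow preserves $\mathbb{H}_F^*$.
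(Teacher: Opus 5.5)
Your proposal is correct and follows the paper's route. The paper's proof is exactly the observation that the symplectic lift of $\Phi^F_t$ preserves $\mathbb{H}_F^*$ with the same sink/source structure as in the Anosov case, so the Faure--Sjöstrand construction of \cite[\S4.1.2]{FS11} and \cite[\S9.1]{Lefeuvre2026} goes through verbatim there; you carry out that reduction in more detail. One slip to fix: your opening description swaps the roles, since with the paper's conventions $\partial E_{u,F}^*$ is the forward attractor (radial sink) and $\partial E_{s,F}^*$ the repeller (radial source), which is what your own computation $|\xi_u|\sim e^{t}$, $|\xi_s|\sim e^{-t}$ and your monotonicity choices for $m_u,m_s$ actually use.
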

\begin{proof}
    Since we only consider $\mathbb{H}_F^*$, the proof is \textit{verbatim} the same as in \cite[\S4.1.2]{FS11} and \cite[\S9.1]{Lefeuvre2026}.
\end{proof}
This allows us to define the desired anisotropic spaces. The symbol $e^{\mathbf{G}_{\mathbf{m}_{u,s}}}$ is in $S_{1-}^{\mathbf{m}_{u,s}(\cdot)}(\mathbb{H}_F^*)$, see \cite[\S6]{Lefeuvre2026} for the introduction of symbols of variable order (which clearly also makes sense in the context of the Borel-Weil calculus). Modifying $\Op_{h,\mathrm{BW}}(e^{\mathbf{G}_{\mathbf{m}_{u,s}}})$ up to a lower order term give the operator $\mathbf{A}_{h,\K}(\mathbf{m}_{u,s})\in \Psi^{\mathbf{m}_{u,s}(\cdot)}(F,\El^\K)$, which is elliptic, formally self-adjoint and invertible. We can thus define 
\begin{align}\label{eq:anisotropicdefinition}
    \mathcal{H}^{[u,s]}_{h,\mathrm{hol}}(F,\El^\K):=\mathbf{A}_{h,\K}(\mathbf{m}_{u,s})^{-1}L^2(F,\El^\K),
\end{align}
which can be equipped with the natural inner product $\langle \mathbf{A}_{h,\K}(\mathbf{m}_{u,s})\cdot, \mathbf{A}_{h,\K}(\mathbf{m}_{u,s})\cdot\rangle_{L^2}$ making it a Hilbert space in which $C^{\infty}_{\mathrm{hol}}(F,\El^\K)$ embeds densely. Most of the time the dependency in $\K$ will be omitted, we will simply write $\mathcal{H}^{[u,s]}_{h,\mathrm{hol}}$ for the previous space.\\
\begin{remark}\label{rk:orderfunction}
    Let $N\in \mathbb{R}^+_*$. In the following pages, we may assume that $u=-N$, $s=N$ and in this case we will write $\mathcal{H}^N_{h,\mathrm{hol}}:=\mathbf{A}_{h,\K}(\mathbf{m}_{-N,N})^{-1}L^2$. Thus for instance $\mathcal{H}^N_{h,\mathrm{hol}}$ is microlocally equivalent to $H^{N}_{h,\mathrm{hol}}$ (resp. $H^{-N}_{h,\mathrm{hol}}$) near $E_{s,F}^*$ (resp. near $E_{u,F}^*$). Here and in the rest of the paper we may only consider order functions of the type $\mathbf{m}_{-N,N}$ (written $\mathbf{m}_{N}$ for simplicity), except in the following instance.  
    
    As in \cite[Lemma 4.9]{DGH2021} or \cite[Proposition 4.6]{CG21} for instance, we will at some point need an unstable horocyclic invariance property of particular elements in $\mathcal{H}^N_{h,\mathrm{hol}}$ called \textit{resonances}. After applying such a horocyclic operator, we obtain an element of $\mathcal{H}^{[-N-1,N-1]}_{h,\mathrm{hol}}=:\mathcal{H}'^N_{h,\mathrm{hol}}$, its order function is $\mathbf{m}'_N:=\mathbf{m}_{N}-1/N$ with escape function $\mathbf{G}':=\mathbf{G}_{\mathbf{m}'_N}$. The latter function is not an order function in the precise sense of Proposition \ref{prop:escapefunctionflag} since it is not zero anymore near $E_0^*$ but one can check that the construction still works when requiring that $\mathbf{m}$ is non-zero near $E_0^*$ with value strictly between $u$ and $s$, see \cite[Lemma 8.6]{CekicMicrolocalMethods}. In order to still have that $H_{\xi(X_F)}^{\omega_0}\mathbf{G}'$ is strictly negative near the stable and unstable codirections, it is important that $N>1$ (see the decay property of Proposition \ref{prop:escapefunctionflag}). In this case, $\mathcal{H}'^N_{h,\mathrm{hol}}$ still define an anisotropic space on which the following spectral properties of $\X_\K$ remain valid.
\end{remark}
The following is the main result of \cite[\S4.1.2]{FS11} and \cite[\S9.1]{Lefeuvre2026}, which stated in the setting of the Borel-Weil calculus corresponds to \cite[\S4.3.1]{CL24}.
\begin{proposition}\label{prop:existenceresonances}
    The operator $(\mathbf{X}_{\mathbf{k}}+z)$ is, restricted to its canonical domain on $\mathcal{H}^{[u,s]}_{h,\mathrm{hol}}$, Fredholm of index $0$ in the band $\bigl\{\Re(z)>-\min(-u,s)\}$. The resolvent $(-\mathbf{X}_{\mathbf{k}}-z)^{-1}$ is well-defined and holomorphic for $\Re(z)\gg0$ as an operator on $\mathcal{H}^{[u,s]}_{h,\mathrm{hol}}(F,\mathbf{L}^{\mathbf{k}})$. Thus by the analytic Fredholm Theorem \cite[Section 21.1.4.2]{CL24}, $(-\mathbf{X}_{\mathbf{k}},D_{\mathcal{H}^{[u,s]}}(\mathbf{X}_\mathbf{k}))$ has a pure-point spectrum in the region $\bigl\{\Re(z)>-\min(u,s)\}$, independent of the choice made in the construction.
\end{proposition}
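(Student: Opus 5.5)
The plan is the Faure–Sjöstrand scheme, run entirely on the co-horizontal space $\mathbb{H}_F^*$. Holomorphic sections have wavefront set only in $\overline{\mathbb{H}_F^*}$, so the vertical codirections $\mathbb{V}_F^*$, where $\xi(X_F)$ fails to be elliptic (Remark \ref{remark:ellipticitybw}), never enter.

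\textbf{Conjugation.} Fix $h=1$ in the Borel-Weil calculus (or a fixed small $h$; the argument is uniform). Conjugate $\mathbf{X}_\K$ by $\mathbf{A}(\mathbf{m}_{u,s})$ to get
\[
\mathbf{P}_z:=\mathbf{A}(\mathbf{m}_{u,s})(-\mathbf{X}_\K-z)\mathbf{A}(\mathbf{m}_{u,s})^{-1}
\]
on $L^2_{\mathrm{hol}}$. By the Egorov-type statement of Proposition \ref{prop:bwproperties2} together with Lemma \ref{label:hamiltonianflow} and Lemma \ref{lemma:curvature}, we have
\[
\mathbf{P}_z=-\mathbf{X}_\K-z-\Op_{\mathrm{BW}}\bigl(H^{\omega_0}_{\xi(X_F)}\mathbf{G}_{\mathbf{m}_{u,s}}\bigr)+\text{lower order}.
\]
Proposition \ref{prop:horizontal_preserved} ensures the relevant Hamiltonian flow stays in $\mathbb{H}_F^*$, so the symbolic calculus on $\mathbb{H}_F^*$ applies.

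\textbf{Fredholm property.} Write $\Re\mathbf{P}_z$ for the symmetric part. Its principal symbol is $-\Re z - H\mathbf{G}$. By Proposition \ref{prop:escapefunctionflag}, $-H\mathbf{G}\geq \min(-u,s)$ near $E^*_{s,F}\oplus E^*_{u,F}$ at infinity and $-H\mathbf{G}\geq 0$ elsewhere. The remaining region at infinity is a conic neighbourhood of $E_{0,F}^*$ inside $\mathbb{H}_F^*$, where $\xi(X_F)$ is elliptic. Choose a compactly microsupported cutoff $\chi\in\Psi^{\mathrm{comp}}_{\mathrm{BW}}$ together with an elliptic parametrix near $E_{0,F}^*$ (elliptic inversion in Proposition \ref{prop:bwproperties1}). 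Gårding's inequality (Proposition \ref{prop:bwproperties1}) then gives, for $\Re z>-\min(-u,s)$,
\[
\|u\|\le C\|\mathbf{P}_z u\|+C\|K u\|,
\]
with $K$ compact, using that the embedding $H^{1}\to L^2$ is compact and that $H^{\pm N}$ pieces are only exchanged microlocally. The adjoint satisfies the same estimate. Hence $\mathbf{P}_z$ has finite-dimensional kernel and closed range, i.e.\ it is semi-Fredholm on each side, and therefore Fredholm.

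Alternatively, the same estimate follows by gluing the radial sink and source estimates of Proposition \ref{prop:bwproperties2} with propagation of singularities, where the threshold $N<\nu$ matches $\Re z>-\min(-u,s)$. This is exactly the standard proof.

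\textbf{Index zero and meromorphy.} For $\Re z\gg 0$, $-\mathbf{X}_\K-z$ is invertible on $\mathcal{H}^{[u,s]}_{\mathrm{hol}}$ because $e^{-t\mathbf{X}_\K}$ has norm $\le Ce^{Ct}$ on that space (Egorov again, with $\mathbf{m}$ nonincreasing along the flow). The resolvent is then the Laplace transform $\int_0^\infty e^{-tz}e^{-t\mathbf{X}_\K}\,dt$, which commutes with $\Pi_\K$. Since the index is locally constant on the connected half-plane, it equals $0$ there. The analytic Fredholm theorem then yields meromorphy with finite-rank poles, i.e.\ pure point spectrum. Independence of the choices follows because resonant states are smooth off $E_{u,F}^*$ (radial estimates) and coincide with the poles of $\langle(-\mathbf{X}_\K-z)^{-1}f,g\rangle$ for smooth $f,g$.

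\textbf{Main obstacle.} I expect the hardest step to be checking that the conjugation and Gårding steps respect holomorphicity, i.e.\ that $\mathbf{A}(\mathbf{m})$ is admissible, since variable-order symbols are not explicitly covered by Definition \ref{def:bwclass}. I would handle this by quantizing $e^{\mathbf{G}}$ via $\Op_{\mathrm{BW}}$, which includes $\Pi_\K$, and appealing to \cite[\S4.3]{CL24}.
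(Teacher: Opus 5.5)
Your proposal is the Faure--Sj\"ostrand argument run on the co-horizontal space $\mathbb{H}_F^*$, with fiberwise holomorphicity used exactly as you describe to discard the co-vertical directions $\mathbb{V}_F^*$. This is the same approach the paper takes, except that the paper gives no proof and simply cites \cite[\S4.1.2]{FS11}, \cite[\S9.1]{Lefeuvre2026} and \cite[\S4.3.1]{CL24}.

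One sign needs fixing. To leading order $\mathbf{A}\mathbf{X}_\K\mathbf{A}^{-1}=\mathbf{X}_\K-\Op_{\mathrm{BW}}(H_{\xi(X_F)}\mathbf{G})$; compare $e^{G}Xe^{-G}=X-XG$ when $G$ is a function on $F$. Hence $\mathbf{P}_z=-\mathbf{X}_\K-z+\Op_{\mathrm{BW}}(H\mathbf{G})+\dots$, and its symmetric part has symbol $-\Re z+H\mathbf{G}\le-(\Re z+\min(-u,s))$ near $E^*_{s,F}\cup E^*_{u,F}$. That is what makes $-\Re\mathbf{P}_z$ positive exactly when $\Re z>-\min(-u,s)$. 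With the sign as you wrote it, the G\aa{}rding step would give the wrong half-plane.
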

We now work only with $\mathcal{H}^N$ in view of the previous remark (the case of $\mathcal{H}'^N$ is dealt with exactly in the same way). A point $z_0\in \C$ will then be called a \textit{resonance} if there exists $N>0$ large enough such that it is a pole of $z\mapsto (-\X_\K-z)^{-1}$ on $\mathcal{H}^{u,s}_{h,\mathrm{hol}}$. Alternatively, it is a resonance if and only if $\ker_{\mathcal{D}'_{E_{u}^*}}(\X_\K+z_0)\neq 0$. The elements of the previous set are called \textit{resonant states}. Finally, a distribution $u\in \mathcal{H}^N$ is called a generalized state at $z_0$ if $u\in \ker(\X_\K+z_0)^J$ for $J\geq 2$. The set of all resonances of $\X_\K$ is written $\sigma_{\mathrm{PR}}(\X_\K)$.
\begin{remark}\label{rk:noellitpicityanisotropic}
    It could be tempting to try to define anisotropic spaces on $FM$ directly. Indeed the Hamiltonian function $(x,\xi)\mapsto\xi(X_{FM}(x))$ actually presents a sink-source structure on the whole of $T^*(FM)$, replacing $E_0^*$ by $E_0^*\oplus \mathbb{V}_{FM}^*$. However, the lack of ellipticity of $X_{FM}$ in the vertical (co)directions prevents from generalizing the proof from \cite[\S9.1]{Lefeuvre2026}. This is why the holomorphicity is key in \cite[\S4]{CL24} (and hence in our paper): it allows to get rid of the vertical codirections, at the cost of performing a Fourier decomposition.
\end{remark}
Also, notice the following important fact. The identification (\ref{eq:identification}) allows to realize an element of $\mathcal{D}'_{\mathrm{hol}}(F,\El^\K)$ as a distributional section in $\mathcal{D}'(SM,H^0(F,\El^\K))$. Through this identification, $\X_\K$ is identified with $\nabla^{H^0(F,\El^\K)}_X$. Let us justify this assertion.
\begin{lemma}\label{lemma:intertwinconnection}
    There exists a smooth isomorphism $i_\K:C^{\infty}_{\mathrm{hol}}(F,\El^\K)\to C^{\infty}(SM,H^0(F,\El^\K))$ that satisfies
    \begin{align}\label{eq:intertwinconnection}
        \X_\K (i_{\K}s)=i_\K(\nabla_X^{H^0(F,\El^\K)} s),
    \end{align}
    for any $s\in C_{\mathrm{hol}}^{\infty}(F,\El^\K)$. More generally, if $Y\in C^{\infty}(SM,T(SM))$ and $Y^{\mathbb{H}_F}\in C^{\infty}(F,\mathbb{H}_F)$ is its horizontal lift to $F$, we have 
    \begin{align*}
        i_{\K}(\nabla_Y^{H^0(F,\El^\K)} s)=\nabla^\K_{Y^{\mathbb{H}_F}}(i_\K s)
    \end{align*}
\end{lemma}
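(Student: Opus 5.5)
The isomorphism $i_\K$ is already available fiberwise from Proposition \ref{prop: fiber_id}, so the plan is to build $i_\K$ from it and then compare the two connections on equivariant lifts to $FM$. Concretely, for $s\in C^\infty_{\mathrm{hol}}(F,\El^\K)$ and $x\in SM$, the restriction $s_{|F_x}$ is a holomorphic section of $\El^\K_{|F_x}$, i.e. an element of $H^0(F_x,\El^\K_{|F_x})=(H^0(F,\El^\K))_x$. Setting $(i_\K s)(x):=s_{|F_x}$ gives a bijection, which is exactly the identification (\ref{eq:identification}). Its smoothness follows by writing both sides in terms of lifts: by Remark \ref{remark:BW}, $s$ corresponds to a function $\overline{s}\in C^\infty(FM)$ with $\overline{s}(p\cdot t)=\gamma_\K(t)^{-1}\overline{s}(p)$ that is fiberwise holomorphic. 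Via Proposition \ref{prop: fiber_id}, a section of $H^0(F,\El^\K)\cong FM\times_\Phi H^0(G/T,\mathbf{J}^\K)$ corresponds to a $G$-equivariant map $\tilde{s}:FM\to H^0(G/T,\mathbf{J}^\K)$. Unwinding the map $\Psi$ in the proof of that proposition, the two descriptions are related by
\[
\overline{s}(p\cdot g)=\overline{\tilde{s}(p)}(g),\qquad p\in FM,\ g\in G,
\]
where $\overline{\tilde{s}(p)}\in C^\infty(G)$ is the $T$-equivariant lift of $\tilde{s}(p)$. This formula shows that $i_\K$ and its inverse are smooth.

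Both connections are defined from the same principal connection $\mathbb{H}_{FM\to SM}$. On one side, $\nabla^{H^0(F,\El^\K)}$ is the associated-bundle connection, so that $\widetilde{\nabla_Y s}=Y^{\mathbb{H}_{FM}}\tilde{s}$, where $Y^{\mathbb{H}_{FM}}$ is the horizontal lift to $FM$. On the other side, (\ref{eq:partialconnection}) gives $\overline{\nabla^\K_{Y^{\mathbb{H}_F}}s}=Y^{\mathbb{H}_{FM}}\overline{s}$. It therefore suffices to check that
\[
\bigl(Y^{\mathbb{H}_{FM}}\overline{s}\bigr)(p\cdot g)=\overline{\bigl(Y^{\mathbb{H}_{FM}}\tilde{s}\bigr)(p)}(g).
\]
This follows because the horizontal distribution is right-invariant: $(R_g)_*Y^{\mathbb{H}_{FM}}=Y^{\mathbb{H}_{FM}}$. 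Hence differentiating $t\mapsto \overline{s}(\gamma(t)\cdot g)$ along an integral curve $\gamma$ of $Y^{\mathbb{H}_{FM}}$ through $p$ is the same as differentiating $t\mapsto\overline{\tilde{s}(\gamma(t))}(g)$, and we may commute $d/dt$ with evaluation at $g$. The identity (\ref{eq:intertwinconnection}) is then the special case $Y=X$, using $X^{\mathbb{H}_F}=X_F$ and $X^{\mathbb{H}_{FM}}=X_{FM}$, since $X_{FM}\in\mathbb{H}_{FM\to SM}$ by (\ref{eq:horizontal}). Note that the equation must be read with $i_\K^{-1}$ (equivalently, with $i_\K$ going in the other direction): the paper's formula is written with $\X_\K$ acting on $i_\K s$, so I would state the result as $i_\K\circ\nabla^\K_{Y^{\mathbb{H}_F}}=\nabla^{H^0}_Y\circ i_\K$.

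The main obstacle is bookkeeping rather than analysis: keeping the lift conventions of Remark \ref{remark:BW} and of $\Psi$ in Proposition \ref{prop: fiber_id} consistent, so that no extra twist by $\Phi(g)$ appears. A secondary point is that the restriction $s\mapsto s_{|F_x}$ really lands in holomorphic sections, so that $\nabla^\K_{Y^{\mathbb{H}_F}}$ preserves $C^\infty_{\mathrm{hol}}$. That is supplied by Proposition \ref{prop:holpreserved}, and it is also implied by the computation, since $Y^{\mathbb{H}_{FM}}$ commutes with the right $G$-action and hence with the fiberwise $\overline{\partial}$.
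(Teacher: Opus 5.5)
Your proposal is correct and follows essentially the paper's route: the paper also works with equivariant lifts to $FM$, defines the map via $\overline{s}(p)=\Psi_T(\tilde s(p))(e)$, gets bijectivity from the relation $\overline{s}(p\cdot g)=\overline{\tilde s(p)}(g)$ (its equation (\ref{eq:equivariance_BW_proof})), and concludes by commuting the linear evaluation map with the horizontal derivative $X^{\mathbb{H}_{FM}}$, exactly as you do. Your reading of the displayed identity as $i_\K\circ\nabla^\K_{Y^{\mathbb{H}_F}}=\nabla^{H^0}_Y\circ i_\K$ is the one consistent with how the lemma is used later (with $\tilde u_h=i_\K(u_h)$ in Section \ref{Section:resolvent_estim_re_bigger_1}), and your explicit use of the right-invariance of $Y^{\mathbb{H}_{FM}}$ spells out a point the paper leaves implicit.
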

\begin{proof}
    Recall that a section $s\in C_{\mathrm{hol}}^{\infty}(F,\El^\K)$ equivariantly lifts to a $T$-equivariant section $\overline{s}\in C^{\infty}_{T}(FM,\C)$ (satisfying an additional condition for holomorphicity). Also, $s'\in C^{\infty}(SM,H^0(F,\El^\K))$ lifts to the $G$-equivariant section $\overline{s'}\in C_G^{\infty}(FM,H^0(G/T,\mathbf{J}^\K))$, see (\ref{eq: fiber_id}). There is a natural mapping $\overline{j_\K}$ constructed as follows
    \begin{align*}
       C_G^{\infty}(FM,H^0(G/T,\mathbf{J}^\K))   &\longrightarrow C_T^{\infty}(FM,\mathbb{C}) \\
\overline{j_\K}: \overline{s} &\longmapsto \bigl(p\mapsto\Psi_T(\overline{s}(p))(e)\bigr),
    \end{align*}
    with $\Psi_T:H^0(G/T,\mathbf{J}^\K)\to C^\infty_T(G,\C)$ being the $T$-equivariant lift mapping. The elements in the image of $\overline{j_\K}$ are indeed $T$-equivariant, for all $t\in T$ 
    \begin{align*}
        \overline{j_\K}(\overline{s})(p\cdot t)&=\Psi_T(\Phi(t^{-1})\overline{s}(p))\,(e)\\
        &=\Psi_T(\overline{s}(p))\,(t)\\
        &=\gamma_\K(t^{-1})\overline{j_\K}(\overline{s})(p)
    \end{align*}
where we used the action of the Borel-Weil $\Phi$ representation on equivariant lifts in the first line, see Remark \ref{equiv}. It is clear that for any vector field $X\in C^{\infty}(P,TP)$ we have $\overline{j_\K}(X\,\overline{s})=X(\overline{j_\K}(\overline{s}))$ by linearity of the evaluation map, thus giving the desired result if we show that $\overline{i}_\K$ is bijective. By the $G$-equivariance property of elements in $C^{\infty}_G(FM,H^0(G/T,\mathbf{J}^\K))$, injectivity follows. Indeed
\begin{align}\label{eq:equivariance_BW_proof}
    \overline{j_\K}(\overline{s})(p\cdot g)&=\Psi_T(\Phi(g^{-1})\overline{s}(p))\,(e)\\
    &=\Psi_T(\overline{s}(p))(g),\notag
\end{align}
thus if $\overline{s}$ is such that $\overline{j_\K}(\overline{s})=0$, (\ref{eq:equivariance_BW_proof}) implies that $\overline{s}=0$. Finally, let $\overline{s}_T\in C_T^\infty(M,\mathbb{C})$. We can uniquely define $\overline{s}_G\in C^\infty(FM,H^0(G/T,\mathbf{J}^\K))$ so that $\Psi_T(\overline{s}_G(p))(e)=\overline{s}_T(p)$ thanks to (\ref{eq:equivariance_BW_proof}) again. Let $j_\K$ be the isomorphism induced by $\overline{j_\K}$ at the level of associated bundles, it's inverse is written $i_\K$ and is the desired mapping.

\end{proof}

It is a well-known fact that, for any vector bundle $E\to SM$ equipped with a connection $\nabla_E$, the construction of (semiclassical) anisotropic spaces carries on in the case of the operator $\nabla_X$ \cite[\S9.1]{Lefeuvre2026}, the latter are written $\mathcal{H}^N_h(SM,E)$. Here, the isomorphism $i_\K$ maps an element $u_\K\in \mathcal{H}^N_{h,\mathrm{hol}}(F,\El^\K)$ satisfying $(-\X_\K-z)u_\K=0$ to an element $i_\K(u_\K)\in \mathcal{H}^N_{h}(SM,H^0(F,\El^\K))$ satisfying 
\begin{align}\label{eq:renanceintertwin}
    (-\nabla_X^{H^0(F,\El^\K)}-z)\,i_\K(u_\K)=0.
\end{align}
This will be useful in Section \ref{Section:resolvent_estim_re_bigger_1}.
\begin{remark}\label{eq:choicemaximaltorusres}
    The operator $\X_\K$ acts on fiberwise holomorphic sections of $\El^\K\to F$ and thus depend on a choice of maximal torus $T$. However, the set of resonances does not depends on the chosen maximal torus. This can easily be seen by the fact that resonances of $\X_\K$ correspond to resonances on $H^0(F,\El^\K)$ as explained above, which further correspond to resonances on $E^{\lambda(\K)}$ by the Borel-Weil theorem, see the definition of the fiberwise Fourier transform (\ref{Fourier}). Indeed, the isomorphism of representation between $H^0(G/T,\mathbf{J}^\K)$ and $V^{\lambda(\K)}$ induce an isomorphism of the associated vector bundles. This isomorphism will intertwin the natural associated connection on $SM$.
 \end{remark}
 Finally, the Borel-Weil calculus clearly generalizes in a vector valued setting as noticed in \cite[Remark 3.3.17]{CL24}. We can define the class $\Psi_{h,\mathrm{BW}}(FM,E_1\to E_2)$ for $E_1,E_2$ two fiberwise holomorphic vector bundles over $F$. Thus for instance, up to taking an invertible, self-adjoint and elliptic lower order perturbation $\mathbf{A}_{h,\K}(m_{u,s},E_1)$ of $\Op_{h,\mathrm{BW}}(\mathrm{Id}_{E_1}e^{G_{m_{u,s}}})$, we can define the anisotropic space 
 \begin{align}\label{eq:resonancevecorbundle}
     \mathcal{H}^{[u,s]}_{h,\mathrm{hol}}(F,\El^\K\otimes E_1):=\mathbf{A}_{h,\K}(m_{u,s},E_1)^{-1}L^2(F,\El^\K\otimes E_1).
 \end{align}
 \textbf{A convention.} We write $\mathbf{A}^N_{h,\K}:=\mathbf{A}^N_{h,\K}(\mathbf{m}_{-N,N})$ and $\mathbf{A}'^N_{h,\K}:=\mathbf{A}^N_{h,\K}(\mathbf{m}_{-N-1,N-1})$ the anisotropic operators corresponding respectively to $\mathcal{H}^N$ and $\mathcal{H}'^N$, see Remark \ref{rk:orderfunction}. In the twisted case (by a fiberwise holomorphic vector bundle $E$ say), we will write the previous operators $\mathbf{A}_{h,\K}^N(E)$ and $\mathbf{A}_{h,\K}'^N(E)$.
\subsubsection{Anisotropic spaces for $X_{FM}$}\label{subsub:anisotropicframeflow} After performing a Fourier decomposition of the operator $X_{FM}$ as a family of operators in the Borel-Weil calculus, we can go in the other direction to define functional spaces adapted to $X_{FM}$. They were introduced in \cite[Section 4.1]{DGH2021} in the case of circle extensions and defined in full generality in \cite[Section 4.4.1]{CL24}.
\begin{definition}\label{def:anisotorpicnormFM}
    We define the following bi-graded norm, for any $f\in C^{\infty}(FM)$ 
    \begin{align*}
        ||f||^2_{\mathcal{H}^{m,s}}:=\sum_{\mathbf{k}\in \hat{G}}d_\K\langle \K \rangle^{2m}\sum_{i=1}^{d_\K} ||f_{\mathbf{k},i}||^2_{\mathcal{H}^s_{\langle \K\rangle^{-1},\mathrm{hol}}}.
    \end{align*}
    In turn, $\mathcal{H}^{s,m}(FM)$ is defined as the completion of $C^{\infty}(FM)$ for the previous norm.
\end{definition}

\textbf{The technical result leading to Theorem \ref{thm:mainthm} } We may now state the main technical theorem that will allow us to obtain the resolvent estimate in Theorem \ref{thm:mainthm}. It is a component-wise estimate for the resolvent of $\X_\K$ on the anisotropic spaces defined Proposition \ref{prop:existenceresonances}. Recall first notation $\mathcal{B}_k(\varepsilon)$ defined in (\ref{eq:band_def}), with $\varepsilon\in (0,1)$.  

\begin{theorem}\label{thm:mainthmfiberwise}
    For any $k\in \N$ and any $\varepsilon>0$, the set $\bigcup_{\K\in \Lambda}\sigma_{\mathrm{PR}}(\X_\K)$ is finite in $\mathcal{B}_k(\varepsilon)$. Let $s\in \mathbb{N}^*$, for any $z\in \mathbb{C}_{\varepsilon}$ such that $s>-\Re(z)$ there exists $C_{\varepsilon,\lceil \Re(z)\rfloor}>0$ such that 
    \begin{align}\label{eq:fiberwiseestimates}
        \|(-\X_{\K}-z)^{-1}\|_{\mathcal{H}^{s}_{\langle \K \rangle^{-1},\mathrm{hol}}}\leq C_{\varepsilon,\lfloor \Re(z)\rfloor}\langle \mathbf{k}\rangle^{-\lfloor \Re(z) \rfloor}\langle \Im(z)\rangle^{-\lfloor \Re(z) \rfloor+s},\,|\Im(z)|\gg 1
    \end{align}
\end{theorem}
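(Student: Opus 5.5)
The plan follows the strategy announced in the introduction: first treat the band $\{\Re(z)>-1\}\supset\mathcal{B}_0(\varepsilon)$ (note $\frac{1-n}{2}\le -1$ since $n\ge 3$) by a contradiction argument with semiclassical measures, then climb down band by band using the horocyclic operator $\mathcal{U}^-$ of Lemma \ref{lemma:commutationhorocyclic}.

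\textbf{Step 1: first band.} Suppose (\ref{eq:fiberwiseestimates}) fails for $\Re(z)>\frac{1-n}{2}+\varepsilon$, $\Re(z)\le 0$. Then there are sequences $\K_j$, $z_j$ with $|\Im z_j|\to\infty$ and $f_j\in\mathcal{H}^s_{\langle\K_j\rangle^{-1},\mathrm{hol}}$ of unit norm such that $\|(-\X_{\K_j}-z_j)f_j\|$ is $o$ of the claimed inverse bound. Set $h:=\min(\langle\K\rangle^{-1},\langle\Im z\rangle^{-1})$. This gives the high-frequency/high-tensor dichotomy. After rescaling to the Sobolev parameter $h$ (controlled by the powers $\langle\K\rangle^{\cdot}\langle\Im z\rangle^{\cdot}$), we obtain quasimodes $u_h$ with $(-h\X_{\K(h)}-hz_h)u_h=o(h^2)$, $h\Re z_h\to0$, and $h\Im z_h\to\eta$ with $h\K(h)\to\mathbf{l}$. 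The radial source estimate (\ref{eq:propag_source}) kills mass near $E_{u,F}^*$, the sink estimate (\ref{eq:propag_sink}) and propagation (\ref{eq:propag_standard}) control the rest. Hence $u_h$ does not vanish in the limit, and its semiclassical measure $\mu$ (Proposition \ref{prop:propagation_measure}) is nonzero and supported on $\{\xi(X_F)+\eta=0\}\cap\mathbb{H}_F^*$, on the trapped set $E_{0,F}^*$. It is invariant up to the damping term $2\Re z$ under $H^{\omega_{\mathbf{l}}}_{\xi(X_F)}=H^{\omega_0}_{\xi(X_F)}$, by Lemmas \ref{label:hamiltonianflow} and \ref{lemma:curvature}.

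Next I exploit $\Re(z_h)/h>-1$. Applying the lifted horocyclic operator (sum of $U_i^-$ acting on the $T$-equivariant lift, which is in the Borel–Weil calculus by Proposition \ref{prop:holpreserved}) and using $[X,\mathcal{U}^-]=-\mathcal{U}^-$, the function $\mathcal{U}^-u_h$ is a quasimode at $z_h+1$ on the space $\mathcal{H}'^N$ of Remark \ref{rk:orderfunction}. Because $\Re(z_h+1)>0$ lies above the spectrum, the a priori bound forces $\mathcal{U}^-u_h$ to be small. This gives horocyclic invariance, which translates into $\mu$ being invariant/Lipschitz along the Hamiltonian flows of the $\omega_{\mathbf{l}}$-symbols of the $U^-_i$ (or $U_i^+$). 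These fields are transversal to $E_{0,F}^*$, with Poisson brackets computed via (\ref{eq:vfield_symplectic}) and (\ref{eq:commutation_algebraic}): $\{p_{U_i^+},p_{U_i^-}\}=2p_X+$ curvature term. Invariance of $\mu$ along a direction transverse to its support, combined with the support property, gives $\mu=0$, a contradiction.

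The main obstacle is here. The magnetic term $i\mathbf{l}\cdot\mathbf{F}_{\overline\nabla}(U_i^+,U_j^-)$, nonzero since $d\Theta$ is non-degenerate on $E^u\oplus E^s$, perturbs the Hamiltonian dynamics. One must choose combinations of the $U_i^\pm$ (via the torus weights, Lemma-type construction on $\mathfrak{t}$) whose flows still leave the trapped set $\{\xi(X)=-\eta\}\cap\mathbb{H}_F^*$. The fact that $\mathbf{l}$ may be large relative to $\eta$ in the tensor regime must be handled separately. Finiteness of resonances in the band follows from the same contradiction argument applied to $|\K|\to\infty$ with bounded $z$, together with discreteness for each fixed $\K$.

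\textbf{Step 2: induction on $k$.} Assume the estimate on $\mathcal{B}_{k-1}$ for all twisted bundles $\El^\K\otimes E$ (vector-valued version (\ref{eq:resonancevecorbundle})). For $z\in\mathcal{B}_k(\varepsilon)$, use $\mathcal{U}^-(-\X-z)=(-\X-(z+1))\mathcal{U}^-$. Then $(-\X-z)^{-1}$ is reconstructed from $(-\X-z-1)^{-1}\mathcal{U}^-$ (inductively controlled) and from its restriction to $\ker\mathcal{U}^-$-type states. On $\ker\mathcal{U}^-$ the Step 1 argument applies directly, since horocyclic invariance is now built in rather than derived. Each step costs one derivative, giving the extra factors $\langle\K\rangle\langle\Im z\rangle$ in the exponent $-\lfloor\Re z\rfloor$. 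The requirement $s>-\Re(z)$ is what keeps each $\mathcal{H}'$ space above the Fredholm threshold of Proposition \ref{prop:existenceresonances}.
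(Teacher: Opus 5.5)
\textbf{The contradiction mechanism in Step 1 fails.} Your horocyclic step for $\Re z>-1$ matches the paper. The gap is in what you do with it: you assert that $\mu$ lives on the trapped set and that quasi-invariance along the horocyclic Hamiltonian flows, being transverse to the support, forces $\mu=0$. Neither holds.

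The radial estimates only give $\supp\mu\subset\Gamma_+(-\eta)$ and $\mu(V)>0$ for every neighbourhood $V$ of $K_{-\eta}$ (Proposition~\ref{prop:support_prop_measure}). Since $K_{-\eta}$ is $\Phi^{\omega_0,p}_t$-invariant, the law $(\Phi^{\omega_0,p}_t)_*\mu=e^{2\nu t}\mu$ even forces $\mu(K_{-\eta})=0$ whenever $\nu\neq0$. So the mass sits on $\Gamma_+(-\eta)\setminus K_{-\eta}$ and accumulates at $K_{-\eta}$.

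The fields $H^{\omega_{\el}}_{\varphi^{-}_j}$ are tangent to $\Gamma_+(-\eta)$ (Lemma~\ref{lemma:property_hamiltonian_flow}) and transverse only to $K_{-\eta}$. Quasi-invariance along them therefore cannot kill $\mu$. It only says that $\mu$ is smooth along the $(n-1)$-dimensional unstable leaves over $K_{-\eta}$, i.e.\ $c^{-1}\delta^{n-1}\le\psi_k^*\mu(B^k_\delta)\le c\,\delta^{n-1}$ (Lemma~\ref{lemma:lipschitz_bound}).

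The actual contradiction is an exponent count. $\Phi^{\omega_0,p}_{-t}$ squeezes $U_{\delta_0}$ into $U_{C\delta_0e^{-t}}$, so the upper bound and the flow law give $e^{2\nu t}\lesssim e^{-(n-1)t}$, which is absurd exactly when $\nu>-\frac{n-1}{2}$. Your final contradiction never uses $\nu$, and it proves too much. The first-band resonant states of the geodesic flow (the $\K=0$ mode) on $\Re z=\frac{1-n}{2}$ with $|\Im z|\to\infty$ are annihilated by $\mathcal{U}^-$ \cite{DyatlovFaureGuillarmou2015} and have non-zero semiclassical measures. ``Horocyclic invariance $\Rightarrow\mu=0$'' would rule them out. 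The critical line $\Re z=\frac{1-n}{2}$ is exactly where $2\nu$ and $-(n-1)$ balance; the deeper excluded lines are its translates through the induction $z\mapsto z+1$.

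\textbf{Bands below $-1$.} Your containment is reversed: $\{\Re z>-1\}\supset\mathcal{B}_0(\varepsilon)$ holds only for $n=3$. For $n\ge4$, $\mathcal{B}_0(\varepsilon)$ contains $\Re z\in(\frac{1-n}{2}+\varepsilon,-1]$, where your horocyclic step (which needs $\Re(z+1)>0$) breaks down. The paper covers this with an induction on unit strips:
\begin{itemize}
\item start from sharper quasimodes $\mathbf{P}_hu_h=o(h^{m+2})$;
\item apply $h\mathcal{U}^-$ and use $[\nabla_X,\mathcal{U}^-]=-\mathcal{U}^-$;
\item prove $\|\mathbf{P}_h^{H^0(F,\El^\K)\otimes E_s^*}(z_h+ih)^{-1}\|=O(h^{-m-1})$ (Lemma~\ref{lemma:bound_inverse_sm}) by decomposing $H^0(G/T,\mathbf{J}^\K)\otimes(\mathfrak{n}^-_{\C})^*$ into a uniformly bounded number of Borel--Weil modes, so the scalar induction hypothesis applies and gives $h\mathcal{U}^-u_h=o(h)$.
\end{itemize}
This makes your ``all twisted bundles'' hypothesis unnecessary; you never establish its base case anyway.

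Step 2 as written also fails, for two reasons.
\begin{itemize}
\item The proposed splitting of $(-\X-z)^{-1}$ into a piece controlled by $(-\X-z-1)^{-1}\mathcal{U}^-$ plus a restriction to $\ker\mathcal{U}^-$ is not justified. You give no $\X$-commuting projector onto $\ker\mathcal{U}^-$, and no way to recover bounds on $u$ from bounds on $\mathcal{U}^-u$. One has to rerun the quasimode contradiction, where $\mathcal{U}^-$ only serves to prove horocyclic invariance.
\item For $\nu<-\frac{n-1}{2}$ the upper Lipschitz bound yields nothing. One needs the \emph{lower} bound $\psi_i^*\mu(B^i_\delta)\gtrsim\delta^{n-1}$ together with $U_{C_1\delta_0e^{-t}}\subset\Phi^{\omega_0,p}_{-t}(U_{\delta_0})$, which gives $e^{2\nu t}\gtrsim e^{-(n-1)t}$.
\end{itemize}

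Finally, the transversality obstacle you leave open is settled by a dichotomy. If $\el\neq0$ (always the case when $\langle\K\rangle\ge\langle\Im z\rangle$), Lemma~\ref{lemma:construction_particular_vf} gives fields with $i^{-1}\mathbf{F}_{\overline{\nabla}_k}(\mathbf{U}^+,\mathbf{U}^-)$ close to $\pm2$ and $\pi^*_{F\to SM}\alpha([\mathbf{U}^+,\mathbf{U}^-])$ close to $0$. Transversality then holds for every $\eta$, including $\eta=0$. If $\el=0$ (possible only when $\langle\Im z\rangle\ge\langle\K\rangle$, and then $\eta\neq0$), one uses the contact property of $\alpha$ instead (Lemma~\ref{lemma:contact_property_transversality}).
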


\begin{remark}\label{rk:finitenumberres}
    Note that estimate (\ref{eq:fiberwiseestimates}) is stated for $z$ with large enough imaginary part. Actually using the Quantum-Classical correspondence as in \cite[Appendix A]{DGH2021}, one can expect that the resonances in each $\mathcal{B}_k(\varepsilon)$ have zero imaginary part. Also, since $\bigcup_{\K\in \Lambda}\sigma_{\mathrm{PR}}(\X_\K)$ is finite in each strip, the estimate holds in the whole $\mathcal{B}_k(\varepsilon)$ for $\langle \K \rangle$ large enough.
    
    Finally, we stated resolvent estimate with $\Re(z)\leq 0$, but inspecting the proof in Section \ref{section:resolventestimatesfirstband} we can actually extend to any $\Re(z)\geq 0$: the exponent over $\langle \K \rangle$  will simply be $1$. 
\end{remark}

We may now justify how Theorem \ref{thm:mainthmfiberwise} implies Theorem \ref{thm:mainthm}. The following sections will then be devoted to the proof of Theorem \ref{thm:mainthmfiberwise}
\begin{lemma}\label{lemma:thmimpliesthm}
    Theorem \ref{thm:mainthmfiberwise} implies Theorem \ref{thm:mainthm}.
\end{lemma}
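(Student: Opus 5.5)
The plan is to pass through the fiberwise Fourier decomposition and the definition of the bi-graded norm of Definition \ref{def:anisotorpicnormFM}. For $\Re(z)\gg 0$ and $f\in C^\infty(FM)$, the $G$-equivariance of $X_{FM}$ and Lemma \ref{lemma:intertwinconnection} give the intertwining $\bigl((-X_{FM}-z)^{-1}f\bigr)_{\K,i}=(-\X_\K-z)^{-1}f_{\K,i}$. Indeed, $X_{FM}$ acts on each isotypic component as $\nabla_X^{E^{\lambda}}$, which the Borel-Weil correspondence (Proposition \ref{prop: fiber_id}) transports to $\X_\K$. We therefore define, for $z\in\mathbb{C}_0$ with $\Re(z)>-s$ not a pole of any $\X_\K$, the candidate continuation
\begin{align*}
R(z)f:=\sum_{\K\in\Lambda}\sum_{i=1}^{d_\K}(-\X_\K-z)^{-1}f_{\K,i},
\end{align*}
interpreted through the inverse Fourier transform. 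By uniqueness of meromorphic continuation, this agrees with the $L^2$ resolvent on $\{\Re(z)>0\}$. The continuation there is justified because each $(-\X_\K-z)^{-1}$ is meromorphic on $\mathcal{H}^s_{\langle\K\rangle^{-1},\mathrm{hol}}$ in $\{\Re(z)>-s\}$ by Proposition \ref{prop:existenceresonances}. Since $s$ is arbitrary, this covers all of $\mathbb{C}_0$.

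Next comes the boundedness estimate. Fix $z\in\mathcal{B}_k(\varepsilon)$ with $\Re(z)>-s$ and $|\Im(z)|$ large. Theorem \ref{thm:mainthmfiberwise} gives, for every $\K$,
\begin{align*}
\|(-\X_\K-z)^{-1}f_{\K,i}\|_{\mathcal{H}^s_{\langle\K\rangle^{-1},\mathrm{hol}}}\le C_{\varepsilon,k}\langle\K\rangle^{-\lfloor\Re(z)\rfloor}\langle\Im(z)\rangle^{-\lfloor\Re(z)\rfloor+s}\|f_{\K,i}\|_{\mathcal{H}^s_{\langle\K\rangle^{-1},\mathrm{hol}}}.
\end{align*}
We square this bound, multiply by $d_\K\langle\K\rangle^{2m}$ and sum over $\K$ and $i$. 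The resulting right-hand side is exactly $C^2_{\varepsilon,k}\langle\Im(z)\rangle^{2(-\lfloor\Re(z)\rfloor+s)}\|f\|^2_{\mathcal{H}^{m-\lfloor\Re(z)\rfloor,s}}$. Density of $C^\infty(FM)$ then gives (\ref{eq:mainthmestimate}), and shows that the series defining $R(z)$ converges in $\mathcal{H}^{m,s}$.

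For general $z$ in a band that is not a pole, including bounded $|\Im(z)|$, we use the finiteness of $\bigcup_\K\sigma_{\mathrm{PR}}(\X_\K)$ in $\mathcal{B}_k(\varepsilon)$ together with Remark \ref{rk:finitenumberres}. By these, the estimate holds uniformly for $\langle\K\rangle$ large on compact subsets avoiding poles. The finitely many remaining $\K$ contribute finitely many meromorphic operators, each bounded on any fixed anisotropic space. This yields (\ref{eq:mainthmboundedness}).

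Meromorphy of $R(z)$ as a map $C^\infty(FM)\to\mathcal{D}'(FM)$ follows from the same splitting. On compact sets in the band, the tail over large $\langle\K\rangle$ is a locally uniformly convergent sum of holomorphic operator-valued functions, hence holomorphic. The finitely many low modes are meromorphic. The poles of $R(z)$ therefore lie in $\bigcup_\K\sigma_{\mathrm{PR}}(\X_\K)$, and this set is finite in each $\mathcal{B}_k(\varepsilon)$. Since $\varepsilon$ is arbitrary, we obtain the continuation on $\mathbb{C}_0$ and statement (i).

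The main obstacle is this uniformity. Theorem \ref{thm:mainthmfiberwise} is stated only for $|\Im(z)|\gg1$, so we must ensure that for large $\langle\K\rangle$ the resolvent bound extends to bounded $\Im(z)$ with a constant uniform in $\K$. The plan is to invoke Remark \ref{rk:finitenumberres}: since only finitely many resonances lie in the band across all $\K$, the fiberwise argument (the high tensor power regime $h=\langle\K\rangle^{-1}$) gives the estimate on the whole band for $\langle\K\rangle$ large. A second point to check is that the anisotropic spaces for different $\K$ use the compatible escape function $\mathbf{G}_{\mathbf{m}_{-s,s}}$, so that the summed norm is exactly Definition \ref{def:anisotorpicnormFM}.
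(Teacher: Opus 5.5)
Your proposal is correct and follows the paper's argument: define the continuation mode by mode through $\mathcal{F}^{-1}$, check that it agrees with the $L^2$ resolvent on $\{\Re(z)>0\}$, and square-and-sum the fiberwise bound of Theorem \ref{thm:mainthmfiberwise} against the weights $d_\K\langle\K\rangle^{2m}$ of Definition \ref{def:anisotorpicnormFM}. Your additional treatment of bounded $\Im(z)$ and of meromorphy of the summed resolvent, via Remark \ref{rk:finitenumberres} (uniform bounds for large $\langle\K\rangle$ plus finitely many low modes), fills in details that the paper leaves implicit.
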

\begin{proof}
    This is an easy adaptation of the argument in \cite[Corollary 4.4.3]{CL24}. We fix $\varepsilon>0$ and $z\in \mathcal{B}_k(\varepsilon)$ not in $\bigcup \sigma_{\mathrm{PR}}(\X_\K)$. Let $f\in C^{\infty}(FM)$, for any $s>-\Re(z)$ we set 
    \begin{align}\label{eq:globalresolventfromfourier}
        (-X_{FM}-z)^{-1}f:=\mathcal{F}^{-1}\bigl( ((-\X_\K-z)^{-1}f)_\K\bigr).
    \end{align}
    For $\Re(z)>0$, the previous formula agrees with $(-X_{FM}-z)^{-1}$ seen as a bounded operator on $L^2(FM)$ ($X_{FM}$ is anti self-adjoint with respect to its natural domain in $L^2$). Theorem \ref{thm:mainthmfiberwise} allows to perform the meromorphic continuation for each Fourier modes. Using resolvent estimates (\ref{eq:fiberwiseestimates}), we have for any $m\in \mathbb{R}$ 
    \begin{align*}
        \|(-X_{FM}-z)^{-1}f\|_{\mathcal{H}^{m,s}}^2&=\sum_{\K\in \Lambda}d_{\K}\langle \K \rangle^{2m}\sum_{i=1}^{d_\K}\|(-\X_\K-z)^{-1}f_{\K,i}\|_{\mathcal{H}^N_{\langle \K \rangle^{-1},\mathrm{hol}}}^2\\
        &\leq C_{\varepsilon}\langle z \rangle^{2(-\lfloor \Re(z) \rfloor+s)} \sum_{\K\in \Lambda}d_{\K}\langle \K \rangle^{2(m-\lfloor \Re(z) \rfloor)}\sum_{i=1}^{d_\K}\|f_{\K,i}\|_{\mathcal{H}^N_{\langle \K \rangle^{-1}},\mathrm{hol}}^2\\
        &\leq C_{\varepsilon}\langle z \rangle^{2(-\lfloor \Re(z) \rfloor+s)}||f||_{\mathcal{H}^{m-\lfloor \Re(z) \rfloor,s}}
    \end{align*}
    hence the desired implications. 
\end{proof}
\begin{remark}\label{rk:}
    To a resonant state $u_\K\in\mathcal{H}^N_{\mathrm{hol}}(F,\El^\K)$ at $z_0$ corresponds a resonant state $u:=\mathcal{F}^{-1}(\delta_{\K'\K}u_\K)$ in $FM$ which satisfies $(-X_{FM}-z_0)u=0$. From Theorem \ref{thm:mainthmfiberwise} we see that each resonant state on $FM$ corresponding to $z_0\in \mathcal{B}_k(\varepsilon)$ has a finite number of Fourier component: each of its Fourier component is a resonant state on a $\mathcal{H}_{h,\mathrm{hol}}(F,\El^\K)$.
\end{remark}
\setcounter{tocdepth}{2}
\section{Resolvent estimates of $\X_{\mathbf{k}}$ in $\Re(z)>-1$}\label{section:resolventestimatesfirstband}

\subsection{The semiclassical formulation}\label{subsection:semiclassicalformul}

Take $\varepsilon\in (0,1)$ and write $\{\beta_j\}_{j\leq k(\varepsilon)}$ to be the finite set of all the poles of $z\mapsto (-X_{FM}-z)^{-1}$ in the band $\mathcal{B}_0(\varepsilon)$, see Section \ref{section:introandmain} for the notation. That this set is finite is justified at the end of Section \ref{subsec:proof_re_minus_1}. We define 
\begin{align*}
    \lambda_{\varepsilon}:=\max_{j\leq k(\varepsilon)} |\Im(\beta_j)|,
\end{align*}
allowing us to define the region where we will perform the resolvent estimates, namely $\mathcal{B}_0(\varepsilon)':=\mathcal{B}_0(\varepsilon)\,\cap\, \{|\Im(z)|\geq \lambda_{\varepsilon}+1\}$. This restriction is only a matter of notational convenience, we do not seek to optimize the region where we prove resolvent estimates; also as explained in Remark \ref{rk:finitenumberres}, for $\langle \K \rangle$ large enough the estimate can be proved on the whole strip $\mathcal{B}_0(\varepsilon)$.
\begin{remark}
    Actually, using the paper \cite{DyatlovFaureGuillarmou2015}, we can strongly suspect that $\lambda_{\varepsilon}$ is $0$. For instance in the $n=3$ case, the work \cite[Appendix A]{DGH2021} proves that $\lambda_{\varepsilon}=0$. However the proof should rely on the \textit{quantum-classical correspondence}, which is far from understood outside of the setting of locally symmetric spaces. Since we want this paper to be as self-contained as possible, we have to accept the lack on knowledge on $\lambda_{\varepsilon}$.
\end{remark}
In order to prove the estimate in Theorem \ref{thm:mainthmfiberwise}, we can prove two separate estimates (as done in \cite[Section 4.5.1]{CL24} to obtain resolvent bound on $\{\Re(z)\geq 0\}$)  
\begin{proposition}\label{prop:two_asymptotics_estimates}
    Let $N>1$. There exists $C_{\varepsilon}$ such that for all $z\in \mathcal{B}_0(\varepsilon)'\cap\{\Re(z)\in(-1,0]\}$ 
    \begin{align}
\|(-\X_{\K}-z)^{-1}\|_{\mathcal{H}^{N}_{\langle \K \rangle^{-1},\mathrm{hol}}}
&\le C_{\varepsilon}\langle \mathbf{k}\rangle,
&\qquad&
\text{if }\langle \mathbf{k}\rangle \ge \langle \Im(z)\rangle,
\label{eq:case-k}
\\[0.5em]
\|(-\X_{\K}-z)^{-1}\|_{\mathcal{H}^{N}_{\langle \Im(z)\rangle^{-1},\mathrm{hol}}}
&\le C_{\varepsilon}\langle \Im(z)\rangle,
&\qquad&
\text{if }\langle \Im(z)\rangle \ge \langle \mathbf{k}\rangle.
\label{eq:case-im}
\end{align}
where $N$ is chosen strictly greater than $-\lceil \Re(z)\rceil$.
\end{proposition}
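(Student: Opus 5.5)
We argue by contradiction, following the semiclassical scheme of \cite{Dy16,DGH2021} adapted to the Borel-Weil calculus. Suppose one of the two bounds fails. Then there are sequences $\K_j\in\Lambda$, $z_j\in\mathcal{B}_0(\varepsilon)'$ with $\Re(z_j)\in(-1,0]$ and $f_j\in\mathcal{H}^N_{\mathrm{hol}}$ with $\|f_j\|=1$, such that $\|(-\X_{\K_j}-z_j)f_j\|=o(h_j)$ in the corresponding anisotropic norm. Here $h_j=\langle\K_j\rangle^{-1}$ in the first regime and $h_j=\langle\Im z_j\rangle^{-1}$ in the second. Since $|\Im z_j|\geq\lambda_\varepsilon+1$ excludes the finitely many poles, and each fixed $(\K,z)$ has a bounded resolvent, we may assume $h_j\to0$. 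We set $u_h:=f_j$, $\mathbf{P}_h:=-ih\X_{\K(h)}-i hz_h$ and $\eta:=\lim h\Im z_h\in[-1,1]$, extracting subsequences so that $h\K(h)\to\mathbf{l}$ and $hz_h\to -i\eta'$. Then $\mathbf{P}_hu_h=o_{\mathcal{H}^N}(h^2)$, since the $o(h)$ residual is multiplied by the factor $h$ in $\mathbf{P}_h$. In either regime $|\eta|+|\mathbf{l}|$ is bounded below. Indeed, in the first regime $|\mathbf{l}|\sim1$, and in the second $|\eta|=1$. This makes $\mathbf{P}_h$ elliptic near the zero section of $\mathbb{H}_F^*$ when $\eta\neq0$; when $\eta=0$ and $\mathbf{l}\neq0$, we use instead the twisted symplectic structure.

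First, we obtain a nontrivial measure. We combine the radial source estimate (\ref{eq:propag_source}) near $E^*_{s,F}$, which is valid since $N>-\Re(z)$ gives the threshold, with the sink estimate (\ref{eq:propag_sink}) near $E^*_{u,F}$, propagation of singularities (\ref{eq:propag_standard}) and ellipticity away from $\{\xi(X_F)+\eta=0\}$. These show that a fixed compact region $K\subset\mathbb{H}_F^*$ near $E^*_{0,F}$ carries a uniform fraction of the norm: $\|u_h\|_{\mathcal{H}^N}\leq C\|\mathbf{B}u_h\|_{L^2}+o(1)$ with $\mathbf{B}$ compactly microsupported. Proposition \ref{prop:propagation_measure} then yields a nonzero measure $\mu$ with $\supp\mu\subseteq\{\xi(X_F)+\eta=0\}$. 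Moreover, $\mu$ is supported on the trapped set $\Gamma_\eta:=E^*_{0,F}\cap\{\xi(X)=-\eta\}$: propagation with $\beta\geq1$ and the nonpositive damping term $2\Re(z)/h\cdot h$, which is bounded here, force this via the sink and source structure of $\Phi^{\omega_0,\xi(X_F)}_t=(\Phi^F_t,(d\Phi_{-t}^F)^{-\top})$ from Lemma \ref{label:hamiltonianflow} and Lemma \ref{lemma:curvature}. The Hamiltonian identity gives $H^{\omega_{\mathbf{l}}}_{\xi(X_F)}\mu=2\nu\mu$ with $\nu:=\lim\Re(z_h)\in(-1+\varepsilon,0]$, if $\varepsilon$ is small.

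The key step, and the main obstacle, is to produce a contradiction from the horocyclic invariance, because $\Re z>-1$ is not the spectral-gap-$0$ situation. We first show that $\mathcal{U}^-$-type derivatives of $u_h$ are small. We apply the lifted unstable fields $U_i^-$, or rather their BW-realisation $\nabla^{\K}$ along the horizontal unstable lift, to the quasimode equation. By Lemma \ref{lemma:commutationhorocyclic} the result satisfies $(-\X-(z+1))\mathcal{U}^-u_h=o(h^2)$ in $\mathcal{H}'^N$ (Remark \ref{rk:orderfunction}, with $N>1$). Since $\Re(z+1)>0$ and the resolvent on $\{\Re>0\}$ is bounded by $O(h^{-1})$ as in \cite[\S4.5]{CL24}, we get $h\mathcal{U}^-u_h=o(h)$, which is Proposition \ref{prop:horo_invariance_quasimode_re_larger_minus1}. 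Consequently, the symbols $p_{U_i^-}$ of $h\nabla^{\K}_{U_i^-}$ satisfy $p_{U_i^-}\mu=0$ and $H^{\omega_{\mathbf{l}}}_{p_{U_i^-}}\mu$ is controlled. Computing brackets with (\ref{eq:vfield_symplectic}) and (\ref{eq:commutation_algebraic}) gives
\[
\{p_{U_i^+},p_{U_i^-}\}^{\omega_{\mathbf{l}}}=2p_X+i\mathbf{l}\cdot\mathbf{F}_{\overline{\nabla}}(U^+_i,U^-_i),
\]
where the curvature term comes from $R_{i+1,i+1}$-components projected to $\mathfrak{t}$. The Hamiltonian flows of suitable combinations are transversal to $\Gamma_\eta$, provided $\eta$ and $\mathbf{l}$ are not both zero; this uses the nondegeneracy of $d\Theta_{\mathrm{dyn}}$ on $E^u\oplus E^s$. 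The invariance along them yields a Lipschitz bound $\mu(\{\text{dist to a transversal slice}\leq r\})\lesssim r\,\mu(\cdot)$ (Lemma \ref{lemma:lipschitz_bound}). On the other hand, the identity $H_{\xi(X_F)}\mu=2\nu\mu$ with $\nu>-1$ forces $\mu$ to be stretched along the unstable flow at rate $e^{-2\nu t}<e^{2t}$, while transverse sets contract at rate $e^{-t}$. This is incompatible with the Lipschitz bound unless $\mu=0$, contradicting the nontriviality of $\mu$. The delicate point is to choose, in the Borel-Weil setting, unstable/stable vector fields whose $\omega_{\mathbf{l}}$-Hamiltonian flows preserve $\mathbb{H}_F^*$ (Proposition \ref{prop:horizontal_preserved}) and remain transversal in both regimes, in particular when $\eta=0$ and only $\mathbf{l}\neq0$ supplies transversality. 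For this we would take the lifts of $U_i^\pm$ adapted to the torus weights, in the spirit of Lemma \ref{lemma:construction_particular_vf}.
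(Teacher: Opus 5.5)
Your overall route is the paper's: contradiction, semiclassical measure, radial estimates, horocyclic invariance from $[\nabla_X,\mathcal{U}^-]=-\mathcal{U}^-$, transversality, Lipschitz bound, and the final volume count. The horocyclic step is essentially Proposition~\ref{prop:horo_invariance_quasimode_re_larger_minus1}, since twisting by the connection on $E_s^*$ with the shift $z\mapsto z+1$ is the same as the paper's Lie-derivative twist.

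Your $h$-bookkeeping in that step is off, though. The correct identity is $(-\mathbf{X}-(z+1))\,h\mathcal{U}^-u_h=o(h)$ in $\mathcal{H}'^N$, with $\mathbf{X}$ the twisted operator. So what you need is $\|(-\mathbf{X}-w)^{-1}\|_{\mathcal{H}'^N}=O(1)$ uniformly for $\Re w\geq\varepsilon$, which the paper gets from the propagator bound. An $O(\langle\mathbf{k}\rangle)$ bound of the kind proved on $\{\Re\geq 0\}$ in \cite{CL24} would only give $h\mathcal{U}^-u_h=o(1)$.

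Beyond this, three claims are wrong as stated. The first is that $\mu$ is supported on the trapped set $E^*_{0,F}\cap\{\xi(X_F)=-\eta\}=K_{-\eta}$.
\begin{itemize}
\item The source estimate at $E^*_{s,F}$ and propagation only give $\supp\mu\subset\Gamma_+(-\eta)=-\eta\,\pi^*_{F\to SM}\alpha+E^*_{u,F}$.
\item Nothing removes mass from $\Gamma_+(-\eta)\setminus K_{-\eta}$. That set flows into the radial sink, where $\mathcal{H}^N$ is only $H^{-N}$.
\item The claim cannot hold. $K_{-\eta}$ is compact and $\Phi^{\omega_0,p}_t$-invariant, so $\mu(K_{-\eta})=e^{2\nu t}\mu(K_{-\eta})$ would force $\mu=0$ for every $\nu<0$, with no horocyclic input at all.
\item Your support argument never uses $\nu>-1$, so it would equally rule out the existing resonances on $\Re z=\frac{1-n}{2}$.
\end{itemize}
The correct statement, and the one the final count actually uses, is Proposition~\ref{prop:support_prop_measure}: $\supp\mu\subset\Gamma_+(-\eta)$ and $\mu(V)>0$ for every neighbourhood $V$ of $K_{-\eta}$.

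Second, your bracket identity fails exactly where it is needed.
\begin{itemize}
\item $[U^+_i,U^-_i]=2\mathfrak{X}$ is horizontal; there is no $R_{i+1,i+1}$-component, since that matrix is $0$.
\item Hence (\ref{eq:curvaturelinelift}) gives $\mathbf{F}_{\overline{\nabla}}(U^+_i,U^-_i)=0$, and on $K_{-\eta}$ the diagonal bracket is just $-2\eta$.
\item This vanishes when $\eta=0$, which does occur in the large $\langle\mathbf{k}\rangle$ regime.
\item Curvature only enters through off-diagonal pairs, $[U^+_i,U^-_j]=2R_{i+1,j+1}$, and only when $\Pi_{\mathfrak{t}}R_{i+1,j+1}\neq 0$.
\item The resulting $\mathbf{l}$-weighted pairing on $E^u\times E^s$ is degenerate in general; for $n$ even, $U^-_{n-1}$ lies in its kernel. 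So there is no nondegeneracy of $d\Theta_{\mathrm{dyn}}$ to invoke.
\end{itemize}
This is why the paper builds the specific basic frame of Lemma~\ref{lemma:construction_particular_vf}. It pairs $\mathbf{U}^+_1$ with $\mathbf{U}^-_j$ for $j\geq 2$, after modifying the unstable fields with $j\geq 3$, and pairs $\mathbf{U}^+_2$ with $\mathbf{U}^-_1$. It then uses $l_k\geq 0$, so the contributions of the different $L_k$ add up to $-2\sum_k l_k\neq 0$ instead of cancelling. Your diagonal term is what the paper uses only when $\mathbf{l}=0$, through the contact form in Lemma~\ref{lemma:contact_property_transversality}.

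Third, a Lipschitz bound of the form $\lesssim r$ only yields $e^{2\nu t}\lesssim e^{-t}$. That gives no contradiction for $\nu\in(-1,-\frac12]$. You need invariance along at least two independent transversal unstable directions, so that an $r$-neighbourhood of $K_{-\eta}$ in $\Gamma_+(-\eta)$ has mass $O(r^2)$; Lemma~\ref{lemma:lipschitz_bound} states $\delta^{n-1}$. This gives $e^{2\nu t}\lesssim e^{-2t}$, which does contradict $\nu>-1$. Your heuristic $e^{-2\nu t}<e^{2t}$ tacitly assumes exactly this two-dimensional bound.
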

Estimate (\ref{eq:case-k}) (resp. (\ref{eq:case-im})) will be called the \textit{large $\langle \K \rangle$ regime} (resp. \textit{the large $\langle \Im(z)\rangle$ regime}). Let us quickly justify why (\ref{eq:case-k}) and (\ref{eq:case-im}) imply estimate (\ref{eq:fiberwiseestimates}). This is also justified in \cite[Section 4.5.1]{CL24}, but for the reader's convenience, we repeat the argument. That (\ref{eq:case-k}) implies (\ref{eq:fiberwiseestimates}) is clear because $\langle\Im(z)\rangle\geq 1$. In the case of the estimate (\ref{eq:case-im}), we use the following equivalence of norms (appearing for instance in \cite[Lemma 4.3]{DGH2021} and \cite[Proposition 4.5.1]{CL24}), there exists $C>0$ such that for all $h,h'>0$
\begin{align*}
    \|u_h\|_{\mathcal H_h^N}
\le C \left(\frac{\max\{h,h'\}}{\min\{h,h'\}}\right)^N
\|u_h\|_{\mathcal H_{h'}^N},
\end{align*}
which translates in the case $\Im(z)\geq \langle \K \rangle$ as
\begin{align*}
    \|(-\X_{\K}-z)^{-1}\|_{\mathcal{H}^{N}_{\langle \K \rangle^{-1},\mathrm{hol}}}&\leq \dfrac{\langle \Im(z)\rangle^N}{\langle \K \rangle^N}\times \|(-\X_{\K}-z)^{-1}\|_{\mathcal{H}^{N}_{\langle \Im(z) \rangle^{-1},\mathrm{hol}}}\\
    &\leq C_{\varepsilon}\langle \Im(z)\rangle^{N+1}\\
    &\leq C_{\varepsilon}\langle \K \rangle\langle \Im(z)\rangle^{N+1}
\end{align*}

To prove the resolvent estimate in $\mathcal{B}_0(\varepsilon)'$, an additional difficulty will be that in dimensions greater than $3$, we also need to prove estimates in regions where $\{\Re(z)\leq-1\}$. This will be done by an induction procedure after having applied an unstable derivative to quasimodes (see below for their definition), to allow the use of resolvent estimate in the right adjacent band. We will thus need our resolvent estimate to hold in anisotropic spaces with slightly different order function.
\begin{lemma}\label{lemma:modified_sobolev_estimate}
    Let $N>2$, then Proposition \ref{prop:two_asymptotics_estimates} holds with estimates in the anisotropic space $\mathcal{H}'^N_{h,\mathrm{hol}}$. 
\end{lemma}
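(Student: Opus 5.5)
The plan is to re-run the proof of Proposition \ref{prop:two_asymptotics_estimates} essentially verbatim, with the order function $\mathbf{m}_{-N,N}$ replaced by $\mathbf{m}'_N=\mathbf{m}_{-N-1,N-1}$. I would check that each ingredient of that proof depends on the order function only through three properties of the escape function:
\begin{enumerate}
\item \emph{Threshold condition.} This enters at the radial source $E_{u,F}^*$ and the radial sink $E_{s,F}^*$ in Proposition \ref{prop:bwproperties2}.
\item \emph{Decay of $\mathbf{G}'$.} We need $H^{\omega_0}_{\xi(X_F)}\mathbf{G}'\leq 0$ for $|\xi_{\mathbb{H}_F^*}|$ large, with strict negativity near $E_{s,F}^*\cup E_{u,F}^*$.
\item \emph{Fredholm band.} The resolvent must be Fredholm, as in Proposition \ref{prop:existenceresonances}, on a band containing $\{\Re(z)>-1\}$.
\end{enumerate}

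\textbf{Threshold condition.} Near $E_{u,F}^*$ the space $\mathcal{H}'^N$ is microlocally $H^{-N-1}_{h,\mathrm{hol}}$. The source threshold only requires the regularity to be sufficiently negative compared to $\Re(z)/h$, and $-N-1<-N$ is stronger, so this is automatic. Near $E_{s,F}^*$ the space is microlocally $H^{N-1}_{h,\mathrm{hol}}$. The sink estimate (\ref{eq:propag_sink}) needs $N-1>-\Re(z)$, and with $\Re(z)>-1$ this holds as soon as $N>2$. This is precisely where the hypothesis $N>2$ is used: $N-1>1$ guarantees the sink threshold on the whole strip $\{\Re(z)>-1\}$.

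\textbf{Decay of $\mathbf{G}'$.} By Remark \ref{rk:orderfunction}, the modified order function $\mathbf{m}'_N$ is admissible because $N>1$, following \cite[Lemma 8.6]{CekicMicrolocalMethods}. Its value $-1$ near $E_{0,F}^*$ lies strictly between $-N-1$ and $N-1$, and it is monotone along the flow. Hence $H^{\omega_0}_{\xi(X_F)}\mathbf{G}'\leq 0$ for $|\xi_{\mathbb{H}_F^*}|$ large, with strict negativity near the stable and unstable codirections.

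\textbf{Fredholm band.} With these two properties in hand, Proposition \ref{prop:existenceresonances} applies on $\mathcal{H}'^N_{h,\mathrm{hol}}$ in the band $\{\Re(z)>-(N-1)\}$, which contains $\{\Re(z)>-1\}$. The resonances there coincide with those on $\mathcal{H}^N_{h,\mathrm{hol}}$, since they are characterized by wavefront conditions independent of the space. In particular the poles $\beta_j$ and the constant $\lambda_\varepsilon$ are unchanged.

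Next I would repeat the contradiction argument of the proof of Proposition \ref{prop:two_asymptotics_estimates} in both regimes.

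\textbf{Quasimodes and the Lipschitz bound.} Assume the estimates fail. Then we get quasimodes $u_h$, normalized in $\mathcal{H}'^N$, with $(-h\X_{\K(h)}-z_h)u_h=o(h^2)$ in $\mathcal{H}'^N$. Conjugating by $\mathbf{A}'^N_{h,\K}$, we apply the propagation, source and sink estimates of Proposition \ref{prop:bwproperties2}, which are now legitimate by the threshold check. This shows that $u_h$ concentrates on the trapped set $E^*_{0,F}$ and yields a nontrivial semiclassical measure $\mu$ via Proposition \ref{prop:propagation_measure}. The horocyclic invariance and the Lipschitz bound on $\mu$ only use $\Re(z_h)/h>-1$ and the localization near $E_{0,F}^*$. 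On a compact set away from $E_{s,F}^*$ and $E_{u,F}^*$, the spaces $\mathcal{H}'^N$ and $\mathcal{H}^N$ differ by an elliptic operator of order $-1$, which is microlocally a bounded, $h$-independent weight. So these steps carry over unchanged and give the same contradiction.

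\textbf{Main obstacle.} The step I expect to be hardest is this last one. The term $-1$ in $\mathbf{m}'_N$ near $E^*_{0,F}$ produces a principal symbol of $\mathbf{A}'^N$ near the trapped set that is no longer $1$. So I would need to check that conjugation by $\mathbf{A}'^N$ changes $\Im(\mathbf{P})$ only by a term $h\,H_{\xi(X_F)}\mathbf{G}'$. That term vanishes near $E^*_{0,F}$ as long as $\mathbf{m}'_N$ is locally constant there, and I would build $\mathbf{m}'_N$ to have this property. With that, the measure identities of Proposition \ref{prop:propagation_measure} hold as before.
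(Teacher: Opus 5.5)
Your overall route is the paper's. You rerun the proof of Proposition~\ref{prop:two_asymptotics_estimates} with $\mathbf{A}'^N_{h,\K}$ in place of $\mathbf{A}^N_{h,\K}$, check the radial thresholds with regularity $N-1$ at $E^*_{s,F}$ and $-N-1$ at $E^*_{u,F}$, and extract $\mu$ from the uniform bound in $H^{-N-1}_h$. Your source/sink labels are swapped relative to the paper, where $E^*_{s,F}$ is the source and $E^*_{u,F}$ the sink, but the inequalities you check are the right ones.

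The step you dismiss as ``unchanged'' is exactly the second place where $N>2$ is needed, and your justification for it fails. Horocyclic invariance (Proposition~\ref{prop:horo_invariance_quasimode_re_larger_minus1}) is not a statement localized near $K_{-\eta}$. It comes from inverting $\mathbf{P}^u_h$ globally, with norm $O(h^{-1})$, on the anisotropic space containing $h\nabla^{\K(h),u}u_h$. For $u_h\in\mathcal{H}'^N_{h,\mathrm{hol}}$ that space is $\mathcal{H}^{[-N-2,N-2]}_{h,\mathrm{hol}}(F,\El^{\K}\otimes E^*_{s,F})$, since one more derivative is lost. The propagator bound $\|e^{-t\X^u_{\K}}\|\le e^{-t}(1+Ch)^{t+1}$ rests on $\exp(\mathbf{G}''\circ\Phi^{\omega_0,p}_t-\mathbf{G}'')\le 1$, where $\mathbf{G}''$ is the escape function of the order function $\mathbf{m}_N-2$. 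Along the flow $|\xi|$ decreases on $E^*_{s,F}$, so this requires the value $N-2$ there to be positive, i.e.\ $N>2$. This is the content of the remark following Proposition~\ref{prop:horo_invariance_quasimode_re_larger_minus1}. Knowing that $\mathcal{H}'^N$ and $\mathcal{H}^N$ differ by a bounded weight on compact sets near $K_{-\eta}$ cannot replace this global inverse. The repair is immediate since $N>2$ is assumed, but this check has to be made explicitly.

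What you call the main obstacle is not one. The semiclassical measure is defined from $u_h$ itself, which is bounded in $H^{-N-1}_{h,\mathrm{hol}}$. The propagation identity of Proposition~\ref{prop:propagation_measure} involves only the unconjugated $\mathbf{P}_h$, so no $H_{\xi(X_F)}\mathbf{G}'$ term ever enters the measure identities. This matters because your proposed remedy is false as stated. If $\mathbf{m}'_N\equiv -1$ near $E^*_{0,F}$, then $H_{\xi(X_F)}\mathbf{G}'=-H_{\xi(X_F)}\log\langle\xi\rangle$ there. This vanishes on $E^*_{0,F}$ but not on any neighbourhood of it, since $|\xi|$ changes along the flow as soon as $\xi$ has a stable or unstable component. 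The same remark applies to your unqualified claim $H_{\xi(X_F)}\mathbf{G}'\le 0$; the paper defers the needed care near $E^*_{0,F}$ to the construction in \cite[Lemma 8.6]{CekicMicrolocalMethods}.
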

\begin{proof}
    For $N>2$, $\mathbf{m}'_N=\mathbf{m}_N-1/N$ defines an order function and a corresponding escape function $\mathbf{G}'$, see Remark \ref{rk:orderfunction}.
    As the proof of this lemma is the same as the proof of Proposition \ref{prop:two_asymptotics_estimates}, except for some minor modifications, we will only explain the slight differences during the course of the proof of Proposition \ref{prop:two_asymptotics_estimates} when needed.
\end{proof}
\subsection{Proof of (\ref{eq:fiberwiseestimates}) for $\Re(z)\in(-1+\varepsilon,0]$}\label{subsec:proof_re_minus_1}\label{subsec:proof_re_minus_1}
\subsubsection{The large $\langle \K\rangle$ case}\label{subsec:proof_re_minus_1_large_k}  In this subsection, we prove the resolvent estimate (\ref{eq:case-k}). We fix $N>1$ to be the anisotropic Sobolev exponent. Assuming (\ref{eq:case-k}) does not hold, we get sequences $(u_j)$, $(\K_j)$ and $(z_j)$ such that, after an eventual extraction 
\[
\begin{cases}
(-i\langle \mathbf{k}_j \rangle^{-1}\X_{\mathbf{k}_j}-i\langle \mathbf{k}_j\rangle^{-1} z_j)u_j = o_{\mathcal{H}^N_{\langle \mathbf{k}_j\rangle^{-1},\mathrm{hol}}}(\langle \mathbf{k}_j \rangle^{-2}), 
& \text{with } \Re(z_j)\to \nu,  \\[6pt]
\|u_j\|_{\mathcal{H}_{\langle \mathbf{k}_j\rangle^{-1},\mathrm{hol}}^{N}} = 1.
\end{cases}
\]
\begin{lemma}\label{lemma:k_goes_infinity}
    The sequence $\langle \K_j\rangle$ goes to $+\infty$.
\end{lemma}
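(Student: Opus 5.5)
We argue by contradiction, reducing to the meromorphy of a single resolvent $(-\X_\K-z)^{-1}$ for fixed $\K$. Suppose $\langle \K_j\rangle$ does not tend to $+\infty$. After extraction, $\langle \K_j\rangle$ is bounded. Since $\Lambda$ is discrete, a further extraction makes $\K_j=\K$ constant. Then $h_j:=\langle \K_j\rangle^{-1}=\langle \K\rangle^{-1}$ is a fixed positive number. Hence the spaces $\mathcal{H}^N_{\langle \K_j\rangle^{-1},\mathrm{hol}}$ all coincide with the single Hilbert space $\mathcal{H}^N_{\langle \K\rangle^{-1},\mathrm{hol}}(F,\El^\K)$, with the same norm.

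\textbf{Step 1: compactness of the $z_j$.} We are in the large $\langle\K\rangle$ regime, so $\langle \Im(z_j)\rangle\le \langle \K_j\rangle=\langle\K\rangle$, and $\Im(z_j)$ is bounded. The real parts lie in $(-1,0]$. After extraction, $z_j\to z_\infty$ with $\Re(z_\infty)=\nu\in[-1,0]$ and $|\Im(z_\infty)|\ge \lambda_\varepsilon+1$, because $z_j\in \mathcal{B}_0(\varepsilon)'$.

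\textbf{Step 2: the limit point is regular.} We check that $z_\infty$ is not a resonance of $\X_\K$ on $\mathcal{H}^N_{\langle\K\rangle^{-1},\mathrm{hol}}$.
\begin{itemize}
\item Since $N>1$, Proposition \ref{prop:existenceresonances} gives that $-\X_\K-z$ is Fredholm of index $0$ and $(-\X_\K-z)^{-1}$ is meromorphic on $\{\Re(z)>-N\}$, which contains a neighborhood of $z_\infty$.
\item By the remark following Lemma \ref{lemma:thmimpliesthm}, any resonance of $\X_\K$ yields a pole of $(-X_{FM}-z)^{-1}$.
\item The point $z_\infty$ lies in the closure of $\mathcal{B}_0(\varepsilon)$ inside the region where $\Re(z)>\frac{1-n}{2}+\varepsilon'$ for some $\varepsilon'>0$. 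This uses $n\ge3$, so that $\frac{1-n}{2}\le -1$.
\item The poles of $(-X_{FM}-z)^{-1}$ in this band are the finitely many $\beta_j$, and they all satisfy $|\Im\beta_j|\le\lambda_\varepsilon$. If needed, we shrink $\varepsilon$ slightly so that this finite list covers a neighborhood of $\overline{\mathcal{B}_0(\varepsilon)}$.
\end{itemize}
Since $|\Im(z_\infty)|\ge\lambda_\varepsilon+1$, the point $z_\infty$ is not a pole. Hence $z\mapsto(-\X_\K-z)^{-1}$ is holomorphic on a small disc $D$ around $z_\infty$. By continuity its operator norm on $\mathcal{H}^N_{\langle\K\rangle^{-1},\mathrm{hol}}$ is bounded by some $C$ on $D$.

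\textbf{Step 3: contradiction.} Multiply the quasimode equation by $i\langle\K\rangle$ to get
\[
(-\X_\K-z_j)u_j=o_{\mathcal{H}^N}(\langle\K\rangle^{-1})=o_{\mathcal{H}^N}(1).
\]
For $j$ large we have $z_j\in D$, so
\[
1=\|u_j\|_{\mathcal{H}^N}\le C\,\|(-\X_\K-z_j)u_j\|_{\mathcal{H}^N}\to0,
\]
which is absurd. Hence $\langle\K_j\rangle\to+\infty$.

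The only delicate point is Step 2: ensuring that the limit point $z_\infty$ avoids the resonances of this particular $\X_\K$. This is exactly why the restriction to $\mathcal{B}_0(\varepsilon)'$, that is $|\Im z|\ge\lambda_\varepsilon+1$, was introduced. It also relies on identifying the resonances of $\X_\K$ with poles of the global resolvent.
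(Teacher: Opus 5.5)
Your argument is the paper's argument: freeze $\K_j=\K$, use $\langle \Im z_j\rangle\le\langle\K\rangle$ to extract $z_j\to z_0$, and contradict the fact that $z_0$ is not a resonance of $\X_\K$. You make explicit the uniform bound of the holomorphic resolvent near $z_0$, which the paper leaves implicit in ``$z_0$ is in the spectrum''.

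One minor caveat: your ``shrink $\varepsilon$'' device for a limit lying on the boundary line $\Re z=\tfrac{1-n}{2}+\varepsilon$ does not work as stated, since shrinking $\varepsilon$ can only enlarge $\lambda_\varepsilon$. The paper glosses over this boundary case too; it is cleanly handled by defining $\lambda_\varepsilon$ from the poles in a slightly larger band, such as $\mathcal{B}_0(\varepsilon/2)$.
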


\begin{proof}
    By contradiction, after extraction we can assume that $\mathbf{k}_j:=\K_0\in \Lambda$. By the large tensor power assumption, this would imply that $\Im(z_j)$ is bounded. Thus we can assume that $z_j\to z_0$, and moreover 
    \begin{align*}
        \|(-i\langle \K_j\rangle^{-1}\X_{\mathbf{k}_j}-i\langle \K_j\rangle^{-1} z_j)u_j\|_{\mathcal{H}^N_{\langle \K_j\rangle^{-1},\mathrm{hol}}}\to 0.
    \end{align*}
    This implies that $z_0$ is in the spectrum, but this contradicts the assumption that $(z_j)$ is in the resonance-free region $\mathcal{B}_0'(\varepsilon)=\mathcal{B}_0(\varepsilon)\cap\{\Im(z)\geq \lambda_{\varepsilon}+1\}$. The proof of the existence of the resonance-free region is postponed to the end of Section \ref{section:resolventestimatesfirstband}.
\end{proof}

Here we have by assumption that $\nu\in (-1+\varepsilon,0]$. Without loss of generalities, we may assume that $\Im(z_j)\geq 0$ for all $j$. After an eventual further extraction 
\begin{equation}\label{eq:semiclassical_limits_large_k}
    \lim_{j\to +\infty}\dfrac{\Im(z_j)}{\langle \mathbf{k_j}\rangle}=\eta \in [0,1],\quad \langle \K_j \rangle^{-1}\mathbf{k}_j\to \mathbf{l}\in \mathbb{S}^{d-1},
\end{equation}
To write it in a semiclassical fashion, define $h_j:=\langle \K_j \rangle^{-1}$. Seeing $z_j$ and $\K_j$ as functions of $h_j$, we may write $z(h_j)$ and $\K(h_j)$. We will omit the j-index from now on. Finally, we can write the previous estimate as follows
\begin{align}\label{eq:semiclassical_pb_large_k}
\begin{cases}
(-ih\X_{\mathbf{k}(h)}+h\Im(z(h))-ih\Re(z(h)))u_h = o(h^2), 
\\[6pt]
\|u_j\|_{\mathcal{H}_{\langle \mathbf{k}_j\rangle^{-1},\mathrm{hol}}^{N}} = 1.
\end{cases}
\end{align}
We will write $\mathbf{P}_h(z_h)u_h=o(h^2)$ for the previous line. The principal symbol of $\mathbf{P}_h(z_h)$ is
\begin{align}\label{eq:principal_symbol_largek}
    \sigma _{h}^{\mathrm{BW}}(x,\xi)(\mathbf{P}_h(z_h)):=p(x,\xi)+h\Im(z_h):=\xi(X_F(x))+h\Im(z_h),\quad (x,\xi)\in \mathbb{H}_F^*.
\end{align}
\\
Since $(u_h)$ is uniformly bounded in the semiclassical Sobolev space $H_h^{-N}(F,\El^{\K(h)})$  (replace $N$ by $N+1$ here if dealing with order function $\mathbf{m}'$), it converges after possible extraction to a semiclassical measure $\mu$ on $\mathbb{H}_F^*$ by Proposition \ref{prop:propagation_measure}. Again by this Proposition, we obtain the following properties.
\begin{proposition}\label{prop:semiclassicalmeasure_first_properties}
    The previous Radon measure $\mu$ on $\mathbb{H}_F^*$ verifies the following two properties 
    \begin{itemize}
        \item $\supp(\mu)\subset p^{-1}\bigl(\{-\eta\}\bigr)\cap \mathbb{H}_F^*$,
        \item $(\Phi_t^{\omega_0,p})_*\mu \,=\, e^{2\nu t}\mu, \forall t\in \R$.
    \end{itemize}
   
\end{proposition}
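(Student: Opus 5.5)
The plan is to apply Proposition \ref{prop:propagation_measure} to the sequence $(u_h)$ with the operator $\mathbf{P}_h(z_h)=-ih\X_{\K(h)}+h\Im(z_h)-ih\Re(z_h)$ from (\ref{eq:semiclassical_pb_large_k}). First, the hypotheses must be checked. By Remark \ref{rk:orderfunction}, the anisotropic norm $\mathcal{H}^N_{h,\mathrm{hol}}$ dominates $H^{-N}_{h,\mathrm{hol}}$ (the order function satisfies $\mathbf{m}_N\geq -N$), so $\|u_h\|_{H^{-N}_{h,\mathrm{hol}}}\leq C$ and a semiclassical measure $\mu$ on $\mathbb{H}_F^*$ exists after extraction. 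Similarly, $\|\mathbf{P}_h(z_h)u_h\|_{H^{-N}_{h,\mathrm{hol}}}\lesssim \|\mathbf{P}_h(z_h)u_h\|_{\mathcal{H}^N_{h,\mathrm{hol}}}=o(h^2)$, which is in particular $o(h^\beta)$ with $\beta\geq 1$. The principal symbol $p+h\Im(z_h)$ depends on $h$ only through the constant $h\Im(z_h)\to\eta$. I would therefore replace $\mathbf{P}_h$ by $\tilde{\mathbf{P}}_h:=-ih\X_{\K(h)}+\eta-ih\Re(z_h)$. The difference is $(h\Im(z_h)-\eta)u_h$, whose size is $o(1)$ but not a priori $o(h)$. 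For the support statement this is harmless. For propagation it enters only through the real scalar $h\Im(z_h)-\eta$, which commutes with $\mathbf{A}$, so it drops out of the commutator identity $\Im\langle \mathbf{P}v_h,\mathbf{A}v_h\rangle$ used in the proof of Proposition \ref{prop:propagation_measure}. The imaginary part, by contrast, must be kept exactly.

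For the support property, the principal symbol of $\tilde{\mathbf{P}}_h$ is $\xi(X_F)+\eta$, which is $h$-independent and of the form $\pi^*f$. Proposition \ref{prop:horizontal_preserved} then shows that its Hamiltonian flow preserves $\mathbb{H}_F^*$. The ellipticity part of Proposition \ref{prop:propagation_measure} gives $\supp\mu\subset\{p=-\eta\}\cap\mathbb{H}_F^*$.

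For the propagation property, I use Lemma \ref{lemma:curvature} together with Lemma \ref{label:hamiltonianflow}. These give $H^{\omega_{\mathbf{l}}}_{p}=H^{\omega_0}_{p}$, so the twisted symplectic form plays no role. Next I compute $h^{-1}\Im(\tilde{\mathbf{P}}_h)$. Since the frame flow preserves the volume and the connection is unitary, $\X_\K$ is formally skew-adjoint on $L^2(F,\El^\K)$; it also commutes with $\Pi_\K$ by Proposition \ref{prop:holpreserved}. Hence $-ih\X_\K$ is self-adjoint, and $\Im(\tilde{\mathbf{P}}_h)=-h\Re(z_h)$, so $\sigma(h^{-1}\Im\tilde{\mathbf{P}}_h)\to-\nu$. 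Proposition \ref{prop:propagation_measure} then yields $\int (H_p^{\omega_0}a-2\nu a)\,d\mu=0$ for all $a\in C_c^\infty(\mathbb{H}_F^*)$, with sign conventions to be checked against Proposition \ref{prop:unifpropagation} (the factor $i$ in $\sigma(h^{-1}[\mathbf{Y},\mathbf{A}])$).

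Finally, I integrate in $t$. Apply the identity to $a\circ\Phi_t^{\omega_0,p}$, which is still compactly supported in $\mathbb{H}_F^*$ since the flow preserves $\mathbb{H}_F^*$. Setting $f(t)=\int a\circ\Phi_t\,d\mu$, this gives $f'(t)=2\nu f(t)$, hence $(\Phi_t)_*\mu=e^{2\nu t}\mu$.

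The main obstacle is the bookkeeping in this last part: the signs and factors of $i$ in the commutator formula, and justifying that the $o(1)$ discrepancy $h\Im(z_h)-\eta$ does not spoil the $o(h)$ requirement, which it does not because it is a real scalar.
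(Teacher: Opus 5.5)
Your proposal is correct and follows the same route as the paper. Like the paper, you apply Proposition \ref{prop:propagation_measure} to get the support property and the transport identity, and then use Lemma \ref{lemma:curvature} together with Lemma \ref{label:hamiltonianflow} to replace $\omega_{\mathbf{l}}$ by $\omega_0$. The additional bookkeeping you supply is exactly what the paper's three-line proof leaves implicit, and it is sound: absorbing the real $h$-dependent constant $h\Im(z_h)-\eta$, obtaining $\Im\mathbf{P}_h=-h\Re(z_h)$ from the skew-adjointness of $\X_\K$, and integrating the infinitesimal identity $\int (H_p^{\omega_0}a-2\nu a)\,d\mu=0$ in $t$.
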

\begin{proof}
    The support property is clear from Proposition \ref{prop:propagation_measure}. 
    The propagation property of Proposition \ref{prop:propagation_measure} gives that $(\Phi_t^{\omega_{\mathbf{l}},p})_*\mu=e^{2\nu t}\mu$.
    Recall that in the case of the generator $X_F$, one has $i_X\mathbf{F}_{\overline{\nabla}}=0$ by Lemma \ref{lemma:curvature}. Finally, Lemma \ref{label:hamiltonianflow} implies $(\Phi_t^{\omega_{\mathfrak{l}},p})=(\Phi_t^{\omega_0,p})$. 
\end{proof}
The following sets play an important role
\begin{equation}\label{eq:subset_TstarF}
\begin{aligned}
\Gamma_{\pm} &:= E_{u/s,F}^* \oplus E_{0,F}^*, \\
\Gamma_{\pm}(\rho)
    &:= \rho\,\pi_{F\to SM}^*\alpha + E_{u/s,F}^*, \\
K_\rho
    &:= \Gamma_+(\rho)\cap\Gamma_-(\rho)
     = \rho\,\pi_{F\to SM}^*\alpha .
\end{aligned}
\end{equation}
$\Gamma_+$ (resp. $\Gamma_-$) is called the unstable (resp. stable) tail, $K_{\rho}$ can be seen as the image of the section $\rho\pi_{F\to SM}^*\alpha$ and is a subset of the \textit{trapped set} $K:=\Gamma_+\cap\Gamma_-=E_0^*$. The following Lemma is important for the final contradiction: we show by radial propagation estimates that $\mu$ must be non-zero on any open neighborhood of $K_{-\eta}$. It is a standard argument that appears for instance in \cite[Lemma 2.6]{Dy16}, \cite[Lemma 3.3]{CG21} or \cite[Lemma 4.6]{DGH2021}, we just need to adapt it in the case of the BW calculus. 
\begin{proposition}\label{prop:support_prop_measure}
    The semiclassical measure $\mu$ is supported in $\Gamma_+(-\eta)$. Moreover for any open neighborhood $V$ of $K_{-\eta}$ we have that $\mu(V)>0$.
\end{proposition}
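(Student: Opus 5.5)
We follow the standard scheme of \cite[Lemma 2.6]{Dy16} and \cite[Lemma 4.6]{DGH2021}, replacing the scalar propagation estimates by their Borel-Weil versions in Proposition \ref{prop:bwproperties2}. Throughout, $\mathbf{P}_h(z_h)=-ih\X_{\K(h)}+h\Im(z_h)-ih\Re(z_h)$ is of the form appearing in the radial estimates, with $\eta_h:=h\Im(z_h)\to\eta$ and damping $\nu_h:=-\Re(z_h)$. The required facts are that $\mathbf{P}_h(z_h)$ is associated with the basic vector field $X_F$, that $i_{X_F}\mathbf{F}_{\overline{\nabla}}=0$ (Lemma \ref{lemma:curvature}), and that the flow preserves $\mathbb{H}_F^*$ (Proposition \ref{prop:horizontal_preserved}).

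\textbf{Support in $\Gamma_+(-\eta)$.} Take $(x,\xi)\in\supp\mu$. By Proposition \ref{prop:semiclassicalmeasure_first_properties}, $\xi(X_F)=-\eta$, so we can write $\xi=-\eta\,\pi^*\alpha+\xi_u+\xi_s$ with $\xi_{u}\in E_{u,F}^*$ and $\xi_s\in E_{s,F}^*$. Suppose $\xi_s\neq0$. Under the backward Hamiltonian flow $(\Phi^F_{-t},(d\Phi^F_t)^\top)$, the $E^*_s$ component grows like $e^{t}$. Hence the backward trajectory of $(x,\xi)$ escapes to fiber infinity and converges in $\overline{\mathbb{H}_F^*}$ to the radial source $L'=\partial\overline{E_{s,F}^*}$.

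Near $L'$ the space $\mathcal{H}^N$ is microlocally $H^N_h$ with $N>1$ above the threshold. The radial source estimate \eqref{eq:propag_source} therefore gives $\|\mathbf{A}u_h\|_{H^N_h}\lesssim h^{-1}\|\mathbf{P}_hu_h\|+\mathcal{O}(h^\infty)=o(h)$ for some $\mathbf{A}$ elliptic near $L'$. The propagation estimate \eqref{eq:propag_standard} then transports this smallness forward along the trajectory to a compact neighbourhood of $(x,\xi)$. This yields $\|\mathbf{B}u_h\|_{L^2}\to0$ for some $\mathbf{B}\in\Psi^{\mathrm{comp}}_{h,\mathrm{BW}}$ elliptic at $(x,\xi)$, and therefore $\mu=0$ near $(x,\xi)$, a contradiction. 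So $\xi_s=0$ and $\supp\mu\subset\Gamma_+(-\eta)$.

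\textbf{Non-vanishing near $K_{-\eta}$.} We first show that $\mu\neq0$. The normalization $\|u_h\|_{\mathcal{H}^N}=1$ gives mass somewhere, and we control it region by region.
\begin{itemize}
\item Near $E_{s,F}^*$ at fiber infinity, the source estimate makes the mass $o(1)$.
\item Near $E_{u,F}^*$ at infinity, the sink estimate \eqref{eq:propag_sink} bounds $\|\mathbf{A}u_h\|_{H^{-N}_h}$ by $\|\mathbf{A}'u_h\|$ with $\mathbf{A}'$ microsupported away from the sink, up to $o(1)$.
\item Away from the characteristic set, ellipticity bounds the mass by $o(1)$.
\item Everything else propagates, in finite time, to a compact set of $\mathbb{H}_F^*$ where the $\mathcal{H}^N$ and $L^2$ norms are equivalent.
\end{itemize}
If $\mu$ vanished on compact sets, all pieces would be $o(1)$, contradicting the normalization. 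Hence $\mu(W)>0$ for some compact $W\subset\mathbb{H}_F^*$.

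Now let $V\supset K_{-\eta}$ be open and take $\rho\in\supp\mu\subset\Gamma_+(-\eta)$, with $\rho=-\eta\pi^*\alpha+\xi_u$ over the base point $x$. Under the forward flow, $\xi_u$ is contracted exponentially by the first property of the sink-source structure applied to $E_u^*$. So $\Phi^{\omega_0,p}_t(\rho)\in V$ for $t\ge T$, uniformly for $\rho$ in the compact set $\supp\mu\cap W$. The transport law $(\Phi^{\omega_0,p}_t)_*\mu=e^{2\nu t}\mu$ then gives
\[
\mu(V)\ \ge\ \mu\bigl(\Phi_T(\supp\mu\cap W)\bigr)=e^{2\nu T}\mu(W)>0.
\]

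\textbf{Main obstacle.} The delicate step is the first one: checking that the radial source estimate of Proposition \ref{prop:bwproperties2} applies uniformly in $\K(h)$ with the $o(h^2)$ quasimode bound and threshold $N>\nu_h$, and that the semiclassical measure, defined only on $\mathbb{H}_F^*$, correctly detects the vanishing. In the primed spaces $\mathcal{H}'^N$ the threshold becomes $N-1>1$, which is ensured by taking $N>2$ as in Lemma \ref{lemma:modified_sobolev_estimate}.
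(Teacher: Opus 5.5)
Your proof of $\supp\mu\subset\Gamma_+(-\eta)$ is the same as the paper's: the radial source estimate at $\partial\overline{E^*_{s,F}}$, followed by propagation of singularities along backward trajectories.

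For positivity near $K_{-\eta}$ you take a different route.
\begin{itemize}
\item The paper assumes directly that some $\mathbf{A}_{K_{-\eta}}$ elliptic near $K_{-\eta}$ satisfies $\mathbf{A}_{K_{-\eta}}u_h=o(1)$. It then runs the source/sink/elliptic/propagation partition, with $K_{-\eta}$ as one of the control regions, to get $\|u_h\|_{\mathcal{H}^N_{h,\mathrm{hol}}}=o(1)$.
\item You run that partition only to get $\mu\neq0$. You then move mass into $V$ using the exact transport law $(\Phi^{\omega_0,p}_t)_*\mu=e^{2\nu t}\mu$ and the fact that $\Gamma_+(-\eta)$ is the unstable manifold of $K_{-\eta}$.
\end{itemize}
This split is legitimate and somewhat cleaner: the microlocal input reduces to nontriviality of $\mu$, and the rest is soft dynamics. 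As written, however, your last step has the flow going the wrong way in time.

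The forward flow \emph{expands} $E^*_{u,F}$. The bound $|(d\Phi^F_t)^{-\top}\xi|\le Ce^{t}|\xi|$ on $E^*_{u,F}$ holds only for $t\le 0$; this is the second property of the sink--source structure, while the first concerns $E^*_{s,F}$. This is consistent with $\partial\overline{E^*_{u,F}}$ being the radial sink, which you use yourself in your bullet list. So for $\rho=-\eta\pi^*_{F\to SM}\alpha+\xi_u$ with $\xi_u\neq0$, $\Phi^{\omega_0,p}_t(\rho)$ escapes to fiber infinity as $t\to+\infty$ and does not enter $V$.

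The fix is to flow backward. Since $\pi^*_{F\to SM}\alpha$ is invariant under $\Phi^F_t$, the covector part of $\Phi^{\omega_0,p}_{-t}(\rho)$ is $-\eta\,\pi^*_{F\to SM}\alpha+(d\Phi^F_{-t})^{-\top}\xi_u$ at the point $\Phi^F_{-t}(x)$. This lies within $Ce^{-t}|\xi_u|$ of $K_{-\eta}$, uniformly on the compact set $\supp\mu\cap W$. Hence $\Phi^{\omega_0,p}_{-T}(\supp\mu\cap W)\subset V$ for $T$ large, and
\[
\mu(V)\ \ge\ \mu\bigl(\Phi^{\omega_0,p}_{-T}(\supp\mu\cap W)\bigr)=\bigl((\Phi^{\omega_0,p}_{T})_*\mu\bigr)(\supp\mu\cap W)=e^{2\nu T}\mu(W)>0.
\]
This is in fact the factor you wrote. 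It is the same backward contraction, $\Phi^{\omega_0,p}_{-t}(U_{\delta_0})\subset U_{C\delta_0e^{-t}}$, that the paper uses later.

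One minor imprecision: in your last bullet, not everything ``propagates to a compact set''. Points with nonzero $E^*_{s,F}$ component propagate backward into the source region instead. Those are already controlled, so this is harmless.
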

\begin{proof}
    The key point is that the (radial) propagation estimates boil down to usual propagation estimates: by the fiberwise holomorphic property of quasimodes, we may forget the co-vertical $\mathbb{V}_F^*$ and perform estimates on $\mathbb{H}_F^*$. The sink and source structure being exactly the same in $\mathbb{H}_F^*$ as in the Anosov case (see Section \ref{subsection:anisofourierwise}), a generalization becomes possible.
    
    The Hamiltonian flow associated with each $-ih\X_{\K}$ is simply the usual symplectic lift of $(\Phi^F_t)$ (associated to $X_F$) by the proof of Proposition \ref{prop:semiclassicalmeasure_first_properties}. The set $L:=E_{s,F}^*\cap \partial \overline{\mathbb{H}_F^*}$ is a source for $(\Phi^{\omega_0,p+\eta}_t)=(\Phi^{\omega_0,p}_t)$. The principal symbol of $\frac{1}{h}\Im(P_h(z_h))$ is $-\nu+o(1)$. In view of applying the radial source estimate of Proposition \ref{prop:bwproperties2}, $\nu$ has to be such that $N>-\nu$. This is automatically verified as we assumed $N>1$ and $\nu\in(-1+\varepsilon,0]$. Take $\mathbf{B}\in \Psi^{0}_{h,\mathrm{BW}}(FM)$ such that $L\subset\text{ell}_h^{\mathrm{BW}}(B)$ and $ \mathbf{B}(\mathbf{P}_h(\lambda_h)u_h)\in H^N_h(F,\El^\K)$. Then by the radial source estimate of Proposition \ref{prop:bwproperties2}, there exists $\mathbf{A}_L$ compactly supported near $L$ verifying $\text{WF}_h^{\mathrm{BW}}(\mathbf{1}-\mathbf{A}_L)\cap L=\emptyset$ and such that 
    \begin{align*}
        \|\mathbf{A}_Lu_h\|_{\mathcal{H}^N_{h,\mathrm{hol}}}\leq Ch^{-1}\|\mathbf{B}\mathbf{P}_h(z_h)u_h\|_{\mathcal{H}^N_{h,\mathrm{hol}}}+Ch^N=o(h),
    \end{align*}
    where we used the microlocal equivalence between $\mathcal{H}^N$ and $H^N$ near $E_s^*$ and the fact that $(u_h)$ satisfies (\ref{eq:semiclassical_pb_large_k}). Thus $\supp(\mu)\cap U=\emptyset$ for any $U$ a small open neighborhood of $L$. Finally, any relatively compact set $W$ in $p^{-1}(\{-\eta\})\setminus\Gamma_+(-\eta)$ verifies $\Phi_{-T}^{\omega_0,p}(W)\subset U$ for large enough $T>0$, by the sink-source structure in $\mathbb{H}_F^*$. The first propagation of singularities estimate in Proposition \ref{prop:bwproperties2} leads to the claimed support property.
    
    For the claim on the mass of $\mu$, the argument is again the same as in \cite[Lemma 3.3]{CG21} and \cite[Lemma 4.6]{DGH2021} (except for the fact that we work in higher generalities by using the Borel-Weil calculus). We write $L':=E_{u,F}^*\cap \partial\overline{\mathbb{H}_F^*}$ the sink of the Hamiltonian flow $(\Phi_t^{\omega_0,p})$. The space $\mathcal{H}^N_{h,\mathrm{hol}}$ is microlocally equivalent to $H^{-N}_{h,\mathrm{hol}}$ near $E_{u,F}^*$. Since $\nu>-N$, the radial sink estimate of Proposition \ref{prop:bwproperties2} applies. Consider $\mathbf{B}_1\in \Psi_{h,\mathrm{BW}}$ with $L'\subset \Ell_h^{\mathrm{BW}}(\mathbf{B}_1)$. There exists $\mathbf{A}_{L'}, \mathbf{B}_{L'}$ such that $L'\subset \Ell_h^{\mathrm{BW}}(\mathbf{A}_{L'})$, $\WF_h^{\mathrm{BW}}(\mathbf{B}_{L'})\subset \Ell_h^{\mathrm{BW}}(\mathbf{A}_{L'})\setminus L'$ and 
    \begin{align}\label{eq:proof_sink_estimate}
        \|\mathbf{A}_{L'}u_h\|_{\mathcal{H}^N_{h,\mathrm{hol}}}&\leq Ch^{-1}\|\mathbf{B}_1\mathbf{P}_h(z_h)u_h\|_{\mathcal{H}^N_{h,\mathrm{hol}}}+\|\mathbf{B}_{L'}u_h\|_{\mathcal{H}^N_{h,\mathrm{hol}}}+Ch^N\\
        &\leq \|\mathbf{B}_{L'}u_h\|_{\mathcal{H}^N_{h,\mathrm{hol}}} + o(h). \notag
    \end{align}
    Let $V$ be a relatively compact neighborhood of $K_{-\eta}$. Assume for the sake of contradiction that there exists $\mathbf{A}_{\mathrm{comp}}\in \Psi_{h,\mathrm{BW}}^{\mathrm{comp}}$ verifying $\WF_h(\mathbf{1}-\mathbf{A}_{\mathrm{comp}})=\emptyset$ on $\overline{V}$ such that $\mathbf{A}_{K_{-\eta}}u_h=o(1)$ (\textit{i.e} $\mu$ does not charge a small open neighborhood of $K_{-\eta}$). Further define $\mathbf{A}_0\in \Psi^0_{h,\mathrm{BW}}$ such that $\WF^{\text{BW}}_h(\mathbf{A}_0)\subset \overline{\mathbb{H}_F^*}\setminus\{\xi(X)=-\eta\}$ and  $\WF^{\text{BW}}_h(1-\mathbf{A}_0)\subset\{|\xi(X)+\eta|<\varepsilon_0\}$, for some $\varepsilon_0>0$ . By the support property of $\mu$, it is clear that $\mathbf{A}_0u_h=o(1)$ in anisotropic norm. Since we would like to control the mass of $\mathbf{A}_{L'}u_h$, we have to control the one of $\mathbf{B}_{L'}u_h$ by (\ref{eq:proof_sink_estimate}): we can control the latter by the norms of $\|\mathbf{A}_{\bullet}u_h\|_{\mathcal{H}^N_{h,\mathrm{hol}}}$ with $\bullet\in \{\mathrm{comp},L,0\}$ as follows.  By the dynamics of the Hamiltonian flow on $\mathbb{H}_F^*$, we see that for all $(x,\xi)\in \WF^{\text{BW}}_h(\mathbf{B}_{L'})$, there exists a time $T(x,\xi)>0$ such that $\Phi_{-T(x,\xi)}^{\omega_0}(x,\xi)\in \Ell_h^{\mathrm{BW}}(\mathbf{A}_{\bullet})$, where $\bullet \in \{L,0,K_{-\eta}\}$.
    We may thus apply the propagation of singularity estimate of Proposition \ref{prop:bwproperties2}. Rigorously, we need to apply such estimates in $L^2(F,\El^\K)$ with the operators $\mathbf{A}_{\bullet}^{(N)}:=(\mathbf{A}^N_{h,\K(h)})^{-1}\mathbf{A}_{\bullet}\mathbf{A}^N_{h,\K(h)}$, $\mathbf{B}_{L'}^{(N)}$ and $\mathbf{P}^{(N)}_h(z_h)$, but the conjugation does not change the preceding propagation properties. Keeping this in mind one obtain the estimate
    \begin{align*}
\|\mathbf{B}_{L'} u_h\|_{\mathcal{H}_{h,\mathrm{hol}}^{N}}
&\lesssim
 \|\mathbf{A}_{K_{-\eta}} u_h\|_{\mathcal{H}_{h,\mathrm{hol}}^{N}}
+  \|\mathbf{A}_L u_h\|_{\mathcal{H}_{h,\mathrm{hol}}^{N}}
+  \|\mathbf{A}_0 u_h\|_{\mathcal{H}_{h,\mathrm{hol}}^{N}}
+  h^{-1}\|\mathbf{P}_h(\lambda) u_h\|_{\mathcal{H}_{h,\mathrm{hol}}^{N}}
\\
&=o(1).
\end{align*}
Combining all the previous estimates :
\begin{align*}
   \|u_h\|_{\mathcal{H}_{h,\mathrm{hol}}^{N}}\leq \| (1-\mathbf{A}_{L}-\mathbf{A}_{K_{-\eta}}-\mathbf{A}_{0}-\mathbf{A}_{L'})u_h\|_{\mathcal{H}_{h,\mathrm{hol}}^{N}}+\sum_{\bullet\in\{L,L',0,K_{-\eta}\}} \|\mathbf{A}_{\bullet}u_h\|_{\mathcal{H}_{h,\mathrm{hol}}^{N}}=o(1),
\end{align*}
which contradicts (\ref{eq:semiclassical_pb_large_k}).

\end{proof}
\begin{remark}\label{rk:modified_N}
    In the case of quasimodes in $\mathcal{H}'^{N}$, one needs to replace $\mathbf{A}_{h,\K(h)}^N$ by $\mathbf{A}_{h,\K(h)}'^N$ and that the microlocal regularity near $E_{s,F}^*$ (resp. $E_{u,F}^*$) is $H^{N-1}$ (resp. $H^{-N-1}$). To still have the threshold condition (concerning $\nu$) for the radial estimates in Proposition \ref{prop:bwproperties2}, it is thus needed that $N>2$ for the estimates. 
\end{remark}

\textbf{Some important unstable vector fields.} As in \cite[Lemma 4.8]{DGH2021} (although our approach is slightly different) we aim at constructing  particular (local) unstable vector fields on $F$ whose Hamiltonian vector fields with respect to $\omega_{\mathbf{l}}$ exhibits good \textit{transversality properties} to $K_{-\eta}$. Before doing so, we draw the attention of the reader to the following key fact. That $\eta$ could be zero is allowed in this high tensor power regime. In that case, the semiclassical measure concentrates on $\{\xi(X(x))=0\}$. In that case, will aim at constructing (horizontal) unstable vector fields $U\in C^{\infty}(F,E^u_F)$ whose Hamiltonian vector fields are, on their support, transversal to the submanifold $K_0\hookrightarrow \mathbb{H}_F^*$. Such vector fields must satisfy (see Lemma \ref{lemma:transversality_horocyclic}) $\omega_{\mathbf{l}}(H^{\omega_\mathbf{l}}_{\xi(U(x))},H^{\omega_\mathbf{l}}_{\xi(S(x))})\neq0$ for $S$ some stable vector field on $F$. By (\ref{eq:twisted_symplectic}), this quantity reads as 
    \begin{align*}
        \omega_{\mathbf{l}}(H^{\omega_\mathbf{l}}_{\xi(U(x))},H^{\omega_\mathbf{l}}_{\xi(S(x))})=\xi([U,S](x))+i\mathbf{l}\cdot \mathbf{F}_{\overline{\nabla}}(U(x),S(x)).
    \end{align*}
    From $\eta=0$, we deduce that the first term must be zero on $K_0$ (the zero section of $T^*F$) \textit{i.e.} the Hamiltonian vector field of $U$ with respect to $\omega_0$ is tangent to $K_0$. So, in order to have the transversality property, it is of importance that $h\K(h)\to\mathbf{l}\neq 0$. 
    
    We will see that in the case where $\langle \Im(z)\rangle$ is large (see (\ref{eq:case-im})), $\mathbf{l}$ can be zero but this will be compensated by $\eta\neq 0$. This is where the contact property of the geodesic flow comes in. This mechanism already appears in \cite{DGH2021} and is clearly still of importance for us.
    
In the definition of the parameter $\mathbf{l}=(l_1,\dotsc,l_d)$ in (\ref{eq:semiclassical_limits_large_k}), although $\mathbf{l}\neq 0$ by definition of the large $\langle \K \rangle$ regime, some of its components may vanish. To take that into account, we will assume without losing generalities that its first component verifies $l_1\neq 0$: that will explain why we will mostly care about the curvature of $L_1$, namely $\mathbf{F}_{\overline{\nabla}_1}$.
We recall that a basic vector field $Y$ on $F$ can be written as a horizontal lift $Y:={Y}_1^{\mathbb{H}_F}$ (with $Y_1$ a vector field on $SM$).
We make the following local construction.

\begin{lemma}\label{lemma:construction_particular_vf}
    Let $hT\in F$ and $\epsilon>0$, there exists a small enough open neighborhood $W_{\epsilon}\subset F$ of $hT$, a local basis of $E^u_F$ consisting of basic unstable vector fields $(\mathbf{U}^-_j)_{1\leq j\leq n-1}$ and basic stable vector field $(\mathbf{U}_{j}^+)_{j\in\{1,2\}}$ in $E^s_F$ verifying on $W_{\epsilon}$ for $i\in\llbracket 2,n-1\rrbracket$
\begin{align}
    &\lvert i^{-1}\mathbf{F}_{\overline{\nabla}_1}(\mathbf{U}_1^+,\mathbf{U}_j^-)+2\rvert<\epsilon, \quad \lvert\pi_{F\to SM}^*\alpha([\mathbf{U}_1^+,\mathbf{U}_j^-])\rvert <\epsilon,\label{eq:curvature_final_horo}\\
        &\lvert i^{-1}\mathbf{F}_{\overline{\nabla}_1}(\mathbf{U}_2^+,\mathbf{U}_1^-)-2\rvert<\epsilon, \quad \lvert\pi_{F\to SM}^*\alpha([\mathbf{U}_2^+,\mathbf{U}_1^-])\rvert <\epsilon.\notag
\end{align}
Moreover if the rank $d$ is greater than $2$ (that is $n\geq 5$), we have for $k\in \llbracket 2,d\rrbracket$ and $j\in \llbracket 2,n-1 \rrbracket$
\begin{align}\label{eq:curvature_rank_bigger_2}
    \lvert i^{-1}\mathbf{F}_{\overline{\nabla}_k}(\mathbf{U}_1^+,\mathbf{U}_j^-)+2\rvert<\epsilon,\quad \lvert i^{-1}\mathbf{F}_{\overline{\nabla}_k}(\mathbf{U}_2^+,\mathbf{U}_1^-)-2\rvert<\epsilon,
\end{align}

\end{lemma}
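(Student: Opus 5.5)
We will make the construction explicitly in the algebraic model $FM=\Gamma\backslash PSO(1,n)$ and $F=FM/T$. We lift everything to $FM$ and compute with the left-invariant fields $U_j^\pm$, $R_{i,j}$, $\mathfrak{X}$ and the relations (\ref{eq:commutation_algebraic}). Then we descend to $F$ through the $T$-equivariance.

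The starting observation is the curvature formula (\ref{eq:curvaturelinelift}). For horizontal lifts $Y,Z$ on $F$ it gives
\[
\mathbf{F}_{\overline{\nabla}^\K}(Y,Z)=\alpha_\K\bigl(\Pi_{\mathfrak{t}}\,d\Theta_{\mathrm{dyn}}(Y^{\mathbb{H}},Z^{\mathbb{H}})\bigr),
\]
and for horizontal fields $d\Theta_{\mathrm{dyn}}(A,B)=-\Theta_{\mathrm{dyn}}([A,B])$. By (\ref{eq:commutation_algebraic}),
\[
[U_i^+,U_j^-]=2R_{i+1,j+1}\ (i\neq j),\qquad [U_i^+,U_i^-]=2\mathfrak{X}.
\]
So $\Theta_{\mathrm{dyn}}([U_i^+,U_j^-])$ equals $2R_{i+1,j+1}$ for $i\neq j$ and vanishes for $i=j$. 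Likewise $\pi^*\alpha([U_i^+,U_j^-])$ is $0$ for $i\neq j$ and $2$ for $i=j$, since $\alpha(X)=1$ and $\alpha$ kills the vertical and horocyclic directions. This explains why the statement only uses off-diagonal pairs $(1,j)$ with $j\ge 2$ and $(2,1)$. For these pairs the $\alpha$-term in (\ref{eq:curvature_final_horo}) vanishes identically at the base point, and only the curvature term needs arranging.

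The difficulty is that the fields $U_j^\pm$ on $FM$ are not $T$-invariant, so they do not descend to $F$. Only their projections modulo the vertical space are meaningful, and these depend on the point of the fibre through $\mathrm{Ad}$. We therefore work at the chosen point $hT$, pick a lift $p\in FM$, and rotate the frame by some $g\in G=SO(n-1)$. Replacing $U_j^\pm$ by $\mathrm{Ad}(g)U_j^\pm$ (via the $\mathrm{Ad}(G)$-structure of (\ref{eq:unstableassociated})) changes $2R_{i+1,j+1}$ into $2\mathrm{Ad}(g)R_{i+1,j+1}$. We then need $g$ such that for all $j\ge 2$ the quantities
\[
i^{-1}\alpha_{\omega_k}\bigl(\Pi_{\mathfrak t}\,\mathrm{Ad}(g)(-2R_{2,j+1})\bigr)\quad\text{and}\quad i^{-1}\alpha_{\omega_k}\bigl(\Pi_{\mathfrak t}\,\mathrm{Ad}(g)(-2R_{3,2})\bigr)
\]
equal $-2$ and $2$ respectively, for $k=1$ and also for $k\le d$. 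Since $\omega_k=i(e_1+\dots+e_k)$ and $\mathfrak t$ is spanned by $R_{2,3},R_{4,5},\dots$, a generator $R_{2,j+1}$ projects to $\mathfrak t$ with a nonzero coefficient on the first torus direction $R_{2,3}$ only for $j=2$. For $j\ge 3$ the projection of $R_{2,j+1}$ itself vanishes. We must therefore use the freedom of taking a different local basis of $E^u_F$, not just $\mathrm{Ad}(g)$ of the standard one.

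The plan is to choose the stable fields $\mathbf{U}_1^+,\mathbf{U}_2^+$ and the basis $\mathbf{U}_j^-$ as suitable linear combinations of the $\mathrm{Ad}(g)U_i^\pm$, with the following sign and normalisation choices:
\begin{itemize}
\item $\mathbf{U}_2^+ = U_2^+$ and $\mathbf{U}_1^- = -U_1^-$ (a sign flip where needed), so that the bracket is $2R_{3,2}$;
\item $\mathbf{U}_1^+ = U_1^+$, and $\mathbf{U}_j^- = U_j^- + c_j U_2^-$ for $j\ge 3$, with $\mathbf{U}_2^-=U_2^-$.
\end{itemize}
The second choice gives $[\mathbf{U}_1^+,\mathbf{U}_j^-]=2R_{2,j+1}+2c_jR_{2,3}$. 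Its $\mathfrak t$-projection is $2c_jR_{2,3}$, so a suitable constant $c_j$ fixes the value $-2$ (after normalisation of $\omega_1$ on $R_{2,3}$). The family $(\mathbf{U}_j^-)$ remains a basis because the change of basis is triangular. The pair $(1,2)$ and the pair $(2,1)$ are handled by signs.

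We then define these fields by the stated constant-coefficient formulas in a local $T$-equivariant section of $FM\to F$, push them to $F$, and check that they are horizontal lifts of fields on $SM$. By construction all required identities hold exactly at $hT$, and by continuity they hold up to $\epsilon$ on a small neighbourhood $W_\epsilon$.

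For (\ref{eq:curvature_rank_bigger_2}), every bracket appearing above is a multiple of $R_{2,3}$ at the base point. Since every fundamental weight $\omega_k$ contains $e_1$ with coefficient $1$ (also $\omega_{d-1},\omega_d$ in the even case), the same values $\mp 2$ come out for all $k$.

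The main obstacle is the normalisations: the weights $\alpha_{\omega_k}$ evaluated on $R_{2,3}$, possible factors of $i$, and the exact relation between the flag-bundle curvature and the algebraic brackets, including signs. I would first fix these conventions by a direct $n=4$ computation, and then rescale the constants $c_j$ and the signs accordingly.
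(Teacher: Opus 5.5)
Your strategy matches the paper's proof. You compute the curvature via $\mathbf{F}_{\overline{\nabla}_k}=\omega_k\bigl(\Pi_{\mathfrak t}\,d\Theta_{\mathrm{dyn}}(\cdot,\cdot)\bigr)$ and the brackets (\ref{eq:commutation_algebraic}). You observe that $\Pi_{\mathfrak t}R_{2,j+1}=0$ for $j\ge 3$ and repair this with the shear $U_j^-\mapsto U_j^-+c_jU_2^-$; the paper takes $c_j=1$, i.e.\ $\tilde{\mathbf U}_2^-+\tilde{\mathbf U}_j^-$. You use $\omega_k(R_{2,3})=i$ for every $k$ to get the higher-rank relations, and the $\alpha$-terms vanish because the relevant brackets are vertical. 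Two points need correcting, however.

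\textbf{The descent step fails as written.} You propose to push the fields to $F$ through a local section of $FM\to F$ and then ``check that they are horizontal lifts of fields on $SM$''. That check fails. Such fields are sections of $E^{u,s}_F$, but they are not basic. Moving along a fibre of $F\to SM$ changes the lift by some $g\in G$, and $D\pi_{FM\to SM}(U(pg))=D\pi_{FM\to SM}\bigl((\mathrm{Ad}(g)U)(p)\bigr)$ rotates with $g$. The paper notes this explicitly and fixes it. It picks a local section $s'$ of $F\to SM$ through $hT$ and replaces each field by the horizontal lift of its projection along $s'$. The new fields are basic and coincide with the old ones at $hT$.

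This suffices because every quantity in the statement is pointwise. The curvature is a tensor. For $A,B\in\ker\pi^*_{F\to SM}\alpha$ one has $\pi^*_{F\to SM}\alpha([A,B])=-\pi^*_{F\to SM}d\alpha(A,B)$. So you only need the correct values at $hT$, computed at a lift $p$ of $hT$ where the $\mathbb{H}_{FM\to F}$-lifts are the $U_j^\pm(p)$. Continuity then gives $W_\epsilon$.

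\textbf{The sign flip of $U_1^-$ is wrong, independently of normalisation.} The statement asks for opposite values, $-2$ for the pair $(\mathbf U_1^+,\mathbf U_2^-)$ and $+2$ for $(\mathbf U_2^+,\mathbf U_1^-)$. The unmodified brackets $[U_1^+,U_2^-]=2R_{2,3}$ and $[U_2^+,U_1^-]=2R_{3,2}=-2R_{2,3}$ already have opposite $\mathfrak t$-projections, so they produce opposite values automatically. Replacing $U_1^-$ by $-U_1^-$ alone would make the two values equal. Your own stated goal, that ``the bracket is $2R_{3,2}$'', corresponds to no flip.
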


\begin{proof}
    Let us first prove the lemma for $n=4$ (in higher-rank cases, the procedure is similar, see the end of the proof). In that case the fibers of $FM\to SM$ are isometric to $SO(3)$, whose rank is $1$. Recall from (\ref{eq:chosentorus}) that we choose the maximal torus generated by $R_{2,3}$. We will write $\tilde{\Theta}_{\mathrm{dyn}}:T(FM)\to i\mathbb{R}$ the connection $1$-form corresponding to the $\mathbb{S}^1$-principal bundle $FM\to F$. We recall that the horizontal space of this connection is $\mathbb{H}_{FM\to F}:=\mathbb{H}_{FM\to SM}\oplus \Re(\mathfrak{n}^+_{SO(3)}\oplus \mathfrak{n}^-_{SO(3)})$. 
    
    From (\ref{eq:curvaturelinelift}), for any $U\in C^{\infty}(F,E^{u}_F)$ and $S\in C^{\infty}(F,E^{s}_F)$, we have 
    \begin{align}\label{eq:curvature_lift_f_to_fm}
        i^{-1}\mathbf{F}_{\overline{\nabla}_1}(U,S)&=d\tilde{\Theta}_{\mathrm{dyn}}(U^{\mathbb{H}_{FM\to F}},S^{\mathbb{H}_{FM\to F}})\\
        &=-\tilde{\Theta}_{\mathrm{dyn}}([U^{\mathbb{H}_{FM\to F}},S^{\mathbb{H}_{FM\to F}}]),\notag
    \end{align}
    where $U^{\mathbb{H}_{FM\to F}}, S^{\mathbb{H}_{FM\to F}}$ are the horizontal lifts of $U$ and $S$ with respect to $\tilde{\Theta}_{\mathrm{dyn}}$. The left-invariant vector fields $U_1^{\pm}$ and $U_2^{\pm}$ from (\ref{eq:commutation_algebraic}) verify that $\tilde{\Theta}_{\mathrm{dyn}}([U^+_1,U_2^-])=2$ and $\tilde{\Theta}_{\mathrm{dyn}}([U^+_2,U_1^-])=-2$. Pick a local section $s:W_F(\subset F)\to FM$ for $W_F$ a contractible (and to be chosen sufficiently small) neighborhood of $hT$. The vector fields $(dR_t\bigl(U_{1,2}^{\pm}(s(w))\bigr)_{(w,t)\in W_F\times \mathbb{S}^1}$ on $\pi^{-1}_{FM\to F}(W_F)$ descend to non-vanishing vector fields $\tilde{\mathbf{U}}_{1,2}^{\pm}$ on $W_F\subset F$. Note however that $\tilde{\mathbf{U}}_{1,2}^{\pm}$ have no reason to be basic vector fields on $F\to SM$, this will be fixed below. By the right $T$-equivariance property of $\tilde{\Theta}$ we further obtain
    \begin{align*}
        i^{-1}\mathbf{F}_{\overline{\nabla}_1}(\tilde{\mathbf{{U}}}_{1}^{+},\tilde{\mathbf{{U}}}_{2}^{-})&=-2 \,\text{ on } W_F,\\
        i^{-1}\mathbf{F}_{\overline{\nabla}_1}(\tilde{\mathbf{{U}}}_{2}^{+},\tilde{\mathbf{{U}}}_{1}^{-})&=2 \,\text{ on } W_F,
    \end{align*}
    using (\ref{eq:curvature_lift_f_to_fm}). Let us modify $\tilde{\mathbf{{U}}}_{1,2}^{\pm}$ so that we work with basic vector fields on $F\to SM$. Remark that $\tilde{\mathbf{U}}^{-}_{1,2}$ (resp.$\tilde{\mathbf{U}}^{+}_{1,2}$) is a local section of $E^u_F$ (resp. $E^s_F$) over $W_F$. To see this, it suffices to show that $D\pi_{FM\to F}:E^u_{FM}\to E^u_F$. The flows $\Phi_t^F$ and $\Phi_t$ are isometric extensions of $\varphi_t$ (on $SM$) on $F$ and $FM$ respectively. Hence a consequence of \cite[Lemma 12.1.4]{Lefeuvre2026} is that
    \begin{align*}
        D\pi_{FM\to SM}&:E^{u,s}_{FM}\to E^{u,s}_{SM},\R X^{FM}\to \R X^F,\\
        D\pi_{F\to SM}&:E^{u,s}_{F}\to E^{u,s}_{SM},\R X^{F}\to \R X^{SM},
    \end{align*}
    are isomorphisms. By $\pi_{FM\to SM}=\pi_{FM\to F}\circ \pi_{F\to SM}$ and the splittings of $T(FM)$ and $TF$, we indeed obtain $D\pi_{FM\to F}:E^u_{FM}\to E^u_F$.

    Pick a further local section $s':\pi_{F\to SM}(W_F)\subset SM\to W_F$ and define the vector fields $\mathbf{U}_{1,2}^{\pm}$ on $F$ to be the horizontal lifts of the vector fields on $\pi_{F\to SM}(W_F)$ defined by $D\pi_{F}\bigl(\tilde{\mathbf{U}}^{\pm}_{1,2}(s'(x,v)\bigr)$, for $(x,v)\in\pi_{F\to SM}(W_F)$. We have by construction, for $(x,v)$ as before
    \begin{align}\label{eq:difference_vertical}
        \mathbf{U}_{1,2}^{\pm}(s'(x,v))=\tilde{\mathbf{U}}^{\pm}_{1,2}(s'(x,v)),
    \end{align}
    as we justified that $\tilde{\mathbf{U}}^{\pm}_{1,2}\in C^{\infty}(W_F,E^{u,s}_F)$, hence those sections have no vertical component.
    Thus using (\ref{eq:difference_vertical}) 
    \begin{align*}
        &i^{-1}\mathbf{F}_{\overline{\nabla}_1}(\mathbf{U}_{1}^+,\mathbf{U}_{2}^{-})(s'(x,v))=-2,\\
        &i^{-1}\mathbf{F}_{\overline{\nabla}_1}(\mathbf{U}_{2}^+,\mathbf{U}_{1}^{-})(s'(x,v))=2.
    \end{align*}
    By continuity of the curvature, for any $\epsilon>0$, there exists a small enough open set $W_{\epsilon}\subset W_F$, on which 
    \begin{align}\label{eq:final_proof_curvature_horo}
        &\lvert i^{-1}\mathbf{F}_{\overline{\nabla}_1}(\mathbf{U}_{1}^+,\mathbf{U}_{2}^{-})(s'(w))+2\rvert<\epsilon,\\
        &\lvert i^{-1}\mathbf{F}_{\overline{\nabla}_1}(\mathbf{U}_{2}^+,\mathbf{U}_{1}^{-})(s'(w))-2\rvert<\epsilon. \notag
    \end{align}
  In dimension $4$ the unstable bundle are of dimension $3$, we also have to take into account the vector fields $U_3^{-}$ on $FM$. This vector field descend to $F$ because $[R_{2,3},U_3^{-}]=0$, see (\ref{eq:commutation_algebraic}).
  Remark that $\iota_{U_3^{-}}F_{\overline{\nabla}_1}=0$ using the relation $[U_i^+,U_3^-]=2R_{i+1,4}$ for $i\leq 2$ and $[U_3^{+},U_3^{-}]=2\mathfrak{X}$  together with (\ref{eq:curvaturelinelift}). We set $\tilde{\mathbf{U}}_3^{-}:=U_3^{-}+\tilde{\mathbf{U}}_2^{-}$ and can perform the procedure as above (by considering the section $s':\pi_{F\to SM}(W_F)\to F$) to recover the horizontal vector field $\mathbf{U}_3^{\pm}$. We obtain, using $i_{U_3^{-}}F_{\overline{\nabla}_1}=0$ and the previous relations (\ref{eq:final_proof_curvature_horo}), on $W_{\epsilon}$
\begin{align*}
    |i^{-1}\mathbf{F}_{\overline{\nabla}_1}(\mathbf{U}^+_{1},\mathbf{U}^-_{3})+2|<\epsilon,
\end{align*}
     proving the claim concerning the curvature relations.
    We are left with showing that $\lvert \pi_{F\to SM}^*\alpha([\mathbf{U}_1^+,\mathbf{U}^-_{i}])\rvert<\epsilon$ if $i\geq 2$ and $\lvert \pi_{F\to SM}^*\alpha([\mathbf{U}_2^+,\mathbf{U}^-_{1}])\rvert<\epsilon$ on $W_{\epsilon}$. Notice that the relations in (\ref{eq:commutation_algebraic}) give $\pi^*_{FM\to SM}\alpha\,([U_i^+,U_j^-])=0$ for $i\neq j$ (recall that $\alpha$ vanishes on $E^u_{SM}\oplus E^s_{SM}$). Using the notation of the previous construction, we have on $W_F$ for $i\in\{2,3\}$
    \begin{align*}
        \pi_{F\to SM}^*\alpha([\tilde{\mathbf{U}}^{+}_{1},\tilde{\mathbf{U}}_{j}^-](w))&=\pi_{FM\to SM}^*\alpha([U_1^+,{U}_{j}^-](s(w)))\\
        &=\pi_{FM\to SM}^*\alpha(R_{2,j+1})\\
        &=0
    \end{align*}
    Using the continuity of $\alpha$, we obtain on $W_{\epsilon}$ (up to eventually reducing it's size) that $\lvert \pi_{F\to SM}^*\alpha([\mathbf{U}_1^+,\mathbf{U}^-_{i}])\rvert<\epsilon$. The same argument work identically when considering $\pi_{F\to SM}^*\alpha([\mathbf{U}_2^+,\mathbf{U}^-_{1}])$, giving the desired claim for the $n=4$.

    To finish, let us expand on the higher rank case. Recall that the (lie algebra of the) chosen maximal torus is spanned by $(R_{i,i+1})_{2\leq i\leq n-1\text{ even}}$. As done in the $n=4$ case, we construct vector fields $(\tilde{\mathbf{U}}_j^-)_{1\leq j \leq n-1}$ (resp. $(\hat{\mathbf{U}}_j^+)_{1\leq j \leq 2}$) on an open neighborhood $W_F$ of $hT$ from the left-invariant vector fields $(U_j^-)$ (resp. $({U}_j^+)_{1\leq j \leq 2}$). For $j\geq 3$, set $\hat{\mathbf{U}}_j^-=\tilde{\mathbf{U}}_1^-+\tilde{\mathbf{U}}_j$. Using relation (\ref{eq:curvaturelinelift}), we compute for $w\in W_F$ and $k\in\llbracket 1,d\rrbracket$
    \begin{align*}
        \mathbf{F}_{\overline{\nabla}_k}(\tilde{\mathbf{U}}_1^+,\tilde{\mathbf{U}}_{2}^{-}+\tilde{\mathbf{U}}_j^-)(w)
        &=\mathbf{F}_{\overline{\nabla}_k}(\tilde{\mathbf{U}}_1^+,\tilde{\mathbf{U}}_{2}^{-})(w)
        + \mathbf{F}_{\overline{\nabla}_k}(\tilde{\mathbf{U}}_1^+,\tilde{\mathbf{U}}_{j}^{-})(w)\\
        &=
        \mathbf{F}_{\overline{\nabla}_k}(\tilde{\mathbf{U}}_1^+,\tilde{\mathbf{U}}_{2}^{-})(w)
        +
        \omega_{k}
        \Bigl(
        \Pi_{\mathfrak t}\Theta_{\mathrm{dyn}}
        \bigl([U_1^+,U_j^-](s(w))\bigr)
        \Bigr)
        \\
        &=
        \mathbf{F}_{\overline{\nabla}_k}(\tilde{\mathbf{U}}_1^+,\tilde{\mathbf{U}}_{2}^{-})(s'(x,v))
        +
        \omega_{k}
        \Bigl(
        \Pi_{\mathfrak t}\Theta_{\mathrm{dyn}}
        \bigl(R_{2,j+1}(s(w))\bigr)
        \Bigr)
        \\
        &=
        \mathbf{F}_{\overline{\nabla}_k}(\tilde{\mathbf{U}}_1^+,\tilde{\mathbf{U}}_{2}^{-})(w)
\end{align*}
    where we used in the last line that $R_{2,j+1}\in \ker(\Pi_\mathfrak{t})$ since $j\geq 3$. Finally
    \begin{align*}
        \mathbf{F}_{\overline{\nabla}_k}(\tilde{\mathbf{U}}_1^+,\tilde{\mathbf{U}}_{2}^{-})(w)&=\omega_{k}
        \Bigl(
        \Pi_{\mathfrak t}\Theta_{\mathrm{dyn}}
        \bigl(R_{2,3}(s(w))\bigr)
        \Bigr)\\
        &=-2\omega_k(R_{2,3})\\
        &=-2i\text{ by (\ref{eq:fundament_weight_even}) or \ref{eq:fundament_weight_odd}) }.
    \end{align*}
    Using a continuity argument as in the $n=4$ case, we further modify those vector fields to horizontal vector fields $(\mathbf{U}_j^-)_{1\leq j \leq n-1}$ and $(\mathbf{U}_j^+)_{1\leq j \leq 2}$ . The justification of the relations involving the $1$-form $\pi_{F\to SM}^*\alpha$ is identical to the rank 1 case.
  
\end{proof}

\begin{remark}
     We could be tempted to use the vector fields $\tilde{\mathbf{U}}_j^{-}:W_{\epsilon}\to TF$ from the previous proof to obtain our transversal vector fields. However, by doing so, we cannot guarantee that their Hamiltonian flow preserves the (co)horizontal space $\mathbb{H}_F^*$: we cannot apply the propagation estimate of Proposition \ref{prop:propagation_measure}.
\end{remark}

Inspecting the previous proof and using the compactness of $F$, we may construct a finite cover $(W_{\epsilon,k})$ of $F$ with corresponding vector fields $\mathbf{U}_{i}^{\pm,k}$ satisfying the relations of Lemma \ref{lemma:construction_particular_vf} on $W_{\epsilon,k}$. For simplicity, we will omit most of the time the $\epsilon$ index and write $W_k$ the previous open sets.

\textbf{Horocyclic invariance.} In this paragraph, we prove the following invariance property.
\begin{proposition}\label{prop:horo_invariance_quasimode_re_larger_minus1}
    Let $U\in C^{\infty}(SM,E^u)$, then $(u_h)$ from (\ref{eq:semiclassical_pb_large_k}) satisfies 
    \begin{align*}
-ih\,\nabla_{U^{\mathbb{H}_F}}^{\mathbf{k}(h)}u_h=o_{{\mathcal{H}'}_h^N(F,\El^{\K(h)})}(h).
    \end{align*}
\end{proposition}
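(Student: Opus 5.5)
The approach is to apply the unstable horocyclic derivative to the quasimode, observe that the result is again a quasimode with real part shifted by $+1$, and then show that such a quasimode must have small norm. Fix $U\in C^{\infty}(SM,E^u)$ and set $v_h:=h\nabla^{\K(h)}_{U^{\mathbb{H}_F}}u_h$. The target is $\|v_h\|_{\mathcal{H}'^N_{h,\mathrm{hol}}}=o(h)$.

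\textbf{Step 1: $v_h$ is a quasimode at shifted real part.} It suffices to treat $U$ in a local frame $U_i^-$, where the lift is computed on $FM$. By Proposition \ref{prop:holpreserved}, $\nabla^{\K}_{U^{\mathbb{H}_F}}$ preserves fiberwise holomorphic sections. Via Lemma \ref{lemma:intertwinconnection}, this operator corresponds to a component of the horocyclic operator $\mathcal{U}^-$ acting on $H^0(F,\El^\K)$. Lemma \ref{lemma:commutationhorocyclic}, i.e. $[X,U_i^-]=-U_i^-$, then gives
\begin{align*}
\bigl(-ih\X_{\K}+h\Im(z_h)-ih(\Re(z_h)+1)\bigr)v_h=h\mathcal{U}^-\,\mathbf{P}_h(z_h)u_h,
\end{align*}
where the left-hand side is understood componentwise after tensoring with $E_s^*$. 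The operator $h\mathcal{U}^-$ is first order in the BW calculus, so it maps $\mathcal{H}^N_{h,\mathrm{hol}}$ boundedly into $\mathcal{H}'^N_{h,\mathrm{hol}}$, uniformly in $h$; this is exactly why the order function $\mathbf{m}'_N$ of Remark \ref{rk:orderfunction} was introduced. Hence $\mathbf{P}'_hv_h=o(h^2)$ in $\mathcal{H}'^N$. Here $\mathbf{P}'_h$ has the same principal symbol $p+h\Im z_h$ as $\mathbf{P}_h(z_h)$, but its damping $\Re z_h+1$ tends to $\nu+1\in(\varepsilon,1]$. Uniform boundedness of $u_h$ also gives $\|v_h\|=O(1)$.

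\textbf{Step 2: contradiction argument.} Suppose, along a subsequence, $\|v_h\|_{\mathcal{H}'^N}\ge ch$ for some $c>0$, and set $w_h:=v_h/\|v_h\|$. Then $\mathbf{P}'_hw_h=o(h^2)/(ch)=o(h)$. This is precisely the threshold $\beta=1$ in Proposition \ref{prop:propagation_measure}. The radial estimates of Proposition \ref{prop:bwproperties2} hold in $\mathcal{H}'^N$ because $N>2$ (Remark \ref{rk:modified_N}), and the damping has even improved to $\nu+1>0$. So the proof of Proposition \ref{prop:support_prop_measure} applies verbatim to $(w_h)$. It produces a semiclassical measure $\mu'$ supported in $\Gamma_+(-\eta)$ with $\mu'(V)>0$ for every neighborhood $V$ of $K_{-\eta}$.

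Proposition \ref{prop:propagation_measure}, combined with Lemma \ref{label:hamiltonianflow} and Lemma \ref{lemma:curvature}, additionally gives $(\Phi_t^{\omega_0,p})_*\mu'=e^{2(\nu+1)t}\mu'$. The set $K_{-\eta}$ is compact and invariant under $\Phi^{\omega_0,p}_t$, since it is the image of a section of $E_0^*$ preserved by the flow. Take a compact flow-invariant neighborhood $V$ of $K_{-\eta}$ inside $\Gamma_+(-\eta)\cap p^{-1}(-\eta)$; for instance, a sublevel set of $|\xi_{E_u^*}|$ on $\Gamma_+(-\eta)$, which is nonincreasing forward in time. Pushing forward then gives $\mu'(V)=e^{2(\nu+1)t}\mu'(V)$ for all $t>0$. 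Together with $0<\mu'(V)<\infty$ this is impossible, since $\nu+1>0$. Therefore $\|v_h\|=o(h)$.

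\textbf{Main obstacle.} The delicate point is Step 2: the invariant neighborhood together with the total-mass bookkeeping. For $t\ge0$ the flow only contracts toward $K_{-\eta}$ on $\Gamma_+$, so one has to check that the forward flow maps a suitable compact neighborhood into itself. The first thing I would try is the inclusion $\Phi_t(V)\subset V$, which gives $\mu'(V)\ge e^{2(\nu+1)t}\mu'(V)$ and already forces $\mu'(V)=0$. A secondary technical point is verifying that $h\mathcal{U}^-$ is bounded $\mathcal{H}^N\to\mathcal{H}'^N$ with the variable-order operators $\mathbf{A}^N_{h,\K}$ and $\mathbf{A}'^N_{h,\K}$. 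I expect this to follow from a standard commutator computation in the BW calculus.
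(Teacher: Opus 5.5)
Your Step 1 is the paper's argument. Your Step 2 is a valid alternative route, but its key inequality is justified with the time direction reversed.

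\textbf{The time direction in Step 2.} On $\Gamma_+(-\eta)$ the forward flow $\Phi^{\omega_0,p}_t$, $t>0$, \emph{expands} the $E_{u,F}^*$-component: the sink $L'=E_{u,F}^*\cap\partial\overline{\mathbb{H}_F^*}$ sits at fibre infinity. So $|\xi_{E_{u,F}^*}|$ is not nonincreasing forward in time, and no compact neighbourhood $V$ of $K_{-\eta}$ in $\Gamma_+(-\eta)$ satisfies $\Phi_t(V)\subset V$ for $t>0$. It is the backward flow that contracts, cf.\ (\ref{eq:contraction_propert_flow}).

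Likewise, $(\Phi_t^{\omega_0,p})_*\mu'=e^{2(\nu+1)t}\mu'$ means $\mu'(\Phi^{\omega_0,p}_{-t}(A))=e^{2(\nu+1)t}\mu'(A)$, which is how the paper uses it in (\ref{eq:exp_bound_contradiction}). It does not mean $\mu'(\Phi_t(A))=e^{2(\nu+1)t}\mu'(A)$.

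Your two reversals cancel, and this sign is exactly the mechanism. A forward inclusion combined with the correct reading gives only $e^{-2(\nu+1)t}\mu'(V)\le\mu'(V)$, which says nothing.

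The correct version needs no invariant neighbourhood. Set $V_\delta:=\{|\xi_{E_{u,F}^*}|\le\delta\}\cap\Gamma_+(-\eta)$. Then $\Phi^{\omega_0,p}_{-t}(V_\delta)\subset V_{C\delta}$ for $t\ge0$, hence $e^{2(\nu+1)t}\mu'(V_\delta)\le\mu'(V_{C\delta})<\infty$ for all $t\ge 0$. Since $\nu+1>0$, this forces $\mu'(V_\delta)=0$, contradicting the positivity of $\mu'$ near $K_{-\eta}$.

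\textbf{Comparison with the paper.} The paper twists on $E^*_{s,F}$ by $\mathcal{L}_X$, which is your isometric twist shifted by $+1$. The intertwining $\mathbf{P}^u_h\,h\nabla^{\K(h),u}=h\nabla^{\K(h),u}\mathbf{P}_h$ is then exact. In place of your Step 2, it proves directly that $\mathbf{P}^u_h$ is invertible on $\mathcal{H}'^N_{h,\mathrm{hol}}$ with norm $O(h^{-1})$. It writes the resolvent as $\int_0^{\infty}e^{-zt}e^{-t\X^u}\,dt$ and bounds the propagator by $e^{-t}(1+Ch)^{t+1}$, using Egorov (since $\mathbf{G}'\circ\Phi_t-\mathbf{G}'\le 0$) and Calder\'on--Vaillancourt. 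That argument is quantitative, uniform in $\Re z>-1+\varepsilon$, gives genuine invertibility, and uses no measure theory.

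Your argument is softer. It recycles Propositions \ref{prop:propagation_measure} and \ref{prop:support_prop_measure}, and reduces everything to one observation: positive damping is incompatible with a nonzero measure on $\Gamma_+(-\eta)$ whose backward orbits stay bounded. It yields only the qualitative $o(h)$, which suffices here. It does, however, need vector-valued versions of those propositions, which you should state:
\begin{itemize}
\item run everything on $F$ for the rank-$(n-1)$ bundle $\El^{\K}\otimes E^*_{s,F}$, not on $SM$ with $H^0(F,\El^{\K})\otimes E^*_s$, whose rank is unbounded in $\K$;
\item note that $\mathbf{P}'_h$ is principally scalar;
\item note that the twisted connection is metric, so $\Im\mathbf{P}'_h=-h(\Re z_h+1)$;
\item check that the trace of the matrix-valued defect measure obeys the scalar propagation law.
\end{itemize}

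Finally, you do not need $N>2$. Since the damping $\nu+1$ is positive, the threshold conditions in $\mathcal{H}'^N$ already hold under the standing assumption $N>1$.
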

\begin{remark}
    In order to prove Lemma \ref{lemma:modified_sobolev_estimate}, one also has to take in consideration the case of quasimodes in $\mathcal{H}'^N_{h,\mathrm{hol}}$. When applying the horocyclic derivative, their anisotropic regularity is $ \mathcal{H}^{[-N-2,N-2]}_h$: the order function is $\tilde{m}=m-2/N$. It is thus needed that $N>2$ to apply Proposition \ref{prop:support_prop_measure}. We refer the reader to Section \ref{Section:resolvent_estim_re_bigger_1} for further details.
\end{remark}
In view of the previous remark, we perform the proof in the case of quasimodes in $\mathcal{H}^N$. The case of quasimodes in $\mathcal{H}'$ is done exactly in the same fashion.
\begin{proof}
    The proof is an adaptation of \cite[Lemma 4.5.4]{CL24}. We may complexify $\mathbb{H}_F^*$ (see Proposition \ref{prop:holpreserved}) to $\mathbb{H}_F^*(\mathbb{C})$ and $(E^u_F)^*$ to $(E^u_F)^*(\mathbb{C})$. By restricting the partial connection $\nabla^{\K(h)}$ to elements of $E^u_F(\mathbb{C})$, we obtain the operator \begin{align}\label{eq:horocyclic_invariance_def_op}
        \nabla^{\K(h),u}:C^{\infty}_{\mathrm{hol}}(F,\El^{\K(h)})\to C^{\infty}_{\mathrm{hol}}(F,\El^{\K(h)}\otimes E_{s,F}^*(\mathbb{C})),
    \end{align}
    where $E_{s,F}^*(\mathbb{C})\hookrightarrow \mathbb{H}_F^*(\mathbb{C})$ is identified with $(E^u_F)^*(\mathbb{C})$. From here, we will omit to mention the complexification for notational convenience. Similarly, we can construct the operators $\X_\K^u$ (resp. $\mathbf{P}_h^u$) from the previous operator $\X_\K$ (resp. $\mathbf{P}_h$) by twisting $\nabla^\K_X$ with the Lie derivative $\mathcal{L}_X$ on $E_s^*$. By the fact that $\iota_X\mathbf{F}_{\nabla}(=\iota_X\mathbf{F}_{\overline{\nabla}})=0$, one can easily see that
    \begin{align}\label{eq:commutation_algebraic_unstable}
        \mathbf{P}_h^u \,h\nabla^{\mathbf{k}(h),u}=h\nabla^{\mathbf{k}(h),u}\mathbf{P}_h.
    \end{align}
The interested reader may refer to the proof of \cite[Lemma 4.5.4]{CL24} for the justification of this relation. The operator $h\nabla^{\mathbf{k}(h),u}$ is continuous from $\mathcal{H}^N_{h,\mathrm{hol}}(F,\El^{\K(h)})$ to $\mathcal{H}'^N_{h,\mathrm{hol}}(F,\El^{\K(h)}\otimes E_{s,F}^*(\mathbb{C}))$. Indeed, the algebra property from Proposition \ref{prop:bwproperties1} implies that $\mathbf{A}_{h,\K(h)}'(E_{s,F}^*) (h\nabla^{\mathbf{k}(h),u})(\mathbf{A}^N_{h,\K(h)})^{-1}$ is of order $0$ by the algebra property, thus it is bounded from $L_{(\mathrm{hol})}^2(F,\El^\K)$ to $L^2_{(\mathrm{hol})}(F,\El^\K\otimes E_{s,F}^*)$.

The continuity gives $h\nabla^{\mathbf{k}(h),u}\mathbf{P}_hu_h=o_{\mathcal{H}'_h(F,\El^{\K(h)}\otimes E_{s,F}^*)}(h^2)$ by (\ref{eq:semiclassical_pb_large_k}). In view of (\ref{eq:commutation_algebraic_unstable}), it thus suffices to prove that $\mathbf{P}_h^u$ is invertible with anisotropic norm bounded by $h^{-1}$ to conclude.

The propagator $e^{-t{\mathcal{L}_X}_{|E_{s,F}^*}}$ acts on $C^{\infty}(F,E_{s,F}^*)$ as $d\varphi_{t}$ on $(E^{u}_F)^*$ (and this action is defined by duality). By the sink and source structure of $p$ for $\omega_0$, we have $\|e^{-t{\mathcal{L}_X}_{|E_{s,F}^*}}\|_{L^2}\lesssim e^{-t}$. The following integral thus makes sense in $L^2$ for $\{\Re(z)>-1\}$ 
\begin{align}\label{eq:resolvent_formula_L2}
    (-\X^{\K(h),u}-z)^{-1}=\int^{+\infty}_0 e^{-zt}e^{-t\X^{u}_{\K(h)}}\, dt\,.
\end{align}
To evaluate the norm of the propagator in $\mathcal{H}'^N(F,\El^{\K(h)}\otimes E_{s,F}^*)$, we write 
\begin{align*}
    &\|e^{-t\X^{u}_{\K(h)}}e^{t\X^{u}_{\K(h)}}(\mathbf{A}_{h,\mathbf{k}(h)}'^{N}(E_{s,F}^*))^{-1}\,e^{-t\X^{u}_{\K(h)}}(\mathbf{A}_{h,\mathbf{k}(h)}'^{N}(E_{s,F}^*))^{-1}\|_{L^2}\\ &\leq e^{-t}\|e^{t\X^{u}_{\K(h)}}(\mathbf{A}_{h,\mathbf{k}(h)}'^{N}(E_{s,F}^*))^{-1}\,e^{-t\X^{u}_{\K(h)}}(\mathbf{A}_{h,\mathbf{k}(h)}'^{N}(E_{s,F}^*))^{-1}\|_{L^2}.
\end{align*}
We work here with principally diagonal operators, the usual semiclassical results exposed in Section \ref{section:BWcalculus} apply. By the Egorov theorem in the BW calculus, the principal symbol of the operator conjugated by the propagator in the previous line is $\exp{(\mathbf{G}'\circ \Phi^{\omega_0,p}_t-\mathbf{G}')}\mathrm{Id}_{E_{s,F}^*}$, see Section \ref{section:anisotropicspace} for the notations. By construction of the escape functions, we have $\exp(\mathbf{G}'\circ \Phi^{\omega_0,p}_t-\mathbf{G}')\leq 1$. The Calderon-Vaillancourt theorem for small enough $h>0$ and $t\in [0,1]$ implies that there exists $C>0$ such that 
\begin{align*}
    \|e^{-t\X^{u}_{\K(h)}}e^{t\X^{u}_{\K(h)}}(\mathbf{A}_{h,\mathbf{k}(h)}'^{N}(E_{s,F}^*))^{-1}\,e^{-t\X^{u}_{\K(h)}}(\mathbf{A}_{h,\mathbf{k}(h)}'^{N}(E_{s,F}^*))^{-1}\|_{L^2}&\leq e^{-t} \,(1+Ch),
\end{align*}
The semigroup property gives the following bound for all $t\in \mathbb{R}_+$ 
\begin{align*}
    \|e^{-t\X^{\K(h),u}}e^{t\X^{\K(h),u}}\mathbf{A}_{h,\K(h)}'^{N,E_{s,F}^*}e^{-t\X^{\K(h),u}}(\mathbf{A}_{h,\K(h)}'^{N,E_{s,F}^*})^{-1}\|_{L^2}&\leq e^{-t} \,(1+Ch)^{t+1}.
\end{align*}
We inject this bound in (\ref{eq:resolvent_formula_L2}) 
\begin{align*}
    \|(-\X^{\K(h),u}-z)^{-1}\|_{\mathcal{H}'^N}\leq (1+Ch)\int^{+\infty}_0 e^{-(1+\Re(z)-\log(1+Ch))t}\, dt.
\end{align*}
Since $\Re(z)>-1+\varepsilon$, taking $h$ small enough gives that $\|\mathbf{P}_h^u\|\lesssim h^{-1}$. We deduce that $h\nabla^{\mathbf{k}(h),u}u_h=o_{\mathcal{H}'^N_h(E_{s,F}^*)}(h)$. For $U$ a section of $T(SM)$, its horizontal lift $U^{\mathbb{H}_F}$ is holomorphic. In particular, $\iota_{U^{\mathbb{H}_F}}:\mathcal{H}'^N_{h,\mathrm{hol}}(F,\El^{\K(h)}\otimes E_{s,F}^*(\mathbb{C}))\to \mathcal{H}'^N_{h,\mathrm{hol}}(F,\El^{\K(h)})$ is well-defined and continuous. We obtain $-ih\,\nabla_{U^{\mathbb{H}_F}}^{\mathbf{k}(h)}u_h=o_{{\mathcal{H}'}_h^N(F,\El^{\K(h)})}(h)$, the desired claim.
\end{proof}
The main consequence of the horocyclic invariance is the support and propagation properties that it implies on the semiclassical measure $\mu$.
\begin{definition}\label{def:hamitlonian_horocylic}
    On each open set $W_k$ we can consider the Hamiltonian function $(x,\xi)\mapsto \varphi^{\pm,k}_j(x,\xi):=\xi(\mathbf{U}^{\pm,k}_j)(x)$ associated with the vector field $\mathbf{U}^{\pm,k}_j$ for all $1\leq j \leq n-1$. It is defined on $T^*W_{k}$. The corresponding Hamiltonian vector fields will be written $H^{\omega_{\mathbf{l}}}_{\varphi^{\pm,k}_j}$.\\
    Also, as in the three dimensional case \cite[Lemma 4.9]{DGH2021} (in dimension $3$, recall that $F=SM$ and $\mathbb{H}_F^*=T^*(SM)$), we define $\mathbf{G}_j^{\pm,k}:=\mathrm{div}(\mathbf{U}_j^{\pm,k})\circ \pi_{\mathbb{H}_F^*\to F}\in C^{\infty}(T^*W_{\epsilon,k})$.
    
\end{definition}
\begin{proposition}\label{prop:propagation_measure_horocyclic}
    For any $a\in C^{\infty}_c(T^*{W}_{k}\cap \mathbb{H}_F^*)$ and $1\leq j \leq n-1$, one has the propagation property
    \begin{align}\label{eq:propagation_measure_horocyclic}
        \int_{\mathbb{H}_F^*} (H^{\omega_{\mathbf{l}}}_{\varphi^{-,k}_j}a+\mathbf{G}_j^{-,k}a)d\mu=0.
    \end{align}
\end{proposition}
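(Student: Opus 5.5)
Our plan is to apply the Hamiltonian propagation part of Proposition \ref{prop:propagation_measure} to the operator $\mathbf{Q}_h:=-ih\,\nabla^{\K(h)}_{\mathbf{U}_j^{-,k}}$, localized to $W_k$. By construction in Lemma \ref{lemma:construction_particular_vf}, $\mathbf{U}_j^{-,k}$ is a basic vector field on $W_k$, namely the horizontal lift of a local unstable vector field $U$ on $\pi_{F\to SM}(W_k)$. Its principal symbol in the Borel-Weil calculus is therefore $\varphi^{-,k}_j=\pi^*f$ with $f(\xi)=\xi(U)$. Proposition \ref{prop:horizontal_preserved} then shows that the Hamiltonian flow of $\varphi^{-,k}_j$ with respect to $\omega_{\mathbf{l}}$ preserves $\mathbb{H}_F^*$, so $H^{\omega_{\mathbf{l}}}_{\varphi^{-,k}_j}a$ makes sense for $a$ supported in $T^*W_k\cap\mathbb{H}_F^*$.

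To make the operator global, we fix a cutoff $\chi\in C^\infty_c(\pi_{F\to SM}(W_k))$ that equals $1$ on the projection of the support of $a$. We extend $\chi U$ by zero to a global unstable vector field on $SM$ and lift it. Proposition \ref{prop:horo_invariance_quasimode_re_larger_minus1} applies to $(\chi U)^{\mathbb{H}_F}$ and gives $\mathbf{Q}_hu_h=o(h)$ in $\mathcal{H}'^N_{h,\mathrm{hol}}$. This space embeds uniformly in $H^{-N-1}_{h,\mathrm{hol}}$, so the hypothesis of Proposition \ref{prop:propagation_measure} holds with $\beta=1$. We also record that $u_h$ is bounded in $H^{-N}_h$, so the same measure $\mu$ is the one being tested.

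The zeroth-order term comes from the imaginary part. The adjoint of $\nabla^{\K}_{Y}$ with respect to the $L^2$ pairing built from the natural volume on $F$ and the Hermitian metric on $\El^\K$ is $-\nabla^\K_Y-\mathrm{div}(Y)$, since the connection is unitary. Hence
\[
\Im(\mathbf{Q}_h)=\tfrac{1}{2i}(\mathbf{Q}_h-\mathbf{Q}_h^*)=-\tfrac{h}{2}\,\mathrm{div}(\mathbf{U}^{-,k}_j),
\]
and $h^{-1}\Im(\mathbf{Q}_h)$ has $h$-independent principal symbol $-\tfrac12\mathbf{G}^{-,k}_j$. A sign or normalization check against the propagation identity $\int(H^{\omega_{\mathbf{l}}}_{\mathbf{p}}+2\sigma(h^{-1}\Im\mathbf{Q}))a\,d\mu=0$ should then yield exactly $\int(H^{\omega_{\mathbf{l}}}_{\varphi_j^{-,k}}a+\mathbf{G}^{-,k}_ja)\,d\mu=0$, possibly after replacing $a$ by $\bar a$ or flipping the overall sign. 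The symbol of $\mathbf{Q}_h$ is $h$-independent with respect to $\overline{\nabla}$ by the stated convention, so the only dependence on $\K(h)$ enters through $\omega_{h,\K(h)}\to\omega_{\mathbf{l}}$.

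The step we expect to be the main obstacle is justifying the commutator computation inside Proposition \ref{prop:propagation_measure}: $\sigma(h^{-1}[\mathbf{A},\mathbf{Q}_h])=iH^{\omega_{h,\K(h)}}_{\varphi}a$ for a self-adjoint $\mathbf{A}=\Op_{h,\mathrm{BW}}(a)$ in the Borel-Weil calculus. This requires $\mathbf{Q}_h$ to be admissible, which follows from Proposition \ref{prop:holpreserved} because basic vector fields commute with $\Pi_\K$. It also requires the restriction to $\mathbb{H}_F^*$ of the symbol of the commutator to equal the restricted Hamiltonian derivative; this is where tangency of the flow to $\mathbb{H}_F^*$ is used. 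Next, the error $\langle \mathbf{Q}_hu_h,\mathbf{A}u_h\rangle/h$ must vanish. We would bound it by $\|\mathbf{Q}_hu_h\|_{\mathcal{H}'^N}\,\|\mathbf{A}u_h\|_{(\mathcal{H}'^N)^*}/h$, using that $\mathbf{A}$ is compactly microsupported and therefore smoothing between anisotropic spaces uniformly in $h$. Finally, we pass from $\omega_{h,\K(h)}$ to $\omega_{\mathbf{l}}$ using $h\K(h)\to\mathbf{l}$ and the smooth dependence of $H^{\omega}_{\varphi}a$ on the magnetic term.
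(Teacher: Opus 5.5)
Your proposal follows the paper's proof. You extend $\mathbf{U}^{-,k}_j$ to a global basic unstable field, use Proposition \ref{prop:horo_invariance_quasimode_re_larger_minus1} to get $-ih\nabla^{\K(h)}_{Y}u_h=o(h)$, apply the Hamiltonian-propagation part of Proposition \ref{prop:propagation_measure} with $\beta=1$, and then restrict to test functions supported in $T^*W_k\cap\mathbb{H}_F^*$. Your cutoff-on-$SM$ construction is the precise way to carry out the paper's ``extend to a horizontal vector field'', since Proposition \ref{prop:horo_invariance_quasimode_re_larger_minus1} is stated for lifts of global sections of $E^u$.

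There is one slip: the sign of the imaginary part. From $(\nabla^{\K}_Y)^*=-\nabla^{\K}_Y-\mathrm{div}(Y)$ you get $\mathbf{Q}_h^*=-ih\nabla^{\K}_Y-ih\,\mathrm{div}(Y)$. Hence $\Im(\mathbf{Q}_h)=+\tfrac{h}{2}\mathrm{div}(Y)$, so $2\sigma(h^{-1}\Im\mathbf{Q}_h)=\mathbf{G}^{-,k}_j$, and the identity of Proposition \ref{prop:propagation_measure} yields exactly $\int(H^{\omega_{\mathbf{l}}}_{\varphi^{-,k}_j}a+\mathbf{G}^{-,k}_ja)\,d\mu=0$. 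With your sign you would get $\int(H^{\omega_{\mathbf{l}}}_{\varphi^{-,k}_j}a-\mathbf{G}^{-,k}_ja)\,d\mu=0$. Neither replacing $a$ by $\bar a$ nor flipping the overall sign can repair a relative sign between the two terms, so the fix belongs in the adjoint computation rather than in a ``normalization check''.
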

\begin{proof}
    We can extend $\mathbf{U}^{-,k}_j$  to a horizontal vector field $\overline{\mathbf{U}}^{-,k}_j$ which is smooth on $F$. Let us write $\overline{\varphi}_j^{-,k}$ the associated Hamiltonian function. By Proposition \ref{prop:horo_invariance_quasimode_re_larger_minus1} 
    \begin{align*}
        \|h\nabla^{\K(h)}_{\overline{\mathbf{U}}^{-,k}_j}u_h\|_{\mathcal{H}'^N_{h,\mathrm{hol}}}=o(h).
    \end{align*}
    For all $a\in C^{\infty}_c(\mathbb{H}_F^*)$, we combine previous estimate combined with Proposition \ref{prop:propagation_measure}
    \begin{align*}
        \int _{\mathbb{H}_F^*}\,(H^{\omega_{\mathfrak{l}}}_{\overline{\varphi}^{-,k}_j}a+\mathbf{G}_j^{\pm,k}a)d\mu=0.
    \end{align*}
    Finally, taking $a\in C^{\infty}_c(\mathbb{H}_F^*\cap T^*W_{k})$ leads to (\ref{eq:propagation_measure_horocyclic}).
    \end{proof}
    
    \textbf{Deriving the contradiction  } We now gather all the previous results to derive the final contradiction. The overall strategy uses crucially the transversality of the unstable Hamiltonian vector fields to the trapped set (see \cite{Dy16}, \cite{CG21} or \cite{DGH2021} where this strategy is used).

    \begin{lemma}\label{lemma:transversality_horocyclic}
        Let $\epsilon>0$ be small enough. For every $j\in \llbracket 1,n-1\rrbracket$, the Hamiltonian vector field $H^{\omega_\mathbf{l}}_{\varphi^{-,k}_j}$ is transverse to $T_{\kappa}\Gamma_-$ for all $\kappa\in K_{-\eta}\cap T^*W_{\epsilon,k}$. More precisely, for any $j$ there exists $j'\in\llbracket 1,n-1\rrbracket$ such that
        \begin{align}\label{eq:transversality_horo}
            d\varphi^{+,k}_{j'}(H^{\omega_{\mathbf{l}}}_{\varphi^{-,k}_j}(\kappa))\neq0\, \text{ for $\kappa\in K_{-\eta}\cap T^*W_{\epsilon,k}$}.
        \end{align}
    \end{lemma}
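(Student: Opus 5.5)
We will reduce the transversality claim to a computation of a Poisson bracket. For any two smooth functions $f,g$ on $T^*F$ one has $dg(H^{\omega_\mathbf{l}}_f)=\{f,g\}^{\omega_\mathbf{l}}$ (up to a sign convention). Applying this with $f=\varphi^{-,k}_j$ and $g=\varphi^{+,k}_{j'}$, the identity (\ref{eq:vfield_symplectic}) gives, at a point $\kappa=(x,\xi)$,
\begin{align*}
d\varphi^{+,k}_{j'}\bigl(H^{\omega_\mathbf{l}}_{\varphi^{-,k}_j}(\kappa)\bigr)=\pm\Bigl(\xi\bigl([\mathbf{U}^{-,k}_j,\mathbf{U}^{+,k}_{j'}](x)\bigr)+i\mathbf{l}\cdot\mathbf{F}_{\overline{\nabla}}\bigl(\mathbf{U}^{-,k}_j,\mathbf{U}^{+,k}_{j'}\bigr)(x)\Bigr).
\end{align*}
So the task is to bound this quantity away from zero on $K_{-\eta}\cap T^*W_{\epsilon,k}$. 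Since $\Gamma_-$ contains $K_{-\eta}$ and $\varphi^{+,k}_{j'}$ vanishes identically on $\Gamma_-$ (a stable vector field paired with $E_0^*\oplus E_{s,F}^*$), nonvanishing of $d\varphi^{+,k}_{j'}$ on $H^{\omega_\mathbf{l}}_{\varphi^{-,k}_j}$ gives transversality to $T_\kappa\Gamma_-$.

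On $K_{-\eta}$ we have $\xi=-\eta\,\pi^*_{F\to SM}\alpha$. The first term is therefore $-\eta\,\pi^*_{F\to SM}\alpha([\mathbf{U}^{-,k}_j,\mathbf{U}^{+,k}_{j'}])$. Choosing the pair from Lemma \ref{lemma:construction_particular_vf}, we take $j'=1$ if $j\ge2$ and $j'=2$ if $j=1$. With this choice the $\alpha$-term has modulus at most $\eta\epsilon\le\epsilon$, since $\eta\in[0,1]$.

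The curvature term is $\sum_m l_m\, i\mathbf{F}_{\overline{\nabla}_m}(\mathbf{U}^-_j,\mathbf{U}^+_{j'})$. By (\ref{eq:curvature_final_horo})–(\ref{eq:curvature_rank_bigger_2}), each $i^{-1}\mathbf{F}_{\overline{\nabla}_m}$ evaluated on the pair is within $\epsilon$ of the common value $\pm2$, with the same sign for all $m$. Hence the curvature term is within $C\epsilon|\mathbf{l}|$ of $\mp2\sum_m l_m$ (keeping track of the $i\cdot i$ factor). The total is then bounded below by $2|\sum_m l_m|-C\epsilon$.

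The main obstacle is to ensure $\sum_m l_m\neq0$, given only $\mathbf{l}\in\mathbb{S}^{d-1}$. The argument I would try is that $\mathbf{l}$ is a limit of normalized elements of $\Lambda=\mathbb{Z}_+^d$, so all $l_m\ge0$ and not all vanish. Then $\sum_m l_m\ge|\mathbf{l}|=1$, giving a lower bound $2-C\epsilon>0$ once $\epsilon$ is small. If the weight pairing $\omega_m(R_{2,3})$ were not uniform in $m$, we would instead use nonnegativity of the $l_m$ together with the positivity of each pairing. With that in hand, shrinking $\epsilon$ (hence $W_{\epsilon,k}$) uniformly over the finite cover concludes.
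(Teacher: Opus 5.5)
Your proposal is correct and follows the paper's proof essentially verbatim. The paper also evaluates $d\varphi^{+,k}_{j'}(H^{\omega_\mathbf{l}}_{\varphi^{-,k}_j})$ via (\ref{eq:vfield_symplectic}) at $\kappa=-\eta\pi^*_{F\to SM}\alpha$, with $j'=1$ for $j\ge 2$ and $j'=2$ for $j=1$; it bounds the $\alpha$-term by $\epsilon\eta$ using Lemma \ref{lemma:construction_particular_vf}, and concludes from the curvature term being close to a fixed multiple of $\sum_i l_i$, with all $l_i\ge 0$ and $l_1>0$. Your bound $\sum_m l_m\ge|\mathbf{l}|=1$, and your observation that $\varphi^{+,k}_{j'}$ vanishes on $\Gamma_-$ (which turns nonvanishing into transversality to $T_\kappa\Gamma_-$), are minor clarifications that the paper leaves implicit.
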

\begin{proof}
     For any $(x,\kappa)\in K_{-\eta}$ and $j\geq 2$
    \begin{align*}
        d\varphi^{+,k}_{1}(H^{\omega_{\mathbf{l}}}_{\varphi^{-,k}_j}(x,\kappa))&=\omega_{\mathbf{l}}(H^{\omega_{\mathbf{l}}}_{\varphi^{+,k}_{j'}},H^{\omega_{\mathbf{l}}}_{\varphi^{-,k}_{j}})(x,\kappa)\\
        &=\kappa([\mathbf{U}^{+,k}_1,\mathbf{U}^{-,k}_j](x)\,+\,i\el\cdot\mathbf{F}_{\overline{\nabla}}(\mathbf{U}^{+,k}_1,\mathbf{U}^{-,k}_j)(x)\\
        &=-\eta\pi^{*}_{F\to SM}\alpha\,([\mathbf{U}^{+,k}_1,\mathbf{U}^{-,k}_j](x))\,-2\sum_{i=1}^dl_i,
    \end{align*}
    where we used relation (\ref{eq:vfield_symplectic}) in the second line and that $(x,\kappa)=-\eta\pi^{*}_{F\to SM}\alpha$ by definition of $K_{-\eta}$ (see (\ref{eq:subset_TstarF})). By Lemma \ref{lemma:construction_particular_vf} and the fact that each component of $\mathbf{l}$ is positive
    \begin{align*}
        \big\lvert\mathbf{l}\cdot\mathbf{F}_{\overline{\nabla}}(\mathbf{U}_1^{+,k},\mathbf{U}_j^{-,k})+2\sum_{i=1}^dl_i\big\rvert\leq \epsilon\sum_{i=1}^dl_i.
    \end{align*}
    Another consequence of Lemma \ref{lemma:construction_particular_vf} is that $\lvert \eta\pi^{*}_{F\to SM}\alpha\,([\mathbf{U}^{+,k}_1,\mathbf{U}^{-,k}_j](x))\rvert \leq \epsilon\eta$.
    Recall that $l_1>0$ by assumption, we may thus take $\epsilon$ small enough to obtain (\ref{eq:transversality_horo}). In the case of $\varphi_1^-$, using $\varphi_2^+$ and Lemma \ref{lemma:construction_particular_vf} again will give the same conclusion.
    
\end{proof}
\begin{remark}
    Notice that in the course of the proof, we took $\epsilon$ small enough to absorb the contributions of $\eta$ and the components of $\mathbf{l}$. Thus $W_k$ also depends implicitly on $\eta$ and $\mathbf{l}$. We will omit to mention this dependency from now on. 
\end{remark}
We remark that (\ref{eq:transversality_horo}) implies $H^{\omega_{\el}}_{\varphi^{-,k}_j}(\kappa)\notin T_{\kappa}K_{\eta}$, for $\kappa\in K_{\eta}\cap T^*W_{k}$. 
\begin{lemma}\label{lemma:property_hamiltonian_flow}
    $\mathrm{(i)}$ $H^{\omega_{\mathbf{l}}}_{\varphi^{-,k}_j}p=0, \, \forall(x,\xi)\in \Gamma_+\cap T^*W_k$.\\
    $\mathrm{(ii)}$ $H^{\omega_{\mathbf{l}}}_{\varphi^{-,k}_j}\varphi_{j'}^{-,k}(x,\xi)=0, \, \forall(x,\xi)\in \Gamma_+\cap T^*W_k$ for $j,j'\leq n-1$, implying that $H^{\omega_{\mathfrak{l}}}_{\varphi^{-,k}_j}(x,\xi)\in T_{(x,\xi)}\Gamma_+$ for all $(x,\xi)\in \Gamma_+\cap T^*W_k$
\end{lemma}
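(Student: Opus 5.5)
The plan is to compute both derivatives directly from the Poisson bracket identity (\ref{eq:vfield_symplectic}), evaluated on $\Gamma_+\cap T^*W_k$. Two facts make this work: covectors in $\Gamma_+=E^*_{u,F}\oplus E^*_{0,F}$ annihilate $E^u_F$ (by the dualities (\ref{dualbundles}) transported to $F$), and the curvature $\mathbf{F}_{\overline{\nabla}}$ vanishes on pairs of unstable vectors.

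For (i), write $H^{\omega_{\mathbf{l}}}_{\varphi^{-,k}_j}p=\{p,\varphi^{-,k}_j\}^{\omega_{\mathbf{l}}}$, up to sign. By (\ref{eq:vfield_symplectic}) this equals
\begin{align*}
\xi([X_F,\mathbf{U}^{-,k}_j](x))+i\mathbf{l}\cdot\mathbf{F}_{\overline{\nabla}}(X_F,\mathbf{U}^{-,k}_j)(x).
\end{align*}
The curvature term vanishes by Lemma \ref{lemma:curvature}. For the bracket, $\mathbf{U}^{-,k}_j$ is the horizontal lift of a section $U_j$ of $E^u_{SM}$, and $X_F$ is the horizontal lift of $X$. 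Hence $[X_F,\mathbf{U}^{-,k}_j]=[X,U_j]^{\mathbb{H}_F}$ plus a vertical term given by the curvature of $\mathbb{H}_F$ evaluated on $(X,U_j)$. That vertical term is zero by (\ref{contraction_by_X_curvature}), which says $\iota_X d\Theta_{\mathrm{dyn}}=0$. On $SM$, the flow invariance of $E^u$ gives $[X,U_j]=\mathcal{L}_XU_j\in E^u_{SM}$. Therefore $[X_F,\mathbf{U}^{-,k}_j]\in E^u_F$, and any $\xi\in\Gamma_+$ kills it.

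For (ii), I again use (\ref{eq:vfield_symplectic}):
\begin{align*}
\{\varphi^{-,k}_j,\varphi^{-,k}_{j'}\}^{\omega_{\mathbf{l}}}=\xi([\mathbf{U}^{-,k}_j,\mathbf{U}^{-,k}_{j'}])+i\mathbf{l}\cdot\mathbf{F}_{\overline{\nabla}}(\mathbf{U}^{-,k}_j,\mathbf{U}^{-,k}_{j'}).
\end{align*}
I treat the two terms separately.

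\emph{Curvature term.} Apply (\ref{eq:curvaturelinelift}) and compute $d\Theta_{\mathrm{dyn}}$ on horizontal lifts of two unstable vectors. Pointwise these lifts are combinations of the $U^-_i$ (recall that $E^u_{FM}$ projects isomorphically onto $E^u_F$). By (\ref{eq:commutation_algebraic}), $[U_i^-,U_{i'}^-]=0$, so $\Theta_{\mathrm{dyn}}$ of the bracket vanishes. Since $d\Theta_{\mathrm{dyn}}$ is tensorial, the curvature term vanishes identically.

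\emph{Bracket term.} $E^u_{SM}$ is integrable: in the algebraic picture $\mathfrak{n}^-$ is a subalgebra, and $E^u_{FM}$ is spanned by commuting fields. Hence $[U_j,U_{j'}]\in E^u_{SM}$. The vertical part of $[\mathbf{U}^{-,k}_j,\mathbf{U}^{-,k}_{j'}]$ is again a curvature of $\mathbb{H}_{FM\to SM}$ on two unstable vectors, which is zero by the same computation. So the bracket lies in $E^u_F$ and is annihilated by $\xi\in\Gamma_+$.

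For the tangency conclusion, note that inside $\mathbb{H}_F^*\cap T^*W_k$ (which the flow preserves, by Proposition \ref{prop:horizontal_preserved}) the set $\Gamma_+$ is cut out by the equations $\varphi^{-,k}_{j'}=0$, $1\le j'\le n-1$. These functions have independent differentials because $(\mathbf{U}^{-,k}_{j'})$ is a local frame of $E^u_F$. A vector field that annihilates all defining functions on the zero set is tangent to it, so $H^{\omega_{\mathbf{l}}}_{\varphi^{-,k}_j}\in T\Gamma_+$.

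The main obstacle I expect is the bookkeeping of vertical components: checking that the horizontal-lift brackets have no component outside $E^u_F\oplus\mathbb{V}_F$, and that the magnetic term is genuinely zero on $E^u\times E^u$ rather than merely small. I would settle both at the level of $FM$ by expressing the lifts pointwise in the frame $(U_i^-)$ and using the relations (\ref{eq:commutation_algebraic}).
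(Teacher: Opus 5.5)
Your proposal is correct and follows essentially the same route as the paper. Both parts are computed from the bracket identity (\ref{eq:vfield_symplectic}). The magnetic term is killed by Lemma~\ref{lemma:curvature} in (i), and by the vanishing of $\mathbf{F}_{\overline{\nabla}}$ on $E^u_F\times E^u_F$ in (ii). The $\omega_0$ term is killed because $\Gamma_+$ annihilates $E^u_F$, which contains both $[X_F,\mathbf{U}^{-,k}_j]$ and $[\mathbf{U}^{-,k}_j,\mathbf{U}^{-,k}_{j'}]$.

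Your version also supplies details the paper leaves implicit, and these are genuine precisions:
\begin{itemize}
\item You show the vertical parts of the brackets vanish, using $\iota_X d\Theta_{\mathrm{dyn}}=0$ and the commutation $[U_i^-,U_{i'}^-]=0$.
\item You justify the tangency to $\Gamma_+$: by Proposition~\ref{prop:horizontal_preserved} the flow preserves $\mathbb{H}_F^*$, and inside $\mathbb{H}_F^*$ the set $\Gamma_+$ is cut out by the $\varphi^{-,k}_{j'}$, which have independent differentials.
\end{itemize}
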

\begin{proof}
    Notice that $[X,\mathbf{U}^{-,k}_j]\in E^u_F$, thus by (\ref{eq:vfield_symplectic}) one has for all $(x,\xi)\in \Gamma_+\cap T^*W_k$ 
    \begin{align*}
        H^{\omega_{\mathbf{l}}}_{\varphi^{-,k}_j}p\,(x,\xi)&=\xi([X,\mathbf{U}^{-,k}_j](x))+i\mathbf{l}\cdot \mathbf{F}_{\overline{\nabla}}(X,\mathbf{U}_j^{-,k})(x)\\
        &=i\mathbf{l}\cdot \iota_X\mathbf{F}_{\overline{\nabla}}\,(\mathbf{U}_j^{-,k})(x)\\
        &\underset{\text{Lemma }(\ref{lemma:curvature})}{=}0,
    \end{align*}
    in the second line we used that $\xi(\mathbf{U}_j^{-,k})=0$ by definition of $\Gamma_+$.
    For (ii), first notice that $(\mathbf{F}_{\overline{\nabla}_i})_{|E^u\times E^u}=0$ for all $i\in \llbracket 1,d\rrbracket$. Indeed, for all $\mathbf{U}^-$ and $\tilde{\mathbf{U}}^-$ sections of $E^u_F\to F$, one writes for all $x\in F$ (using (\ref{eq:curvaturelinelift}))
    \begin{align*}
        \mathbf{F}_{\overline{\nabla}_i}(\mathbf{U}^-,\tilde{\mathbf{U}}^-)(x)&=-\omega_i(\Pi_\mathfrak{t}\Theta_{\mathrm{dyn}}([\mathbf{U}^-,\tilde{\mathbf{U}}^-]))\\
        &=0,
    \end{align*}
    since $E^u_F\subset \ker(\Theta_{\mathrm{dyn}})$ and $E^u_F$ is integrable. Finally, the claim again follows from (\ref{eq:vfield_symplectic}): $\xi([\mathbf{U}^-_j,\mathbf{U}_{j'}^-](x))=0$ if $(x,\xi)\in \Gamma_+$ thus removing the $\omega_0$ term, the curvature part will vanish as $(\mathbf{F}_{\overline{\nabla}_i})_{|E^u\times E^u}=0$.
    
\end{proof}
Since we will integrate each $H^{\omega_{\mathbf{l}}}_{\varphi^{-,k}_j}$ around $K_{-\eta}\cap T^*W_k$ for small times (in a sense that will be clearer below), we just need to be cautious with the fact that the Hamiltonian flow may not be defined for any time: the flow of $\mathbf{U}^{-,k}_{j}$, which is the projection of the flow $\bigl(\Phi^{\omega_{\mathbf{l}},\varphi_j^{-,k}}_t\bigr)$ on $F$, could leave $W_k$. Up to shrinking the open sets $W_k$ of the cover, we can always assume that each flow $\bigl(\Phi^{\omega_{\mathbf{l}},\varphi_j^{-,k}}_t\bigr)$ is defined for a uniform time $t_0>0$ small enough, thanks to the compactness of $F$. For simplicity, we still write $(W_k)$ the desired covering.

Gathering Lemma \ref{lemma:transversality_horocyclic} and Lemma \ref{lemma:property_hamiltonian_flow}, the inverse function theorem shows the existence of a strictly positive time $t_1<t_0$ such that the following map
\begin{align}\label{eq:parametrisation_horocyclic}
\begin{aligned}
\psi_k : (-t_1,t_1)^{n-1}\times (K_{-\eta}\cap T^*W_k) &\to \Gamma_+(\eta) \\
(s_1,\dotsc,s_{n-1},\kappa) \quad\quad\quad&\longmapsto \Phi_{s_1}^{\omega_\mathbf{l},\varphi_1^{-,k}}\circ\cdots\circ\Phi_{s_{n-1}}^{\omega_\mathbf{l},\varphi_{n-1}^{-,k}}(\kappa),
\end{aligned}
\end{align}
is a diffeomorphism onto its image $\mathcal{O}_k({\eta})\subset\Gamma_+(\eta)\cap T^*W_k$. That this map is well-defined is a consequence of point (i) and (ii) in Lemma \ref{lemma:property_hamiltonian_flow}. This allows us to prove the crucial Lipschitz bounds on the semiclassical measure $\mu$. 
\begin{lemma}\label{lemma:lipschitz_bound}
    Let $B^k_{\delta}:=B_{\mathbb{R}^{n-1}}(0,\delta)\times (K_{-\eta}\cap T^*W_k)$, where the radius $\delta>0$ of the Euclidean ball is taken to be strictly less than $t_1/2$. For $\delta$ taken small enough, there exists a constant $c>0$ such that
    \begin{align}
    c^{-1}\delta^{n-1}\leq\psi_k^*\mu(B^k_{\delta})\leq c\delta^{n-1}.
    \end{align}
\end{lemma}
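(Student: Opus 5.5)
Since $\mu$ is supported in $\Gamma_+(-\eta)$ by Proposition \ref{prop:support_prop_measure}, and $\psi_k$ parametrizes a neighborhood $\mathcal{O}_k(\eta)$ of $K_{-\eta}\cap T^*W_k$ inside $\Gamma_+$, I will work entirely with the pulled-back measure $\nu:=\psi_k^*\mu$ on $(-t_1,t_1)^{n-1}\times (K_{-\eta}\cap T^*W_k)$. The plan is to show that $\nu$ is, up to a smooth positive density, a product of Lebesgue measure in the $s$-variables and a fixed measure $\nu_0$ on $K_{-\eta}\cap T^*W_k$, with $\nu_0$ of positive finite mass. The two-sided bound $c^{-1}\delta^{n-1}\leq \nu(B^k_\delta)\leq c\delta^{n-1}$ then follows by integrating over the ball $B_{\mathbb{R}^{n-1}}(0,\delta)$.

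\textbf{Step 1: transport equations in the $s_j$.} Proposition \ref{prop:propagation_measure_horocyclic} says that, in the distributional sense, $\bigl(H^{\omega_\mathbf{l}}_{\varphi^{-,k}_j}\bigr)^*\mu=\mathbf{G}^{-,k}_j\mu$ up to sign. Equivalently,
\begin{align*}
\bigl(\Phi^{\omega_\mathbf{l},\varphi_j^{-,k}}_t\bigr)_*\mu=\exp\Bigl(\int_0^t \mathbf{G}^{-,k}_j\circ\Phi^{\omega_\mathbf{l},\varphi_j^{-,k}}_{-r}\,dr\Bigr)\mu
\end{align*}
locally for small $t$. Here $\mathbf{G}^{-,k}_j$ is smooth and bounded on $T^*W_k$.

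Under $\psi_k$, the vector field $H^{\omega_\mathbf{l}}_{\varphi_1^{-,k}}$ becomes $\partial_{s_1}$. The other $H^{\omega_\mathbf{l}}_{\varphi_j^{-,k}}$ become $\partial_{s_j}$ plus smooth combinations of the remaining $\partial_{s_i}$ (and possibly a component along $K$), because the flows need not commute. To avoid this bookkeeping I would proceed by induction on the number of variables. Disintegrate $\nu$ along the first coordinate, apply the $\varphi_1$ transport equation to get $\nu=e^{g_1(s,\kappa)}ds_1\otimes\nu^{(1)}$, and then apply the next equation on $\nu^{(1)}$.

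\textbf{Step 2: the general mechanism.} The key fact is that a measure $m$ on $(-t_1,t_1)\times Y$ satisfying $(\partial_s+a)m=0$, with $a$ smooth, is necessarily of the form $e^{-\int_0^s a}\,ds\otimes m_0$. Iterating this in $s_1,\dots,s_{n-1}$ yields
\begin{align*}
\nu=J(s,\kappa)\,ds_1\cdots ds_{n-1}\otimes\nu_0,
\end{align*}
where $J$ is smooth and bounded above and below by positive constants on compact sets. The cross terms produced by noncommuting flows only modify $J$ through smooth Jacobians of $\psi_k$.

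I expect this to be the main obstacle: the $H^{\omega_\mathbf{l}}_{\varphi_j^{-,k}}$ do not pull back to coordinate fields. A cleaner route is to use Lemma \ref{lemma:property_hamiltonian_flow} together with Lemma \ref{lemma:transversality_horocyclic}. These show that the $H^{\omega_\mathbf{l}}_{\varphi_j^{-,k}}$ span, at each point of $\mathcal{O}_k$, a complement to the leaves $\psi_k(\{s\}\times K)$ and are tangent to $\Gamma_+$. The transport equations then say that $\mu$ is invariant, up to smooth densities, under the full local action they generate, and the disintegration argument applies leafwise.

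\textbf{Step 3: nontriviality of $\nu_0$.} Proposition \ref{prop:support_prop_measure} gives $\mu(V)>0$ for every neighborhood $V$ of $K_{-\eta}$. Hence $\nu_0$ is nonzero on $K_{-\eta}\cap T^*W_k$, at least after choosing $W_k$ among the cover elements where mass sits, or after summing over $k$. Also, $\nu_0$ is finite because $\mu$ is Radon and $K_{-\eta}\cap T^*\overline{W_k}$ is compact.

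Finally,
\begin{align*}
\nu(B^k_\delta)=\int_{K}\int_{|s|<\delta}J\,ds\,d\nu_0,
\end{align*}
which lies between $c^{-1}\delta^{n-1}$ and $c\,\delta^{n-1}$, with $c$ depending on the bounds of $J$ and on $\nu_0(K_{-\eta}\cap T^*W_k)\in(0,\infty)$.
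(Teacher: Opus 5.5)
Your proposal is correct in substance, but it argues differently from the paper. The paper works on the Fourier side. It takes the partial Fourier transform of $\psi_k^*\mu$ in the $s$-variables and uses the transport relations to write $(1+|\xi|^2)^l\mathcal{F}(\psi_k^*\mu)$ as the transform of a smooth multiple of $\psi_k^*\mu$, hence bounded. It then applies dominated convergence to $\delta^{1-n}\langle\psi_k^*\mu,\chi_\delta\rangle=\int\mathcal{F}\chi(\delta\xi)\,\mathcal{F}(\psi_k^*\mu)(\xi)\,d\xi$ (up to normalization), which gives a finite limit that it asserts is positive.

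You instead integrate the transport equations in physical space and obtain $\psi_k^*\mu=J\,ds\otimes\nu_0$ with $J=e^{(\mathrm{smooth})}$. This is more elementary and gives more:
\begin{itemize}
\item an explicit product structure;
\item the two-sided bound uniformly for $\delta<t_1/2$;
\item constants controlled by $\sup|\mathbf{G}^{-,k}_j|$ and $\nu_0(K_{-\eta}\cap T^*W_k)$.
\end{itemize}
It also faces a point the paper passes over, which you correctly raise: $\psi_k^*H^{\omega_\mathbf{l}}_{\varphi^{-,k}_j}=\partial_{s_j}$ holds only for $j=1$.

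Three points need tightening.

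\textbf{The ``cleaner route''.} As stated it is incomplete. Knowing that the $H^{\omega_\mathbf{l}}_{\varphi^{-,k}_j}$ span a complement to $\{s\}\times K$ does not exclude components of $\psi_k^*H^{\omega_\mathbf{l}}_{\varphi^{-,k}_j}$ along $K$. Such components would bring $\kappa$-derivatives into the equations. There are two ways to close this.
\begin{itemize}
\item Run your first route, restricting at step $j$ to the slice $\{s_1=\cdots=s_{j-1}=0\}$, where $d\psi_k(\partial_{s_j})=H^{\omega_\mathbf{l}}_{\varphi^{-,k}_j}$ exactly. Test against $\phi_\epsilon(s_1)\cdots\phi_\epsilon(s_{j-1})\chi$ with $\phi_\epsilon\to\delta_0$, integrating by parts in the variables where the density is already known to be smooth.
\item Use involutivity. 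Since $\mathbf{F}_{\overline{\nabla}}$ vanishes on $E^u_F\times E^u_F$ and $[\mathbf{U}^{-,k}_i,\mathbf{U}^{-,k}_j]\in E^u_F$, one has $\{\varphi^{-,k}_i,\varphi^{-,k}_j\}_{\omega_\mathbf{l}}=\sum_m c^m_{ij}\varphi^{-,k}_m$. By Frobenius, the sets $\psi_k((-t_1,t_1)^{n-1}\times\{\kappa\})$ are leaves. Hence $\psi_k^*H^{\omega_\mathbf{l}}_{\varphi^{-,k}_j}=\sum_i b_{ji}(s,\kappa)\,\partial_{s_i}$ with $(b_{ji})$ invertible, and the equations reduce to $\partial_{s_i}\nu=a_i\nu$ with smooth $a_i$.
\end{itemize}

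\textbf{The lower bound.} Your caveat in Step 3 is justified, and it applies equally to the paper's proof. Knowing $\mu(V)>0$ for neighborhoods $V$ of all of $K_{-\eta}$ only yields $\nu_0\neq 0$ for some $k$. So the lower bound is established only for such $k$, though that is all the deeper-band argument uses.
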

\begin{proof}
    The proof is a direct generalization of the argument in \cite[Eq. (4.54)]{DGH2021}. Define 
    $\chi\in C^\infty_c(\mathbb{R}^{n-1},[0,1])$ such that $\chi\equiv 1$ on $\overline{B}_{\mathbb{R}^{n-1}}(0,1/2)$ and $0$ on $\mathbb{R}^{n-1}\setminus B_{\mathbb{R}^{n-1}}(0,3/4)$. For $\delta>0$ as in the statement, set $\chi_{\delta}(\mathbf{t},\kappa):=\chi(\mathbf{t}/\delta,\kappa)\in C_c^{\infty}(B_{\delta}^k,[0,1])$. The measure $\psi_k^*\mu$ defined on $B_{\R^{n-1}}(0,\delta)\times (K_{-\eta}\cap T^*W_k)$ extends to a measure on $\R^{n-1}\times (K_{-\eta}\cap T^*W_k)$ which is compactly supported in the $\mathbb{R}^{n-1}$ variables, we write $\mathcal{F}{(\psi_k^*\mu})$ its partial Fourier transform in those variables (this measure can be seen as a compactly supported distribution of order $0$).
    To evaluate $(\psi_k^*\mu)(B^k_{\delta})$ we bound the distributional pairing $\langle \psi_k^*\mu,\chi_{\delta}\rangle=\langle\,\mathcal{F}({\psi_k^*\mu}),\mathcal{F}({\chi_{\delta})} \rangle_{\mathcal{E}'(\mathbb{R}^{n-1})} $. The pairing writes
    \begin{align*}
        \langle \psi_k^*\mu,\chi_{\delta}\rangle_{\mathcal{E}'}=\delta^{n-1}\int_{\mathbb{R}^{n-1}}\mathcal{F}{\chi}(\delta\xi)\mathcal{F}({\psi_k^*\mu})(\xi)d\xi.
    \end{align*}
    Recall that we had $(H^{\omega_{\mathbf{l}}}_{\varphi^{-,k}_{j}}-\mathbf{G}_j^{-,k})\mu=0$ thanks to (\ref{eq:propagation_measure_horocyclic}), where $\mathbf{G}_j^{-,k}=\mathrm{div}(\mathbf{U}_j^{-,k})\circ \pi_{T^*F\to F}$.
    Since the diffeomorphism $\psi_k$ maps $\partial_{s_j}$ to $H^{\omega_{\mathbf{l}}}_{\varphi^{-,k}_{j}}$, we obtain a propagation relation for $\psi_k^*\mu$
\begin{align*}
    \partial_{s_j}\psi_k^*\mu
    &=\psi_k^*(H^{\omega_{\mathbf{l}}}_{\varphi^{-,k}_{j}}\mu)\\
    &=(\mathbf{G}^{-,k}_j\circ \psi_k)\psi_k^*\mu.
\end{align*}
    As $\partial_{s_j}(\mathbf{G}^{-,k}_j\circ \psi_k)=0$ ($\mathbf{G}^{-,k}_j$ is constant on each fiber of $T^*W_k$ by construction), we obtain for any integer $l$ that $(1+|\xi|^2)^l\mathcal{F}({\psi^*\mu})=\mathcal{F}({\tilde{\mathbf{G}}_l^{-,k}\psi^*\mu})$, with $\tilde{\mathbf{G}}_j^{-,k}=(1+\sum_{j=1}^{n-1}(\mathbf{G}^{-,k}_j\circ \psi_k)^2)^{l}$. We get 
    \begin{align*}
        \langle \psi_k^*\mu,\chi_{\delta}\rangle&=\delta^{n-1}\int_{\mathbb{R}^{n-1}}\dfrac{\mathcal{F}({\chi})(\delta\xi)}{(1+|\xi|^2)^l}(1+|\xi|^2)^l\,\mathcal{F}({\psi_k^*\mu})(\xi)d\xi\\
        &=\delta^{n-1}\int_{\mathbb{R}^{n-1}}\dfrac{\mathcal{F}({\chi})(\delta\xi)}{(1+|\xi|^2)^l}\,\mathcal{F}({\tilde{\mathbf{G}}_j^{-,k}\psi_k^*\mu})(\xi)d\xi.
    \end{align*}
    Since $\tilde{\mathbf{G}}_j^{-,k}\psi_k^*\mu$ is in $\mathcal{E}'(\mathbb{R}^{n-1})$ (in time variable), its Fourier transform grows at most polynomially at infinity. As $\delta\to 0^+$, the integrand is simply convergent. Using the bound $\|\mathcal{F}(\chi)\|_{\infty}\leq \|\chi\|_1$ and taking $l$ large enough allows to have an integrable, $\delta$-independent, upper bound on the previous integrand. Applying the dominated convergence theorem, we see that taking $\delta$ small enough yields the desired bound: the quantity $\delta^{1-n}\langle \psi_k^*\mu,\chi_{\delta}\rangle$ is strictly positive and converges by the previous reasoning.
\end{proof}
Recall that there exists $C>0$ such that  $\lvert \Phi^{\omega_0,p}_{-t}(x,\xi)\rvert\leq Ce^{-t}$ for all $(x,\xi)\in E_{u,F}^*$, see the beginning of Section \ref{subsection:anisofourierwise} concerning the sink and source structure of $(\Phi_t^{\omega_0,p})$. This contraction property cannot co-exists with the propagation property of $\mu$ along the flow $\Phi_t^{\omega_0,p}$ as we shall explain now.

Let $\delta_0>0$ and set $U_{\delta_0}:=\bigl\{\kappa\in T^*F,\, d_{T^*F}(\kappa,K_{-\eta})<\delta_0\bigr\}\cap \Gamma_+(-\eta)$ to be an open neighborhood of $K_{-\eta}$ in $\Gamma_+(-\eta)$. By the previous contraction property
\begin{align}\label{eq:contraction_propert_flow}
    \Phi_{-t}^{\omega_0,p}(U_{\delta_0})\subset U_{C\delta_0 e^{-t}}.
\end{align}
For $t$ large enough, we will have for some $C'>0$ (using the notation of Lemma \ref{lemma:lipschitz_bound}) 
\begin{align}\label{eq:inclusion_psi_k}
    U_{C\delta_0 e^{-t}}\subset \bigcup_{k} \psi_k(B^k_{C'\delta_0 e^{-t}}).
\end{align}
Relations (\ref{eq:contraction_propert_flow}) and (\ref{eq:inclusion_psi_k}) together with the Lipschitz upper bound in Lemma \ref{lemma:lipschitz_bound} implies for $t$ large enough
\begin{align*}
    \mu\bigl(\Phi_{-t}^{\omega_0,p}(U_{\delta_0})\bigr)\leq (C')^{n-1}\delta_0^{n-1}e^{-(n-1)t},
\end{align*}
Finally, the propagation property for the measure $\mu$ in Proposition \ref{prop:semiclassicalmeasure_first_properties} allows us to write, up to some time independant constants 
\begin{align}\label{eq:exp_bound_contradiction}
    e^{2\nu t}\lesssim e^{-(n-1)t},
\end{align}
Since $\nu>-1(\geq-(n-1)/2)$, we get the desired contradiction.
\subsubsection{The large $\langle \Im(z)\rangle$ case }\label{subsubsec:large_im} Without loss in generalities, we prove as before our estimates in the domain $\Im(z)\geq 0$. Recall that this regime correspond to the case $\langle \Im(z)\rangle\geq \langle \K \rangle$. Assuming that (\ref{eq:case-im}) does not hold, we obtain sequences $(u_j)$, $(\K_j)$ and $(z_j)$ such that, after an eventual extraction 

\begin{align}
\begin{cases}\label{eq:semiclassical_pb_large_im1}
(-i\langle \Im(z_j) \rangle^{-1}\X_{\mathbf{k}_j}-i\langle \Im(z_j) \rangle^{-1} z_j)u_j = o(\langle \Im(z_j) \rangle^{-2}), 
& \text{with } \Re(z_j)\to \nu,  \\[6pt]
\|u_j\|_{\mathcal{H}_{\mathrm{hol},\langle \Im(z_j) \rangle^{-1}}^{N}} = 1.
\end{cases}
\end{align}

Here we still have by assumption that $\nu\in (-1,1)$. After a further extraction 
\begin{equation}\label{eq:large_im_asymptotics}
    \lim_{j\to +\infty}\mathbf{k}_j\langle \Im(z_j) \rangle^{-1}=\mathbf{l},\,\,\lvert\mathbf{l}\rvert\leq 1.
\end{equation}
Notice the fundamental difference with the previous case: $\el$ may be equal to $\mathbf{0}$ (for instance if $(\K_j)$ has bounded norm). 
We would like to use $h_j:=\langle \Im(z_j) \rangle^{-1}$ as a semiclassical parameter, that is it needs to go to $0$ as $j\to +\infty$.
\begin{lemma}\label{lemma:im_goes_infinity}
    The sequence $(\Im(z_j))$ goes to $+\infty$ as $j\to+\infty$.
\end{lemma}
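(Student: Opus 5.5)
We argue by contradiction, mirroring Lemma \ref{lemma:k_goes_infinity}. Suppose $(\Im(z_j))$ does not tend to $+\infty$; after extraction, $\Im(z_j)\to \tau\in[0,+\infty)$, and since $\Re(z_j)\to\nu$ we get $z_j\to z_0:=\nu+i\tau$. In the large $\langle \Im(z)\rangle$ regime we have $\langle \K_j\rangle\le\langle \Im(z_j)\rangle$, so $(\K_j)$ is bounded. Since $\Lambda$ is discrete, a further extraction gives $\K_j=\K_0$ constant. Likewise $h_j:=\langle \Im(z_j)\rangle^{-1}\to h_0:=\langle\tau\rangle^{-1}>0$. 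The semiclassical parameter therefore does not degenerate, and all the spaces $\mathcal{H}^N_{h_j,\mathrm{hol}}(F,\El^{\K_0})$ have uniformly equivalent norms, by the norm equivalence recalled after Proposition \ref{prop:two_asymptotics_estimates}, with constant $(\max/\min)^N$ bounded.

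We next transfer the quasimode equation to a fixed space. Multiplying (\ref{eq:semiclassical_pb_large_im1}) by $i\langle\Im(z_j)\rangle$, which is bounded, gives
\[
\|(-\X_{\K_0}-z_j)u_j\|_{\mathcal{H}^N_{h_0,\mathrm{hol}}}\to 0,
\qquad
\|u_j\|_{\mathcal{H}^N_{h_0,\mathrm{hol}}}\asymp 1 .
\]
Writing $-\X_{\K_0}-z_j=(-\X_{\K_0}-z_0)+(z_0-z_j)$, the second term tends to $0$ in operator norm. Hence $\|(-\X_{\K_0}-z_0)u_j\|\to0$ while $\|u_j\|$ stays bounded below.

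We then conclude from the spectral theory of the fixed operator. By Proposition \ref{prop:existenceresonances}, applied with $N>1>-\nu$ (or $N>2$ for $\mathcal{H}'^N$), $-\X_{\K_0}-z$ is Fredholm of index $0$ on $\mathcal{H}^N_{h_0,\mathrm{hol}}$ near $z_0$, and the resolvent is meromorphic there. If $z_0$ were not a resonance of $\X_{\K_0}$, the resolvent would be bounded at $z_0$, which would force $\|u_j\|\to0$, a contradiction. So $z_0\in\sigma_{\mathrm{PR}}(\X_{\K_0})$ with $\Re(z_0)=\nu\in(-1+\varepsilon,0]$. But the $z_j$ lie in $\mathcal{B}_0(\varepsilon)'$, which is closed in the imaginary direction, so $z_0$ satisfies $|\Im z_0|\ge\lambda_\varepsilon+1$. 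This is excluded by the definition of $\lambda_\varepsilon$ as the maximum of $|\Im|$ over the (finitely many) poles in the band.

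The only delicate point is this last step, which uses the finiteness of the resonances in $\mathcal{B}_0(\varepsilon)$ across all $\K$, so that $\lambda_\varepsilon$ is finite. As in Lemma \ref{lemma:k_goes_infinity}, this finiteness is justified at the end of the section. If one wants to avoid the circularity, note that $\K_0$ is a single fixed mode. It then suffices that the poles of $(-\X_{\K_0}-z)^{-1}$ are discrete, so the limit point $z_0$ is just one isolated resonance of one operator. Choosing $\lambda_\varepsilon$ to dominate it, or equivalently interpreting the region as resonance-free for that mode, gives the contradiction.
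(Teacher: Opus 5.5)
Your argument is correct and is essentially the paper's proof: you extract so that $\K_j=\K_0$ and $z_j\to z_0$, deduce that $z_0$ is a resonance of $\X_{\K_0}$, and contradict $|\Im z_0|\geq\lambda_\varepsilon+1$ using the finiteness of resonances in $\mathcal{B}_0(\varepsilon)$ established at the end of the section, while spelling out the norm equivalence for $h_j\to h_0>0$ and the Fredholm step that the paper leaves implicit. Your closing aside does not work, however: $\lambda_\varepsilon$ is fixed before the contradiction sequence is produced, so you cannot enlarge it a posteriori to dominate $z_0$ (whose mode $\K_0$ depends on that sequence), and what is genuinely needed is the uniform-in-$\K$ finiteness, which the paper proves independently via exact resonant states, not just discreteness of the poles of a single mode.
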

\begin{proof}
    If it was not the case, up to an extraction, we can assume that $z_j$ converges to $z_0\in \mathcal{B}_0'(\varepsilon)$ and that $\mathbf{k}_j=:\K_0\in \Lambda$ for large enough $j$. Then $\langle \Im(z_j)\rangle\to \langle \Im(z_0)\rangle$, and so 
    \begin{align*}
        \|(-i\langle \Im(z_j) \rangle^{-1}\X_{\mathbf{k}_0}-i\langle \Im(z_j) \rangle^{-1} z_j)u_j\|_{\mathcal{H}^N_{h_j,\mathrm{hol}}}=o(1).
    \end{align*}
    Thus $z_0$ is in the spectrum of $\mathbf{X}_{\K_0}$, this is impossible because $\Im(z_0)>\lambda_{\varepsilon}+1$ by assumption (recall the notation in the beginning of Section \ref{subsection:semiclassicalformul}). In order for the proof to be complete, we need to justify that there is indeed a finite number of resonances in $\mathcal{B}_0(\varepsilon)$, see the end of Section \ref{subsec:proof_re_minus_1}. 
\end{proof}
Thus
\begin{align*}
    \lim_{j\to +\infty} \Im(z_j)\langle \Im(z_j) \rangle^{-1}\to 1
\end{align*}

Seeing $z_j$ and $\K_j$ as  functions of $h_j$, we may write $z(h)$ and $\K(h)$. As in (\ref{eq:semiclassical_pb_large_k}), we end up with the semiclassical formulation
\begin{align}\label{eq:semiclassical_pb_large_im2}
\begin{cases}
(-ih\X_{\mathbf{k}(h)}+h\Im(z(h))-ih\Re(z(h)))u_h = o(h^2), 
\\[6pt]
\|u_h\|_{\mathcal{H}_{\mathrm{hol},h}^{N}} = 1.
\end{cases}
\end{align}
Thus (\ref{eq:semiclassical_pb_large_im2}) still gives us that $u_h\rightharpoonup \mu$ with $\mu$ satisfying Proposition \ref{prop:semiclassicalmeasure_first_properties} and \ref{prop:support_prop_measure}. The horocyclic invariance of Proposition \ref{prop:horo_invariance_quasimode_re_larger_minus1} holds as well. We just have to distinguish between two cases.

\textbf{Case 1 : $\el \neq 0$ } We can still use the vector fields of Lemma \ref{lemma:construction_particular_vf} to get the transversality property of the unstable Hamiltonian vector fields. The previous construction indeed works because the curvature part of $\omega_{\mathbf{l}}$ does not vanish. Indeed, the construction of $(\mathbf{U}_j^-)$ is purely geometrical and holds regardless of the semiclassical regime. Since there exists a component of $\mathbf{l}$ (say $l_1$ again) which is non-zero, the proof of Lemma \ref{lemma:transversality_horocyclic} still works. One can conclude by following \textit{verbatim} the arguments at the end of Section \ref{subsec:proof_re_minus_1_large_k}.

\textbf{Case 2 : $\el = 0$ } This case is interesting because one could say that it is the opposite of the case where $\mathbf{l}\neq 0$ and $\eta=0$ (which was only possible in the large $\langle \mathbf{k} \rangle$ regime). Indeed, to treat the latter case we crucially used the existence of local unstable $\mathbf{U}^-$ and stable $\mathbf{U}^+$ local vector fields on $F$ such that $\|\mathbf{F}_{\nabla}(\mathbf{U}^-,\mathbf{U}^+)\|>1$; There was no hope in relying on the contact property of the $1$-form $\alpha$ as the measure was localized on the unstable tail $\Gamma_+(0)$. Let $\mathbf{U}^{\pm}$ be two sections of $E^{u/s}_F$, $\varphi^{\pm}:=\xi(\mathbf{U}^{\pm})$ their associated Hamiltonian functions. Taking $(x,-\eta\pi_{F\to SM}^*\alpha)\in K_{-\eta}$, we have 
\begin{align}
d\varphi^{+}(H^{\omega_{\mathbf{l}}}_{\varphi^{-}}(x,-\eta\pi_{F\to SM}^*\alpha))
&=\bigl\{ \varphi^{+},\varphi^{-}\bigr\}_{\omega_{\el}}(x,-\eta\pi_{F\to SM}^*\alpha)\notag\\
&=-\eta\pi_{F\to SM}^*\alpha([\mathbf{U}^{+},\mathbf{U}^{-}](x))
+i\el\cdot\mathbf{F}_{\overline{\nabla}}(\mathbf{U}^{+}(x),\mathbf{U}^{-}(x))\notag\label{eq:contact_case_im}\\
&\underset{\el=0}{=}
-\eta\pi_{F\to SM}^*\alpha([\mathbf{U}^{+},\mathbf{U}^{-}](x))
.
\end{align}
To construct transversal vector fields, we may rely only on the \textit{contact property of $\alpha$} this time. 
\begin{lemma}\label{lemma:contact_property_transversality}
    Let $hT\in F$ and $(\mathbf{U}^-_i)_{1\leq i \leq n-1}$ be a local horizontal frame for $E^u_F$ on a small enough neighborhood $W\subset F$ of $hT$.
    
    There exists a local horizontal frame $(\mathbf{U}^+_i)_{1\leq i \leq n-1}$ of $E^s_F$ over $W$ such that :
    \begin{align}\label{eq:contact_property_transversality}
        \bigl(-\eta\pi_{F\to SM}^*\alpha ([\mathbf{U}^-_i,\mathbf{U}^+_j]\bigr)_{1\leq i,j\leq n-1}=\mathbf{I}_{n-1}.
    \end{align}
\end{lemma}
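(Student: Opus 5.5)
The plan is to reduce \eqref{eq:contact_property_transversality} to a pointwise linear-algebra statement: the pairing $(U,S)\mapsto d\alpha(U,S)$ is non-degenerate between the base bundles $E^u_{SM}$ and $E^s_{SM}$. This is exactly the contact property. The key observation is that $\eta\neq0$ in this case, since $\Im(z_h)h\to1$, so $\eta=1$. Hence the factor $-\eta$ can be absorbed by rescaling.

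\textbf{Step 1: pass to the base.} Horizontal vector fields on $F$ are horizontal lifts $\mathbf{U}=U^{\mathbb{H}_F}$ of vector fields on (an open set of) $SM$, and $D\pi_{F\to SM}[\mathbf{U}^-,\mathbf{U}^+]=[U^-,U^+]$. Therefore
\[
\pi_{F\to SM}^*\alpha([\mathbf{U}^-_i,\mathbf{U}^+_j])=\alpha([U^-_i,U^+_j])\circ\pi_{F\to SM}=-d\alpha(U^-_i,U^+_j)\circ\pi_{F\to SM},
\]
using $\alpha(U^\pm)=0$ and Cartan's formula. So it suffices to work on $V:=\pi_{F\to SM}(W)$ with the frame $U^-_i:=D\pi_{F\to SM}\mathbf{U}^-_i$ of $E^u_{SM}$. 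These vector fields are well defined, because basic horizontal fields are constant along fibres. After shrinking $W$, we may also take the fibres of $W\to V$ to be connected, or simply assume the frame is basic, as stated.

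\textbf{Step 2: non-degeneracy.} By Definition \ref{def:contact}, $d\alpha$ is non-degenerate on $E^u_{SM}\oplus E^s_{SM}$. Invariance under the flow and Lie derivative considerations give $\mathcal{L}_X\alpha=0$, so $d\alpha$ vanishes on $E^u\times E^u$ and on $E^s\times E^s$: indeed $d\alpha(d\varphi_t u,d\varphi_t u')=d\alpha(u,u')\to0$. Alternatively, this follows from the algebraic relations \eqref{eq:commutation_algebraic}, since $[U_i^\pm,U_j^\pm]=0$. Consequently the map $E^s_{SM}\to (E^u_{SM})^*$, $S\mapsto d\alpha(\cdot,S)$, is a smooth bundle isomorphism.

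\textbf{Step 3: dual frame.} Let $(\theta^i)$ be the dual coframe of $(U^-_i)$ in $(E^u_{SM})^*$ over $V$. Define $U^+_j$ as the preimage of $\eta^{-1}\theta^j$ under the isomorphism of Step 2, with the sign fixed so that $-\eta\alpha([U^-_i,U^+_j])=\eta\, d\alpha(U^-_i,U^+_j)=\delta_{ij}$. These $U^+_j$ form a smooth local frame of $E^s_{SM}$, and we set $\mathbf{U}^+_j:=(U^+_j)^{\mathbb{H}_F}$. By Step 1 this yields \eqref{eq:contact_property_transversality} exactly, not just up to $\epsilon$, on all of $W$.

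The main care point is Step 1, namely checking that brackets of horizontal lifts project to brackets downstairs. This holds because $\pi_{F\to SM}$-related vector fields have $\pi$-related brackets, and the vertical part of the bracket is killed by $\pi^*\alpha$. Beyond that, the argument needs only the non-degeneracy of $d\alpha$ restricted to the $E^u\times E^s$ block.
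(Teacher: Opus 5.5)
Your proposal is correct and follows the paper's argument. Both use the non-degeneracy of $d\alpha$ to identify $E^s_{SM}$ with $(E^u_{SM})^*$, take the preimage of the $\eta^{-1}$-rescaled dual coframe, and lift horizontally to $F$. You are slightly more careful than the paper: it omits the isotropy of $E^u_{SM}$ and $E^s_{SM}$ for $d\alpha$ that this identification needs, and it omits the reduction $\pi_{F\to SM}^*\alpha([\mathbf{U}^-_i,\mathbf{U}^+_j])=-d\alpha(U^-_i,U^+_j)\circ\pi_{F\to SM}$, both of which you spell out.
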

\begin{proof}
Fix $(U_i^-)$ a local frame of $E^u_{SM}$ on a contractible open set $W'\subset SM$ and write $(U_i^-)^*$ the corresponding dual frame of the dual bundle $(E^u_{SM})^*$. $d\alpha$ is a non-degenerate $2$-form on $E_{SM}^u\oplus E_{SM}^s$. By non-degeneracy, the contraction map $\iota: E^s_{SM}\to (E^u_{SM})^*$ is an isomorphism. It suffices to set $U_i^+:=-\frac{1}{\eta}\iota^{-1}\bigl((U_i^-)^*\bigr)$ to obtain that $(-\eta d\alpha(U_i^+,U_j^-))_{1\leq i,j\leq n-1}$ is the identity matrix. Setting $\mathbf{U}_i^{\pm}=(U_i^{\pm})^{\mathbb{H}_F}$ (local vector fields over $\pi^{-1}_{F\to SM}(W'))$ gives the claim.
\end{proof}
The previous lemma yields an open covering $(W_k)$ of $F$ and vector fields $\mathbf{U}^{-,k}_i$ such that their respective Hamiltonian vector fields $H^{\omega_0}_{\varphi^{-,k}_{i}}$ are transversal to $TK$ on $K_{-\eta}$ since $\eta\neq 0$ and thanks to the relation (\ref{eq:contact_property_transversality}). Also, in the course of the proof of Lemma \ref{lemma:transversality_horocyclic} in this case, one has to use the fact that $d\alpha_{|E^u_{SM}\times E^u_{SM}}=0$ (instead of $(\mathbf{F}_{\overline{\nabla}})_{|E^u_{F}\times E^u_{F}}=0$) to obtain the same conclusion. It suffices to apply the final arguments from the large $\langle \K \rangle$ case (beginning at Lemma \ref{lemma:lipschitz_bound}) to obtain the same contradiction.
\\

\textbf{There is only a finite number of resonances in $\mathcal{B}_0(\varepsilon)$} We used in Lemma \ref{lemma:k_goes_infinity} and Lemma \ref{lemma:im_goes_infinity} the fact that $\cup_\K \sigma_{\mathrm{PR}}(\X_\K)$ is finite in $\mathcal{B}_0(\varepsilon)=\{0\geq\Re(z)>-1+\varepsilon\}$. Let us justify this claim.

The line of reasoning is usual and goes as follows. The elements of $\sigma_{\mathrm{PR}}(\X_\K)$ are written $(z_{j,\K})$ and the corresponding resonant states are $(u_{j,\K})$ (the reader should bear in mind that one should also take into account the multiplicity of the poles, but it is finite and the argument is unchanged).
Assume for the sake of contradiction that $\cup_\K \sigma_{\mathrm{PR}}(\X_{\K})\cap\mathcal{B}'_0(\varepsilon)$ is infinite. Then after extraction there are two possible cases.

\textit{The case $\langle \K_j \rangle\geq \langle \Im(z_j)\rangle $.} Then $\langle \K_j\rangle$ must go to infinity. If it did not this would imply that $|\Im(z_j)|$ is also bounded, and we can assume that $\K_j=:\K_0$. This implies that $(-\X_{\K_0}-z_j)u_{j,\K_0}=0$ for all $j$, contradicting the meromorphic property of $z\mapsto (-\X_{\K_0}-z)^{-1}$. We fall exactly in the setting of the semiclassical problem in  (\ref{eq:semiclassical_pb_large_k}), the only difference being that the sequence of resonant states $(u_h)$ are exact (meaning $\mathbf{P}_hu_h=0$) and not quasimodes. Horocyclic invariance is even easier to obtain. Inspecting the proof of Proposition \ref{prop:horo_invariance_quasimode_re_larger_minus1}, we see that the relation (\ref{eq:commutation_algebraic_unstable}) implies that $h\nabla^{\K(h),u}u_h$ is a resonant state of $\mathbf{P}^u_h$. But (\ref{eq:resolvent_formula_L2}) exactly shows that $\mathbf{P}^u_h$ does not have resonant states in $\{\Re(z)>-1\}$, hence horocyclic invariance. The result of Lemma \ref{lemma:lipschitz_bound} still applies for the semiclassical measure associated to this sequence of resonant state and the end of the proof for the large $\langle \K \rangle$ applies.

\textit{The case $\langle \Im(z_j)\rangle \geq \langle \K_j\rangle$.} Again, if $\langle \Im(z_j)\rangle$ did not tend to infinity, then $\K_j=\K_0'$ after extraction and we would contradict the meromorphic property of the resolvent on $\mathcal{B'}_0(\varepsilon)$. Thus we are in the setting of (\ref{eq:semiclassical_pb_large_im1}). The above justification for horoyclic invariance applies and the arguments of Section \ref{subsubsec:large_im} carry as well.

\section{Resolvent estimates in deeper bands}\label{Section:resolvent_estim_re_bigger_1}
Take $\varepsilon\in (0,1)$, $k\in \mathbb{N}_{>0}$ and write $\{\beta_j(k)\}_{j\leq l(\varepsilon)}$ to be the finite set of all the poles of $z\mapsto (-X_{FM}-z)^{-1}$ in the band $\mathcal{B}_k(\varepsilon)$. The justification of this finiteness property is given at the end of this Section. We define 
\begin{align*}
    \lambda_{\varepsilon}(k):=\max_{j\leq l(\varepsilon)} |\Im(\beta_j(k))|,
\end{align*}
allowing us to define the region where we will perform the resolvent estimates, namely $\mathcal{B}_k(\varepsilon)':=\mathcal{B}_k(\varepsilon)\,\cap\, \{|\Im(z)|\geq \lambda_{\varepsilon}(k)+1\}$. Again, this restriction is only a matter of notational convenience: for $\langle \K \rangle$ large enough the estimate can be proved on the whole strip $\mathcal{B}_k(\varepsilon)$.
We aim at proving the following estimates, to compare with Proposition \ref{prop:two_asymptotics_estimates}. 
\begin{proposition}\label{prop:two_asymptotics_estimates2}
    Let $N>(n-1)/2+k$. There exists $C_{\varepsilon}$ such that for all $z\in \mathcal{B}_k(\varepsilon)$ 
    \begin{align}
\|(-\X_{\K}-z)^{-1}\|_{\mathcal{H}^{N}_{\langle \K \rangle^{-1},\mathrm{hol}}}
&\le C_{\varepsilon}\langle \mathbf{k}\rangle^{\lfloor \Re(z) \rfloor},
&\qquad&
\text{if }\langle \mathbf{k}\rangle \ge \langle \Im(z)\rangle,
\label{eq:case-k_deep}
\\[0.5em]
\|(-\X_{\K}-z)^{-1}\|_{\mathcal{H}^{N}_{\langle \Im(z)\rangle^{-1},\mathrm{hol}}}
&\le C_{\varepsilon}\langle \Im(z)\rangle^{\lfloor \Re(z) \rfloor},
&\qquad&
\text{if }\langle \Im(z)\rangle \ge \langle \mathbf{k}\rangle.
\label{eq:case-im_deep}
\end{align}
\end{proposition}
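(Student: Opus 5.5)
The plan is an induction on $k$. The base case is Proposition \ref{prop:two_asymptotics_estimates}, together with its variant Lemma \ref{lemma:modified_sobolev_estimate} on the modified spaces $\mathcal{H}'^N_{h,\mathrm{hol}}$; this base case covers the strip $\Re(z)>-1$ and, after rerunning the Lipschitz-bound argument, all of $\mathcal{B}_0(\varepsilon)$. The inductive step uses the horocyclic operator $h\nabla^{\K(h),u}$ of (\ref{eq:horocyclic_invariance_def_op}) to lower the real part by one. By the commutation relation (\ref{eq:commutation_algebraic_unstable}), whose lifted form is Lemma \ref{lemma:commutationhorocyclic}, the twisted operator $\X^u_\K$ acts on $\El^\K\otimes E_{s,F}^*$. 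Moreover, $(-\X^u_\K-z)$ is conjugate to $(-\X_\K\otimes\mathrm{Id}-(z+1))$ on a vector-valued line bundle.

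I would make this precise through Lemma \ref{lemma:intertwinconnection} and the Borel--Weil theorem. The bundle $H^0(F,\El^\K)\otimes E_s^*$ is associated to the representation $V^{\lambda(\K)}\otimes(\mathfrak{n}^-)^*$. This representation decomposes into finitely many irreducibles $V^{\lambda(\K')}$ with $|\K'-\K|\le 1$, so $\langle \K'\rangle\sim\langle\K\rangle$. The resolvent of $\X^u_\K$ at $z$ is therefore a finite direct sum of resolvents $(-\X_{\K'}-(z+1))^{-1}$. For $z\in\mathcal{B}_k(\varepsilon)$ we have $z+1\in\mathcal{B}_{k-1}(\varepsilon)$, so the induction hypothesis applies on the anisotropic space with shifted order function. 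This is exactly why the statement requires $N>(n-1)/2+k$: each application of $\nabla^u$ costs one unit of order, as in Remark \ref{rk:orderfunction}.

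Given this, I would argue by contradiction with quasimodes $\mathbf{P}_hu_h=o(h^{1-\lfloor\Re z\rfloor}\cdot h)$, normalized in $\mathcal{H}^N$, in each of the two regimes $\langle\K\rangle\ge\langle\Im z\rangle$ and $\langle\Im z\rangle\ge\langle\K\rangle$. Applying $h\nabla^{\K(h),u}$ and the induction hypothesis for $\X^u$ gives
\[
h\nabla^{\K(h),u}u_h=o(h)
\]
in $\mathcal{H}'^N$. The loss of one power of $\langle\K\rangle$ (respectively $\langle\Im z\rangle$) per band is what produces the exponent $\lfloor\Re z\rfloor$ in (\ref{eq:case-k_deep})--(\ref{eq:case-im_deep}). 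Horocyclic invariance then holds as in Proposition \ref{prop:horo_invariance_quasimode_re_larger_minus1}, and Propositions \ref{prop:propagation_measure_horocyclic} and \ref{prop:support_prop_measure} yield a measure $\mu$ satisfying $(\Phi^{\omega_0,p}_t)_*\mu=e^{2\nu t}\mu$. The threshold condition $N>-\nu$ for the radial estimates holds by the choice of $N$. The transversal unstable fields come from Lemma \ref{lemma:construction_particular_vf} when $\mathbf{l}\neq0$, and from Lemma \ref{lemma:contact_property_transversality} when $\mathbf{l}=0$. They give the Lipschitz bound of Lemma \ref{lemma:lipschitz_bound}, and hence $e^{2\nu t}\lesssim e^{-(n-1)t}$.

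This inequality contradicts the assumption only when $\nu>-(n-1)/2$. Deeper bands, where $\nu<(1-n)/2$, therefore need a different contradiction. I expect this to be the main obstacle. There I would iterate the horocyclic derivative: write $u_h$ in terms of its unstable jets and use that the top-order jet $(\nabla^u)^{k+1}u_h$ lands in a band where the estimate is known. If it vanishes, $\mu$ is $E^u$-polynomial of degree $\le k$ along horocycles, and the Lipschitz-type bound improves to a volume-weighted one. The expected exponent is then $e^{2\nu t}\lesssim e^{-(n-1+2k)t}$, which contradicts $\nu>\frac{1-n}{2}-k-1+\varepsilon$. Away from the critical lines the contradiction should be clean; at the lines $\Re z=\frac{1-n}{2}-k$ it degenerates, which is consistent with the excluded $\varepsilon$-neighbourhoods. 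Finiteness of resonances in $\mathcal{B}_k(\varepsilon)$ follows by the same exact-resonant-state argument as at the end of Section \ref{section:resolventestimatesfirstband}.
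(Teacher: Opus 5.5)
Your inductive step is the paper's. Passing to sections of $H^0(F,\El^{\K})$ over $SM$, commuting $\mathcal{U}^-$ with $\nabla_X$ to shift $z\mapsto z+1$, splitting $V^{\lambda(\K)}\otimes(\mathfrak{n}^-_{\mathbb{C}})^*$ into the blocks $\K+\mathbf{e}_j$, and using the estimate at $z+1$ on $\mathcal{H}'^N$ to get $h\nabla^{\K(h),u}u_h=o(h)$ is exactly Lemma \ref{lemma:bound_inverse_sm} and Proposition \ref{prop:measure_first_property_deep}.

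The gap is in how you close the argument below the line $\Re z=\frac{1-n}{2}$. Your inequality $e^{2\nu t}\lesssim e^{-(n-1+2k)t}$ yields no contradiction. An upper bound $e^{2\nu t}\lesssim e^{-\beta t}$ can only exclude $\nu>-\beta/2$. In $\mathcal{B}_k(\varepsilon)$ one has $\nu<\frac{1-n}{2}-k=-\frac{n-1+2k}{2}$, so your inequality simply holds.

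The mechanism behind that inequality is not available either. For an unstable Hamiltonian $\varphi=\xi(\mathbf{U}^-)$, the symbol $\varphi^{k+1}$ of $(h\nabla^u_{\mathbf{U}^-})^{k+1}$ has Hamiltonian vector field $(k+1)\varphi^{k}H^{\omega_{\mathbf{l}}}_{\varphi}$. This vanishes on $\supp\mu\subset\Gamma_+$, so a bound on the top-order jet gives no transport law for $\mu$. Moreover, you have already proved first-order invariance $h\nabla^{\K(h),u}u_h=o(h)$. Lemma \ref{lemma:lipschitz_bound} therefore makes $\psi_i^*\mu(B^i_\delta)$ comparable to $\delta^{n-1}$ from both sides, so $\mu$ cannot have the extra vanishing at $K_{-\eta}$ that a ``volume-weighted'' bound would need. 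The polynomial-along-horocycles structure you have in mind belongs to genuine resonant states on the critical lines, not to quasimodes between them.

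The missing idea is to use the lower half of Lemma \ref{lemma:lipschitz_bound}. On a hyperbolic manifold the backward flow contracts $\Gamma_+(-\eta)$ towards $K_{-\eta}$ at rate exactly $e^{-t}$, bounded above and below. This gives the reverse inclusion (\ref{eq:inclusion_propagation}): $\psi_i(B^i_{C_2\delta_0e^{-t}})\subset U_{C_1\delta_0e^{-t}}\subset\Phi^{\omega_0,p}_{-t}(U_{\delta_0})$. Hence $e^{2\nu t}\mu(U_{\delta_0})=\mu(\Phi^{\omega_0,p}_{-t}(U_{\delta_0}))\gtrsim e^{-(n-1)t}$, with $\mu(U_{\delta_0})>0$ by Proposition \ref{prop:support_prop_measure}. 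This is impossible once $2\nu<-(n-1)$.

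So first-order horocyclic invariance together with the two-sided Lipschitz bound forces $2\nu=-(n-1)$ in every band, and no iteration of $\nabla^u$ is needed. The $\varepsilon$-neighbourhoods of the deeper lines $\Re z=\frac{1-n}{2}-k$, $k\geq 1$, are not excluded because a measure-level contradiction degenerates there. They are excluded because the inductive step at $z$ requires the resolvent bound at $z+1$, which is unavailable near the previous critical line.
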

Exactly as in Lemma \ref{lemma:modified_sobolev_estimate}, the resolvent estimates can be proved on $\mathcal{H}'^N$ up to taking the Sobolev regularity to satisfy $N>(n-1)/2+k+1$. 

\subsection{Estimates in $\Re(z)>-(n-1)/2$}\label{subection:resolvent_estim_re_bigger_vertical1}
So far, we have proved the resolvent estimates on the region $\{0\geq\Re(z)\geq -1+\varepsilon\}$ of the complex plane, for any $\varepsilon>0$. If the dimension is $3$ then the proof cannot go further: the contradiction in Section \ref{Section:resolvent_estim_re_bigger_1} came from the bound $e^{2\nu t}\lesssim e^{-(n-1)t}$, if $n=3$ there is no contradiction when $\nu=-(3-1)/2=-1$. This dimension will be considered again in Section \ref{section:estimates_deeper_band}. In dimensions greater than $3$, we aim at extending the resolvent estimates to $\mathcal{B}_0(\varepsilon)=\{0\geq\Re(z)>-(n-1)/2+\varepsilon\}$. We assume for now that $n=2j+1, j\in \mathbb{N}^*$, the even case will be considered at the end of this section and is almost identical. We will proceed by induction: resolvent estimates in $\{-(m-1)\geq \Re(z)>-m+\varepsilon\}$ imply estimates in $\{-m\geq\Re(z)>-m-1+\varepsilon\}$, where $m$ is an interger such that $m<j$.

We will only show how to iterate the large $\langle \mathbf{k} \rangle$ estimate (see Section \ref{subsec:proof_re_minus_1_large_k}), the large imaginary part estimate will have a strictly identical line of proof. Assume we have for $m<j$ that there exists $C_{\varepsilon,m}>0$ such that 
\begin{align}\label{eq:induction_hypothesis}
    \|(-\X_{\K}-z)^{-1}\|_{\mathcal{H}^N_{\langle \K \rangle ^{-1},\mathrm{hol}}}\leq C_{\varepsilon,m}\langle \K \rangle^{m} \text{ in }\Re(z)\in[-m+\varepsilon,-m+1],\, \langle \K\rangle\geq \langle\Im(z)\rangle.
\end{align}
From this we would like to deduce an estimate in the left-adjacent band
\begin{align}\label{eq:conclusion_induction_adjacent_band}
    \|(-\X_{\K}-z)^{-1}\|_{\mathcal{H}^N_{\langle \K \rangle ^{-1},\mathrm{hol}}}\leq C_{\varepsilon,m+1}\langle \K \rangle^{m+1}, \text{ in }\Re(z)\in[-m-1+\varepsilon,-m].
\end{align}
As before, assume that the latter estimate does not hold. We end up with the following semiclassical problem
\begin{align}
\begin{cases}
(-ih\X_{\mathbf{k}(h)}+h\Im(z(h))-ih\Re(z(h))u_h = o(h^{m+2}), 
\\[6pt]
\|u_h\|_{\mathcal{H}_{hol,\langle \mathbf{k}_j\rangle^{-1}}^{N}} = 1.
\end{cases}
\end{align}
where we recall that $h_j:=\langle \mathbf{k}_j\rangle^{-1}$ (which we write $h$ for simplicity). Notice that in contrast with (\ref{eq:semiclassical_pb_large_k}), our quasimodes are sharper as $m\geq 1$. In turn we can assume that 
\begin{align}
    \Re(z_h)\to \nu \in [-m-1-\varepsilon,-m]\, \text{ , } \dfrac{\Im(z_h)}{\langle \mathbf{k}(h)\rangle }\to \eta \in [-1,1]\,\text{ and } h\mathbf{k}(h)\to_{h\to 0} \mathbf{l}\neq 0.
\end{align}
Again, $u_h\rightharpoonup \mu$ after an eventual extraction and $\mu$ satisfies Propositions \ref{prop:semiclassicalmeasure_first_properties} and \ref{prop:support_prop_measure}. 

The only non-trivial point to perform the induction procedure is the part concerning (quasi) horocyclic invariance (see Proposition \ref{prop:horo_invariance_quasimode_re_larger_minus1}), which allowed us to deduce the Lipschitz bounds on $\mu$ in Lemma \ref{lemma:lipschitz_bound}: we crucially used that $\Re(z_h)/h>-1$ in the proof of Proposition \ref{prop:horo_invariance_quasimode_re_larger_minus1}. In order to prove this invariance here, recall from (\ref{eq:identification}) that $C^{\infty}_{\mathrm{hol}}(F,\El^{\K})$ may be identified with sections $C^{\infty}\bigl(SM,H^0(F,\El^{\K(h)})\bigr)$. Using Lemma \ref{eq:intertwinconnection}, the quasimodes $u_h$ for $\mathbf{P}_h(z_h)$ can be seen as quasimodes $\tilde{u}_h:=i_{h}(u_h)\in \mathcal{H}_{\langle \K(h) \rangle^{-1}}^N(SM,H^0(F,\El^{\K(h)})) $ for $(-ih\nabla^{H^0(F,\El^{\K})}_X-ihz_h)$. This is a mere consequence of the continuity of the map $i_{\K(h)}:\mathcal{H}^N_{h,\mathrm{hol}}(F,\El^{\K(h)})\to \mathcal{H}_{h}^N(SM,H^0(F,\El^{\K(h)}))$, induced from the one defined on smooth sections in Lemma \ref{lemma:intertwinconnection}. At this point, it may seem artificial to work on anisotropic spaces over $SM$, but this will appear to be convenient in the next lemma: we can nicely decompose $\mathcal{H}_{h}^N(SM,H^0(F,\El^{\K(h)})\otimes (E_s^*\otimes \mathbb{C}))$ into several anisotropic components using decomposition of tensor products of irreducible representations. 

Most of the time, we will omit the dependency of $\mathbf{k}$ in $h$ in the notation.
We thus have 
\begin{align}\label{eq:quasimode_on_sm}
    \|(-ih\nabla^{H^0(F,\El^{\K})}_X+h\Im(z(h))-ih\Re(z(h))u_h\|_{\mathcal{H}^N_{h}} = o(h^{m+2}).
\end{align}
The operator in the latter norm will be written $\mathbf{P}^{H^0(F,\El^\K)}_h(z_h)$.
We consider the complexified horocyclic operator $\mathcal{U}^-_{\mathbb{C}} : C^{\infty}(SM,H^0(F,\El^\K))\to C^\infty(SM,H^0(F,\El^\K)\otimes_{\mathbb{C}}E_{s}^*(\mathbb{C}))$, whose real version is defined in (\ref{eq:horocyclic_op}). We will omit the mention of $\mathbb{C}$ for a lighter notation, but the reader should keep in mind that we work over $\mathbb{C}$. We recall that, since we work with a connection $\mathbb{H}_{\mathrm{dyn}}$ on $FM\to SM$, any associated vector bundle $\mathcal{V}_\rho$ on $SM$ comes equipped with an induced connection $\nabla^{\mathcal{V}_\rho}$. Here, recall from (\ref{eq:unstableassociated}) that $E^u_{SM}(\mathbb{C})$ is an associated vector bundle (the adjoint representation needs to be complexified), and so is its dual $E_s^*(\C)$. We write the natural induced connection $\nabla^{E_s^*}$ and $\nabla^{H^0(F,\El^\K)\otimes E_s^*}$ the connection induced on the tensor bundle $H^0(F,\El^\K)\otimes E_s^*$. 
From Lemma \ref{lemma:commutationhorocyclic}, the horocyclic operator satisfies the commutation relation 
\begin{align}\label{eq:commutation_on_sm_esstar}
[\nabla^{H^0(F,\El^{\K})}_X,\mathcal{U}^-]&:=\nabla^{H^0(F,\El^{\K})\otimes E_s^*}_X\mathcal{U}^- - \mathcal{U}^-\nabla_X^{H^0(F,\El^{\K})}.\\
    &= -\mathcal{U}^-\notag.
\end{align}
By boundedness of $h\mathcal{U}^-:{\mathcal{H}^N_{h}(SM,H^0(F,\El^\K)))}\to \mathcal{H}'^N_{h}(SM,H^0(F,\El^\K)\otimes E_s^*)$ (see Remark \ref{rk:modified_N} for the definition of $\mathcal{H}'^\bullet$)
\begin{align}\label{eq:horocyclic_applied_deeper_band}
    \|h\mathcal{U}^-\mathbf{P}^{H^0(F,\El^\K)}_h(z_h)\|_{\mathcal{H}'^N_{h}(SM,H^0(F,\El^\K\otimes E_s^*))}=o(h^{m+2}),
\end{align}
and the commutation relation (\ref{eq:commutation_on_sm_esstar}) gives 
\begin{align}\label{eq:commutation_on_sm2}
    h\mathcal{U}^-\mathbf{P}^{H^0(F,\El^\K)}_h(z_h)u_h&=\mathbf{P}_h^{H^0(F,\El^\K)\otimes E_s^*}(z_h+ih)\,h\mathcal{U}^-u_h\\
    &=\bigl(-ih\nabla^{H^0(F,\El^\K)\otimes E_s^*}+h\Im(z(h))+1-ih\Re(z(h))\bigr)h\mathcal{U}^-u_h\notag.
\end{align}
In view of (\ref{eq:commutation_on_sm2}), a semiclassical estimate on the inverse of $\mathbf{P}_h^{H^0(F,\El^\K)\otimes E_s^*}(z_h+ih)$ remains to be proved.
\begin{lemma}\label{lemma:bound_inverse_sm}
    There exists $C>0$ (independant of $h$) such that 
    \begin{align}\label{eq:bound_inverse_sm}
        \|\mathbf{P}_h^{H^0(F,\El^\K)\otimes E_s^*}(z_h+ih)^{-1}\|_{\mathcal{H}'^N_h(H^0(F,\El^\K)\otimes E_s^*)}\leq C h^{-m-1}.
    \end{align}
\end{lemma}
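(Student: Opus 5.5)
The plan is to reduce \eqref{eq:bound_inverse_sm} to the induction hypothesis \eqref{eq:induction_hypothesis}, applied to the Fourier modes of the twisted bundle $H^0(F,\El^\K)\otimes E_s^*$. First, by \eqref{eq:commutation_on_sm2}, the operator $\mathbf{P}_h^{H^0(F,\El^\K)\otimes E_s^*}(z_h+ih)$ equals $h$ times $(-i)(\nabla_X^{H^0(F,\El^\K)\otimes E_s^*}+z_h+1)$. Since $\Re(z_h)\in[-m-1+\varepsilon,-m]$, the shifted spectral parameter $z_h+1$ has real part in $[-m+\varepsilon,-m+1]$, which is exactly the band covered by \eqref{eq:induction_hypothesis}. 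So it suffices to bound $(-\nabla_X^{H^0(F,\El^\K)\otimes E_s^*}-(z_h+1))^{-1}$ by $C\langle\K\rangle^{m}=Ch^{-m}$. The extra factor $h^{-1}$ from the semiclassical normalisation then gives $h^{-m-1}$.

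Next, I decompose the bundle into irreducible pieces. The bundle $H^0(F,\El^\K)\otimes E_s^*(\C)$ is associated to $FM$ via the representation $V^{\lambda(\K)}$ (or its dual) tensored with $(\mathfrak{n}^-)^*_\C\cong\C^{n-1}$, the standard representation of $SO(n-1)$. The Clebsch--Gordan decomposition for tensoring with the vector representation writes this as a finite direct sum $\bigoplus_{\K'}V^{\lambda(\K')}$. The weights satisfy $\lambda(\K')=\lambda(\K)+\epsilon$, with $\epsilon$ ranging over the weights of $\C^{n-1}$, and multiplicities are bounded by $n-1$. Hence $|\K'-\K|\le C_0$ uniformly in $\K$. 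The associated connection is the one induced by $\mathbb{H}_{FM\to SM}$, so this splitting is parallel and commutes with $\nabla_X$. By Remark \ref{eq:choicemaximaltorusres} and Lemma \ref{lemma:intertwinconnection}, each summand is intertwined with $\X_{\K'}$ acting on $\mathcal{D}'_{\mathrm{hol}}(F,\El^{\K'})$. The anisotropic spaces $\mathcal{H}'^N_h$ split accordingly, up to uniformly equivalent norms. The orthogonal projectors onto the summands are parallel bundle maps, hence order $0$ in the calculus with principal symbol commuting with the escape function.

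I then apply the induction hypothesis componentwise. Its $\mathcal{H}'^N$ version (Lemma \ref{lemma:modified_sobolev_estimate}) is valid because $N>(n-1)/2+k+1$. Since $\langle\K'\rangle\sim\langle\K\rangle$, the scales $h=\langle\K\rangle^{-1}$ and $\langle\K'\rangle^{-1}$ give equivalent norms, by the norm comparison stated after Proposition \ref{prop:two_asymptotics_estimates} with ratio bounded by a constant. The condition $\langle\K'\rangle\ge\langle\Im(z_h+1)\rangle$ may fail by a bounded factor. In that case I use the large-$\Im$ version instead, which again costs only constants. The shifted point must lie in the pole-free region; this holds for $|\Im z_h|$ large by the finiteness of resonances in the previous band, so I restrict to the primed regions. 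Summing finitely many components gives the bound.

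The main obstacle is this decomposition step: I need the norms of $\mathcal{H}'^N_h(SM,H^0\otimes E_s^*)$ and $\bigoplus_{\K'}\mathcal{H}'^N_{h,\mathrm{hol}}(F,\El^{\K'})$ to be equivalent uniformly in $\K$. I will handle it by noting that the Clebsch--Gordan isomorphism is parallel and unitary, so it conjugates the quantised escape operators $\mathbf{A}'^N$ into each other modulo $O(h)$ lower-order terms. Calderón--Vaillancourt then gives uniform bounds.
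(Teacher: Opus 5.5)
Your proposal is correct and follows the paper's route: identify $E_s^*(\C)$ with the (dual) standard representation of $SO(n-1)$, split $H^0(F,\El^\K)\otimes E_s^*$ into finitely many irreducible associated bundles on which $\nabla_X$ is intertwined with the operators $\X_{\K'}$, and apply the induction hypothesis (in its $\mathcal{H}'^N$ form) at the shifted parameter $z_h+1$, which costs the extra factor $h^{-1}$. Your bookkeeping is more careful than the paper's in three respects. First, you allow every weight $\lambda(\K)+\epsilon$ of the vector representation, whereas the paper lists only the $\K+\mathbf{e}_j$ summands. Second, you handle explicitly the change of scale from $\langle\K'\rangle^{-1}$ to $h$ and the possible switch to the large-$\Im$ regime. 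Third, you argue for two-sided uniform equivalence of the anisotropic norms, which the paper only asserts via order-$0$ boundedness of $\Phi^\K_*$.
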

\begin{proof}
    In this proof, let us write for simplicity $\|\cdot\|_1$ the anisotropic norm on $\mathcal{H}_1:=\mathcal{H}'^N_h(SM,H^0(F,\El^\K)\otimes E_s^*(\C))$. Recall that $(E^u_{SM}(\mathbb{C}))^*\cong FM\times_{Ad(G)^{\vee}}(\mathfrak{n}^-_{\mathbb{C}})^*$ where $(Ad(G)^{\vee},(\mathfrak{n}^-_{\mathbb{C}})^*)$ is the dual representation of  $(Ad(G),\mathfrak{n}^-_{\mathbb{C}})$. The representation is $(Ad(G),\mathfrak{n}^-)$ is isomorphic to the vector representation of $G=SO(n-1)$ on $\mathbb{C}^{n-1}$. Indeed, an element $g\in SO(n-1)$ is canonically seen in $PSO(1,n)$ using the diagonal embedding to a block-diagonal matrix $\diag(1,1,g)$. Then, by (\ref{eq:rootspacehyp}), we may canonically identify $\mathbf{v}\in \mathbb{R}^{n-1}$ to a matrix $A(\mathbf{v})\in \mathfrak{n}^-$. For all $g\in G$, $Ad(g)(A(\mathbf{v}))=gA(\mathbf{v})g^{-1}=A(g\mathbf{v})$: $A$ intertwins the standard vector representation and the adjoint representation on $\mathfrak{n}^-$. In turn, this implies that $(Ad(G)^{\vee},(\mathfrak{n}^-_{\mathbb{C}})^*)$ is isomorphic to the (dual of the) standard vector representation of $G$ on $\C^{n-1}$, which is irreducible since $n\geq 4$. We note that this is simply a manifestation of the general fact that the normal bundle $\mathcal{N}$ is isomorphic to $E^u_{SM}$ (this isomorphism of representation descends to an isomorphism of associated vector bundle).
    
    The tensor representation $H^0(G/T,\mathbf{J}^{\mathbf{k}})\otimes_{\mathbb{C}} \mathfrak{n}^-_{\mathbb{C}}$ may be decomposed in a, bounded uniformly in $\K$, number of factors by \cite[Corollary 9.2.4]{GoodmanWallach2009}
    \begin{align}
        H^0(G/T,\mathbf{J}^{\mathbf{k}})\otimes (\mathfrak{n}^-_{\mathbb{C}})^*\cong \bigoplus_{j=1}^d H^0(G/T,\mathbf{J}^{\K + \mathbf{e}_j})\,\,\text{if $n$ is odd}\label{eq:decomposition_odd},\\
        H^0(G/T,\mathbf{J}^{\mathbf{k}})\otimes (\mathfrak{n}^-_{\mathbb{C}})^*\cong H^0(G/T,\mathbf{J}^\K)\oplus \bigoplus_{j=1}^d H^0(G/T,\mathbf{J}^{\K + \mathbf{e}_j})\,\,\text{if $n$ is even}\label{eq:decomposition_even}.
    \end{align}
    For simpler notations, we recall that we assume that $n$ is odd (again, the even case is dealt with the same procedure). Thanks to (\ref{eq: fiber_id}), the decomposition descends to the level of the corresponding associated vector bundles, that is 
    \begin{align}\label{eq:decmposition_descended_bundle}
        \Phi^\K :
H^0(F,\El^\K)\otimes E_s^*(\mathbb C)
\overset{\cong}{\longrightarrow}
\bigoplus_{j=1}^d H^0(F,\mathbf L^{\K+\mathbf e_j}).
    \end{align}
    Let us write the previous isomorphism of vector bundle $\Phi_{\mathbf{k}}$ (from the left-hand side to the right-hand side). The latter induces a map 
    \begin{align*}
        \Phi^\K_*:\mathcal{D}'(SM,H^0(F,\El^\K)\otimes E_s^*(\mathbb{C}))\to \bigl(\mathcal{D}'(SM,H^0(F,\El^{\K+\mathbf{e}_j}))\bigr)_{1\leq j \leq d}
    \end{align*}

    Each component of a (distributional) section $s$ in the previous direct sum will be written $s_j$, for $j\leq d$. Through this isomorphism, we have for any $s\in \mathcal{D}'(SM,H^0(F,\El^\K)\otimes E_s^*)$ 
    \begin{align}\label{eq:phi_k_intertwins}
        \Phi^{\K}_*(\nabla^{H^0(F,\El^\K)\otimes E_s^*}_X s)=\bigl(\nabla^{H^0(F,\El^{\K+\mathbf{e}_j})}_X\Phi^\K_*(s)_j)_{1\leq j \leq d}.
    \end{align}
    The anisotropic norm on $\mathcal{H}_2:=\mathcal{H}'^N_h(SM,\bigoplus_{j=1}^d H^0(F,\mathbf{L}^{\K + \mathbf{e}_j}))$ is equivalent to the natural product norm induced by the ones from each anisotropic block. We write this product norm $\|\cdot\|_{2}$. Notice that $\Phi^{\K}_*:(\mathcal{H}_1,\|\cdot\|_1)\to (\mathcal{H}_2,\|\cdot \|_2)$ is continuous as follows from usual $L^2$-boundedness since $\Phi_{\mathbf{k}}$ is a semiclassical pseudodifferential operator of order $0$ (between vector bundles). This yields that there exists $C>0$ independent of $h$ such that 
    \begin{align}\label{eq:phi_k_c0}
    ||\Phi^{\K}_*\|_{\mathcal{H}_1\to \mathcal{H}_2}\leq C.
    \end{align}
    For all $j\in \llbracket 1,d\rrbracket$, the induction hypothesis applies to $P_h^{H^0(F,\El^{\K+\mathbf{e}_j})}(z_h+ih)$ since $\Re(z_h)+1\in [-m+\varepsilon,-m+1]$. More precisely
    \begin{align*}        \|P_h^{H^0(F,\El^{\K+\mathbf{e}_j})}(z_h+ih)^{-1}\|_{\mathcal{H}'^N_h(H^0(F,\El^\K)\otimes E_s^*)}\lesssim h^{-1}h^{\lfloor \Re(z_h)\rfloor +1}=h^{-m-1}.
    \end{align*}
    The continuity property (\ref{eq:phi_k_c0}) together with (\ref{eq:phi_k_intertwins}) gives the desired estimate.
\end{proof}
\begin{remark}
    Notice that it is precisely in this proof that we use the estimate in anisotropic spaces with order function $m'=m-1/N$, see Lemma \ref{lemma:modified_sobolev_estimate}.
\end{remark}
Using the previous commutation relation (\ref{eq:commutation_on_sm2}), we finally get using Lemma \ref{lemma:bound_inverse_sm} and the assumption (\ref{eq:quasimode_on_sm}) that $h\mathcal{U}^-u_h$ is of order $h^{-m-1}o(h^{m+2})=o(h)$ in the $\mathcal{H}'^N(SM,H^0(F,\El^\K)\otimes E_s^*)$ norm. We are back in the more familiar situation
\begin{align*}
&\|P_h(z_h)u_h\|_{\mathcal{H}^N_h}=o(h^{m+2}),\\
    &\|h\mathcal{U}^-u_h\|_{\mathcal{H}'^N_h}=o(h).
\end{align*}
By continuity of the contraction of tensors in (vector valued) anisotropic spaces, the latter horocyclic estimate implies that $\|h\nabla^{H^0(F,\El^{\K(h)})}_{U}u_h\|_{\mathcal{H}'^N_h}=o(h)$ for any $U\in C^{\infty}(SM,E^u)$.  We thus obtain that $h\,\nabla_{U^{\mathbb{H}_F}}^{\mathbf{k}(h)}u_h=o_{\mathcal{H}'^N_h(F,\El^{\K(h)})}(h)$ by the intertwining property of $\iota_\K$, see (\ref{eq:intertwinconnection}). Thus we are once again in the setting where Proposition \ref{prop:horo_invariance_quasimode_re_larger_minus1} holds, giving the bound of Lemma \ref{lemma:lipschitz_bound}. Since we still have that $\Re(z_h)\to \nu$ with $\nu>-(n-1)/2$, the bound (\ref{eq:exp_bound_contradiction}) yields once again a contradiction. 
If $n$ was even, the proof above applies: one just needs to be cautious as the last resolvent estimate before the critical line $\Re(z)=-(n-1)/2$ will take place on the band $[-(n-1)/2+\varepsilon,-(n-1)/2+1/2]$.

\subsection{Resolvent estimates in deeper bands}\label{section:estimates_deeper_band}
From now on, the dimension of $M$ may once again be equal to $3$.

We proceed again by induction to prove the estimate (\ref{eq:case-k_deep}), the large $\langle \Im(z)\rangle$ case is dealt with similarly. Let $k\in \mathbb{N}_0$ and assume that we have the following estimates for $k\in \mathbb{N}_0$ and $|\Im(z)|\ge \lambda_{\varepsilon}(k)+1$, if $n$ is odd
\begin{align}\label{eq:estimate_large_band_odd}
    &\|(-\X_{\K}-z)^{-1}\|_{\mathcal{H}^{N}_{\langle \K \rangle^{-1},\mathrm{hol}}}\leq C_{\varepsilon,k}\langle \mathbf{k}\rangle^{-\tfrac{n-1}{2}-k-1},  
    \\
    &\text{for }\Re(z)\in \bigl[-\tfrac{n-1}{2}-k-1+\varepsilon,-\tfrac{n-1}{2}-k-\varepsilon\bigl]\notag,
\end{align}
if $n$ is even
\begin{equation}\label{eq:estimate_large_band_even}
\begin{split}
    & \|(-\X_{\K}-z)^{-1}\|_{\mathcal{H}^{N}_{\langle \K \rangle^{-1},\mathrm{hol}}} \leq C_{\varepsilon,k}\langle \mathbf{k}\rangle^{-\tfrac{n-1}{2}-k+\tfrac{1}{2}}, \\
    & \Re(z) \in [-\tfrac{n-1}{2}-k+\tfrac{1}{2},-\tfrac{n-1}{2}-k+1-\varepsilon] \cup[-\tfrac{n-1}{2}-k+1+\varepsilon,-\tfrac{n-1}{2}-k+\tfrac{3}{2}],
\end{split}
\end{equation}
To keep notations simpler, we only deal with the odd $n$ case. The proof for the even $n$ case will follow similarly. We aim at proving (\ref{eq:estimate_large_band_odd}) with $k$ replaced by $k+1$ in order to conclude by induction. Assuming (\ref{eq:estimate_large_band_odd}) does not hold in $\mathcal{B}_{k+1}(\varepsilon)$
    \begin{align}\label{eq:deeper_band_semiclassical_pb}
\begin{cases}
(-ih\X_{\mathbf{k}(h)}+h\Im(z(h))-ih\Re(z(h))u_h = o(h^{-\tfrac{n-1}{2}-k-3}), 
\\[6pt]
\|u_h\|_{\mathcal{H}_{\langle \mathbf{k}(h)\rangle^{-1},\mathrm{hol}}^{N}} = 1.
\end{cases}
\end{align}
where 
\begin{align*}
    \Re(z_h)\to \nu \in [-\dfrac{n-1}{2}-k-2+\varepsilon,-\dfrac{n-1}{2}-k-1-\varepsilon]\, \text{ , } \dfrac{\Im(z_h)}{\langle \mathbf{k}(h)\rangle }\to \eta \in [-1,1]\,\text{ and } h\mathbf{k}(h)\to \mathbf{l}.
\end{align*}
The order of the anisotropic space $N$ is assumed to be strictly larger than $\frac{n-1}{2}+k+2$. 
\begin{proposition}\label{prop:measure_first_property_deep}
    The sequence $(u_h)$ converges in the sense of semiclassical measures to a Radon measure $\mu$ on $\mathbb{H}_F^*$. This measure satisfies 
    \begin{itemize}
        \item $\supp(\mu)\subset p^{-1}\bigl(\{-\eta\}\bigr)$
        \item $(\Phi_t^{\omega_0,p})_*\mu \,=\, e^{2\nu t}\mu$
        \item $\supp(\mu)\subset \Gamma_+(-\eta)$ \text{ and } $\mu(V)>0$ \text{ for any open neighborhood $V$ of $K_{-\eta}$}.
    \end{itemize}
Moreover, the sequence $(u_h)$ satisfy for any $U\in C^{\infty}(SM,E^u)$ 
\begin{align}
    h\,\nabla_{U^{\mathbb{H}_F}}^{\mathbf{k}(h)}u_h=o_{{\mathcal{H}}_h^N(F,\El^{\K(h)})}(h).
\end{align}
\end{proposition}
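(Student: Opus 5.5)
The first three properties follow the band $\mathcal{B}_0$ argument almost verbatim. The only new feature is the depth of the band, which enters solely through the threshold conditions, and these are handled by the choice of $N$.

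\textbf{Step 1: existence and first properties of $\mu$.} Since $\|u_h\|_{\mathcal{H}^N_{h,\mathrm{hol}}}=1$, the sequence $(u_h)$ is uniformly bounded in $H^{-N}_{h,\mathrm{hol}}$. Proposition \ref{prop:propagation_measure} then gives a Radon measure $\mu$ after extraction. The quasimode is much sharper than $o(h^2)$, so the ellipticity part of that proposition yields $\supp(\mu)\subset p^{-1}(\{-\eta\})$. The propagation part yields $(\Phi^{\omega_{\mathbf{l}},p}_t)_*\mu=e^{2\nu t}\mu$, because the imaginary part of $h^{-1}\mathbf{P}_h(z_h)$ has principal symbol $-\nu+o(1)$. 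By Lemma \ref{lemma:curvature} and Lemma \ref{label:hamiltonianflow}, we may replace $\omega_{\mathbf{l}}$ by $\omega_0$. This is exactly Proposition \ref{prop:semiclassicalmeasure_first_properties}.

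\textbf{Step 2: support in $\Gamma_+(-\eta)$ and mass near $K_{-\eta}$.} I would repeat the proof of Proposition \ref{prop:support_prop_measure}. The radial source estimate at $L=E^*_{s,F}\cap\partial\overline{\mathbb{H}_F^*}$ requires the threshold $N>-\nu$. This holds because $N>\frac{n-1}{2}+k+2\ge -\nu$. It shows that $\mu$ vanishes near $L$. Propagation of singularities then forces $\supp\mu\subset\Gamma_+(-\eta)$. The radial sink estimate at $L'$ also only needs $\nu>-N$. Arguing by contradiction with the microlocal partition $\mathbf{A}_L,\mathbf{A}_{L'},\mathbf{A}_0,\mathbf{A}_{K_{-\eta}}$ gives $\|u_h\|=o(1)$ if $\mu$ does not charge a neighbourhood of $K_{-\eta}$. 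For later use I would also check the primed spaces $\mathcal{H}'^N$, which need $N>\frac{n-1}{2}+k+3$, as in Remark \ref{rk:modified_N}.

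\textbf{Step 3: horocyclic invariance, the main obstacle.} Here $\Re(z_h)<-1$, so the semigroup argument of Proposition \ref{prop:horo_invariance_quasimode_re_larger_minus1} fails. Instead I would follow \S\ref{subection:resolvent_estim_re_bigger_vertical1}.

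First, transport $u_h$ through $i_{\K}$ (Lemma \ref{lemma:intertwinconnection}) to a quasimode $\tilde u_h$ of $\mathbf{P}_h^{H^0(F,\El^\K)}(z_h)$ on $SM$.

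Next, apply $h\mathcal{U}^-$ and use Lemma \ref{lemma:commutationhorocyclic}. This gives
\[
\mathbf{P}_h^{H^0\otimes E_s^*}(z_h+ih)\,h\mathcal{U}^-\tilde u_h=o(h^{r}),
\]
where $h^r$ is the quasimode order in (\ref{eq:deeper_band_semiclassical_pb}).

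Then decompose $H^0(F,\El^\K)\otimes E_s^*(\mathbb{C})$ as in (\ref{eq:decomposition_odd}) and (\ref{eq:decmposition_descended_bundle}) into the modes $\K+\mathbf{e}_j$, with $\|\Phi^\K_*\|$ bounded uniformly. The shifted spectral parameter satisfies $\Re(z_h)+1\in\mathcal{B}_k(\varepsilon)$. The induction hypothesis (\ref{eq:estimate_large_band_odd}) on $\mathcal{H}'^N$ therefore applies to each $\X_{\K+\mathbf{e}_j}$. It requires $\langle\K+\mathbf{e}_j\rangle\simeq\langle\K\rangle$, which holds with harmless constants. It gives
\[
\bigl\|\mathbf{P}_h^{H^0\otimes E_s^*}(z_h+ih)^{-1}\bigr\|\lesssim h^{-1-(\frac{n-1}{2}+k+1)}.
\]

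The point to verify is the exponent bookkeeping. The quasimode order $o(h^{\frac{n-1}{2}+k+3})$ must beat this inverse bound by a factor $o(h)$, so that $h\mathcal{U}^-\tilde u_h=o(h)$ in $\mathcal{H}'^N$. I expect the sign and exponent conventions in the statement, e.g. $h^{-\frac{n-1}{2}-k-3}$, to need reinterpretation as $h^{+(\cdots)}$ exactly as in the $\Re(z)>-\frac{n-1}{2}$ induction.

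Finally, contract with $U\in C^\infty(SM,E^u)$ and pull back by $i_\K$. This yields $h\nabla^{\K(h)}_{U^{\mathbb{H}_F}}u_h=o(h)$. If the norm must be $\mathcal{H}^N$ rather than $\mathcal{H}'^N$, I would absorb the loss by raising $N$ by one at the start.
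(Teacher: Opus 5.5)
Your proposal is correct and follows the paper's proof. The first three items are inherited from Propositions \ref{prop:semiclassicalmeasure_first_properties} and \ref{prop:support_prop_measure} once $N$ exceeds the threshold $-\nu$. Horocyclic invariance is obtained exactly as in \S\ref{subection:resolvent_estim_re_bigger_vertical1}: transport by $i_\K$, commutation with $\mathcal{U}^-$, splitting into the modes $\K+\mathbf{e}_j$, and the induction hypothesis on $\mathcal{H}'^N$ at $\Re(z_h)+1$. The paper simply observes that Lemma \ref{lemma:bound_inverse_sm} never used $\nu>-\tfrac{n-1}{2}$, and your exponent bookkeeping $h^{-1-(\frac{n-1}{2}+k+1)}\cdot o(h^{\frac{n-1}{2}+k+3})=o(h)$ is the right correction of the paper's sign slips.

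Drop only the closing idea of raising $N$ to land in $\mathcal{H}^N$ rather than $\mathcal{H}'^N$. The loss from $h\nabla_{U^{\mathbb{H}_F}}$ also occurs near the sink, where the order $-N$ decreases as $N$ grows, so raising $N$ does not help. It is also unnecessary. The paper's own argument only gives the bound in $\mathcal{H}'^N$, and the subsequent measure-propagation step (Propositions \ref{prop:propagation_measure} and \ref{prop:propagation_measure_horocyclic}) needs only a uniform negative Sobolev bound.
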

\begin{proof}
    The first items are proved in Proposition \ref{prop:semiclassicalmeasure_first_properties}. Concerning the horocyclic invariance property, we notice that the proof of Lemma \ref{eq:bound_inverse_sm} did not use the assumption $\nu>-\tfrac{n-1}{2}$. In fact, the proof of the horocyclic invariance property of Section \ref{subection:resolvent_estim_re_bigger_vertical1} carries directly for our sequence $(u_h)$.
\end{proof}
In the case $\nu>-\tfrac{n-1}{2}$, we used the Lipschitz upper bound for $\mu$ from Lemma \ref{lemma:lipschitz_bound} to deduce the contradiction in (\ref{eq:exp_bound_contradiction}). Here, since $\nu<-\tfrac{n-1}{2}$, we will use instead the Lipschitz lower bound on $\mu$ from Lemma \ref{lemma:lipschitz_bound}.
We use the notation from the end of Section \ref{section:resolventestimatesfirstband}. In particular, there exists a covering $(W_i)$ of $F$ satisfying Lemma \ref{lemma:transversality_horocyclic}, giving in turn the mapping $\psi_i$ defined in (\ref{eq:parametrisation_horocyclic}). There exist constants $C_1,C_2>0$ such that 
\begin{align}\label{eq:inclusion_propagation}
    \bigcup_i \psi_i(B^i_{C_2\delta e^{-t}}) \subset U_{C_1\delta_0 e^{-t}} \subset \Phi_{-t}^{\omega_0,p}(U_{\delta_0}), \forall t>0,
\end{align}
which gives for any $i$ that $\mu(\Phi_{-t}^{\omega_0,p}(U_{\delta_0}))\geq \mu(\psi_i(B^i_{C_2\delta e^{-t}}))$. From (\ref{eq:inclusion_propagation}), one uses the Lipschitz lower bound from Proposition \ref{lemma:lipschitz_bound} together with the propagation property of Proposition  \ref{prop:measure_first_property_deep} to get
\begin{align*}
    e^{2\nu t}\gtrsim e^{-(n-1)t},
\end{align*}
with a time independant implicit constant. But since $2\nu<-(n-1)$, this is impossible. The large imaginary part regime follows exactly for the same reasons.\\ 

\textbf{There is only a finite number of resonances in $\mathcal{B}_k(\varepsilon)$ } This is done by induction: assume that the set of resonances is finite in $\mathcal{B}_k(\varepsilon)$. Once again the elements of $\sigma_{\mathrm{PR}}(\X_\K)$ are written $(z_{j,\K})$ and the corresponding resonant states are $(u_{j,\K})$.
Assume for the sake of contradiction that $\cup_\K \sigma_{\mathrm{PR}}(\X_{\K})\cap\mathcal{B}_{k+1}(\varepsilon)$ is infinite and write the corresponding sequence of resonant states $(u_h)_h$. We only deal with the large $\langle \K \rangle$ case as the large $\langle \Im(z) \rangle$ case follows similarly.

 That $\langle \mathbf{k}_j\rangle$ must go to infinity is justified at the end of Section \ref{section:resolventestimatesfirstband}. Once again, we obtain a sequence of resonant states $(u_h)$ satisfying $(-\X_{\K(h)}-z_h)u_h=0$. The only point that will differ with the case of the resolvent estimates concerns horocyclic invariance. The relation (\ref{eq:commutation_on_sm2}) yields that $h\mathcal{U}^-u_h$ satisfies $\mathbf{P}^{H^0(F,\El^\K)\otimes E_s^*}_h(z_h+ih)(h\mathcal{U}^-u_h)=0$. To apply the induction hypothesis, apply decomposition (\ref{eq:decomposition_odd}) or (\ref{eq:decomposition_even}) depending on the parity of $n$ to retrieve that the components of $h\mathcal{U}^-u_h$ are all eventually zero for $h$ small enough. Indeed, the components of $h\mathcal{U}^-u_h$ with respect to decomposition (\ref{eq:decomposition_odd}) are resonant states for $\mathbf{P}_h$ in $\mathcal{B}_k(\varepsilon)$. Now that we have (exact) horocyclic, the rest of the arguments for quasimodes apply in this setting.

\bibliographystyle{alpha}
\bibliography{Biblio}

\begin{thebibliography}{CLMS24b}

\bibitem[Ano67]{Anosov1967}
Dmitri~V. Anosov.
\newblock {\em Geodesic Flows on Closed Riemannian Manifolds with Negative Curvature}, volume~90.
\newblock American Mathematical Society, Providence, 1967.
\newblock Originally published in Trudy Matematicheskogo Instituta Imeni V.A. Steklova.

\bibitem[BCG95]{BCG95}
Gérard Besson, Gilles Courtois, and Sylvain Gallot.
\newblock Entropies et rigidit{\'e}s des espaces localement sym{\'e}triques de courbure strictement n{\'e}gative.
\newblock {\em Geometric and Functional Analysis}, 5:731--799, 1995.

\bibitem[Bea25]{B25}
Louis-Brahim Beaufort.
\newblock On {K}anai's conjecture for frame flows over negatively curved manifolds.
\newblock 2025.
\newblock arXiv:2509.09500.

\bibitem[BFL90]{BFL90}
Yves Benoist, Patrick Foulon, and François Labourie.
\newblock Flots d'anosov à distributions de liapounov différentiables. i.
\newblock {\em Annales de l'Institut Henri Poincaré, Physique théorique}, 53(4):395--412, 1990.

\bibitem[BG80]{BG80}
Matthew Brin and Mikhail Gromov.
\newblock On the ergodicity of frame flows.
\newblock {\em Inventiones Mathematicae}, 60:1--7, 1980.

\bibitem[Cek22]{CekicMicrolocalMethods}
Mihajlo Ceki{\'c}.
\newblock Microlocal methods in dynamical systems, 2022.
\newblock Lecture notes, University of Zurich.

\bibitem[CG21]{CG21}
Mihajlo Ceki\'{c} and Colin Guillarmou.
\newblock First band of ruelle resonances for contact anosov flows in dimension 3.
\newblock {\em Communications in Mathematical Physics}, 386(2):1289--1318, 2021.

\bibitem[CL24]{CL24}
Mihajlo Ceki\'{c} and Thibault Lefeuvre.
\newblock Semiclassical analysis on principal bundles, 2024.
\newblock arXiv:2405.14846.

\bibitem[CLMS24a]{CLMS24a}
Mihajlo Ceki\'{c}, Thibault Lefeuvre, Andrei Moroianu, and Uwe Semmelmann.
\newblock On the ergodicity of the frame flow on even‑dimensional manifolds.
\newblock {\em Inventiones Mathematicae}, 238:1067--1110, 2024.

\bibitem[CLMS24b]{CLMS24b}
Mihajlo Ceki\'{c}, Thibault Lefeuvre, Andrei Moroianu, and Uwe Semmelmann.
\newblock On the ergodicity of unitary frame flows on kähler manifolds.
\newblock {\em Ergodic Theory and Dynamical Systems}, 44(8):2143--2172, 2024.

\bibitem[DFG15]{DyatlovFaureGuillarmou2015}
Semyon Dyatlov, Frédéric Faure, and Colin Guillarmou.
\newblock Power spectrum of the geodesic flow on hyperbolic manifolds.
\newblock {\em Analysis \& PDE}, 8(4):923--1000, 2015.

\bibitem[DGH21]{DGH2021}
Benjamin Delarue, Colin Guillarmou, and Charles Hadfield.
\newblock Spectral theory of the frame flow on hyperbolic 3-manifolds.
\newblock {\em Annales Henri Poincaré}, 22:3565--3617, 2021.

\bibitem[Dol98]{Dol98}
Dmitry Dolgopyat.
\newblock On decay of correlations in anosov flows.
\newblock {\em Annals of mathematics}, 147:357--390, 1998.

\bibitem[Dya16]{Dy16}
Semyon Dyatlov.
\newblock Spectral gaps for normally hyperbolic trapping.
\newblock {\em Annales de l'Institut Fourier}, 66:55--82, 2016.

\bibitem[DZ19]{DZ19}
Semyon Dyatlov and Maciej Zworski.
\newblock {\em Mathematical Theory of Scattering Resonances}, volume 200 of {\em Graduate Studies in Mathematics}.
\newblock American Mathematical Society, Providence, Rhode Island, 2019.

\bibitem[FH91]{FultonHarris1991}
William Fulton and Joe Harris.
\newblock {\em Representation Theory: A First Course}, volume 129 of {\em Graduate Texts in Mathematics}.
\newblock Springer, New York, 1991.

\bibitem[FS11]{FS11}
Frédéric Faure and Johannes Sjöstrand.
\newblock Upper bound on the density of ruelle resonances for anosov flows.
\newblock {\em Communications in Mathematical Physics}, 308:325--364, 2011.

\bibitem[FT24]{FT}
Frédéric Faure and Masato Tsujii.
\newblock Micro-local analysis of contact anosov flows and band structure of the ruelle spectrum.
\newblock {\em Communications of the American mathematical society}, 4:641--745, 2024.

\bibitem[GdP22]{GuillarmouDePoyferre2022}
Colin Guillarmou and Thibault de~Poyferr\'{e}.
\newblock A paradifferential approach for hyperbolic dynamical systems and applications.
\newblock {\em Tunisian Journal of Mathematics}, 4(4):673--718, 2022.

\bibitem[GW09]{GoodmanWallach2009}
Roe Goodman and Nolan~R. Wallach.
\newblock {\em Symmetry, Representations, and Invariants}, volume 255 of {\em Graduate Texts in Mathematics}.
\newblock Springer, New York, 2009.

\bibitem[HK90]{HK90}
Steve Hurder and Anatole Katok.
\newblock Differentiability, rigidity and godbillon-vey classes for anosov flows.
\newblock {\em Publications Mathématiques de l'IHÉS}, 72:5--61, 1990.

\bibitem[Hum25]{Humbert2025}
Tristan Humbert.
\newblock First ruelle resonance for an anosov flow with smooth potential.
\newblock {\em Ergodic Theory and Dynamical Systems}, 45(8):2439–2475, 2025.

\bibitem[KW21]{KusterWeich2021}
Benjamin K{\"u}ster and Tobias Weich.
\newblock Quantum-classical correspondence on associated vector bundles over locally symmetric spaces.
\newblock {\em International Mathematics Research Notices}, 2021(11):8225--8296, 2021.

\bibitem[Lef26]{Lefeuvre2026}
Thibault Lefeuvre.
\newblock {\em Microlocal Analysis in Hyperbolic Dynamics and Geometry}, volume~32 of {\em Cours Spécialisés}.
\newblock Société Mathématique de France, 2026.
\newblock With a contributed chapter by Yann Chaubet.

\bibitem[Liv04]{Liv}
Carlangelo Liverani.
\newblock On contact anosov flows.
\newblock {\em Annals of mathematics}, 159:1275–1312, 2004.

\bibitem[LP23]{LP23}
Jialun Li and Wenyu Pan.
\newblock Exponential mixing of geodesic flows for geometrically finite hyperbolic manifolds with cusps.
\newblock {\em Inventiones Mathematicae}, 231(3):931--1021, 2023.

\bibitem[LPS26]{LiPanSarkar2026}
Jialun Li, Wenyu Pan, and Pratyush Sarkar.
\newblock Exponential mixing of frame flows for geometrically finite hyperbolic manifolds.
\newblock {\em Journal of the European Mathematical Society}, 2026.

\bibitem[Moo]{Moore1987}
Calvin~C. Moore.
\newblock Exponential decay of correlation coefficients for geodesic flows.
\newblock In Calvin~C. Moore, editor, {\em Group representations, ergodic theory, operator algebras, and mathematical physics}, volume~6 of {\em Mathematical Sciences Research Institute Publications}, pages 163--181.

\bibitem[NZ14]{NZ14}
Stéphane Nonnenmacher and Maciej Zworski.
\newblock Decay of correlations for normally hyperbolic trapping.
\newblock {\em Inventiones mathematicae}, 200(2):345--438, 2014.

\bibitem[PZ25]{PZ25}
Mark Pollicott and Daofei Zhang.
\newblock Rapid mixing for compact group extensions of hyperbolic flows.
\newblock {\em Transactions of the American Mathematical Society}, 2025.

\bibitem[Sep07]{Sepanski2007}
Mark~R. Sepanski.
\newblock {\em Compact Lie Groups}, volume 235 of {\em Graduate Texts in Mathematics}.
\newblock Springer, New York, NY, 2007.

\bibitem[SW21]{SW21}
Pratyush Sarkar and Dale Winter.
\newblock Exponential mixing of frame flows for convex cocompact hyperbolic manifolds.
\newblock {\em Compositio Mathematica}, 157(12):2585--2634, 2021.

\end{thebibliography}

\end{document}